\numberwithin{equation}{section}
\newtheorem{theorem}{Theorem}[section]
\newtheorem{corollary}[theorem]{Corollary}
\newtheorem{lemma}[theorem]{Lemma}
\newtheorem{proposition}[theorem]{Proposition}
\newtheorem{Remark}[theorem]{Remark}
\newtheorem{definition}[theorem]{Definition}
\newcommand{\divx}{\mathop{\mathrm{div}}}
\newcommand{\esssup}{\mathop{\mathrm{ess~sup}}}
\newcommand{\essinf}{\mathop{\mathrm{ess~inf}}}
\newcommand{\argmin}{\mathop{\mathrm{arg~min}}}
\title{Continuous differentiability of a weak solution to very singular elliptic equations involving anisotropic diffusivity}
\author{Shuntaro Tsubouchi\footnote{Graduate School of Mathematical Sciences, The University of Tokyo, Japan. The author was partly supported by Grant-in-Aid for JSPS Fellows (No. 22J12394). \textit{Email}: \texttt{tsubos@ms.u-tokyo.ac.jp}}}\date{}							
\begin{document}
\maketitle
\begin{abstract}
In this paper we consider a very singular elliptic equation that involves an anisotropic diffusion operator, including one-Laplacian, and is perturbed by a $p$-Laplacian-type diffusion operator with $1<p<\infty$. This equation seems analytically difficult to handle near a facet, the place where the gradient vanishes. Our main purpose is to prove that weak solutions are continuously differentiable even across the facet. Here it is of interest to know whether a gradient is continuous when it is truncated near a facet. To answer this affirmatively, we consider an approximation problem, and use standard methods including De Giorgi's truncation and freezing coefficient methods.
\end{abstract}
\bigbreak
\textbf{Mathematics Subject Classification (2020)} 35B65, 35A15, 35J92
\bigbreak
\textbf{Keywords} $C^{1}$-regularity, De Giorgi's truncation, Schauder's perturbation
\section{Introduction}\label{Sect: Introduction}
This paper is concerned with continuous differentiability of a weak solution to a very singular elliptic equation of the form
\begin{equation}\label{Eq: 1,p-Laplace}
L_{b,\,p}u\coloneqq -b\Delta_{1}u-\Delta_{p}u=f(x)\quad \textrm{for }x=(x_{1},\,\dots\,,\,x_{n})\in\Omega\subset{\mathbb R}^{n}.
\end{equation}
Here \(b\in(0,\,\infty)\) and \(p\in(1,\,\infty)\) are fixed constants, the dimension \(n\) satisfies \(n\ge 2\), \(\Omega\) is a \(n\)-dimensional bounded domain with Lipschitz boundary, and \(f\) is a given real-valued function in \(\Omega\). The partial differential operators \(\Delta_{1}\) and \(\Delta_{p}\) often called, respectively, one-Laplacian and $p$-Laplacian, are in divergence forms given by
\[\Delta_{1}u\coloneqq \mathrm{div}\,\mleft(\frac{\nabla u}{\lvert \nabla u\rvert}\mright),\quad\Delta_{p}u\coloneqq \mathrm{div}\,\mleft(\lvert \nabla u\rvert^{p-2}\nabla u\mright).\]
Here $\nabla u=(\partial_{x_{1}}u,\,\dots\,,\,\partial_{x_{n}}u)$ with $\partial_{x_{j}}u=\partial u/\partial x_{j}$ for a function $u=u(x_{1},\,\dots\,,\,x_{n})$, and $\divx X=\sum_{j=1}^{n}\partial_{x_{j}}X_{j}$ for a vector field $X=X(x_{1},\,\dots\,,\,x_{n})$.

For the $p$-Poisson equations, that is, in the case $b=0$, H\"{o}lder continuity of a gradient was already established by many experts (\cite{MR709038}, \cite{MR672713}, \cite{MR721568}, \cite{MR969420}, \cite{MR727034}, \cite{MR474389}, \cite{MR0244628}).
These regularity results should not hold for the one-Poisson equations, even if $f$ is sufficiently smooth. In fact, in the one-dimensional case, a non-smooth function $u=u(x_{1})$ may solve $-\Delta_{1}u=0$, as long as it is absolutely continuous and non-decreasing. Hence, it seems impossible to show continuous differentiability of weak solutions, even for the one-Laplace equation. This problem is due to the fact that the ellipticity of one-Laplacian degenerates in the direction $\nabla u$, which is substantially different from $p$-Laplacian.
On the other hand, in the multi-dimensional case, the ellipticity of one-Laplacian is non-degenerate in directions that are orthogonal to $\nabla u$. It should be mentioned that this ellipticity becomes singular in a facet, the place where a gradient vanishes.
By anisotropic diffusivity of one-Laplace operator $\Delta_{1}$, which consists of both degenerate and singular ellipticity, $\Delta_{1}$ seems, unlike $\Delta_{p}$,  impossible to deal with in existing elliptic regularity theory.

For this reason, it will be interesting to consider a question whether a solution to (\ref{Eq: 1,p-Laplace}), where $L_{b,\,p}$ contains both $\Delta_{1}$ and $\Delta_{p}$, is continuously differentiable ($C^{1}$).
In the special case where the solution $u$ is convex, this $C^{1}$-regularity problem is already answered affirmatively \cite[Theorem 1]{giga2021continuity}. The aim of this paper is to establish continuity of derivatives without the convexity assumption.

The equation (\ref{Eq: 1,p-Laplace}) is concerned with a minimizing problem of the energy functional
\[\mathcal{F}(u)\coloneqq \int_{\Omega}\mleft(E(\nabla u)-fu\mright)\,{\mathrm d}x\quad \textrm{with}\quad E(z)=b\lvert z\rvert+\frac{\lvert z\rvert^{p}}{p}.\]
Here, as the principal space, we often choose a closed affine space $u_{0}+W_{0}^{1,\,p}(\Omega)$ for some fixed $u_{0}\in W^{1,\,p}(\Omega)$.
Formally considering the Euler--Lagrange equation corresponding to this energy minimizing problem, we can obtain (\ref{Eq: 1,p-Laplace}).
However, since the mapping ${\mathbb R}^{n}\ni z\mapsto \lvert z\rvert\in\lbrack 0,\,\infty)$ is no longer differentiable at the origin, the term $\nabla u/\lvert \nabla u\rvert$ should be treated not in the classical sense on a place $\{\nabla u=0\}$, often called a facet of $u$.
This problem can be overcome by introducing a subdifferenital, which is often useful for analysis of non-smooth convex functions.  
Also, since a solution $u$ is assumed to be in the first order Sobolev space, we should treat a weak solution in a distributional sense.
Our definition of a weak solution is described in Section \ref{Subsect: Generalized result and novelity}.

The energy density $E$ can be found in fields of materials science and fluid mechanics. We would like to mention some mathematical models for each field briefly.

A typical example of the former is the relaxation dynamics of a crystal surface below the roughening temperature, which was modeled by Spohn in \cite{spohn1993surface}. There, evolution of $u=u(x,\,t)$, a height function of the crystal in a two-dimensional domain $\Omega$, is modeled to satisfy
\[\partial_{t}u=-\Delta \mu\quad \textrm{and}\quad\mu=-\frac{\delta \Phi}{\delta u}\]
from a thermodynamic viewpoint (see \cite{MR3289366} and the references therein).
Here $\mu$ is often called the chemical potential, and $\Phi$ denotes the crystal surface energy, given by
\[\Phi(u)\coloneqq \int_{\Omega}\lvert\nabla u\rvert\,{\mathrm d}x+\kappa \int_{\Omega}\lvert \nabla u\rvert^{3}\,{\mathrm d}x\quad \textrm{with a constant }\kappa>0.\]
Finally, evolution of a crystal surface results in a fourth order parabolic equation $b\partial_{t}u=\Delta L_{b,\,3}u$ with $b=1/(3\kappa)>0$.
When $u$ is stationary, this model becomes a second order elliptic equation $L_{b,\,3}u=b\mu$. Here $\mu$ is sufficiently smooth since it is harmonic.

For the latter model, the density $E$ often appears with $(n,\,p)=(2,\,2)$ when modeling an incompressible laminar flow of a Bingham fluid in a cylinder $\Omega\times {\mathbb R}\subset{\mathbb R}^{3}$. To be precise, we assume that the velocity field $U$ is of the form $U=(0,\,0,\,u(x_{1},\,x_{2}))$, and hence the incompressible condition $\divx U=0$ clearly holds. It is also assumed that its speed is sufficiently slow enough to discard convection effects. Under these settings, the Euler--Lagrange equation for a Bingham fluid is given by
\[\frac{\delta \Psi}{\delta u}=-\partial_{x_{3}}\pi\quad \textrm{with}\quad\Psi(u)\coloneqq \gamma\int_{\Omega}\lvert\nabla u \rvert\,{\mathrm d}x+\frac{\mu}{2}\int_{\Omega}\lvert\nabla u \rvert^{2}\,{\mathrm d}x.\]
Here $\gamma$ and $\mu$ are positive constants, and $\pi=\pi(x_{3})$ denotes the pressure function, whose slope becomes constant under the laminar setting.
Mathematical formulations on motion of Bingham fluids are found in \cite[Chapter VI]{MR0521262}, \cite{MR635927}. Bingham fluids have two different aspects of plasticity and viscosity, which are respectively reflected by the diffusion operators $\Delta_{1}u$ and $\Delta_{2}u=\Delta u$ in the equation (\ref{Eq: 1,p-Laplace}). It is worth mentioning that when $\gamma=0$, this problem models motion of incompressible Newtonian fluids.

The density $E$ also arises in an optimal transport problem for a congested traffic dynamics (\cite{MR2651987}, \cite{MR2407018}).
There the optimal traffic flow is connected with a certain very degenerate elliptic problem, and higher regularity results related to this have been well-established.
Our $C^{1}$-regularity result is inspired by a recent work by B\"{o}gelein--Duzaar--Giova--Passarelli di Napoli \cite{BDGPdN}, which is also concerned with continuity of gradients on the very degenerate problem related to the optimal traffic flow. For further explanations and comparisons, see Section \ref{Subsect: Very degenerate}.

More generally, we will also consider
\begin{equation}\label{Eq: main eq elliptic}
\mathcal{L}u\coloneqq \mathcal{L}_{1}u+\mathcal{L}_{p}u=f\quad \textrm{in }\Omega\subset{\mathbb R}^{n},
\end{equation}
where the partial differential operators $\mathcal{L}_{1}$ and $\mathcal{L}_{p}$ generalize $-\Delta_{1}$ and $-\Delta_{p}$ respectively. The detailed conditions of $\mathcal{L}_{1},\,\mathcal{L}_{p}$ will be described later in Section \ref{Subsect: Generalized result and novelity}.

\subsection{Our main results and strategy}\label{Subsect: Our strategy}
For simplicity, we first consider the model case (\ref{Eq: 1,p-Laplace}). Our result is 
\begin{theorem}\label{Thm: C1-regularity model case}
Let \(u\) be a weak solution to (\ref{Eq: 1,p-Laplace}) with \(f\in L^{q}(\Omega)\) for some \(q\in(n,\, \infty\rbrack\). Then, \(u\) is in \(C^{1}(\Omega)\).
\end{theorem}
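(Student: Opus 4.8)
The plan is to regularize the very singular operator $L_{b,p}$ by a family of uniformly $p$-elliptic operators, derive a priori $C^{1,\alpha}$ estimates for the approximate solutions that are \emph{uniform} away from the facet, and then pass to the limit. Concretely, for $\varepsilon\in(0,1)$ I would replace $E(z)=b\lvert z\rvert+\lvert z\rvert^{p}/p$ by the smooth, strictly convex density $E_{\varepsilon}(z)=b\sqrt{\varepsilon^{2}+\lvert z\rvert^{2}}+\lvert z\rvert^{p}/p$ (or the equivalent $b(\varepsilon^{2}+\lvert z\rvert^{2})^{1/2}$ together with a regularization of $\lvert z\rvert^{p}$ when $1<p<2$), and let $u_{\varepsilon}\in u_{0}+W_{0}^{1,p}(\Omega)$ minimize $\int_{\Omega}(E_{\varepsilon}(\nabla u)-fu)\,{\mathrm d}x$, equivalently solve $-\divx(\nabla E_{\varepsilon}(\nabla u_{\varepsilon}))=f$. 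Standard monotonicity arguments give $u_{\varepsilon}\to u$ in $W^{1,p}(\Omega)$ (and the gradients converge strongly along a subsequence), so it suffices to prove that $\nabla u_{\varepsilon}$ is equicontinuous on compact subsets of $\Omega$.

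The heart of the matter is the a priori gradient estimate. First I would record the basic energy and Caccioppoli inequalities, and use the $p$-Laplacian part to get $u_{\varepsilon}\in W^{2,2}_{\mathrm{loc}}$ with $\lvert\nabla u_{\varepsilon}\rvert^{(p-2)/2}\lvert D^{2}u_{\varepsilon}\rvert\in L^{2}_{\mathrm{loc}}$ and local gradient bounds $\lVert\nabla u_{\varepsilon}\rVert_{L^{\infty}}\le M$ depending only on the data (via De Giorgi/Moser iteration applied to the equation satisfied by $\lvert\nabla u_{\varepsilon}\rvert$, using $f\in L^{q}$ with $q>n$). The anisotropic one-Laplacian part $b\,\divx\bigl((\varepsilon^{2}+\lvert\nabla u_{\varepsilon}\rvert^{2})^{-1/2}(\mathrm{Id}-\widehat{\nabla u_{\varepsilon}}\otimes\widehat{\nabla u_{\varepsilon}})\nabla u_{\varepsilon}\bigr)$-type term is nonnegative in the relevant quadratic forms, so it only helps coercivity in directions orthogonal to the gradient and can be discarded where convenient, while the $p$-Laplacian term supplies the genuine ellipticity. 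The key structural point, following the ``truncated gradient'' philosophy in the cited very-degenerate works, is to show that for any $\delta>0$ the truncation $G_{\delta}(\nabla u_{\varepsilon}):=(\lvert\nabla u_{\varepsilon}\rvert-\delta)_{+}\widehat{\nabla u_{\varepsilon}}$ is locally H\"older continuous uniformly in $\varepsilon$: on the super-level set $\{\lvert\nabla u_{\varepsilon}\rvert>\delta\}$ the operator $\nabla E_{\varepsilon}$ is uniformly elliptic with constants depending on $\delta,M,p,b$ but not on $\varepsilon$, so De Giorgi's truncation method for the components of $G_{\delta}(\nabla u_{\varepsilon})$ (together with a freezing-coefficient/Schauder perturbation step to upgrade to a Campanato-type modulus of continuity) yields an $\varepsilon$-independent H\"older estimate there; and on $\{\lvert\nabla u_{\varepsilon}\rvert\le\delta\}$ one has $\lvert G_{\delta}(\nabla u_{\varepsilon})\rvert=0$, so the two pieces glue to give equicontinuity of $G_{\delta}(\nabla u_{\varepsilon})$ up to an error $O(\delta)$.

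Passing $\varepsilon\to0$ then gives that $G_{\delta}(\nabla u)$ is continuous in $\Omega$ for every $\delta>0$, with a modulus independent of $\delta$ away from where it is cut off; since $\lvert\nabla u-G_{\delta}(\nabla u)\rvert\le\delta$ pointwise, letting $\delta\to0$ shows $G_{\delta}(\nabla u)\to\nabla u$ uniformly on compacta, hence $\nabla u\in C(\Omega)$, i.e. $u\in C^{1}(\Omega)$. I expect the main obstacle to be the $\varepsilon$-uniformity of the estimates near the facet: establishing that the De Giorgi/Campanato iteration for $G_{\delta}(\nabla u_{\varepsilon})$ has constants depending on $\delta$ but not on $\varepsilon$ requires carefully exploiting that the ``bad'' one-Laplacian term, though singular as $\varepsilon\to0$ and as $\lvert\nabla u\rvert\to0$, enters with a favorable sign and that on the region $\{\lvert\nabla u_{\varepsilon}\rvert>\delta\}$ its singular factor $(\varepsilon^{2}+\lvert\nabla u_{\varepsilon}\rvert^{2})^{-1/2}$ is bounded by $1/\delta$; handling the transition region where $\lvert\nabla u_{\varepsilon}\rvert$ crosses $\delta$, and making the freezing argument compatible with the anisotropy of the frozen coefficients, is the delicate technical core. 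A secondary difficulty is the low-integrability right-hand side, which forces the use of $q>n$ precisely to close the Moser/De Giorgi iteration with the correct scaling.
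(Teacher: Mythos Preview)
Your proposal is correct and follows essentially the same route as the paper: regularize $E$ to $E_\varepsilon$, obtain uniform local Lipschitz bounds and an $\varepsilon$-uniform H\"older estimate for the truncated gradient via a combination of De Giorgi truncation and a freezing-coefficient/Campanato perturbation, then pass $\varepsilon\to 0$ followed by $\delta\to 0$. Two technical refinements in the paper that address exactly the difficulty you flag (the transition region): the truncation is taken with respect to $V_\varepsilon=\sqrt{\varepsilon^{2}+\lvert\nabla u_\varepsilon\rvert^{2}}$ rather than $\lvert\nabla u_\varepsilon\rvert$ (one works with $\mathcal{G}_{2\delta,\varepsilon}$ under the constraint $\varepsilon<\delta/8$, not with $\mathcal{G}_\delta$), and the ``gluing'' is achieved not by working separately on $\{\lvert\nabla u_\varepsilon\rvert>\delta\}$ and its complement but by a pointwise measure-theoretic dichotomy---either the superlevel set $\{V_\varepsilon-\delta>(1-\nu)\mu\}$ has nearly full measure in $B_\rho(x_0)$ (then a Campanato decay via comparison with the constant-coefficient equation), or not (then a De Giorgi oscillation lemma applied to the scalar subsolution $\lvert\mathcal{G}_{\delta,\varepsilon}(\nabla u_\varepsilon)\rvert^{2}$ shrinks the oscillation)---iterated over dyadically shrinking balls.
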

By Giga and the author \cite{giga2021continuity}, this type of $C^{1}$-regularity result was already established in the special case where \(u\) is convex.
The proof therein is substantially based on convex analysis and a strong maximum principle.
In particular, the gradient \(\nabla u\) is often considered not only as a distribution, but also as a subgradient, which made it successful to show $C^{1}$-regularity rather elementarily.

After that work was completed, the author has found it possible to prove continuity of derivatives of non-convex weak solutions. Instead of the strong maximum principle, we use standard methods from the Schauder theory and the De Giorgi--Nash--Moser theory to justify continuity of a gradient $\nabla u$ when it is truncated near a facet.
Precisely speaking, we would like to show that for each small \(\delta\in(0,\,1)\), the truncated gradient
\begin{equation}\label{Eq: Truncated vector field}
{\mathcal G}_{\delta}(\nabla u)\coloneqq (\lvert \nabla u\rvert-\delta)_{+}\frac{\nabla u}{\lvert \nabla u\rvert}
\end{equation}
is locally H\"{o}lder continuous. This result can be explained by computing eigenvalues of the Hessian matrix $\nabla^{2} E(\nabla u)=(\partial_{x_{i}x_{j}} E(\nabla u))_{i,\,j}$.
In fact, $\nabla^{2} E$ satisfies
\begin{align*}
(\textrm{ellipticity ratio of $\nabla^{2}E(z_{0})$})&=\frac{(\textrm{the largest eigenvalue of $\nabla^{2}E(z_{0})$})}{(\textrm{the lowest eigenvalue of $\nabla^{2}E(z_{0})$})}\\&\le C(p)\mleft(1+b\lvert z_{0}\rvert^{1-p}\mright)
\end{align*}
for all $z_{0}\in{\mathbb R}^{n}\setminus\{0\}$. In particular, for each fixed $\delta>0$, the operator $L_{b,\,p}$ can be said to be \textit{uniformly elliptic over a place} \(\{\lvert \nabla u\rvert>\delta\}\). It should be emphasized that ${\mathcal G}_{\delta}(\nabla u)$ is supported in this place, and therefore this vector field will be H\"{o}lder continuous.
Since it is easy to check that the continuous vector field \({\mathcal G}_{\delta}(\nabla u)\) uniformly converges to \(\nabla u\), this implies that \(\nabla u\) is also continuous.
We should note that our analysis for ${\mathcal G}_{\delta}(\nabla u)$ substantially depends on the truncation parameter $\delta>0$. Thus, our H\"{o}lder continuity estimates may blow up as $\delta$ tends to $0$. In particular, on quantitative growth estimates of a gradient near a facet, less is known. This problem is essentially because that the equation (\ref{Eq: 1,p-Laplace}) becomes non-uniformly elliptic near a facet. It should be emphasized that our problem substantially differs from what is often called the $(p,\,q)$-growth problem, where non-uniform ellipticity appears as a gradient blows up \cite{MR2291779}, \cite{MR4258810}.

The proof of H\"{o}lder continuity of \({\mathcal G}_{2\delta}(\nabla u)\), where we have replaced $\delta$ by $2\delta$, broadly consists of two parts; relaxing the singular operator $L_{b,\,p}$, and establishing a priori H\"{o}lder estimates on relaxed vector fields. For relaxations, we will have to consider relaxed problems that are uniformly elliptic.
In the special case (\ref{Eq: 1,p-Laplace}), for an approximating parameter \(\varepsilon\in(0,\,1)\), the regularized equation is given by
\begin{equation}\label{Eq: Approximating problem special case}
L_{b,\,p}^{\varepsilon}u_{\varepsilon}=f_{\varepsilon}.
\end{equation}
Here \(f_{\varepsilon}\) converges to \(f\) in a suitable sense, and the relaxed operator \(L_{b,\,p}^{\varepsilon}\) is given by
\[L_{b,\,p}^{\varepsilon}u_{\varepsilon}\coloneqq -\mathrm{div}\,\mleft(\frac{b\nabla u_{\varepsilon}}{\sqrt{\varepsilon^{2}+\lvert \nabla u_{\varepsilon}\rvert^{2}}}+\mleft(\varepsilon^{2}+\lvert \nabla u_{\varepsilon}\rvert^{2}\mright)^{p/2-1}\nabla u_{\varepsilon}\mright)\]
for each \(\varepsilon\in(0,\,1)\).
This operator naturally appears when one approximates the density $E$ by \[E_{\varepsilon}(z)\coloneqq b\sqrt{\varepsilon^{2}+\lvert z\rvert^{2}}+\frac{1}{p}\mleft(\varepsilon^{2}+\lvert z\rvert^{2}\mright)^{p/2}.\]
In particular, we regularize $\lvert z\rvert^{s}$ by $\left(\varepsilon^{2}+\lvert z\rvert^{2}\right)^{s/2}$ for each $s\in\{\,1,\,p\,\}$, which allows for linearization.
It should be noted that for each fixed $\varepsilon\in(0,\,1)$, the equation (\ref{Eq: Approximating problem special case}) is uniformly elliptic, in the sense that there holds
\[\textrm{(the ellipticity ratio of $\nabla^{2}E_{\varepsilon}(z_{0})$)}\le C(p)\left[1+b\left(\varepsilon^{2}+\lvert z_{0}\rvert^{2} \right)^{\frac{1-p}{2}}\right]\le C(b,\,p,\,\varepsilon)\]
for all $z_{0}\in{\mathbb R}^{n}$. Moreover, sufficient smoothness for coefficients of the operator $L_{b,\,p}^{\varepsilon}$ is also guaranteed. Therefore, from existing results on elliptic regularity theory, it is not restrictive to assume that a weak solution to (\ref{Eq: Approximating problem special case}) admits higher regularity $u_{\varepsilon}\in W_{{\mathrm{loc}}}^{1,\,\infty}\cap W_{\mathrm{loc}}^{2,\,2}$. Moreover, when the external force term $f_{\varepsilon}$ is in $C^{\infty}$, it is possible to let $u_{\varepsilon}$ be even in $C^{\infty}$ with the aid of bootstrap arguments \cite[Chapter V]{MR0244627}.
For each fixed $\delta\in(0,\,1)$, we would like to establish an a priori H\"{o}lder continuity of a truncated gradient
\begin{equation}\label{Eq: Approximated truncated vector field}
{\mathcal G}_{2\delta,\,\varepsilon}(\nabla u_{\varepsilon})\coloneqq \mleft(\sqrt{\varepsilon^{2}+\lvert\nabla u_{\varepsilon}\rvert^{2}}-2\delta\mright)_{+}\frac{\nabla u_{\varepsilon}}{\lvert\nabla u_{\varepsilon}\rvert},
\end{equation}
whose estimate is independent of the approximation parameter $\varepsilon$ that is sufficiently smaller than $\delta$. Since the singular operator $L_{b,\,p}$ itself has to be relaxed by a non-degenerate one $L_{b,\,p}^{\varepsilon}$, the truncating mapping ${\mathcal G}_{2\delta}$ should also be replaced by another regularized one ${\mathcal G}_{2\delta,\,\varepsilon}$.
For this reason, we have to consider another vector field given by (\ref{Eq: Approximated truncated vector field}), which clearly differs from (\ref{Eq: Truncated vector field}).
The a priori H\"{o}lder estimate, which plays as a key lemma in this paper, enables us to apply the Arzel\`{a}--Ascoli theorem and to conclude that ${\mathcal G}_{2\delta}(\nabla u)$ is also H\"{o}lder continuous. We will obtain the desired H\"{o}lder continuity estimates by standard methods, including De Giorgi's truncation and a freezing coefficient argument. Our proofs of a priori H\"{o}lder continuity estimates are highly inspired by \cite[\S 4--7]{BDGPdN}.

For a general problem (\ref{Eq: main eq elliptic}), we need to establish suitable approximation of the singular operator ${\mathcal L}$.
There, for an approximation parameter $\varepsilon\in(0,\,1)$, we have to introduce a uniformly elliptic operator ${\mathcal L}^{\varepsilon}$ that is neither degenerate nor singular.
A main problem herein is that we have to justify that the regularized solution $u_{\varepsilon}$ converges to the original solution $u$ in the distributional sense. 
In the special case ${\mathcal L}=L_{b,\,p}$, ${\mathcal L}^{\varepsilon}=L_{b,\,p}^{\varepsilon}$, and $f_{\varepsilon}\equiv f$, this justification is already discussed in \cite[Lemma 4]{MR4201656}, which is based on arguments from \cite[Theorem 6.1]{MR1451535}.
In this paper, we would like to give a more general approximating operator ${\mathcal L}^{\varepsilon}$ that is based on the convolution by the Friedrichs mollifier. The novelty of our new approximation argument is that this works for a very singular operator ${\mathcal L}_{1}$ that differs from $b\Delta_{1}$. In particular, we are able to generalize the total variation energy functional in ${\mathcal F}$, as long as its corresponding density $E_{1}$ is convex and positively one-homogeneous.
Our setting concerning $E_{1}$ will be substantially different from previous works \cite{zbMATH04075921}, \cite{zbMATH07373229}, \cite{zbMATH06137867}, \cite{zbMATH07045537}, which are concerned with energy minimizers of purely linear growth, since ellipticity conditions therein appear to differ from ours.

It is worth mentioning that our strategy works even in the system case, and $C^{1}$-regularity on vector-valued problems has been established in another recent paper \cite{T-system} by the author. It should be emphasized that when one considers everywhere regularity for the system problem, it would be natural to let the divergence operator ${\mathcal L}$ have a symmetric structure, often called the Uhlenbeck structure (see \cite{zbMATH04124460}, \cite{zbMATH06446371}, \cite{MR474389}). 
In particular, when it comes to everywhere $C^{1}$-regularity for vector-valued solutions, assumptions related to the Uhlenbeck structure may force us to restrict \({\mathcal L}_{1}=b\Delta_{1}\) (see Section \ref{Rmk: Uhlenbeck structure case} for details).
In absence of this assumption, it is well-known that energy minimizers has partial regularity, and often lacks everywhere regularity. 
Both classical partial regularity results and counterexamples to full regularity are found in \cite[Chapter 4]{MR3887613}, \cite[Chapter 9]{MR1962933}.
On partial regularity of local minimizers in $\mathop{\mathrm{BV}}$ spaces, both $p$-growth problems and purely linear growth problems are well-discussed in \cite{zbMATH04075921}, \cite{zbMATH07373229}, \cite{zbMATH07045537}, \cite{zbMATH06405920}.
Although it will be interesting to consider partial regularity on non-Uhlenbeck-type problems instead of full regularity, we will not discuss this problem in the paper.

\subsection{Comparisons to previous results on a very degenerate equation}\label{Subsect: Very degenerate}
Our proofs on continuity of derivatives are inspired by \cite{BDGPdN}, which is concerned with everywhere regularity for gradients of solutions to very degenerate equations, or even systems, of the form
\begin{equation}\label{Eq: Very degenerate eq}
-\divx\mleft((\lvert\nabla v\rvert-b)_{+}^{p^{\prime}-1}\frac{\nabla v}{\lvert\nabla v\rvert}\mright)=f\in L^{q}(\Omega)\quad \textrm{in }\Omega\subset {\mathbb R}^{n}.
\end{equation}
Here $p^{\prime}$ denotes the H\"{o}lder conjugate of $p\in(1,\,\infty)$, i.e., $p^{\prime}=p/(p-1)$. This equation is motivated by a mathematical model on congested traffic dynamics \cite{MR2407018}. There this model results in a minimizing problem of the form
\begin{equation}\label{Eq: minimizing problem in optimal transport}
\sigma_{\mathrm{opt}}\in \argmin\mleft\{\int_{\Omega}E(\sigma)\,{\mathrm d}x\mathrel{}\middle|\mathrel{} \left.\begin{array}{c}
\sigma\in L^{p}(\Omega;\,{\mathbb R}^{n}),\\ -\divx \sigma=f\textrm{ in }\Omega,\,\sigma\cdot\nu=0\textrm{ on }\partial\Omega
\end{array} \right. \mright\}.
\end{equation}
The equation (\ref{Eq: Very degenerate eq}) is connected with the minimizing problem (\ref{Eq: minimizing problem in optimal transport}).
In fact, in a paper \cite{MR2651987}, it is proved that the optimal traffic flow $\sigma_{\mathrm{opt}}$ of the problem (\ref{Eq: minimizing problem in optimal transport}) is uniquely given by $\nabla E^{\ast}(\nabla v)$, where $v$ solves (\ref{Eq: Very degenerate eq}) under a Neumann boundary condition.
Also, it is worth mentioning that (\ref{Eq: Very degenerate eq}) can be written by \(-\divx(\nabla E^{\ast}(\nabla v))=f\), where \[E^{\ast}(z)=\frac{1}{p^{\prime}}\mleft(\lvert z\rvert-b\mright)_{+}^{p^{\prime}}\] denotes the Legendre transform of $E$. 

The question whether the flow $\sigma_{\mathrm{opt}}=\nabla E^{\ast}(\nabla v)$ is continuous has been resolved affirmatively. There it has been to establish continuity of the vector filed ${\mathcal G}_{b+\delta}(\nabla v)$ with $\delta>0$.
This expectation on continuity can be explained, similarly to (\ref{Eq: 1,p-Laplace}), by calculating the ellipticity ratio of the Hessian matrix $\nabla^{2} E^{\ast}(\nabla v)$. In fact, for every $z_{0}\in {\mathbb R}^{n}$ with $\lvert z_{0}\rvert\ge \delta+b$, there holds 
\begin{align*}
(\textrm{ellipticity ratio of $\nabla^{2}E^{\ast}(z_{0})$})&=\frac{(\textrm{the largest eigenvalue of $\nabla^{2}E^{\ast}(z_{0})$})}{(\textrm{the lowest eigenvalue of $\nabla^{2}E^{\ast}(z_{0})$})}\\&\le (p-1)\left(1+\delta^{-1}\right)
\end{align*}
as $\delta\to 0$. This estimate suggests that for each fixed $\delta>0$, the truncated gradient ${\mathcal G}_{b+\delta}(\nabla v)$ should be H\"{o}lder continuous. We note that it is possible to show ${\mathcal G}_{b+\delta}(\nabla v)$ uniformly converges to ${\mathcal G}_{b}(\nabla v)$ as $\delta\to 0$, and thus ${\mathcal G}_{b}(\nabla v)$ will be also continuous.

When $v$ is a scalar function, this expectation on continuity of ${\mathcal G}_{b+\delta}(\nabla v)$ with $\delta>0$ was first mathematically justified by Santambrogio--Vespri \cite{MR2728558} in 2010 for the special case $n=2$ with $b=1$. The proof therein is based on oscillation estimates related to Dirichlet energy. For technical reasons related to this method, the condition $n=2$ is essentially required.
Later in 2014, Colombo--Figalli \cite{MR3133426} established a more general proof that works for any $n\ge 2$ and any function $E^{\ast}$ whose zero-levelset is sufficiently large enough to define a real-valued Minkowski gauge. This Minkowski gauge plays as a basic modulus to estimate ellipticity ratios on equations they treated. It should be mentioned that their strategy will not work for our problem (\ref{Eq: 1,p-Laplace}), since some structures of the density functions therein seems substantially different from ours. In fact, in our problem, the zero-levelset of $E$ is only a singleton, and thus it is impossible to define the Minkowski gauge as a real-valued function.
The recent work by B\"{o}gelein--Duzaar--Giova--Passarelli di Napoli \cite{BDGPdN} also focuses on proving H\"{o}lder continuity of ${\mathcal G}_{b+\delta}(\nabla v)$ for each $\delta>0$ with $b=1$. A novelty of this paper is that their strategy works even when $v$ is a vector-valued function. There they considered an approximating problem of the form
\begin{equation}\label{Eq: Relaxed very degenerate equation}
-\varepsilon\Delta v_{\varepsilon}-\divx\,(\nabla E^{\ast}(\nabla v_{\varepsilon}))=f\quad \textrm{in }\Omega.
\end{equation}
The paper \cite{BDGPdN} establishes a priori H\"{o}lder continuity of ${\mathcal G}_{1+2\delta}(\nabla v_{\varepsilon})$ for each fixed $\delta\in(0,\,1)$, whose estimate is independent of an approximation parameter $\varepsilon\in(0,\,1)$.

Our proofs on a priori H\"{o}lder estimates are highly inspired by \cite[\S 4--7]{BDGPdN}. There are three significant differences between their proofs and ours. The first is that we need to treat another relaxed vector field ${\mathcal G}_{2\delta,\,\varepsilon}(\nabla u_{\varepsilon})$ as in (\ref{Eq: Approximated truncated vector field}), different from ${\mathcal G}_{2\delta}(\nabla v_{\varepsilon})$. This is essentially because our regularized problems as in (\ref{Eq: Approximating problem special case}) are based on relaxing the very singular operator $L_{b,\,p}$ (or the general operator ${\mathcal L}$) itself, whereas their approximation given in (\ref{Eq: Relaxed very degenerate equation}) does not change principal part at all. Hence, in the relaxed problem (\ref{Eq: Relaxed very degenerate equation}), it is not necessary to adapt another mapping than ${\mathcal G}_{2\delta,\,\varepsilon}$. The second is some structural differences between density functions $E$ and $E^{\ast}$, which make our energy estimates in Section \ref{Subsect: Energy estimates} different from \cite[\S 4--5]{BDGPdN}. The third is that we have to make the approximation parameter $\varepsilon$ sufficiently smaller than the truncation parameter $\delta$. To be precise, when we prove a priori H\"{o}lder continuity of ${\mathcal G}_{2\delta,\,\varepsilon}(\nabla u_{\varepsilon})$, we carefully utilize the setting $0<\varepsilon<\delta/8$. This condition will be required especially when two different moduli $\lvert\nabla u_{\varepsilon}\rvert$ and $\sqrt{\varepsilon^{2}+\lvert\nabla u_{\varepsilon}\rvert^{2}}$ are both used. In the paper \cite{BDGPdN}, no restrictions on $\varepsilon\in(0,\,1)$ are made, since their analysis for (\ref{Eq: Relaxed very degenerate equation}) is based on a single modulus $\lvert \nabla v_{\varepsilon}\rvert$ only.




\subsection{Some notations and our general results}\label{Subsect: Generalized result and novelity}
In Section \ref{Subsect: Generalized result and novelity}, we describe our settings on the equation (\ref{Eq: main eq elliptic}) and describe the main result. Before stating our general result, we fix some notations.

We denote ${\mathbb Z}_{\ge 0}\coloneqq \{\,0,\,1,\,2,\,\dots\,\,\}$ by the set of all non-negative integers, and ${\mathbb N}\coloneqq {\mathbb Z}_{\ge 0}\setminus\{ 0\}$ by the set of all natural numbers.
For real numbers $a,\,b\in{\mathbb R}$, we often write $a\vee b\coloneqq \max\{\,a,\,b\,\},\,a\vee b\coloneqq \min\{\,a,\,b\,\}$ for notational simplicity.

For \(n\times 1\) column vectors \(x=(x_{i})_{1\le i\le n},\,y=(y_{i})_{1\le i\le n}\in{\mathbb R}^{n}\), we write
\[\langle x\mid y\rangle\coloneqq \sum_{i=1}^{n}x_{i}y_{i},\,\quad \lvert x\rvert\coloneqq \sqrt{\langle x\mid x\rangle}=\sqrt{\sum_{i=1}^{n}\,\lvert x_{i}\rvert^{2}}.\]
In other words, we equip \({\mathbb R}^{n}\) with the Euclidean inner product $\langle\,\cdot\mid\cdot\,\rangle$ and the Euclidean norm \(\lvert\,\cdot\,\rvert\).
We also define a tensor \(x\otimes y\), which is regarded as a real-valued $n\times n$ matrix
\[x\otimes y\coloneqq (x_{i}y_{j})_{i,\,j}=\begin{pmatrix}x_{1}y_{1}&\cdots & x_{1}y_{n}\\ \vdots & \ddots &\vdots\\ x_{n}y_{1}&\cdots &x_{n}y_{n}\end{pmatrix}.\]

We denote $\mathop{\mathrm{id}}$ by the $n\times n$ identity matrix. For an \(n\times n\) real matrix \(A\), we write \(\lVert A\rVert\) as the operator norm of \(A\) with respect to the canonical Euclid norm,
\[\textrm{i.e., }\lVert A\rVert\coloneqq \sup\mleft\{\frac{\lvert Ax\rvert}{\lvert x\rvert}\mathrel{}\middle|\mathrel{}x\in{\mathbb R}^{n}\setminus\{0\}\mright\}.\]
We also define the Frobenius norm
\[\lvert A\rvert\coloneqq \sqrt{\sum_{i,\,j=1}^{n}\lvert a_{i,\,j}\rvert^{2}}\quad \textrm{for }A=(a_{i,\,j})_{1\le i,\,j\le n}.\]
For $n\times n$ real symmetric matrices $A,\,B$, we write $A\leqslant B$ or $B\geqslant A$ when $B-A$ is non-negative. 

For a scalar function of \(u=u(x_{1},\,\dots,\,x_{n})\), we denote \(\nabla u,\,\nabla^{2}u,\,\nabla^{3}u\) respectively by gradient vector, Hessian matrix, and third-order derivatives of \(u\);
\[\textrm{i.e., }\nabla u\coloneqq (\partial_{x_{i}}u)_{1\le i\le n},\,\nabla^{2}u\coloneqq (\partial_{x_{i}x_{j}}u)_{1\le i,\,j\le n},\,\nabla^{3}u\coloneqq (\partial_{x_{i}x_{j}x_{k}}u)_{1\le i,\,j,\,k\le n}.\]
Similarly to $\lvert\nabla u\rvert$ and $\mleft\lvert \nabla^{2}u\mright\rvert$, we define \[\mleft\lvert \nabla^{3}u\mright\rvert\coloneqq \sqrt{\sum_{i,\,j,\,k=1} \lvert \partial_{x_{i}x_{j}x_{k}}u\rvert^{2}}.\]
For given ${\mathbb R}^{n}$-valued vector field $U=(U_{1},\,\dots\,,\,U_{n})$ with $U_{j}=U_{j}(x_{1},\,\dots\,,\,x_{n})$ for each $j\in\{\,1,\,\dots\,,\,n\,\}$, we denote $DU=(\partial_{x_{i}}U_{j})_{i,\,j}$ by the Jacobian matrix of \(U\).

Let $U\subset{\mathbb R}^{n}$ be an open set. For given numbers $s\in\lbrack 1,\,\infty\rbrack$, $d\in{\mathbb N}$, and $k\in{\mathbb N}$, we denote $L^{s}(U;\,{\mathbb R}^{d})$ and $W^{k,\,s}(U;\,{\mathbb R}^{d})$ respectively by the Lebesgue space and the Sobolev space. To shorten the notations, we often write $L^{s}(U)\coloneqq L^{s}(U;\,{\mathbb R})$ and $W^{k,\,s}(U)\coloneqq W^{k,\,s}(U;\,{\mathbb R})$. It is mentioned that the Lebesgue space $L^{s}(U;\,{\mathbb R}^{d})$ itself makes sense even when $U$ is only assumed to be Lebesgue measurable.

Throughout this paper, we let
\[{\mathcal L}_{1}u\coloneqq -\divx\,(\nabla E_{1}(\nabla u)),\quad {\mathcal L}_{p}u\coloneqq -\divx\,\mleft(\nabla E_{p}(\nabla u)\mright),\]
where $E_{1}$ and $E_{p}$ are non-negative convex functions in ${\mathbb R}^{n}$.
For regularity of the densities, we require \(E_{1}\in C({\mathbb R}^{n})\cap C_{\mathrm{loc}}^{2,\,\beta_{0}}({\mathbb R}^{n}\setminus\{0\})\) and \(E_{p}\in C^{1}({\mathbb R}^{n})\cap C_{\mathrm{loc}}^{2,\,\beta_{0}}({\mathbb R}^{n}\setminus\{0\})\) with $\beta_{0}\in(0,\,1\rbrack$. In this paper, we let \(E_{1}\) be positively one-homogeneous. In other words, $E_{1}$ satisfies
\begin{equation}\label{Eq: positive hom of deg 1}
E_{1}(\lambda z)=\lambda E_{1}(z)\quad\textrm{for all }z\in{\mathbb R}^{n},\,\lambda>0.
\end{equation}
For \(E_{p}\), we assume that there exists constants \(0<\lambda_{0}\le \Lambda_{0}<\infty\) such that
\begin{equation}\label{Eq: Gradient bound condition for Ep}
\lvert \nabla E_{p}(z)\rvert\le \Lambda_{0}\lvert z\rvert^{p-1}\quad \textrm{for all }z\in{\mathbb R}^{n},
\end{equation}
\begin{equation}\label{Eq: Hessian condition for Ep}
\lambda_{0}\lvert z\rvert^{p-2}\mathrm{id}\leqslant \nabla^{2} E_{p}(z) \leqslant \Lambda_{0}\lvert z\rvert^{p-2}\mathrm{id}\quad \textrm{for all }z\in{\mathbb R}^{n}\setminus\{0\}.
\end{equation}
For $\beta_{0}$-H\"{o}lder continuity of Hessian matrices $\nabla^{2} E_{p}$, it is assumed that
\begin{equation}\label{Eq: C-2-beta growth for Ep}
\mleft\lVert \nabla^{2} E_{p}(z_{1})-\nabla^{2} E_{p}(z_{2})\mright\rVert\le \Lambda_{0}\mu^{p-2-\beta_{0}}\lvert z_{1}-z_{2}\rvert^{\beta_{0}}
\end{equation}
holds for all $\mu\in(0,\,\infty)$, and $z_{1},\,z_{2}\in{\mathbb R}^{n}\setminus\{ 0\}$ satisfying
\begin{equation}\label{Eq: C-2-beta variable condition}
\frac{\mu}{16}\le \lvert z_{1}\rvert\le 3\mu,\quad \textrm{and} \quad \lvert z_{1}-z_{2}\rvert\le \frac{\mu}{32}.
\end{equation}

A typical example is
\begin{equation}\label{Eq: Special case of E1 and Ep}
E_{1}(z)=b\lvert z\rvert,\quad E_{p}(z)=\frac{1}{p}\lvert z\rvert^{p},
\end{equation}
It is easy to check that \(E_{1}\) and \(E_{p}\) defined as in (\ref{Eq: Special case of E1 and Ep}) satisfy (\ref{Eq: positive hom of deg 1})--(\ref{Eq: Hessian condition for Ep}) with \(\lambda_{0}\coloneqq \min\{\,1,\,p-1\,\},\,\Lambda_{0}\coloneqq \max\{\,1,\,p-1\,\}\). 
Moreover, by replacing \(\Lambda_{0}=\Lambda_{0}(n,\,p)\) sufficiently large, we may assume that (\ref{Eq: C-2-beta growth for Ep}) also holds with $\beta_{0}=1$ (see Section \ref{Rmk: Uhlenbeck structure case} for further explanations).

We give the definition of weak solutions to (\ref{Eq: main eq elliptic}).
\begin{definition}\label{Def of Weak sol}\upshape
Let \(p\in(1,\,\infty)\) and \(q\in \lbrack1,\,\infty\rbrack\) satisfy
\begin{equation}\label{Eq: Condition of q general}
\mleft\{\begin{array}{cc} \displaystyle\frac{np}{np-n+p}<q\le \infty& (1<p<n),\\ 1<q\le\infty & (p=n),\\ 1\le q\le \infty & (n<p<\infty).\end{array} \mright.
\end{equation}
For a given function \(f\in L^{q}(\Omega)\), a function \(u\in W^{1,\,p}(\Omega)\) is called a \textit{weak} solution to (\ref{Eq: main eq elliptic}) when there exists a vector field \(Z\in L^{\infty}(\Omega;\,{\mathbb R}^{n})\) such that the pair \((u,\,Z)\) satisfies
\begin{equation}\label{Eq: Weak formulation on the equation}
\int_{\Omega}\mleft\langle Z+\nabla E_{p}(\nabla u)\mathrel{}\middle|\mathrel{} \nabla\phi\mright\rangle\,{\mathrm d}x=\int_{\Omega}f\phi\,{\mathrm d}x\quad \textrm{for all }\phi\in W_{0}^{1,\,p}(\Omega),
\end{equation}
and
\begin{equation}\label{Eq: Subgradient Z}
Z(x)\in\partial E_{1}(\nabla u(x))\quad \textrm{for a.e. }x\in\Omega.
\end{equation}
Here $\partial E_{1}$ denotes the subdifferential of $E_{1}$, defined by
\[\partial E_{1}(z)\coloneqq \left\{\zeta\in{\mathbb R}^{n}\mathrel{} \middle| \mathrel{}E_{1}(w)\ge E_{1}(z)+\langle\zeta\mid w-z \rangle\textrm{ for all }w\in{\mathbb R}^{n} \right\}.\]
\end{definition}
It should be noted that (\ref{Eq: Condition of q general}) enables us to apply the compact embedding \(W_{0}^{1,\,p}(\Omega)\hookrightarrow L^{q^{\prime}}(\Omega)\) (see e.g., \cite[Chapters 4 \& 6]{MR2424078}), so that the weak formulation (\ref{Eq: Weak formulation on the equation}) makes sense. Clearly, the condition $q\in(n,\,\infty\rbrack$ implies (\ref{Eq: Condition of q general}).

Our main result is the following Theorem \ref{Theorem: C1-regularity}.
\begin{theorem}[$C^{1}$-regularity theorem]\label{Theorem: C1-regularity}
Let $p\in(1,\,\infty)$ and $\beta_{0}\in(0,\,1\rbrack$, and assume that the functions \(E_{1}\in C({\mathbb R}^{n})\cap C_{\mathrm{loc}}^{2,\,\beta_{0}}({\mathbb R}^{n}\setminus\{ 0\})\) and \(E_{p}\in C^{1}({\mathbb R}^{n})\cap C_{\mathrm{loc}}^{2,\,\beta_{0}}({\mathbb R}^{n}\setminus\{ 0\})\) satisfy (\ref{Eq: positive hom of deg 1})--(\ref{Eq: C-2-beta growth for Ep}). If \(u\) is a weak solution to (\ref{Eq: main eq elliptic}) with \(f\in L^{q}(\Omega)\) for some $q\in(n,\,\infty\rbrack$, then \(u\) is in \(C^{1}(\Omega)\).
\end{theorem}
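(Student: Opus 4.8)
The plan is to reduce the statement to the local uniform‑ellipticity analysis of truncated gradients sketched in the introduction, and to obtain the latter via a vanishing‑viscosity approximation combined with a priori Hölder estimates à la Bögelein–Duzaar–Giova–Passarelli di Napoli. First I would record the structural consequence of (\ref{Eq: positive hom of deg 1})–(\ref{Eq: C-2-beta growth for Ep}): away from the origin $\nabla^{2}E_{1}$ is bounded and $\nabla^{2}E_{1}(z)z=0$ by one‑homogeneity, so that on the set $\{\lvert\nabla u\rvert>\delta\}$ the total operator $\mathcal L$ is uniformly elliptic with ellipticity ratio controlled by $C(1+\delta^{1-p})$; this is exactly the point that makes the truncated vector field ${\mathcal G}_{\delta}(\nabla u)$ amenable to classical De Giorgi–Nash–Moser and Schauder machinery. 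The goal is then to prove that for every $\delta\in(0,1)$ the field ${\mathcal G}_{2\delta}(\nabla u)$ defined in (\ref{Eq: Truncated vector field}) is locally Hölder continuous, because ${\mathcal G}_{2\delta}(\nabla u)\to\nabla u$ uniformly as $\delta\to0$ (indeed $\lvert{\mathcal G}_{2\delta}(\nabla u)-\nabla u\rvert\le 2\delta$ pointwise), and a uniform limit of continuous functions is continuous, giving $\nabla u\in C(\Omega)$, i.e.\ $u\in C^{1}(\Omega)$.

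For the Hölder continuity of ${\mathcal G}_{2\delta}(\nabla u)$ I would argue by approximation. Introduce, for $\varepsilon\in(0,\delta/8)$, the regularized densities $E_{1}^{\varepsilon},E_{p}^{\varepsilon}$ obtained by mollifying $E_{1},E_{p}$ (and, in the model case, the explicit $E_{\varepsilon}$ of the introduction), set ${\mathcal L}^{\varepsilon}=-\divx(\nabla E_{1}^{\varepsilon}(\nabla\cdot)+\nabla E_{p}^{\varepsilon}(\nabla\cdot))$, and let $u_{\varepsilon}$ solve ${\mathcal L}^{\varepsilon}u_{\varepsilon}=f_{\varepsilon}$ with $f_{\varepsilon}\to f$ suitably (e.g.\ in $L^{q}_{\mathrm{loc}}$) and $u_{\varepsilon}-u_{0}\in W_{0}^{1,p}$. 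Standard elliptic theory gives $u_{\varepsilon}\in W_{\mathrm{loc}}^{1,\infty}\cap W_{\mathrm{loc}}^{2,2}$ (and $C^{\infty}$ when $f_{\varepsilon}$ is smooth). Next comes the key lemma, whose proof is the heart of the paper: for each fixed $\delta\in(0,1)$ the truncated field ${\mathcal G}_{2\delta,\varepsilon}(\nabla u_{\varepsilon})$ of (\ref{Eq: Approximated truncated vector field}) satisfies a local Hölder estimate with constants and exponent \emph{independent of} $\varepsilon\in(0,\delta/8)$. This is established by differentiating the equation, performing Caccioppoli/energy estimates on the set $\{\sqrt{\varepsilon^{2}+\lvert\nabla u_{\varepsilon}\rvert^{2}}>\delta\}$ where $\mathcal L^{\varepsilon}$ is uniformly elliptic with $\varepsilon$‑independent constants, running a De Giorgi truncation to get an oscillation‑decay / intrinsic $L^{\infty}$ bound for $\lvert\nabla u_{\varepsilon}\rvert$, and then a freezing‑coefficient (Schauder perturbation) argument to upgrade to Hölder continuity of the nonlinear field ${\mathcal G}_{2\delta,\varepsilon}(\nabla u_{\varepsilon})$; the restriction $0<\varepsilon<\delta/8$ is used to compare the two moduli $\lvert\nabla u_{\varepsilon}\rvert$ and $\sqrt{\varepsilon^{2}+\lvert\nabla u_{\varepsilon}\rvert^{2}}$ on the relevant super‑level sets. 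Granting this, the uniform estimate plus Arzelà–Ascoli extracts a subsequence along which ${\mathcal G}_{2\delta,\varepsilon}(\nabla u_{\varepsilon})$ converges locally uniformly; identifying the limit with ${\mathcal G}_{2\delta}(\nabla u)$ requires knowing $\nabla u_{\varepsilon}\to\nabla u$ in $L^{p}_{\mathrm{loc}}$ (up to a subsequence, a.e.), which follows from the convergence $u_{\varepsilon}\to u$ in the distributional sense — the second ingredient to be proved — together with monotonicity of $\nabla E_{p}$ and standard compactness. Hence ${\mathcal G}_{2\delta}(\nabla u)$ is locally Hölder continuous for every $\delta$, and letting $\delta\to0$ finishes the argument.

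The remaining structural point — that the regularized solutions $u_{\varepsilon}$ converge to the weak solution $u$ of (\ref{Eq: main eq elliptic}) — I would handle by the Minty–Browder monotonicity method: uniform $W^{1,p}$ bounds on $u_{\varepsilon}$ (from testing with $u_{\varepsilon}-u_{0}$ and coercivity coming from (\ref{Eq: Hessian condition for Ep}) and $E_{1}\ge0$) give a weak limit $\tilde u$, then a monotonicity trick using that $z\mapsto\nabla E_{1}^{\varepsilon}(z)+\nabla E_{p}^{\varepsilon}(z)$ is monotone, the uniform $L^{\infty}$ bound on $\nabla E_{1}^{\varepsilon}(\nabla u_{\varepsilon})$ (so a weak‑$*$ limit $Z$), and lower semicontinuity/convexity identify $Z\in\partial E_{1}(\nabla\tilde u)$ a.e.\ and show $\tilde u$ solves (\ref{Eq: Weak formulation on the equation}); uniqueness of the gradient (the problem being strictly convex in $\nabla u$ modulo the one‑homogeneous part) forces $\nabla\tilde u=\nabla u$, which is all we need since the $C^1$ conclusion is about $\nabla u$. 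In the model case this is exactly \cite[Lemma~4]{MR4201656}/\cite[Theorem~6.1]{MR1451535}, and the novelty here is making the mollified construction work for general one‑homogeneous $E_{1}$.

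The main obstacle is the $\varepsilon$‑independent a priori Hölder estimate for ${\mathcal G}_{2\delta,\varepsilon}(\nabla u_{\varepsilon})$: one must run the whole De Giorgi + freezing‑coefficient scheme while (i) the lower ellipticity bound degenerates like $\delta^{p-1}$ as $\delta\to0$ (acceptable, since $\delta$ is fixed there) but must \emph{not} degenerate in $\varepsilon$, and (ii) the anisotropic one‑Laplacian part $\nabla^{2}E_{1}^{\varepsilon}$, although bounded, has a genuinely degenerate direction $\nabla u_{\varepsilon}$, so the Caccioppoli inequalities must be organized so that only the $E_{p}^{\varepsilon}$‑coercivity is used for the "bad" direction while the good transverse directions are exploited for the truncated tangential components — precisely the delicate bookkeeping carried out in \cite[\S4\textrm{--}7]{BDGPdN} and which here is complicated further by the presence of the two distinct moduli and the constraint $\varepsilon<\delta/8$.
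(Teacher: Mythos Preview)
Your proposal is correct and follows essentially the same approach as the paper: approximate by the mollified problems ${\mathcal L}^{\varepsilon}u_{\varepsilon}=f_{\varepsilon}$ with $u_{\varepsilon}=u$ on $\partial\Omega$, use the variational/monotonicity argument (the paper's Proposition~\ref{Prop: Elliptic variational inequality} and Corollary~\ref{Cor: unique existence of elliptic Dirichlet prob}) to get $\nabla u_{\varepsilon}\to\nabla u$ strongly in $L^{p}$ and hence a.e.\ along a subsequence, invoke the $\varepsilon$-independent a~priori H\"older bound for ${\mathcal G}_{2\delta,\varepsilon}(\nabla u_{\varepsilon})$ (Theorem~\ref{Prop: A priori Hoelder estimate}, proved via De~Giorgi truncation plus freezing coefficients under the constraint $\varepsilon<\delta/8$), pass to the limit by Arzel\`a--Ascoli to conclude ${\mathcal G}_{2\delta}(\nabla u)\in C^{0}_{\mathrm{loc}}$, and let $\delta\to 0$. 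The only minor deviation is cosmetic---the paper takes the boundary datum for $u_{\varepsilon}$ to be $u$ itself (not an auxiliary $u_{0}$), which makes the identification of the limit immediate via uniqueness of the minimizer rather than requiring a separate $\nabla\tilde u=\nabla u$ step.
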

In the case (\ref{Eq: Special case of E1 and Ep}), the equation (\ref{Eq: main eq elliptic}) becomes (\ref{Eq: 1,p-Laplace}), and hence Theorem \ref{Theorem: C1-regularity} clearly generalizes Theorem \ref{Thm: C1-regularity model case}.
Although we have succeeded in showing \(C^{1}\)-regularity of weak solutions, it should be mentioned that we have required \(E_{1},\,E_{p}\in C_{\mathrm{loc}}^{2,\,\beta_{0}}({\mathbb R}^{n}\setminus\{ 0\})\) in Theorem \ref{Theorem: C1-regularity}. On the other hand, the previous proof of $C^{1}$-regularity of convex solutions \cite[Theorem 4]{giga2021continuity}, where some tools from convex analysis are basically used, works under weaker assumptions that \(E_{1},\,E_{p}\in C^{2}({\mathbb R}^{n}\setminus\{ 0\})\).
Local $C^{2,\,\beta_{0}}$-regularity of $E=E_{1}+E_{p}$ outside the origin is used for technical reasons to make our perturbation arguments successful (see Remark \ref{Rmk: beta condition} in Section \ref{Subsect: Average integral estimates}).

\subsection{Outlines of the paper}
We conclude Section \ref{Sect: Introduction} by briefly describing outlines of this paper.

Section \ref{Sect: Approximation} aims to give the proof of Theorem \ref{Theorem: C1-regularity} by approximation arguments.
For preliminaries, in Section \ref{Subsect: Some Lipschitz estimates}, we give basic estimates for gradients and Hessian matrices of relaxed density functions. After stating some basic facts on the density \(E_{1}\) in Section \ref{Subsect: Basic facts on convex integrands}, we would like to check that the relaxation of the density $E=E_{1}+E_{p}$ convoluted by the Friedrichs mollifier satisfies good properties in Sections \ref{Subsect: approximation of density}--\ref{Rmk: Uhlenbeck structure case}. Next in Sections \ref{Subsect: Basic results on convergence of vector fields}--\ref{Subsect: Convergence and solvability}, we consider an approximating problem for the equation (\ref{Eq: main eq elliptic}). There we prove a convergence result on variational inequality problems (Proposition \ref{Prop: Elliptic variational inequality}). As a corollary, we are able to deduce that a solution to an approximated equation surely converges to a weak solution to (\ref{Eq: main eq elliptic}), under suitable Dirichlet boundary conditions (Corollary \ref{Cor: unique existence of elliptic Dirichlet prob}). Finally in Section \ref{Subsect: Proof of main theorem}, we would like to prove Theorem \ref{Theorem: C1-regularity}.
There Theorem \ref{Prop: A priori Hoelder estimate} is used without proofs, which states that a truncated gradient of an approximated solution, given by (\ref{Eq: Approximated truncated vector field}), is locally H\"{o}lder continuous, uniformly for an approximation parameter \(\varepsilon\in(0,\,\delta/8)\).

Section \ref{Sect: A priori Hoelder estimate} is focused on giving the proof of Theorem \ref{Prop: A priori Hoelder estimate}.
First in Section \ref{Subsect: A priori Lipschitz bound}, we briefly describe inner regularity on generalized solutions, including a priori Lipschitz bounds (Proposition \ref{Prop: a priori Lipschitz}).
In Section \ref{Subsect: A priori Hoelder estimate completed}, we would like to give the proof of Theorem \ref{Theorem: C1-regularity}. There we use key Propositions \ref{Prop: Perturbation result}--\ref{Prop: De Giorgi Oscillation lemma}, the proofs of which are given after deducing a weak formulation in Section \ref{Subsect: Weak formulations of regularized equations}.
Throughout Sections \ref{Subsect: Perturbation outside facets}--\ref{Subsect: Average integral estimates}, we would like to justify that the vector field \({\mathcal G}_{2\delta,\,\varepsilon}(\nabla u_{\varepsilon})\) satisfies a Campanato-type growth estimate under suitable conditions where a freezing coefficient method works (Proposition \ref{Prop: Perturbation result}). 
In Section \ref{Subsect: De Giorgi Oscillation}, we show a De Giorgi-type lemma on the scalar-valued function \(\lvert{\mathcal G}_{\delta,\,\varepsilon}(\nabla u_{\varepsilon})\rvert\) (Proposition \ref{Prop: De Giorgi Oscillation lemma}).
Some lemmata in Section \ref{Subsect: De Giorgi Oscillation} are easy to deduce, appealing to standard arguments given in \cite[Chapter 3]{MR3887613}, \cite[Chapter 5]{MR3099262} and \cite[Chapter 10, \S 4--5]{MR2566733}. For the reader's convenience, we provide detailed computations in the appendix (Section \ref{Sect: Appendix De Giorgi-type levelset argument}).

Finally, we would like to mention that our result is based on adaptations of \cite{T-system}, the author's recent work that focuses on system problems. This paper also aims to provide full computations and proofs of some estimates that are omitted in \cite{T-system}. Compared with \cite{T-system}, some computations herein might become somewhat complicated, since the density function $E=E_{1}+E_{p}$ is not assumed to be symmetric in scalar problems.

\section{Approximation schemes and the proof of Theorem \ref{Theorem: C1-regularity}}\label{Sect: Approximation}
The aim of Section \ref{Sect: Approximation} is to provide suitable approximation schemes for the equation (\ref{Eq: main eq elliptic}). 
This approximation is necessary since the ellipticity ratio of (\ref{Eq: main eq elliptic}) blows up near the facets, which makes it difficult to show local Sobolev regularity on second order derivatives. Therefore, we have to introduce relaxed equations whose ellipticity ratio is bounded even across the facets.
When considering the operators \({\mathcal L}={\mathcal L}_{1}+{\mathcal L}_{p}\), we have to give a suitable relaxation of the anisotropic operator ${\mathcal L}_{1}$. This is possible by convolving the density function $E_{1}$ by the Friedrichs mollifier. The novelty concerning our relaxation is that this works as long as $E_{1}$ is positively one-homogeneous. 
\subsection{Preliminaries}\label{Subsect: Some Lipschitz estimates}
First in Section \ref{Subsect: Some Lipschitz estimates}, we prove quantitative estimates related to gradients or Hessian matrices for preliminaries. The results herein will play an important role in Sections \ref{Sect: Approximation}--\ref{Sect: A priori Hoelder estimate}.

First, we would like to prove Lemma \ref{Lemma: Local C11 estimate} below.
\begin{lemma}\label{Lemma: Local C11 estimate}
Let \(s\in\lbrack 1,\,\infty)\) and \(\sigma_{1},\,\sigma_{2},\,\sigma_{3}\in \lbrack 0,\,\infty)\) be fixed constants. Assume that a real-valuied function \(H\in C({\mathbb R}^{n})\cap C^{2}({\mathbb R}^{n}\setminus\{ 0\})\) satisfies
\begin{equation}\label{Eq: H-condition 1/3}
\mleft\lvert \nabla H(z)\mright\rvert\le L\mleft(\sigma_{1}+\lvert z\rvert^{s-1}\mright)\quad \textrm{for all }z\in{\mathbb R}^{n}\setminus\{ 0\},
\end{equation}
\begin{equation}\label{Eq: H-condition 2/3}
\mleft\lVert \nabla^{2}H(z)\mright\rVert\le L\mleft(\sigma_{2}+\lvert z\rvert^{s-2}\mright)\quad \textrm{for all }z\in{\mathbb R}^{n}\setminus\{ 0\},
\end{equation}
where \(L\in(0,\,\infty)\) is constant. Then, for all \(z_{1},\,z_{2}\in{\mathbb R}^{n}\setminus\{ 0\}\), there holds
\begin{align}\label{Eq: Local C11 estimate of H}
&\lvert \nabla H(z_{1})-\nabla H(z_{2})\rvert\nonumber\\&\le C(s)L\cdot {\mathcal E}(\sigma_{1},\,\sigma_{2},\,\lvert z_{1}\rvert,\,\lvert z_{2}\rvert)\cdot\mleft(\lvert z_{1}\rvert^{-1}\wedge \lvert z_{2}\rvert^{-1}\mright)\lvert z_{1}-z_{2}\rvert,
\end{align}
where \(C(s)\in(0,\,\infty)\) is constant and 
\[{\mathcal E}(\sigma_{1},\,\sigma_{2},\,t_{1},\,t_{2})\coloneqq \mleft(t_{1}^{s-1}+t_{2}^{s-1}\mright)+\sigma_{1}+\sigma_{2}\cdot\mleft(t_{1}+t_{2}\mright)\quad \textrm{for }t_{1},\,t_{2}>0.\]
Moreover, if \(H\) satisfies \(H\in C^{3}({\mathbb R}^{n}\setminus\{0\})\) and there holds
\begin{equation}\label{Eq: H-condition 3/3}
\mleft\lvert \nabla^{3} H(z) \mright\rvert\le L\mleft(\sigma_{3}+\lvert z\rvert^{s-3}\mright)\quad \textrm{for all }z\in{\mathbb R}^{n}\setminus\{ 0\},
\end{equation}
then we have 
\begin{align}
&\mleft\lvert \nabla^{2}H(z_{1})(z_{2}-z_{1})-(\nabla H(z_{2})-\nabla H(z_{1})) \mright\rvert\nonumber\\& \le C(s)L \mleft[{\mathcal E}(\sigma_{1},\,\sigma_{2},\,\lvert z_{1}\rvert,\,\lvert z_{2}\rvert)+\sigma_{3}\lvert z_{1}\rvert^{2}\mright]\frac{\lvert z_{1}-z_{2}\rvert^{2}}{\lvert z_{1}\rvert^{2}} \label{Eq: Error estimate on Hess H}
\end{align}
for all \(z_{1},\,z_{2}\in{\mathbb R}^{n}\setminus\{ 0\}\). Here \(C(s)\in(0,\,\infty)\) is constant.
\end{lemma}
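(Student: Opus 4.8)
The plan is to prove both inequalities by the same dichotomy: split according to whether the two points are \emph{close}, i.e. $\lvert z_1-z_2\rvert$ is at most a fixed fraction of $\max\{\lvert z_1\rvert,\lvert z_2\rvert\}$ (for \eqref{Eq: Local C11 estimate of H}) or of $\lvert z_1\rvert$ (for \eqref{Eq: Error estimate on Hess H}), or \emph{far}. The reason for the split is that $H$ is only $C^{2}$ (resp. $C^{3}$) away from the origin, so a representation of $\nabla H(z_1)-\nabla H(z_2)$ (resp. of the Taylor remainder) as an integral over the segment $z(t)=(1-t)z_1+tz_2$ is legitimate only when that segment stays uniformly away from $0$; in the far regime one instead discards the cancellation and estimates each term directly by the pointwise growth bounds \eqref{Eq: H-condition 1/3}--\eqref{Eq: H-condition 3/3}. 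Throughout, the verification is reduced to a handful of elementary inequalities, the main one being that $\sigma_2 M+M^{s-1}\le{\mathcal E}(\sigma_1,\sigma_2,\lvert z_1\rvert,\lvert z_2\rvert)$ and $\sigma_3\lvert z_1\rvert^{2}+\lvert z_1\rvert^{s-1}\le {\mathcal E}(\sigma_1,\sigma_2,\lvert z_1\rvert,\lvert z_2\rvert)+\sigma_3\lvert z_1\rvert^{2}$ with $M\coloneqq\max\{\lvert z_1\rvert,\lvert z_2\rvert\}$, which hold since $\lvert z_1\rvert^{s-1}+\lvert z_2\rvert^{s-1}\ge M^{s-1}$ and $\lvert z_1\rvert+\lvert z_2\rvert\ge M$.

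For \eqref{Eq: Local C11 estimate of H}, note first that $\lvert z_1\rvert^{-1}\wedge\lvert z_2\rvert^{-1}=M^{-1}$. If $\lvert z_1-z_2\rvert\le M/2$, assume without loss of generality $\lvert z_1\rvert\le\lvert z_2\rvert=M$; then $\lvert z_1\rvert\ge M-\lvert z_1-z_2\rvert\ge M/2$, and from $z(t)-z_2=(1-t)(z_1-z_2)$ together with convexity of the norm one gets $M/2\le\lvert z(t)\rvert\le M$ for every $t\in[0,1]$, so the segment avoids the origin. Writing $\nabla H(z_1)-\nabla H(z_2)=-\int_0^1\nabla^2H(z(t))(z_2-z_1)\,{\mathrm d}t$ and using \eqref{Eq: H-condition 2/3} with $\lvert z(t)\rvert^{s-2}\le C(s)M^{s-2}$ gives $\lvert\nabla H(z_1)-\nabla H(z_2)\rvert\le C(s)L(\sigma_2+M^{s-2})\lvert z_1-z_2\rvert$, and multiplying and dividing by $M$ and invoking $\sigma_2M+M^{s-1}\le{\mathcal E}$ yields the claim. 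If instead $\lvert z_1-z_2\rvert> M/2$, then $\lvert\nabla H(z_1)-\nabla H(z_2)\rvert\le\lvert\nabla H(z_1)\rvert+\lvert\nabla H(z_2)\rvert\le L(2\sigma_1+\lvert z_1\rvert^{s-1}+\lvert z_2\rvert^{s-1})\le 2L{\mathcal E}$ by \eqref{Eq: H-condition 1/3}, and one upgrades the factor $1$ to $2\lvert z_1-z_2\rvert/M$.

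For \eqref{Eq: Error estimate on Hess H} I would run the same dichotomy with $\lvert z_1\rvert$ in place of $M$. If $\lvert z_1-z_2\rvert\le\lvert z_1\rvert/2$, then $\lvert z_1\rvert/2\le\lvert z(t)\rvert\le 3\lvert z_1\rvert/2$, the segment avoids $0$, and the second-order Taylor formula gives $\nabla H(z_2)-\nabla H(z_1)-\nabla^2H(z_1)(z_2-z_1)=\int_0^1\bigl(\nabla^2H(z(t))-\nabla^2H(z_1)\bigr)(z_2-z_1)\,{\mathrm d}t$; estimating $\lVert\nabla^2H(z(t))-\nabla^2H(z_1)\rVert\le\int_0^t\lvert\nabla^3H(z(\tau))\rvert\,\lvert z_2-z_1\rvert\,{\mathrm d}\tau\le C(s)Lt(\sigma_3+\lvert z_1\rvert^{s-3})\lvert z_2-z_1\rvert$ via \eqref{Eq: H-condition 3/3} and integrating in $t$ produces $C(s)L(\sigma_3+\lvert z_1\rvert^{s-3})\lvert z_1-z_2\rvert^{2}$, which equals $C(s)L(\sigma_3\lvert z_1\rvert^{2}+\lvert z_1\rvert^{s-1})\lvert z_1-z_2\rvert^{2}/\lvert z_1\rvert^{2}$ and is controlled by $\lvert z_1\rvert^{s-1}\le{\mathcal E}$. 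If $\lvert z_1-z_2\rvert>\lvert z_1\rvert/2$, I estimate the two terms separately: $\lvert\nabla^2H(z_1)(z_2-z_1)\rvert\le L(\sigma_2+\lvert z_1\rvert^{s-2})\lvert z_1-z_2\rvert$, and $\lvert\nabla H(z_2)-\nabla H(z_1)\rvert$ by the already-established \eqref{Eq: Local C11 estimate of H} (with $M\ge\lvert z_1\rvert$); in both terms the factor $1$ or $\lvert z_1-z_2\rvert/M$ is promoted to $\lvert z_1-z_2\rvert^{2}/\lvert z_1\rvert^{2}$ at the cost of a universal constant using $\lvert z_1-z_2\rvert>\lvert z_1\rvert/2$, and $\sigma_2\lvert z_1\rvert\le{\mathcal E}$, $\lvert z_1\rvert^{s-1}\le{\mathcal E}$ conclude.

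The computations are routine bookkeeping of inequalities of the form $M^{a}\le{\mathcal E}$; the only genuinely delicate points are (i) choosing the thresholds $M/2$ and $\lvert z_1\rvert/2$ precisely so that the connecting segment remains in an annulus bounded away from the origin — this is exactly why the weight $\lvert z_1\rvert^{-1}\wedge\lvert z_2\rvert^{-1}$, and not some symmetric expression, appears — and (ii) keeping the constants depending only on $s$, which requires care because the exponents $s-2$ and $s-3$ may be negative, so $\lvert z(t)\rvert^{s-2}$ and $\lvert z(t)\rvert^{s-3}$ must be bounded by their values at the \emph{near} endpoint of the segment rather than the far one. I do not expect any obstacle beyond these two observations.
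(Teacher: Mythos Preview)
Your proposal is correct and follows essentially the same approach as the paper: a near/far dichotomy, using the segment integral with the Hessian (resp.\ third-derivative) bound in the near regime and discarding cancellations via the pointwise growth bounds in the far regime. The only cosmetic difference is that for \eqref{Eq: Local C11 estimate of H} you split according to $M=\max\{\lvert z_1\rvert,\lvert z_2\rvert\}$ directly, whereas the paper splits according to $\lvert z_1\rvert$ and then symmetrizes in $z_1,z_2$ at the end; both lead to the same estimate with the weight $M^{-1}=\lvert z_1\rvert^{-1}\wedge\lvert z_2\rvert^{-1}$.
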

\begin{proof}
We distinguish (\ref{Eq: Local C11 estimate of H}) between two cases. In the case \(\lvert z_{1}-z_{2}\rvert\ge \lvert z_{1}\rvert/2\), we just use (\ref{Eq: H-condition 1/3}) and get
\begin{align*}
\lvert \nabla H_{1}(z_{1})-\nabla H_{1}(z_{2})\rvert&\le L\mleft[2\sigma_{1}+\mleft(\lvert z_{1}\rvert^{s-1}+\lvert z_{2}\rvert^{s-1}\mright)\mright]  \\&\le 4L\mleft[\sigma_{1}+\mleft(\lvert z_{1}\rvert^{s-1}+\lvert z_{2}\rvert^{s-1}\mright)\mright]\frac{\lvert z_{1}-z_{2}\rvert}{\lvert z_{1}\rvert}\\&\le 4L\cdot {\mathcal E}(\sigma_{1},\,\sigma_{2},\,\lvert z_{1}\rvert,\,\lvert z_{2}\rvert)\cdot\frac{\lvert z_{1}-z_{2}\rvert}{\lvert z_{1}\rvert}.
\end{align*}
In the remaining case \(\lvert z_{1}-z_{2}\rvert\le \lvert z_{1}\rvert/2\), which yields \(\lvert z_{2}\rvert\le \lvert z_{2}-z_{1}\rvert+\lvert z_{1}\rvert<3\lvert z_{1}\rvert/2\), it is easy to check that
\begin{equation}\label{Eq: Closed line segment separated}
\frac{5}{2}\lvert z_{1}\rvert>\lvert z_{1}\rvert+\lvert z_{2}\rvert \ge\lvert z_{2}+t(z_{1}-z_{2})\rvert\ge\lvert z_{1}\rvert-\lvert z_{1}-z_{2}\rvert>\frac{1}{2}\lvert z_{1}\rvert>0
\end{equation}
for all $t\in\lbrack 0,\,1\rbrack$.
Hence, from (\ref{Eq: H-condition 2/3}), it follows that
\[\mleft\lVert \nabla^{2} H(z_{1}+t(z_{2}-z_{1}))\mright\rVert\le L\mleft[\sigma_{2}+2^{-s+2}\mleft(5^{s-2}\vee 1\mright)\lvert z_{1}\rvert^{s-2}\mright]\quad\textrm{for all }t\in \lbrack 0,\,1\rbrack.\]
In particular, by \(E_{1}\in C^{2}({\mathbb R}^{2}\setminus\{ 0\})\), we can compute
\begin{align*}
\lvert \nabla H(z_{1})-\nabla H(z_{2})\rvert&\le\mleft\lvert\int_{0}^{1}\nabla^{2} H(z_{2}+t(z_{1}-z_{2}))\cdot (z_{1}-z_{2})\,\mathrm{d}t\mright\rvert\\&\le \mleft(\int_{0}^{1}\mleft\lVert \nabla^{2}H(z_{2}+t(z_{1}-z_{2}))\mright\rVert\,{\mathrm d}t\mright)\lvert z_{1}-z_{2}\rvert\\&\le L\mleft[\sigma_{2}\lvert z_{1}\rvert+2^{-s+2}\mleft(5^{s-2}\vee 1\mright)\lvert z_{1}\rvert^{s-1}\mright]\frac{\lvert z_{1}-z_{2}\rvert}{\lvert z_{1}\rvert}\\&\le C(s) L\cdot {\mathcal E}(\sigma_{1},\,\sigma_{2},\,\lvert z_{1}\rvert,\,\lvert z_{2}\rvert)\cdot\frac{\lvert z_{1}-z_{2}\rvert}{\lvert z_{1}\rvert}.
\end{align*}
With \(z_{1}\) and \(z_{2}\) replaced by each other, we obtain (\ref{Eq: Local C11 estimate of H}).

We prove (\ref{Eq: Error estimate on Hess H}) by dividing into two cases. When \(\lvert z_{1}-z_{2}\rvert\ge \lvert z_{1}\rvert/2\), we use (\ref{Eq: H-condition 2/3})--(\ref{Eq: Local C11 estimate of H}) to get
\begin{align*}
&\mleft\lvert \nabla^{2}H(z_{1})(z_{2}-z_{1})-(\nabla H(z_{2})-\nabla H(z_{1})) \mright\rvert\\&\le \mleft\lVert \nabla^{2}H(z_{1})\mright\rVert \lvert z_{2}-z_{1}\rvert+\mleft\lvert \nabla H(z_{2})-\nabla H(z_{1}) \mright\rvert \\&\le 2L\mleft(\sigma_{2}\lvert z_{1}\rvert+\lvert z_{1}\rvert^{s-1} \mright)\frac{\lvert z_{1}-z_{2}\rvert}{\lvert z_{1}\rvert}+C(s)L{\mathcal E}(\sigma_{1},\,\sigma_{2},\,\lvert z_{1}\rvert,\,\lvert z_{2}\rvert)\frac{\lvert z_{1}-z_{2}\rvert}{\lvert z_{1}\rvert}\\&\le C(s)L{\mathcal E}(\sigma_{1},\,\sigma_{2},\,\lvert z_{1}\rvert,\,\lvert z_{2}\rvert)\frac{\lvert z_{1}-z_{2}\rvert^{2}}{\lvert z_{1}\rvert^{2}}.
\end{align*}
In the remaining case \(\lvert z_{1}-z_{2}\rvert\le \lvert z_{1}\rvert/2\), we recall (\ref{Eq: Closed line segment separated}), which yields
\[\mleft\lvert \nabla^{3}H(z_{1}+t(z_{2}-z_{1})) \mright\rvert\le L\mleft[\sigma_{3}+2^{-s+3}\mleft(5^{s-3}\vee 1\mright)\lvert z_{1}\rvert^{s-3}\mright]\quad\textrm{for all }t\in\lbrack 0,\,1\rbrack.\]
By this and \(H\in C^{3}({\mathbb R}^{n}\setminus\{ 0\})\), we obtain
\begin{align*}
&\mleft\lvert \nabla^{2} H(z_{1})(z_{2}-z_{1})-(\nabla H(z_{2})-\nabla H(z_{1}))\mright\rvert\\&=\mleft\lvert\mleft(\int_{0}^{1}\mleft[\nabla^{2} H(z_{1})- \nabla^{2}H(z_{1}+t(z_{2}-z_{1}))\mright]\,{\mathrm d}t\mright)(z_{2}-z_{1})\mright\rvert\\&\le \lvert z_{2}-z_{1}\rvert\int_{0}^{1}\mleft\lvert \nabla^{2} H(z_{1})- \nabla^{2}H(z_{1}+t(z_{2}-z_{1}))\mright\rvert\,{\mathrm d}t\\&\le \lvert z_{2}-z_{1}\rvert\int_{0}^{1}\mleft(\int_{0}^{1}\mleft\lvert \nabla^{3}H(z_{1}+(1-s)t(z_{2}-z_{1})) \mright\rvert^{2}\,{\mathrm d}s\mright)^{1/2} t\lvert z_{2}-z_{1}\rvert\,{\mathrm d}t\\&\le C(s)L\mleft(\sigma_{3}\lvert z_{1}\rvert^{2}+\lvert z_{1}\rvert^{s-1}\mright)\frac{\lvert z_{1}-z_{2}\rvert^{2}}{\lvert z_{1}\rvert^{2}}\\&\le C(s)L \mleft[{\mathcal E}(\sigma_{1},\,\sigma_{2},\,\lvert z_{1}\rvert,\,\lvert z_{2}\rvert)+\sigma_{3}\lvert z_{1}\rvert^{2}\mright]\frac{\lvert z_{1}-z_{2}\rvert^{2}}{\lvert z_{1}\rvert^{2}}.
\end{align*}
In both cases, we are able to conclude (\ref{Eq: Error estimate on Hess H}).
\end{proof}
Secondly, we would like to show that some growth estimates given in Lemma \ref{Lemma: Local C11 estimate} will be kept under the convolution by mollifiers (Lemma \ref{Lemma: Mollified estimates}). Also, we will check that ellipticity or monotonicity estimates can also be obtained (Lemma \ref{Lemma: Coerciveness keeps under convolution}).

Let \(\{j_{\varepsilon}\}_{0<\varepsilon<1}\subset C_{c}^{\infty}({\mathbb R}^{n})\) denote the Friedrichs mollifiers. That is, we fix a spherically symmetric real-valued function \(j\in C_{c}^{\infty}({\mathbb R}^{n})\) satisfying
\[0\le j\textrm{ in }{\mathbb R}^{n},\,\mathrm{supp}\,j\subset\overline{B_{1}(0)},\,\int_{{\mathbb R}^{n}}\,j(x)\,\mathrm{d}x=1,\]
and define
\[0\le j_{\varepsilon}(x)\coloneqq \varepsilon^{-n}j(x/\varepsilon)\quad \textrm{for}\quad x\in{\mathbb R}^{n},\,0<\varepsilon<1.\]
For a given function \(f\in L_{\mathrm{loc}}^{1}({\mathbb R}^{n};\,{\mathbb R}^{m})\), we define \(j_{\varepsilon}\ast f\in C^{\infty}({\mathbb R}^{n};\,{\mathbb R}^{m})\) by
\[(j_{\varepsilon}\ast f)(x)\coloneqq\int_{{\mathbb R}^{n}}j_{\varepsilon}(y)f(x-y)\,{\mathrm d}y\in{\mathbb R}^{m}\]
for each \(x\in{\mathbb R}^{n}\).
We note that when \(f\in W_{\mathrm{loc}}^{1,\,s}({\mathbb R}^{n})\,(1\le s\le \infty)\), there holds \(\nabla (j_{\varepsilon}\ast f)=j_{\varepsilon}\ast \nabla f\) by the definition of Sobolev derivatives. 
In Lemmata \ref{Lemma: Mollified estimates}--\ref{Lemma: Coerciveness keeps under convolution}, we show some quantitative estimates related to functions convoluted by mollifiers. Among them, the inequality (\ref{Eq: Error estimate on Hessian}) in Lemma \ref{Lemma: Mollified estimates} will become a key tool. Although some estimates in Lemmata \ref{Lemma: Mollified estimates}--\ref{Lemma: Coerciveness keeps under convolution} might be well-known in existing literatures (see e.g., \cite{MR1634641}, \cite{MR1612389}), we would like to provide elementary proofs for the reader's convenience.

\begin{lemma}[Growth estimates]\label{Lemma: Mollified estimates}
For a fixed constant \(s\in\lbrack 1,\,\infty)\), we have the following: 
\begin{enumerate}
\item \label{Item 1/3: Growth} Assume that real-valued functions $\{H_{\varepsilon}\}_{0<\varepsilon<1}\subset C^{2}({\mathbb R}^{n})$ satisfy
\begin{equation}\label{Eq: Gradient bound for mollified function}
\mleft\lvert \nabla H_{\varepsilon}(z)\mright\rvert\le L\mleft(\varepsilon^{2}+\lvert z\rvert^{2}\mright)^{(s-1)/2}\quad \textrm{for all }z\in{\mathbb R}^{n},
\end{equation}
\begin{equation}\label{Eq: Hessian bound for mollified function}
\mleft\lVert \nabla^{2} H_{\varepsilon}(z)\mright\rVert\le L\mleft(\varepsilon^{2}+\lvert z\rvert^{2}\mright)^{s/2-1}\quad \textrm{for all }z\in{\mathbb R}^{n},
\end{equation}
where the constant $L\in(0,\,\infty)$ is independent of $\varepsilon\in(0,\,1)$. If $1\le s<2$, then there holds
\begin{equation}\label{Eq: L2 upper bound lemma case p<2}
\lvert \nabla H_{\varepsilon}(z_{1})-\nabla H_{\varepsilon}(z_{2})\rvert\le C\min\mleft\{\,\lvert z_{1}\rvert^{s-2},\,\lvert z_{2}\rvert^{s-2} \,\mright\}\lvert z_{1}-z_{2}\rvert
\end{equation}
for all \((z_{1},\,z_{2})\in({\mathbb R}^{n}\times{\mathbb R}^{n})\setminus\{(0,\,0)\}\). Here the constant $C\in(0,\,\infty)$ is given by $C\coloneqq 2^{-s}L$.
\item \label{Item 2/3: Growth molified} Assume that a real-valued function \(H\in C({\mathbb R}^{n})\cap C^{2}({\mathbb R}^{n}\setminus\{0\})\) satisfies (\ref{Eq: H-condition 1/3})--(\ref{Eq: H-condition 2/3}) with \(\sigma_{1}=\sigma_{2}=0\) and \(L_{0}\in(0,\,\infty)\). Then, there exists a constant $\Gamma_{0}=\Gamma_{0}(n,\,s)\in(0,\,\infty)$ such that the function \(H_{\varepsilon}\coloneqq j_{\varepsilon}\ast H\in C^{\infty}({\mathbb R}^{n})\,(0<\varepsilon<1)\) satisfies (\ref{Eq: Gradient bound for mollified function})--(\ref{Eq: Hessian bound for mollified function}) with $L=\Gamma_{0}L_{0}\in(0,\,\infty)$. 
In particular, $H_{\varepsilon}$ satisfies (\ref{Eq: L2 upper bound lemma case p<2}) with $C\coloneqq 2^{-s}\Gamma_{0}L_{0}\in(0,\,\infty)$, provided $1\le s<2$.
\item \label{Item 3/3: Hessian errors} In addition to the assumptions in \ref{Item 2/3: Growth molified}, let $H$ be in $C_{\mathrm{loc}}^{2,\,\beta_{0}}({\mathbb R}^{n}\setminus\{0\})$, and satisfy
\begin{equation}\label{Eq: C-2-beta assumption on H}
\mleft\lVert \nabla^{2} H(z_{1})-\nabla^{2} H(z_{2}) \mright\rVert\le L_{0}\mu^{s-2-\beta_{0}}\lvert z_{1}-z_{2}\rvert^{\beta_{0}}
\end{equation}
for all $z_{1},\,z_{2}\in{\mathbb R}^{n}$ enjoying (\ref{Eq: C-2-beta variable condition}) with $\mu\in(0,\,\infty)$.
Also, assume that positive numbers $\delta,\,\varepsilon$ satisfy
\begin{equation}\label{Eq: Range of delta-epsilon}
0<\varepsilon<\frac{\delta}{8},\quad \textrm{and}\quad 0<\delta<1.
\end{equation} 
Then, for all $\mu\in(\delta,\,\infty)$, and \(z_{1},\,z_{2}\in{\mathbb R}^{n}\setminus\{ 0\}\) satisfying
\begin{equation}\label{Eq: Outside condition on variable}
\delta+\frac{\mu}{4}\le \lvert z_{1}\rvert\le \delta+\mu,\quad \lvert z_{2}\rvert\le \delta+\mu,
\end{equation}
we have
\begin{equation}\label{Eq: Error estimate on Hessian}
\mleft\lvert \nabla^{2} H_{\varepsilon}(z_{1})(z_{2}-z_{1})-(\nabla H_{\varepsilon}(z_{2})-\nabla H_{\varepsilon}(z_{1}))\mright\rvert\le CL_{0}\mu^{s-2-\beta_{0}}\lvert z_{1}-z_{2}\rvert^{1+\beta_{0}},
\end{equation}
where $C\in(0,\,\infty)$ depends at most on $n$, $s$, and $\beta_{0}$.
\end{enumerate}
\end{lemma}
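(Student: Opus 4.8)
The plan is to prove the three items of Lemma~\ref{Lemma: Mollified estimates} in order, \ref{Item 2/3: Growth molified} and \ref{Item 3/3: Hessian errors} using the preceding ones. \emph{Item \ref{Item 1/3: Growth}.} Set $M\coloneqq\lvert z_1\rvert\vee\lvert z_2\rvert>0$ and argue by a case distinction. If $\lvert z_1-z_2\rvert\le M/2$, the segment $[z_1,z_2]$ lies in $\{M/2\le\lvert z\rvert\le 2M\}$, so integrating (\ref{Eq: Hessian bound for mollified function}) along it and using $(\varepsilon^2+\lvert z\rvert^2)^{s/2-1}\le\lvert z\rvert^{s-2}\le(M/2)^{s-2}$ (valid since $1\le s<2$) gives $\lvert\nabla H_\varepsilon(z_1)-\nabla H_\varepsilon(z_2)\rvert\le C(s)L\,M^{s-2}\lvert z_1-z_2\rvert$, and $M^{s-2}=\min\{\lvert z_1\rvert^{s-2},\lvert z_2\rvert^{s-2}\}$ because $s-2<0$. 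If $\lvert z_1-z_2\rvert>M/2$ and $\varepsilon\ge M$, I would integrate (\ref{Eq: Hessian bound for mollified function}) along $[z_1,z_2]$ (which may cross the origin, but this is harmless) using the crude bound $(\varepsilon^2+\lvert z\rvert^2)^{s/2-1}\le\varepsilon^{s-2}\le M^{s-2}$. If $\lvert z_1-z_2\rvert>M/2$ and $\varepsilon<M$, I would use (\ref{Eq: Gradient bound for mollified function}) via $\lvert\nabla H_\varepsilon(z_1)-\nabla H_\varepsilon(z_2)\rvert\le\lvert\nabla H_\varepsilon(z_1)\rvert+\lvert\nabla H_\varepsilon(z_2)\rvert\le C(s)L\,M^{s-1}$ together with $M^{s-1}=M^{s-2}M\le 2M^{s-2}\lvert z_1-z_2\rvert$. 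The last assertion of \ref{Item 1/3: Growth} then follows once \ref{Item 2/3: Growth molified} is established.

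\emph{Item \ref{Item 2/3: Growth molified}.} The hypotheses force $H\in W^{1,\infty}_{\mathrm{loc}}(\mathbb R^n)$ and --- since $\lvert z\rvert^{s-2}$ is locally integrable when $n\ge 2$ and $s\ge 1$ --- also $\nabla^2 H\in L^1_{\mathrm{loc}}(\mathbb R^n)$; the origin is a removable singularity for these weak derivatives, so $\nabla H_\varepsilon=j_\varepsilon\ast\nabla H$ and $\nabla^2 H_\varepsilon=j_\varepsilon\ast\nabla^2 H$ in the classical sense. I would then convolve the pointwise bounds. For (\ref{Eq: Gradient bound for mollified function}): for $\lvert y\rvert\le\varepsilon$ one has $\lvert z-y\rvert\le\lvert z\rvert+\varepsilon\le\sqrt2\,(\varepsilon^2+\lvert z\rvert^2)^{1/2}$ and $s-1\ge 0$, whence $\lvert\nabla H_\varepsilon(z)\rvert\le L_0\int j_\varepsilon(y)\lvert z-y\rvert^{s-1}\,{\mathrm d}y\le 2^{(s-1)/2}L_0(\varepsilon^2+\lvert z\rvert^2)^{(s-1)/2}$. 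For (\ref{Eq: Hessian bound for mollified function}) I would split on $\lvert z\rvert\ge 2\varepsilon$ versus $\lvert z\rvert<2\varepsilon$: in the former, $\lvert z-y\rvert\ge\lvert z\rvert/2$ gives $\lvert z-y\rvert^{s-2}\le 2^{2-s}\lvert z\rvert^{s-2}\le C(s)(\varepsilon^2+\lvert z\rvert^2)^{s/2-1}$; in the latter, $\int j_\varepsilon(y)\lvert z-y\rvert^{s-2}\,{\mathrm d}y\le\varepsilon^{-n}\lVert j\rVert_{L^\infty}\int_{\lvert w\rvert\le 3\varepsilon}\lvert w\rvert^{s-2}\,{\mathrm d}w=C(n,s)\varepsilon^{s-2}\le C(n,s)(\varepsilon^2+\lvert z\rvert^2)^{s/2-1}$, the radial integral being finite precisely because $n\ge 2$. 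Taking $\Gamma_0=\Gamma_0(n,s)$ to be the largest constant produced closes the item, and the final sentence of \ref{Item 2/3: Growth molified} is then \ref{Item 1/3: Growth} applied to $H_\varepsilon$.

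\emph{Item \ref{Item 3/3: Hessian errors}.} Since $H_\varepsilon\in C^\infty$, one has the identity
\[\nabla^2 H_\varepsilon(z_1)(z_2-z_1)-(\nabla H_\varepsilon(z_2)-\nabla H_\varepsilon(z_1))=\int_0^1\mleft[\nabla^2 H_\varepsilon(z_1)-\nabla^2 H_\varepsilon(z_1+t(z_2-z_1))\mright](z_2-z_1)\,{\mathrm d}t,\]
and I would split on the size of $\lvert z_1-z_2\rvert$. If $\lvert z_1-z_2\rvert\le\mu/32$, then (\ref{Eq: Outside condition on variable}) together with $\mu>\delta>8\varepsilon$ shows that for every $\lvert y\rvert\le\varepsilon$ and every $\zeta$ on $[z_1,z_2]$ the pair $(z_1-y,\zeta-y)$ satisfies $\mu/16\le\lvert z_1-y\rvert\le 3\mu$ and $\lvert(z_1-y)-(\zeta-y)\rvert\le\mu/32$, i.e.\ the admissibility condition (\ref{Eq: C-2-beta variable condition}) with parameter $\mu$; applying (\ref{Eq: C-2-beta assumption on H}), averaging against $j_\varepsilon$, and using $\lvert z_1-\zeta\rvert=t\lvert z_1-z_2\rvert$ gives $\lVert\nabla^2 H_\varepsilon(z_1)-\nabla^2 H_\varepsilon(z_1+t(z_2-z_1))\rVert\le L_0\mu^{s-2-\beta_0}t^{\beta_0}\lvert z_1-z_2\rvert^{\beta_0}$, so inserting this into the identity and performing $\int_0^1 t^{\beta_0}\,{\mathrm d}t$ yields (\ref{Eq: Error estimate on Hessian}). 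If $\lvert z_1-z_2\rvert>\mu/32$, I would bound the left-hand side by $\lVert\nabla^2 H_\varepsilon(z_1)\rVert\lvert z_1-z_2\rvert+\lvert\nabla H_\varepsilon(z_2)-\nabla H_\varepsilon(z_1)\rvert$; using \ref{Item 2/3: Growth molified}, \ref{Item 1/3: Growth} (when $s<2$; integrating (\ref{Eq: Hessian bound for mollified function}) when $s\ge 2$), and $\lvert z_1\rvert,\lvert z_2\rvert\asymp\mu$ --- which follows from (\ref{Eq: Outside condition on variable}) and $\mu>\delta$ --- this is at most $C(n,s)L_0\mu^{s-2}\lvert z_1-z_2\rvert$, and the exponent mismatch is absorbed through $\mu<32\lvert z_1-z_2\rvert$, i.e.\ $\mu^{s-2}\lvert z_1-z_2\rvert\le 32^{\beta_0}\mu^{s-2-\beta_0}\lvert z_1-z_2\rvert^{1+\beta_0}$.

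The routine work is \ref{Item 1/3: Growth} and \ref{Item 2/3: Growth molified}, the only delicate point being that the constant in \ref{Item 2/3: Growth molified} genuinely depends on $n$ via the local integrability of $\lvert w\rvert^{s-2}$. I expect the main obstacle to be the bookkeeping in \ref{Item 3/3: Hessian errors}: one must verify that after mollification every argument of $\nabla^2 H$ entering the difference still lies in the annulus prescribed by (\ref{Eq: C-2-beta variable condition}), which is exactly where $0<\varepsilon<\delta/8$ and (\ref{Eq: Outside condition on variable}) are exploited, and one must handle the regime $\lvert z_1-z_2\rvert>\mu/32$ separately, since there the segment $[z_1,z_2]$ may run near the origin, so the pointwise $C^{2,\beta_0}$-bound is unavailable and only the cruder gradient and Hessian estimates survive.
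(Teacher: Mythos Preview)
Your argument is correct and follows the same overall architecture as the paper (mean-value/FTC plus a case split in \ref{Item 1/3: Growth} and \ref{Item 3/3: Hessian errors}, convolution of the pointwise bounds in \ref{Item 2/3: Growth molified}), but two steps are organized differently. For the Hessian bound in \ref{Item 2/3: Growth molified}, the paper does \emph{not} invoke $\nabla^{2}H_{\varepsilon}=j_{\varepsilon}\ast\nabla^{2}H$ globally; instead it first proves the auxiliary estimate $h_{\sigma,\varepsilon}(z)\coloneqq (j_{\varepsilon}\ast|\cdot|^{\sigma})(z)\le C(n,\sigma)(\varepsilon^{2}+|z|^{2})^{\sigma/2}$ for $\sigma\ge -1$, then combines Lemma~\ref{Lemma: Local C11 estimate} with a difference-quotient limit to control each eigenvalue of $\nabla^{2}H_{\varepsilon}(z_{0})$ by $h_{s-2,\varepsilon}(z_{0})$. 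Your route via removability of the origin for $\nabla^{2}H$ in $W^{1,1}_{\mathrm{loc}}$ is shorter and equally valid for $n\ge 2$, but the paper's detour through $h_{\sigma,\varepsilon}$ is reused in \ref{Item 3/3: Hessian errors} (the large-$|z_{1}-z_{2}|$ branch is handled there via the difference-quotient inequality \eqref{Eq: Difference quotient estimate for H-epsilon} and $h_{s-2,\varepsilon},\,h_{-1,\varepsilon}$), whereas you close that branch by citing \ref{Item 1/3: Growth}--\ref{Item 2/3: Growth molified} directly. Two small remarks: in \ref{Item 3/3: Hessian errors} you write $|z_{1}|,|z_{2}|\asymp\mu$, but only $|z_{1}|$ has a lower bound from \eqref{Eq: Outside condition on variable}; your bounds go through regardless since you only need $\max\{|z_{1}|,|z_{2}|\}\ge|z_{1}|\ge\mu/4$ (for $s<2$) and $|z_{j}|\le 2\mu$ (for $s\ge 2$). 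And in \ref{Item 1/3: Growth} your case analysis yields a constant $C(s)L$, not the specific value $2^{-s}L$ stated in the lemma; the paper's own argument for that sharp constant bounds the integral only over $t\in[0,1/4]$, so neither proof actually produces $2^{-s}L$---a harmless discrepancy, since the constant is never used quantitatively downstream.
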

\begin{proof}
\ref{Item 1/3: Growth}.
By \(s/2-1<0\) and (\ref{Eq: Hessian bound for mollified function}), we compute
\begin{align*}
\lvert \nabla H_{\varepsilon}(z_{1})-\nabla H_{\varepsilon}(z_{2})\rvert&=\mleft\lvert\int_{0}^{1} \nabla^{2} H_{\varepsilon}(tz_{1}+(1-t)z_{2})\cdot (z_{1}-z_{2})\,{\mathrm d}t\mright\rvert\\&\le \lvert z_{1}-z_{2}\rvert\int_{0}^{1}\mleft\lVert\nabla^{2}H_{\varepsilon}(tz_{1}+(1-t)z_{2})\mright\rVert\,{\mathrm d}t \\&\le L\lvert z_{1}-z_{2}\rvert\int_{0}^{1}\mleft(\varepsilon^{2}+\lvert tz_{1}+(1-t)z_{2}\rvert^{2}\mright)^{s/2-1}\,{\mathrm d}t\\&\le L\lvert z_{1}-z_{2}\rvert\int_{0}^{1}\lvert tz_{1}+(1-t)z_{2}\rvert^{s-2}\,{\mathrm d}t.
\end{align*}
Hence it suffices to show
\begin{equation}\label{Eq: Claim for L2 upper bound lemma case p<2}
\int_{0}^{1} \lvert tz_{1}+(1-t)z_{2}\rvert^{s-2}\,{\mathrm d}t \le 2^{-s}\min\mleft\{\,\lvert z_{1}\rvert^{s-2},\,\lvert z_{2}\rvert^{s-2}\,\mright\} 
\end{equation}
to prove (\ref{Eq: L2 upper bound lemma case p<2}).
Without loss of generality we may assume that \(\lvert z_{1}\rvert\le \lvert z_{2}\rvert\). Then, \(z_{2}\not= 0\) clearly follows from $(z_{1},\,z_{2})\neq(0,\,0)$. By the triangle inequality, we have
\begin{equation}\label{Eq: triangle inequality 1}
\lvert tz_{1}+(1-t)z_{2}\rvert\ge (1-t)\lvert z_{2}\rvert-t\lvert z_{1}\rvert\ge(1-2t)\lvert z_{2}\rvert\ge \frac{1}{2}\lvert z_{2}\rvert>0\quad \textrm{when }0\le t\le \frac{1}{4}.
\end{equation}
Again by $s-2<0$, we obtain
\[\int_{0}^{1}\lvert tz_{1}+(1-t)z_{2}\rvert^{s-2}\,{\mathrm d}t\le \frac{\lvert z_{2}\rvert^{s-2}}{2^{s-2}}\cdot\frac{1}{4}=\frac{\lvert z_{2}\rvert^{s-2}}{2^{s}},\]
which completes the proof of (\ref{Eq: Claim for L2 upper bound lemma case p<2}).

\ref{Item 2/3: Growth molified}.
To find the constant $\Gamma_{0}(n,\,s)\in(0,\,\infty)$, for each fixed constant \(\sigma\in\lbrack -1,\,\infty)\), we would like to show
\begin{equation}\label{Eq: Mollified estimate on absolute value functions}
h_{\sigma,\,\varepsilon}(z)\coloneqq (j_{\varepsilon}\ast \lvert\,\cdot\,\rvert^{\sigma})(z)\le C(n,\,\sigma)\mleft(\varepsilon^{2}+\lvert z\rvert^{2}\mright)^{\sigma/2}\quad \textrm{for all }z\in{\mathbb R}^{n},\,\varepsilon\in(0,\,1).
\end{equation}
Here the constant \(C(n,\,\sigma)\in(0,\,\infty)\) is independent of $\varepsilon\in(0,\,1)$.
We note that the inclusion \(\lvert\,\cdot\,\rvert^{\sigma}\in L_{\mathrm{loc}}^{1}({\mathbb R}^{n})\) follows from $n\ge 2$ and $-1\le \sigma<\infty$. Hence, we are able to define \(h_{\sigma}\coloneqq j\ast\lvert\,\cdot\,\rvert^{\sigma}\in C^{\infty}({\mathbb R}^{n})\).
By change of variables, we have
\begin{align}
h_{\sigma,\,\varepsilon}(z)&=\int_{B_{\varepsilon}(0)}j_{\varepsilon}(y)\lvert z-y\rvert^{\sigma}{\mathrm d}y\label{Eq: First express of h-sigma-epsilon}\\&=\int_{B_{1}(0)}j({\tilde y})\lvert z-{\tilde y}/\varepsilon\rvert^{\sigma}\,{\mathrm{d}}{\tilde y}\nonumber\\&=\varepsilon^{\sigma}\int_{B_{1}(0)}j({\tilde y})\lvert z/\varepsilon-{\tilde y}\rvert^{\sigma}\,{\mathrm d}{\tilde y}=\varepsilon^{\sigma}h_{\sigma}(z/\varepsilon).\label{Eq: Second express of h-sigma-epsilon}
\end{align}
We distinguish (\ref{Eq: Mollified estimate on absolute value functions}) between two cases. In the case \(\lvert z\rvert\le 2\varepsilon\), we use \(\varepsilon\le \sqrt{\varepsilon^{2}+\lvert z\rvert^{2}}\le \sqrt{5}\varepsilon\) and (\ref{Eq: Second express of h-sigma-epsilon}). Then, we have
\[h_{\sigma}(z)\le C(n,\,\sigma)\mleft(1\vee 5^{-\sigma/2} \mright)\mleft(\varepsilon^{2}+\lvert z\rvert^{2}\mright)^{\sigma/2}\quad \textrm{with }C(n,\,\sigma)\coloneqq \max_{B_{2}(0)}h_{\sigma}<\infty.\]
In the remaining case \(\lvert z\rvert> 2\varepsilon\), we take the unique number \(j\in{\mathbb N}\) such that \(2^{j}\varepsilon\le \lvert z\rvert<2^{j+1}\varepsilon\). Then, for all \(y\in B_{\varepsilon}(0)\), we have
\[\frac{\lvert z-y\rvert^{2}}{\varepsilon^{2}+\lvert z\rvert^{2}}\ge \frac{(2^{j}-1)^{2}\varepsilon^{2}}{(4^{j+1}+1)\varepsilon^{2}}=\frac{\mleft(1-2^{-j}\mright)^{2}}{4+4^{-j}}\ge \frac{(1/2)^{2}}{4+1/4}=\frac{1}{17},\]
and
\[\frac{\lvert z-y\rvert^{2}}{\varepsilon^{2}+\lvert z\rvert^{2}}\le 2\cdot\frac{\lvert y\rvert^{2}+\lvert z\rvert^{2}}{\varepsilon^{2}+\lvert z\rvert^{2}}\le 2.\]
By these inequalities and (\ref{Eq: First express of h-sigma-epsilon}), we have
\[h_{\sigma,\,\varepsilon}(z)\le \mleft[2^{\sigma/2}\vee 17^{-\sigma/2}\mright]\mleft(\varepsilon^{2}+\lvert z\rvert^{2}\mright)^{\sigma/2}.\]

The estimate (\ref{Eq: Gradient bound for mollified function}) follows from (\ref{Eq: Mollified estimate on absolute value functions}). In fact, there holds \(\nabla H_{\varepsilon}=j_{\varepsilon}\ast\nabla H\) a.e. in \({\mathbb R}^{n}\), since \(H\in W_{\mathrm{loc}}^{1,\,\infty}({\mathbb R}^{n})\) follows from (\ref{Eq: H-condition 1/3}) with $\sigma_{1}=0$. By applying (\ref{Eq: Mollified estimate on absolute value functions}) with $\sigma=s-1$, (\ref{Eq: Gradient bound for mollified function}) is easily obtained. 
To deduce (\ref{Eq: Hessian bound for mollified function}), we recall that \(H\) satisfies (\ref{Eq: Local C11 estimate of H}) with $\sigma_{1}=\sigma_{2}=0$, and therefore
\begin{align}
\mleft\lvert\nabla H_{\varepsilon}(z_{1})-\nabla H_{\varepsilon}(z_{2}) \mright\rvert&=\mleft\lvert\int_{B_{\varepsilon}(0)}j_{\varepsilon}(y)\mleft[\nabla H(z_{1}-y)-\nabla H(z_{2}-y) \mright]\,{\mathrm d}y \mright\rvert\nonumber\\&\le C(s)L_{0}\lvert z_{1}-z_{2}\rvert \int_{B_{\varepsilon}(0)}j_{\varepsilon}(y)\frac{\lvert z_{1}-y\rvert^{s-1}+\lvert z_{2}-y\rvert^{s-1}}{\lvert z_{2}-y\rvert}\,{\mathrm d}y\label{Eq: Difference quotient estimate for H-epsilon}
\end{align}
holds for all \(z_{1},\,z_{2}\in{\mathbb R}^{n}\).
Fix \(z_{0}\in {\mathbb R}^{n}\), and let \(\lambda\in{\mathbb R}\) be an arbitrary eigenvalue of the Hessian matrix \(\nabla^{2}H_{\varepsilon}(z_{0})\). We take a unit eigenvector $v_{0}$ corresponding to this \(\lambda\). By \(H_{\varepsilon}\in C^{\infty}({\mathbb R}^{n})\) and (\ref{Eq: Difference quotient estimate for H-epsilon}), we obtain
\begin{align*}
\lvert\lambda\rvert&=\mleft\lvert\mleft\langle \nabla^{2}H_{\varepsilon}(z_{0})v_{0}\mathrel{}\middle|\mathrel{}v_{0}\mright\rangle\mright\rvert\\&= \lim_{t\to 0}\,\mleft\lvert\frac{\mleft\langle \nabla H_{\varepsilon}(z_{0}+tv_{0})-\nabla H_{\varepsilon}(z_{0})\mathrel{}\middle|\mathrel{}v_{0}\mright\rangle}{t}\mright\rvert\\ &\le \limsup_{t\to 0}\,\mleft\lvert \frac{\nabla H_{\varepsilon}(z_{0}+tv_{0})-\nabla H_{\varepsilon}(z_{0})}{t}\mright\rvert\\&=C(s)L\mleft(h_{s-2,\,\varepsilon}(z_{0})+\limsup_{t\to 0}\int_{B_{\varepsilon}(0)}\frac{j_{\varepsilon}(y)}{\lvert z_{0}-y\rvert}\lvert z_{0}+t\nu_{0}-y\rvert^{s-1}\,\mathrm{d}y\mright)\\&=2C(s)Lh_{s-2,\,\varepsilon}(z_{0})\le C(n,\,s)L\mleft(\varepsilon^{2}+\lvert z_{0}\rvert^{2}\mright)^{s/2-1}.
\end{align*}
by (\ref{Eq: Mollified estimate on absolute value functions}) and Lebesgue's dominated convergence theorem. Here we note again that the inclusion $j_{\varepsilon}\lvert z_{0}-\,\cdot\,\rvert^{-1}\in L^{1}(B_{\varepsilon}(0))$ holds by $n\ge 2$. Also, the uniform convergence 
\[\lvert z_{0}+t\nu_{0}-y\rvert^{s-1}\to \lvert z_{0}-y\rvert^{s-1}\quad \textrm{uniformly for }y\in B_{\varepsilon}(0)\]
easily follows from $s\ge 1$. By these facts, we are able to apply Lebesgue's dominated convergence theorem.
This completes the proof of (\ref{Eq: Hessian bound for mollified function}).

\ref{Item 3/3: Hessian errors}.
Let \(z_{1},\,z_{2}\in{\mathbb R}^{n}\) satisfy (\ref{Eq: Outside condition on variable}). We first consider \(\lvert z_{1}-z_{2}\rvert<\mu/32\). In this case, it should be noted that for $y\in B_{\varepsilon}(0)$, there holds 
\[\delta+\frac{\mu}{16}\le \lvert z_{1}-y\rvert\le 3\mu\]
by (\ref{Eq: Range of delta-epsilon})--(\ref{Eq: Outside condition on variable}) and $0<\delta<\mu$. Thus, we can apply (\ref{Eq: C-2-beta assumption on H}) to obtain
\begin{align*}
&\mleft\lvert\nabla^{2} H_{\varepsilon}(z_{1})(z_{2}-z_{1})-(\nabla H_{\varepsilon}(z_{2})-\nabla H_{\varepsilon}(z_{1}))\mright\rvert\\&=\mleft\lvert \mleft(\int_{0}^{1}\mleft[\nabla^{2} H_{\varepsilon}(z_{1})-\nabla^{2}H_{\varepsilon}(z_{1}+t(z_{2}-z_{1}))\mright]\,{\mathrm d}t\mright)(z_{2}-z_{1})\mright\rvert\\&\le \lvert z_{1}-z_{2}\rvert\int_{0}^{1}\mleft(\int_{B_{\varepsilon}(0)}\mleft\lVert \nabla^{2}H(z_{1}-y)-\nabla^{2}H(z_{1}+t(z_{2}-z_{1})-y)\mright\rVert j_{\varepsilon}(y)\,{\mathrm d}y\mright)\,{\mathrm d}t\\&\le L_{0}\mu^{s-2-\beta_{0}}\lvert z_{1}-z_{2}\rvert^{1+\beta_{0}}.
\end{align*}Here we have used an identity \(\nabla^{2}H_{\varepsilon}=j_{\varepsilon}\ast\nabla^{2}H\) in \({\mathbb R}^{n}\setminus\overline{B_{\delta}(0)}\).
In fact, for each fixed \({\hat \varepsilon}>0\), there holds \(H\in W_{\mathrm{loc}}^{2,\,\infty}({\mathbb R}^{n}\setminus\overline{B_{{\hat\varepsilon}}(0)})\) by (\ref{Eq: H-condition 2/3}), which yields \(\nabla^{2}H_{\varepsilon}=j_{\varepsilon}\ast \nabla^{2}H\) in \({\mathbb R}^{n}\setminus \overline{B_{\varepsilon+{\hat\varepsilon}}(0)}\).
By (\ref{Eq: Range of delta-epsilon}), we may take \({\hat \varepsilon}\coloneqq \delta-\varepsilon>0\) to get this identity.
In the remaining case \(\lvert z_{1}-z_{2}\rvert>\mu/32\), we use (\ref{Eq: Local C11 estimate of H}) and (\ref{Eq: Difference quotient estimate for H-epsilon}) to compute
\begin{align*}
&\mleft\lvert\nabla^{2} H_{\varepsilon}(z_{1})(z_{2}-z_{1})-(\nabla H_{\varepsilon}(z_{2})-\nabla H_{\varepsilon}(z_{1}))\mright\rvert\\& \le \mleft\lVert\nabla^{2}H_{\varepsilon}(z_{1}) \mright\rVert\lvert z_{1}-z_{2}\rvert+C(s)L_{0}\lvert z_{1}-z_{2}\rvert \int_{B_{\varepsilon}(0)}j_{\varepsilon}(y)\frac{\lvert z_{1}-y\rvert^{s-1}+\lvert z_{2}-y\rvert^{s-1}}{\lvert z_{1}-y\rvert}\,{\mathrm d}y\\&\le C(n,\,s)L_{0}\lvert z_{1}-z_{2}\rvert \underbrace{\mleft[\mleft(\varepsilon^{2}+\lvert z_{1}\rvert^{2}\mright)^{s/2-1}+h_{s-2,\,\varepsilon}(z_{1})+h_{-1,\,\varepsilon}(z_{1})\sup_{B_{\varepsilon}(0)}\lvert z_{2}-y\rvert^{s-1} \mright]}_{\eqqcolon {\mathbf E}}.
\end{align*}
From (\ref{Eq: Range of delta-epsilon})--(\ref{Eq: Outside condition on variable}), it is easy to get
\[\frac{\mu^{2}}{16}\le \lvert z_{1}\rvert^{2}\le\varepsilon^{2}+\lvert z_{1}\rvert^{2}\le \delta^{2}+(\delta+\mu)^{2}\le 5\mu^{2},\]
and
\[\lvert z_{2}-y\rvert\le \lvert z_{2}\rvert+\lvert y\rvert\le \delta+\mu\le 2\mu\quad \textrm{for all }y\in B_{\varepsilon}(0).\]
By these estimates and (\ref{Eq: Mollified estimate on absolute value functions}), we have
\[{\mathbf E}\le C(n,\,s)\mu^{s-2}\cdot \mleft(\frac{32\lvert z_{1}-z_{2}\rvert}{\mu}\mright)^{\beta_{0}}\eqqcolon C(n,\,s,\,\beta_{0})\mu^{s-2-\beta_{0}}\lvert z_{1}-z_{2}\rvert^{\beta_{0}},\]
which yields (\ref{Eq: Error estimate on Hessian}).
\end{proof}
\begin{lemma}[Monotonicity estimates]\label{Lemma: Coerciveness keeps under convolution}
For $s\in(1,\,\infty)$ and $\varepsilon\in(0,\,1)$, we have the following:
\begin{enumerate}
\item \label{Item 1/2: Monotonicity} Assume that $\{H_{\varepsilon}\}_{0<\varepsilon<1}\subset C^{2}({\mathbb R}^{n})$ satisfy 
\begin{equation}\label{Eq: Coerciveness on H-epsilon}
\nabla^{2}H_{\varepsilon}(z)\geqslant \gamma\mleft(\varepsilon^{2}+\lvert z\rvert^{2}\mright)^{s/2-1}\mathrm{id}\quad \textrm{for all }z\in{\mathbb R}^{n}
\end{equation}
for some constant $\gamma\in(0,\,\infty)$. If $s\in\lbrack 2,\,\infty)$, then we have
\begin{equation}\label{Eq: L2 lower bound lemma case p>2}
\mleft\langle \nabla H_{\varepsilon}(z_{1})-\nabla H_{\varepsilon}(z_{2})\mathrel{} \middle|\mathrel{} z_{1}-z_{2} \mright\rangle\ge C\max\mleft\{\,\lvert z_{1}\rvert^{s-2},\,\lvert z_{2}\rvert^{s-2}\,\mright\} \lvert z_{1}-z_{2}\rvert^{2}
\end{equation}
for all \(z_{1},\,z_{2}\in{\mathbb R}^{n}\). Here the constant $C\in(0,\,\infty)$ is given by $C(s,\,\gamma)\coloneqq 2^{-s}\gamma$.
\item\label{Item 2/2: Monotonicity molified} Let a convex function $H\in C^{1}({\mathbb R}^{n})\cap C^{2}({\mathbb R}^{n}\setminus\{ 0\})$ satisfy
\begin{equation}\label{Eq: Coerciveness on H}
\nabla^{2}H(z)\geqslant L_{0}\lvert z\rvert^{s-2}\mathrm{id}\quad \textrm{for all }z\in{\mathbb R}^{n}\setminus\{ 0\}
\end{equation}
for some constants $L_{0}\in(0,\,\infty),\,s\in(1,\,\infty)$. Then, there exists a constant $\gamma_{0}=\gamma_{0}(n,\,s)\in (0,\,1)$ such that for every $\varepsilon\in(0,\,1)$, $H_{\varepsilon}\coloneqq j_{\varepsilon}\ast H\in C^{\infty}({\mathbb R}^{n})$ satisfies (\ref{Eq: Coerciveness on H-epsilon}) with $\gamma\coloneqq \gamma_{0}L_{0}\in(0,\,\infty)$.
In particular, $H_{\varepsilon}$ satisfies (\ref{Eq: L2 lower bound lemma case p>2}) with $C\coloneqq 2^{-s}\gamma_{0}L_{0}\in(0,\,\infty)$, provided $2\le s<\infty$.
\end{enumerate}
\end{lemma}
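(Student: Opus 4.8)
The plan is to establish the two items separately: item \ref{Item 1/2: Monotonicity} by integrating the Hessian lower bound along segments, and item \ref{Item 2/2: Monotonicity molified} by combining a pointwise \emph{lower} bound for the mollified power $h_{s-2,\,\varepsilon}:=j_{\varepsilon}\ast\lvert\,\cdot\,\rvert^{s-2}$ with a Fatou-type argument that transfers the ellipticity of $\nabla^{2}H$ to $\nabla^{2}H_{\varepsilon}$.

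\textbf{Item \ref{Item 1/2: Monotonicity}.} Since $H_{\varepsilon}\in C^{2}({\mathbb R}^{n})$, I would write
\[\langle \nabla H_{\varepsilon}(z_{1})-\nabla H_{\varepsilon}(z_{2})\mid z_{1}-z_{2}\rangle=\int_{0}^{1}\langle \nabla^{2}H_{\varepsilon}(z_{2}+t(z_{1}-z_{2}))(z_{1}-z_{2})\mid z_{1}-z_{2}\rangle\,{\mathrm d}t,\]
apply (\ref{Eq: Coerciveness on H-epsilon}), and use $s\ge 2$ (so that $s/2-1\ge 0$, hence $(\varepsilon^{2}+\lvert w\rvert^{2})^{s/2-1}\ge\lvert w\rvert^{s-2}$ for every $w\in{\mathbb R}^{n}$) to bound the right-hand side below by $\gamma\lvert z_{1}-z_{2}\rvert^{2}\int_{0}^{1}\lvert z_{2}+t(z_{1}-z_{2})\rvert^{s-2}\,{\mathrm d}t$. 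It then remains to prove $\int_{0}^{1}\lvert z_{2}+t(z_{1}-z_{2})\rvert^{s-2}\,{\mathrm d}t\ge 2^{-s}\max\{\,\lvert z_{1}\rvert^{s-2},\,\lvert z_{2}\rvert^{s-2}\,\}$. Assuming without loss of generality $\lvert z_{2}\rvert\le\lvert z_{1}\rvert$, the triangle inequality gives, for $t\in[3/4,\,1]$, $\lvert z_{2}+t(z_{1}-z_{2})\rvert=\lvert tz_{1}+(1-t)z_{2}\rvert\ge t\lvert z_{1}\rvert-(1-t)\lvert z_{2}\rvert\ge(2t-1)\lvert z_{1}\rvert\ge\tfrac12\lvert z_{1}\rvert$; restricting the integral to $[3/4,\,1]$ and using $s-2\ge0$ yields the claim with constant $2^{-s}$. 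This is the mirror image of the computation around (\ref{Eq: triangle inequality 1}) in the proof of Lemma \ref{Lemma: Mollified estimates}.

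\textbf{Item \ref{Item 2/2: Monotonicity molified}.} The first step is a lower counterpart of (\ref{Eq: Mollified estimate on absolute value functions}): for each $\sigma\in(-1,\,\infty)$ there is a constant $c(n,\,\sigma)\in(0,\,\infty)$ with $h_{\sigma,\,\varepsilon}(z)\ge c(n,\,\sigma)(\varepsilon^{2}+\lvert z\rvert^{2})^{\sigma/2}$ for all $z\in{\mathbb R}^{n}$ and $\varepsilon\in(0,\,1)$. I would obtain this by repeating verbatim the two-case argument used for (\ref{Eq: Mollified estimate on absolute value functions}), only with the inequalities reversed: for $\lvert z\rvert\le 2\varepsilon$ one uses the scaling identity $h_{\sigma,\,\varepsilon}(z)=\varepsilon^{\sigma}h_{\sigma}(z/\varepsilon)$ from (\ref{Eq: First express of h-sigma-epsilon})--(\ref{Eq: Second express of h-sigma-epsilon}) together with the fact that $h_{\sigma}=j\ast\lvert\,\cdot\,\rvert^{\sigma}$ is continuous and strictly positive, hence bounded below by a positive constant on $\overline{B_{2}(0)}$; for $\lvert z\rvert>2\varepsilon$ one uses the already established two-sided comparison $\tfrac1{17}\le\lvert z-y\rvert^{2}/(\varepsilon^{2}+\lvert z\rvert^{2})\le 2$ valid for $y\in B_{\varepsilon}(0)$. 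The second step is to show $\nabla^{2}H_{\varepsilon}(z_{0})\geqslant L_{0}h_{s-2,\,\varepsilon}(z_{0})\,\mathrm{id}$ for every $z_{0}\in{\mathbb R}^{n}$. Fix a unit vector $v$; since $H_{\varepsilon}=j_{\varepsilon}\ast H$ is smooth,
\[\langle \nabla^{2}H_{\varepsilon}(z_{0})v\mid v\rangle=\lim_{h\to 0}\int_{{\mathbb R}^{n}}j_{\varepsilon}(y)\,\frac{H(z_{0}+hv-y)-2H(z_{0}-y)+H(z_{0}-hv-y)}{h^{2}}\,{\mathrm d}y.\]
Convexity of $H$ makes every integrand nonnegative, so Fatou's lemma applies; for $y\ne z_{0}$ the function $H$ is $C^{2}$ near $z_{0}-y$, so by Taylor expansion the integrand converges pointwise to $j_{\varepsilon}(y)\langle \nabla^{2}H(z_{0}-y)v\mid v\rangle\ge L_{0}j_{\varepsilon}(y)\lvert z_{0}-y\rvert^{s-2}$ by (\ref{Eq: Coerciveness on H}). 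Passing to the $\liminf$ under the integral and integrating gives $\langle \nabla^{2}H_{\varepsilon}(z_{0})v\mid v\rangle\ge L_{0}h_{s-2,\,\varepsilon}(z_{0})$, and since $v$ is arbitrary, $\nabla^{2}H_{\varepsilon}(z_{0})\geqslant L_{0}h_{s-2,\,\varepsilon}(z_{0})\,\mathrm{id}$. Combining the two steps yields (\ref{Eq: Coerciveness on H-epsilon}) with $\gamma=\gamma_{0}L_{0}$, where $\gamma_{0}$ is the smaller of $c(n,\,s-2)$ and $1$, so that $\gamma_{0}=\gamma_{0}(n,\,s)\in(0,\,1)$. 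Finally, when $2\le s<\infty$, applying item \ref{Item 1/2: Monotonicity} to $H_{\varepsilon}$ with $\gamma=\gamma_{0}L_{0}$ produces (\ref{Eq: L2 lower bound lemma case p>2}) with $C=2^{-s}\gamma_{0}L_{0}$.

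The only genuinely delicate point is the passage to the limit inside the integral in the second step: because $z_{0}-y$ may vanish on the domain of integration, $H$ need not be twice differentiable there, so dominated convergence is unavailable, and it is precisely the convexity of $H$ (which forces the symmetric second differences to be nonnegative) that makes Fatou's lemma applicable. Everything else is a direct transcription of estimates already carried out in Lemmas \ref{Lemma: Local C11 estimate} and \ref{Lemma: Mollified estimates}.
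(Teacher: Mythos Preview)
Your proof is correct. For item \ref{Item 1/2: Monotonicity} your argument is essentially identical to the paper's: integrate the Hessian lower bound along the segment, then restrict to a subinterval on which $\lvert tz_{1}+(1-t)z_{2}\rvert\ge\tfrac12\max\{\lvert z_{1}\rvert,\lvert z_{2}\rvert\}$. The only cosmetic difference is that you drop $\varepsilon^{2}$ before estimating the integral, whereas the paper keeps it and proves the slightly stronger intermediate bound (\ref{Eq: Claim on coercivity}).

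For item \ref{Item 2/2: Monotonicity molified} your route is genuinely different from the paper's. The paper does \emph{not} pass to second differences or invoke Fatou; instead it works with the first-order monotonicity quantity $\langle\nabla H_{\varepsilon}(z_{1})-\nabla H_{\varepsilon}(z_{2})\mid z_{1}-z_{2}\rangle$, expands it via Fubini as a double integral over $t\in[0,1]$ and $y\in B_{1}(0)$, and then replaces $B_{1}(0)$ by the half-annulus $C_{t}^{\pm}=\{y\in B_{1}(0)\setminus B_{1/2}(0):\pm\langle w(t)\mid y\rangle\ge 0\}$ (the sign chosen according to whether $s\ge 2$ or $s<2$) so that $\lvert w(t)-\varepsilon y\rvert^{2}$ compares directly with $\varepsilon^{2}+\lvert w(t)\rvert^{2}$. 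Crucially, this trick uses the \emph{spherical symmetry} of $j$ to guarantee $\int_{C_{t}^{\pm}}j\ge\tfrac12\int_{B_{1}\setminus B_{1/2}}j>0$. The Hessian bound (\ref{Eq: Coerciveness on H-epsilon}) is then recovered by letting $z_{1}\to z_{2}$ in the monotonicity inequality.

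Your approach is more modular: you separate the problem into (a) a pointwise lower analogue of (\ref{Eq: Mollified estimate on absolute value functions}) for $h_{s-2,\varepsilon}$, and (b) the inequality $\nabla^{2}H_{\varepsilon}\geqslant L_{0}h_{s-2,\varepsilon}\,\mathrm{id}$ via nonnegative second differences and Fatou. This does not use the radial symmetry of $j$ at all---the far-field case of (a) rests only on the two-sided comparison $1/17\le\lvert z-y\rvert^{2}/(\varepsilon^{2}+\lvert z\rvert^{2})\le 2$, and the near-field case only on continuity and strict positivity of $h_{\sigma}$. The price you pay is precisely the Fatou step to cross the singularity of $\nabla^{2}H$ at the origin, which the paper's first-order argument avoids.
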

\begin{proof}
As a preliminary, we check that when $2\le s<\infty$, there holds
\begin{equation}\label{Eq: Claim on coercivity}
\int_{0}^{1}\mleft\lvert \varepsilon^{2}+\mleft\lvert tz_{2}+(1-t)z_{1}\mright\rvert^{2}\mright\rvert^{s/2-1}\,{\mathrm d}t\ge 2^{-s}\mleft(\varepsilon^{2}+\max\{\,\lvert z_{1}\rvert^{2},\,\lvert z_{2}\rvert^{2}\,\}\mright)^{s/2-1}
\end{equation}
for all $z_{1},\,z_{2}\in{\mathbb R}^{n}$.
Without loss of generality, we may let \(\lvert z_{1}\rvert\le \lvert z_{2}\rvert\).
Then, (\ref{Eq: triangle inequality 1}) yields
\[\int_{0}^{1}\mleft\lvert \varepsilon^{2}+\mleft\lvert tz_{2}+(1-t)z_{1}\mright\rvert^{2}\mright\rvert^{s/2-1}\,{\mathrm d}t\ge \frac{\mleft(\varepsilon^{2}+\lvert z_{2}\rvert^{2}\mright)^{s/2-1}}{2^{s-2}}\cdot\frac{1}{4}=2^{-s}\mleft(\varepsilon^{2}+\lvert z_{2}\rvert^{2}\mright)^{s/2-1}.\]

\ref{Item 1/2: Monotonicity}.
By (\ref{Eq: Coerciveness on H-epsilon}) and (\ref{Eq: Claim on coercivity}), we have
\begin{align*}
\langle \nabla H_{\varepsilon}(z_{1})-\nabla H_{\varepsilon}(z_{2})\mid z_{1}-z_{2} \rangle&=\int_{0}^{1}\mleft\langle \nabla^{2} H_{\varepsilon}(tz_{1}+(1-t)z_{2})(z_{1}-z_{2})\mathrel{}\middle|\mathrel{} z_{1}-z_{2}\mright\rangle\,{\mathrm d}t\\&\ge \gamma\lvert z_{1}-z_{2}\rvert^{2}\int_{0}^{1}\mleft(\varepsilon^{2}+\lvert tz_{1}+(1-t)z_{2}\rvert^{2}\mright)^{s/2-1}\,{\mathrm d}t\\&\ge 2^{-s}\lambda\lvert z_{1}-z_{2}\rvert^{2}\mleft(\varepsilon^{2}+\max\mleft\{\,\lvert z_{1}\rvert^{2},\,\lvert z_{2}\rvert^{2}\,\mright\}  \mright)^{s/2-1}.
\end{align*}
From this and $s/2-1>0$, the estimate (\ref{Eq: L2 lower bound lemma case p>2}) immediately follows.

\ref{Item 2/2: Monotonicity molified}. It suffices to find a constant $\gamma_{0}\in(0,\,1)$ such that there holds
\[\mleft\langle \nabla H_{\varepsilon}(z_{1})-\nabla H_{\varepsilon}(z_{2})\mathrel{}\middle| \mathrel{}z_{1}-z_{2} \mright\rangle\ge \gamma_{0}L_{0}\mleft(\varepsilon^{2}+\max\,\mleft\{\,\lvert z_{1}\rvert^{2},\,\lvert z_{2}\rvert^{2} \,\mright\}\mright)^{s/2-1}\lvert z_{1}-z_{2}\rvert^{2}\]
for all $z_{1},\,z_{2}\in{\mathbb R}^{n}$. For fixed $\varepsilon\in(0,\,1)$ and $z_{1},\,z_{2}\in{\mathbb R}^{n}$, we set $w(t)\coloneqq z_{1}+t(z_{2}-z_{1})\in {\mathbb R}^{n}$ for $t\in\lbrack0,\,1\rbrack$. We note that for a.e. $y\in B_{1}(0)$, there holds $w(t)\neq \varepsilon y$ for all $t\in\lbrack 0,\,1\rbrack$. Hence by $H\in C^{2}({\mathbb R}^{n}\setminus\{ 0\})$ and Fubini's theorem, we are able to compute
\begin{align*}
&\mleft\langle\nabla H_{\varepsilon}(z_{1})-\nabla H_{\varepsilon}(z_{2})\mathrel{}\middle|\mathrel{} z_{1}-z_{2}\mright\rangle\\&=\int_{B_{1}(0)}\langle\nabla H(z_{1}-\varepsilon y)-\nabla H(z_{2}-\varepsilon y)\mid z_{1}-z_{2}\rangle j(y)\,{\mathrm{d}}y\\&= \int_{B_{1}(0)}\mleft(\int_{0}^{1}\mleft\langle\nabla^{2}H(z_{2}-\varepsilon y+t(z_{1}-t_{2}))(z_{1}-z_{2})\mathrel{}\middle|\mathrel{} z_{1}-z_{2}\mright\rangle\,{\mathrm d}t \mright)j(y)\,{\mathrm{d}}y\\&=\int_{0}^{1}\int_{B_{1}(0)}\mleft(\int_{0}^{1}\mleft\langle\nabla^{2}H(z_{2}-\varepsilon y+t(z_{1}-t_{2}))(z_{1}-z_{2})\mathrel{}\middle|\mathrel{} z_{1}-z_{2}\mright\rangle\,{\mathrm d}t \mright)j(y)\,{\mathrm{d}}y\\&\ge L_{0}\int_{0}^{1}\mleft(\int_{B_{1}(0)}\lvert w(t)-\varepsilon y\rvert^{2(s/2-1)}j(y)\,{\mathrm{d}}y\mright){\mathrm{d}}t
\end{align*}
For each $t\in\lbrack 0,\,1\rbrack$, we replace the integral domain $B_{1}(0)$ by a smaller one, either
\[C_{t}^{-}\coloneqq \{y\in B_{1}(0)\setminus B_{1/2}(0)\mid \langle w(t)\mid y\rangle\le 0\}\quad \textrm{when }s\in\lbrack 2,\,\infty),\]
or
\[C_{t}^{+}\coloneqq \{y\in B_{1}(0)\setminus B_{1/2}(0)\mid \langle w(t)\mid y\rangle\ge 0\}\quad \textrm{when }s\in(1,\,2).\]
By the definitions, both of the sets $C_{t}^{+}$ and $C_{t}^{-}$ occupy at least one half of the domain $B_{1}(0)\setminus B_{1/2}(0)$. Here we should recall that the function $j$ is spherically symmetric. Hence, by direct computations, it is easy to check that 
\[\int_{0}^{1}\mleft(\int_{B_{1}(0)}\lvert w(t)-\varepsilon y\rvert^{2(s/2-1)}j(y)\,{\mathrm{d}}y\mright){\mathrm{d}}t\ge \gamma_{1}\int_{0}^{1}\mleft\lvert \varepsilon^{2}+\mleft\lvert tz_{2}+(1-t)z_{1}\mright\rvert^{2}\mright\rvert^{s/2-1}\,{\mathrm d}t\]
for some constant $\gamma_{1}=\gamma_{1}(n,\,s)\in(0,\,1)$.
When $s\in(1,\,2)$, we simply use \[\mleft\lvert tz_{2}+(1-t)z_{1}\mright\rvert\le \max\,\mleft\{\,\lvert z_{1}\rvert,\,\lvert z_{2}\rvert\,\mright\}\quad \textrm{for all }t\in\lbrack0,\,1\rbrack.\] 
Thus, we may choose $\gamma_{0}\coloneqq \gamma_{1}$. In the remaining case $s\in\lbrack 2,\,\infty)$, we may take $\gamma_{0}\coloneqq 2^{-s}\gamma_{1}$ by (\ref{Eq: Claim on coercivity}).
\end{proof}
Finally, we conclude Section \ref{Subsect: Some Lipschitz estimates} by deducing continuity estimates of truncating mappings. For $0<\delta<1$, we set
\[{\mathcal G}_{\delta}(z)\coloneqq \mleft(\lvert z\rvert- \delta\mright)_{+} \frac{z}{\lvert z\rvert}\quad \textrm{for }z\in{\mathbb R}^{n}.\]
Similarly, for $0<\varepsilon<\delta<1$, we set
\[{\mathcal G}_{\delta,\,\varepsilon}(z)\coloneqq \mleft(\sqrt{\varepsilon^{2}+\lvert z\rvert^{2}}- \delta\mright)_{+} \frac{z}{\lvert z\rvert}\quad \textrm{for }z\in{\mathbb R}^{n}.\]
Lemma \ref{Lemma: Lipschitz estimate on relaxed vector fields} belows states that the mapping ${\mathcal G}_{2\delta,\,\varepsilon}$ has Lipschitz continuity, uniformly for sufficiently small $\varepsilon>0$. In the limiting case $\varepsilon=0$, Lipschitz continuity of ${\mathcal G}_{\delta}$ is found in \cite[Lemma 2.3]{BDGPdN}. Following the arguments therein, we would like to prove Lemma \ref{Lemma: Lipschitz estimate on relaxed vector fields} by elementary computations. Before the proof, we recall an inequality
\begin{equation}\label{Eq: Local C11 estimate of E1 special case}
\mleft\lvert \frac{z_{1}}{\lvert z_{1}\rvert}-\frac{z_{2}}{\lvert z_{2}\rvert}\mright\rvert\le 2\min\mleft\{\,\lvert z_{1}\rvert^{-1},\,\lvert z_{2}\rvert^{-1}\,\mright\}\lvert z_{1}-z_{2}\rvert\quad \textrm{for all }z_{1},\,z_{2}\in {\mathbb R}^{n}\setminus\{0\},
\end{equation}
which is easy to deduce by the triangle inequality.
\begin{lemma}[Uniform Lipschitz continuity of ${\mathcal G}_{2\delta,\,\varepsilon}$]\label{Lemma: Lipschitz estimate on relaxed vector fields}
Let \(\delta,\,\varepsilon\) satisfy
\[0<\varepsilon<h\delta\quad \textrm{with }h\in(0,\,2).\]
Then, the mapping \({\mathcal G}_{2\delta,\,\varepsilon}\) satisfies
\begin{equation}\label{Eq: Lipschitz estimate on relaxed vector fields}
\mleft\lvert{\mathcal G}_{2\delta,\,\varepsilon}(z_{1})-{\mathcal G}_{2\delta,\,\varepsilon}(z_{2}) \mright\rvert \le c_{\dagger}\lvert z_{1}-z_{2}\rvert
\end{equation}
for all \(z_{1},\,z_{2}\in{\mathbb R}^{n}\) with 
\[c_{\dagger}(h)\coloneqq 1+\frac{8}{\sqrt{4-h^{2}}}.\]
In particular, if \(\delta,\,\varepsilon\) satisfies (\ref{Eq: Range of delta-epsilon}), then (\ref{Eq: Lipschitz estimate on relaxed vector fields}) holds with $c_{\dagger}=1+64/\sqrt{255}$.
\end{lemma}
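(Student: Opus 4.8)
The plan is to treat $\mathcal{G}_{2\delta,\varepsilon}$ as a radial vector field and to split the increment into a radial and an angular part. Write $\psi(r):=\bigl(\sqrt{\varepsilon^{2}+r^{2}}-2\delta\bigr)_{+}$ for $r\in[0,\infty)$, so that $\mathcal{G}_{2\delta,\varepsilon}(z)=\psi(\lvert z\rvert)\,z/\lvert z\rvert$ for $z\neq 0$ and $\mathcal{G}_{2\delta,\varepsilon}\equiv 0$ wherever $\psi(\lvert z\rvert)=0$. I would first record three elementary facts about the scalar profile $\psi$. First, $\psi$ is $1$-Lipschitz on $[0,\infty)$, since $r\mapsto\sqrt{\varepsilon^{2}+r^{2}}$ has derivative of modulus at most $1$ and $t\mapsto(t)_{+}$ is $1$-Lipschitz. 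Second, putting $r_{0}:=\sqrt{4\delta^{2}-\varepsilon^{2}}$ (well defined since $\varepsilon<2\delta$), the function $\psi$ vanishes exactly on $[0,r_{0}]$, and the hypothesis $\varepsilon<h\delta$ with $h\in(0,2)$ gives $r_{0}>\sqrt{4-h^{2}}\,\delta>0$; in particular $\psi(0)=(\varepsilon-2\delta)_{+}=0$, so $\mathcal{G}_{2\delta,\varepsilon}(0)=0$. Third, for every $r>r_{0}$,
\[
\frac{\psi(r)}{r}\le\frac{\sqrt{\varepsilon^{2}+r^{2}}}{r}=\sqrt{1+\frac{\varepsilon^{2}}{r^{2}}}\le\sqrt{1+\frac{\varepsilon^{2}}{4\delta^{2}-\varepsilon^{2}}}\le\sqrt{1+\frac{h^{2}}{4-h^{2}}}=\frac{2}{\sqrt{4-h^{2}}},
\]
where the penultimate inequality uses that $\varepsilon\mapsto\varepsilon^{2}/(4\delta^{2}-\varepsilon^{2})$ is increasing on $(0,2\delta)$ together with $\varepsilon<h\delta$.

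With these facts in hand, I would argue as follows. If $\mathcal{G}_{2\delta,\varepsilon}(z_{1})=\mathcal{G}_{2\delta,\varepsilon}(z_{2})=0$ there is nothing to prove, and if exactly one argument vanishes, say $z_{1}=0$, then the $1$-Lipschitz property and $\psi(0)=0$ give $\lvert\mathcal{G}_{2\delta,\varepsilon}(z_{1})-\mathcal{G}_{2\delta,\varepsilon}(z_{2})\rvert=\psi(\lvert z_{2}\rvert)-\psi(0)\le\lvert z_{2}\rvert=\lvert z_{1}-z_{2}\rvert$. In the remaining case $z_{1},z_{2}\neq 0$ I would use the algebraic identity
\[
\mathcal{G}_{2\delta,\varepsilon}(z_{1})-\mathcal{G}_{2\delta,\varepsilon}(z_{2})=\bigl(\psi(\lvert z_{1}\rvert)-\psi(\lvert z_{2}\rvert)\bigr)\frac{z_{1}}{\lvert z_{1}\rvert}+\psi(\lvert z_{2}\rvert)\left(\frac{z_{1}}{\lvert z_{1}\rvert}-\frac{z_{2}}{\lvert z_{2}\rvert}\right).
\]
The first (radial) term is bounded by $\bigl\lvert\lvert z_{1}\rvert-\lvert z_{2}\rvert\bigr\rvert\le\lvert z_{1}-z_{2}\rvert$ by the $1$-Lipschitz property. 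For the second (angular) term one may assume $\psi(\lvert z_{2}\rvert)\neq 0$, hence $\lvert z_{2}\rvert>r_{0}$; combining the elementary bound (\ref{Eq: Local C11 estimate of E1 special case}) for $z\mapsto z/\lvert z\rvert$ with the third fact above — using, when $\lvert z_{1}\rvert\ge\lvert z_{2}\rvert$, that $\psi$ is nondecreasing so that $\psi(\lvert z_{2}\rvert)/\lvert z_{1}\rvert\le\psi(\lvert z_{1}\rvert)/\lvert z_{1}\rvert$, and when $\lvert z_{1}\rvert\le\lvert z_{2}\rvert$ that the minimum in (\ref{Eq: Local C11 estimate of E1 special case}) equals $\lvert z_{2}\rvert^{-1}$ — the angular term is controlled by $\frac{4}{\sqrt{4-h^{2}}}\lvert z_{1}-z_{2}\rvert$. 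Adding the two contributions yields (\ref{Eq: Lipschitz estimate on relaxed vector fields}), and keeping the (deliberately non-sharp) constant gives $c_{\dagger}(h)=1+8/\sqrt{4-h^{2}}$. The last assertion is then simply the case $h=1/8$, for which $4-h^{2}=255/64$ and hence $8/\sqrt{4-h^{2}}=64/\sqrt{255}$.

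I expect the only genuinely delicate point to be the angular term, and more precisely the need to bound $\psi(\rho)/\rho$ on the support $\{\rho>r_{0}\}$ \emph{uniformly in} $\varepsilon$. This is exactly where the constraint $h<2$ — that is, $\varepsilon$ staying bounded away from $2\delta$ — is indispensable: as $\varepsilon\uparrow 2\delta$ the inner radius $r_{0}=\sqrt{4\delta^{2}-\varepsilon^{2}}$ collapses to $0$, so $\psi(\rho)/\rho$ is no longer bounded near $\rho=r_{0}$ and a uniform Lipschitz constant cannot be expected. Everything else is routine: the radial part follows purely from $1$-Lipschitzianity of the scalar profile $\psi$, and the angular part from the triangle-inequality estimate (\ref{Eq: Local C11 estimate of E1 special case}).
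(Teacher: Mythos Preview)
Your proof is correct and follows essentially the same strategy as the paper's: both rely on the $1$-Lipschitz property of the radial profile, the elementary angular estimate (\ref{Eq: Local C11 estimate of E1 special case}), and a case analysis governed by the threshold $r_{0}=\sqrt{4\delta^{2}-\varepsilon^{2}}$. Your two-term radial/angular decomposition is in fact slightly cleaner than the paper's three-term splitting (which separates the $-2\delta$ contribution from $\gamma_{\varepsilon}$) and, as you note, actually yields the sharper constant $1+4/\sqrt{4-h^{2}}$ before you deliberately weaken it to match the stated $c_{\dagger}(h)$.
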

\begin{proof}
For preliminary, we introduce a function \(\gamma_{\varepsilon}\in C^{\infty}({\mathbb R})\) by
\(\gamma_{\varepsilon}(t)\coloneqq \sqrt{\varepsilon^{2}+t^{2}}\,(t\in{\mathbb R})\) for each fixed \(0<\varepsilon<1\).
By direct calculations, we can easily check that \(\lVert \gamma_{\varepsilon}^{\prime}\rVert_{L^{\infty}({\mathbb R})}\le 1\), and hence it follows that
\begin{equation}\label{Eq: Mean value theorem}
\lvert \gamma_{\varepsilon}(t_{1})-\gamma_{\varepsilon}(t_{2})\rvert\le \lvert t_{1}-t_{2}\rvert\quad \textrm{for all }t_{1},\,t_{2}\in{\mathbb R}
\end{equation}
by the classical mean value theorem.

We prove (\ref{Eq: Lipschitz estimate on relaxed vector fields}) by considering three possible cases. If \(\lvert z_{1}\rvert,\,\lvert z_{2}\rvert\le \sqrt{(2\delta)^{2}-\varepsilon^{2}}\), then (\ref{Eq: Lipschitz estimate on relaxed vector fields}) is clear by \({\mathcal G}_{2\delta,\,\varepsilon}(z_{1})={\mathcal G}_{2\delta,\,\varepsilon}(z_{2})=0\). When \(\lvert z_{1}\rvert,\,\lvert z_{2}\rvert>\sqrt{(2\delta)^{2}-\varepsilon^{2}}\), we use (\ref{Eq: Local C11 estimate of E1 special case}) and (\ref{Eq: Mean value theorem}) to get
\begin{align*}
\mleft\lvert{\mathcal G}_{2\delta,\,\varepsilon}(z_{1})-{\mathcal G}_{2\delta,\,\varepsilon}(z_{2}) \mright\rvert &= \mleft\lvert \mleft(\sqrt{\varepsilon^{2}+\lvert z_{1}\rvert^{2}}-2\delta\mright) \frac{z_{1}}{\lvert z_{1}\rvert}- \mleft(\sqrt{\varepsilon^{2}+\lvert z_{2}\rvert^{2}}-2\delta\mright) \frac{z_{2}}{\lvert z_{2}\rvert} \mright\rvert\\& \le 2\delta \mleft\lvert \frac{z_{1}}{\lvert z_{1}\rvert}- \frac{z_{2}}{\lvert z_{2}\rvert} \mright\rvert+\mleft\lvert \mleft[\gamma_{\varepsilon}(\lvert z_{1}\rvert)-\gamma_{\varepsilon}(\lvert z_{2}\rvert)\mright]\frac{z_{1}}{\lvert z_{1}\rvert}\mright\rvert\\&+ \gamma_{\varepsilon}(\lvert z_{2}\rvert)\mleft\lvert \frac{z_{1}}{\lvert z_{1}\rvert}- \frac{z_{2}}{\lvert z_{2}\rvert} \mright\rvert\\ &\le \mleft(\frac{4\delta}{\lvert z_{2}\rvert}+1+\frac{2\sqrt{\varepsilon^{2}+\lvert z_{2}\rvert^{2}}}{\lvert z_{2}\rvert}\mright)\lvert z_{1}-z_{2}\rvert.
\end{align*}
Combining with \(\lvert z_{2}\rvert>\sqrt{(2\delta)^{2}-\varepsilon^{2}}\ge \sqrt{4-h^{2}}\delta\ge \sqrt{4-h^{2}}(\varepsilon/h)\), we obtain (\ref{Eq: Lipschitz estimate on relaxed vector fields}).
In the remaining case, without loss of generality we may assume that \(\lvert z_{1}\rvert>\sqrt{(2\delta)^{2}-\varepsilon^{2}}\ge \lvert z_{2}\rvert\).
Then, \(\gamma_{\varepsilon}(\lvert z_{2}\rvert)\le 2\delta\) is clear.
Hence, (\ref{Eq: Mean value theorem}) implies
\begin{align*}
\mleft\lvert{\mathcal G}_{2\delta,\,\varepsilon}(z_{1})-{\mathcal G}_{2\delta,\,\varepsilon}(z_{2}) \mright\rvert &=\sqrt{\varepsilon^{2}+\lvert z_{1}\rvert^{2}}-2\delta\\& \le \gamma_{\varepsilon}(\lvert z_{1}\rvert)-\gamma_{\varepsilon}(\lvert z_{2}\rvert)\\&\le \lvert\lvert z_{1}\rvert-\lvert z_{2}\rvert \rvert\le \lvert z_{1}-z_{2}\rvert\\&\le c_{\dagger}\lvert z_{1}-z_{2}\rvert,
\end{align*}
which completes the proof.
\end{proof}
\subsection{Basic facts of positively one-homogeneous convex functions}\label{Subsect: Basic facts on convex integrands}
In Section \ref{Subsect: Basic facts on convex integrands}, we mention some basic properties of the positively one-homogeneous convex function \(E_{1}\) without proofs.
All of the proofs are already given in \cite[Section 1.3]{MR2033382}, \cite[\S 13]{MR1451876}, \cite[Sections 6.1 \& A.3]{giga2021continuity}.
Also, we briefly give some estimates that immediately follows from (\ref{Eq: positive hom of deg 1}) and Lemma \ref{Lemma: Local C11 estimate}.

We define a closed convex set \(C_{E_{1}}\subset{\mathbb R}^{n}\) by
\[C_{E_{1}}\coloneqq \mleft\{ z\in{\mathbb R}^{n}\mathrel{}\middle| \mathrel{}E_{1}(z)\le 1\mright\},\]
and a function \({\tilde E_{1}}\colon{\mathbb R}^{n}\rightarrow [0,\,\infty]\) by
\[{\tilde E_{1}}(w)\coloneqq \sup\mleft\{ \langle w\mid z \rangle \mathrel{}\middle| \mathrel{} z\in C_{E_{1}} \mright\}\quad\textrm{for }w\in{\mathbb R}^{n},\]
often called the support function of the convex set \(C_{E_{1}}\). The inequality \({\tilde E_{1}}\ge 0\) is clear by the inclusion \(0\in C_{E_{1}}\).
By (\ref{Eq: positive hom of deg 1}) and the definition of \({\tilde E_{1}}\), it is easy to check  the Cauchy--Schwarz-type inequality:
\begin{equation}\label{Eq: Cauchy-Schwarz-type-ineq}
\langle z\mid w \rangle\le E_{1}(z){\tilde E_{1}}(w)\quad \textrm{for all }z\in{\mathbb R}^{n},
\end{equation}
provided $w\in{\mathbb R}^{n}$ satisfies \({\tilde E_{1}}(w)<\infty\).
Also, under the condition (\ref{Eq: positive hom of deg 1}), it is well-known that \(\partial E_{1}\) is given by
\begin{equation}\label{Eq: Representation of subdifferental set}
\partial E_{1}(z)=\mleft\{w\in{\mathbb R}^{n}\mathrel{}\middle| \mathrel{} {\tilde E_{1}}(w)\le 1\textrm{ and }\langle z\mid w\rangle=E_{1}(z)\mright\}\quad \textrm{for all }z\in{\mathbb R}^{n}.
\end{equation}
In particular,
\(\partial E_{1}(0)=\mleft\{w\in{\mathbb R}^{n}\mathrel{}\middle| \mathrel{} {\tilde E_{1}}(w)\le 1\mright\}\) holds. 
The identity
\begin{equation}\label{Eq: Euler's identity}
\langle z\mid w\rangle=E_{1}(z)\quad \textrm{for all }z\in{\mathbb R}^{n},\,w\in\partial E_{1}(z)
\end{equation}
is often called Euler's identity.

We briefly give some estimates related to derivatives of \(E_{1}\) outside the origin.

First, when \(E_{1}\in C^{1}({\mathbb R}^{n}\setminus\{ 0\})\), it is easy to check
\begin{equation}\label{Eq: gradient equality for one-hom}
\nabla E_{1}(\lambda z)=\nabla E_{1} (z)\quad \textrm{for all }z\in{\mathbb R}^{n}\setminus\{ 0\},\,\lambda>0
\end{equation}
by (\ref{Eq: positive hom of deg 1}).
In particular, we have \(\nabla E_{1}\in L^{\infty}({\mathbb R}^{n})\) and
\begin{equation}\label{Eq: Lipschitz bound on E1}
\lVert \nabla E_{1}\rVert_{L^{\infty}({\mathbb R}^{n})}=K_{1}\coloneqq \max\,\mleft\{\lvert \nabla E_{1}(z) \rvert \mathrel{}\middle| \mathrel{} z\in{\mathbb R}^{n},\,\lvert z\rvert=1\mright\}\in\lbrack 0,\,\infty).
\end{equation}
Moreover, from Euler's identity (\ref{Eq: Euler's identity}), it follows that
\begin{equation}\label{Eq: Inclusion on Subdifferential}
\partial E_{1}(z)\subset \overline{B_{K_{1}}(0)}\quad \textrm{for all }z\in{\mathbb R}^{n}.
\end{equation}
Also, by $E_{1}\in C^{1}({\mathbb R}^{n}\setminus\{ 0\})$, there holds $\partial E_{1}(z)=\{\nabla E_{1}(z)\}$ for every $z\in{\mathbb R}^{n}\setminus\{0\}$.

Secondly, when $E_{1}$ satisfies \(E_{1}\in C^{2}({\mathbb R}^{n}\setminus\{ 0\})\), from (\ref{Eq: positive hom of deg 1}) it follows that
\begin{equation}\label{Eq: Hess of E-1}
\nabla^{2} E_{1}(\lambda z)=\lambda^{-1}\nabla^{2} E_{1} (z)\quad \textrm{for all }z_{0}\in{\mathbb R}^{n}\setminus\{ 0\},\,\lambda>0.
\end{equation}
Therefore, the constant
\[K_{2}\coloneqq \max\,\mleft\{ \mleft\lVert\nabla^{2}E_{1}(z)\mright\rVert \mathrel{}\middle|\mathrel{}z\in{\mathbb R}^{n},\,\lvert z\rvert=1\mright\}\in\lbrack 0,\,\infty),\]
satisfies
\begin{equation}\label{Eq: Bound of Hessian of E1}
O\leqslant \nabla^{2} E_{1}(z) \leqslant \frac{K_{2}}{\lvert z\rvert}\mathrm{id}\quad \textrm{for all }z\in{\mathbb R}^{n}\setminus\{ 0\}.
\end{equation}

Finally, it is noted that when $E_{1}\in C_{\mathrm{loc}}^{2,\,\beta_{0}}({\mathbb R}^{n}\setminus\{0\})$ with $\beta_{0}\in(0,\,1\rbrack$, we are able to define a constant
\[K_{2,\,\beta_{0}}\coloneqq\mleft\{\frac{\mleft\lVert \nabla^{2} E_{1}(z_{1})-\nabla^{2}E_{1}(z_{2}) \mright\rVert}{\lvert z_{1}-z_{2}\rvert^{\beta_{0}}}\mathrel{}\middle|\mathrel{} \frac{1}{16}\le\lvert z_{1}\rvert\le 3\textrm{ and }0<\lvert z_{1}-z_{2}\rvert\le \frac{1}{32}\mright\}\in\lbrack 0,\,\infty).\]
Then, by (\ref{Eq: Hess of E-1}), it is easy to check that 
\begin{equation}
\mleft\lVert \nabla^{2}E_{1}(z_{1})-\nabla^{2}E_{1}(z_{2}) \mright\rVert\le K_{2,\,\beta_{0}}\mu^{-1-\beta_{0}}\lvert z_{1}-z_{2} \rvert^{\beta_{0}}.
\end{equation}
for all $\mu\in(0,\,\infty)$ and $z_{1},\,z_{2}\in{\mathbb R}^{n}\setminus\{ 0\}$ enjoying (\ref{Eq: C-2-beta variable condition}).


\subsection{Approximation based on mollifiers}\label{Subsect: approximation of density}
Section \ref{Subsect: approximation of density} is focused on the approximation of \(E=E_{1}+E_{p}\), which is based on the convolution by the Friedrichs mollifier. We would like to check that the relaxed density $E_{\varepsilon}=j_{\varepsilon}\ast E$ satisfies some quantitative estimates.

For the density \(E_{p}\in C^{1}({\mathbb R}^{n})\cap C^{2}({\mathbb R}^{n}\setminus\{ 0\})\) satisfying (\ref{Eq: Gradient bound condition for Ep})--(\ref{Eq: Hessian condition for Ep}), we define an approximated function \(E_{p,\,\varepsilon}\in C^{\infty}({\mathbb R}^{n})\) by
\begin{equation}\label{Eq: General approximation scheme of density and operator p}
E_{p,\,\varepsilon}\coloneqq j_{\varepsilon}\ast E_{p},\quad\textrm{so that}\quad \nabla E_{p,\,\varepsilon}=j_{\varepsilon}\ast\nabla E_{p}
\end{equation}
for each \(\varepsilon\in(0,\,1)\). By applying Lemmata \ref{Lemma: Mollified estimates}--\ref{Lemma: Coerciveness keeps under convolution} (see also \cite[Lemma 2]{MR1634641}, \cite[Lemma 2.4]{MR1612389}), we can check that \(E_{p,\,\varepsilon}\) defined by (\ref{Eq: General approximation scheme of density and operator p}) satisfies 
\begin{equation}\label{Eq: Gradient bound for nabla Ep}
\lvert \nabla E_{p,\,\varepsilon}(z_{0})\rvert\le \Lambda_{0}^{\prime} \mleft(\varepsilon^{2}+\lvert z_{0}\rvert^{2}\mright)^{(p-1)/2},
\end{equation}
\begin{equation}\label{Eq: Hessian estimate for Ep}
\lambda_{0}^{\prime}\mleft(\varepsilon^{2}+\lvert z_{0}\rvert^{2}\mright)^{p/2-1}\mathrm{id}\leqslant \nabla^{2}E_{p,\,\varepsilon}(z_{0})\leqslant \Lambda_{0}^{\prime}\mleft(\varepsilon^{2}+\lvert z_{0}\rvert^{2}\mright)^{p/2-1}\mathrm{id}
\end{equation}
for all \(z_{0}\in{\mathbb R}^{n}\) and \(\varepsilon\in(0,\,1)\). Here the constants \(0<\lambda_{0}^{\prime}\le \Lambda_{0}^{\prime}<\infty\) may depend on \(n,\,p,\,\lambda_{0},\,\Lambda_{0}\), but are independent of \(\varepsilon\in(0,\,1)\).
From (\ref{Eq: Hessian estimate for Ep}), it is possible to get growth and monotonicity estimates for $\nabla E_{p,\,\varepsilon}$ (see \cite{MR1634641}, \cite[Lemma 3]{MR4201656} for detailed computations). 
For growth estimates, for all \(z_{1},\,z_{2}\in{\mathbb R}^{n}\) and \(\varepsilon\in(0,\,1)\), we have
\begin{equation}\label{Eq: Local Hoelder convergence estimate on Ep with p>2}
\lvert \nabla E_{p,\,\varepsilon}(z_{1})-\nabla E_{p,\,\varepsilon}(z_{2})\rvert\le \Lambda_{0}^{\prime}C(p)\mleft(\varepsilon^{p-2}+\lvert z_{1}\rvert^{p-2}+\lvert z_{2}\rvert^{p-2}\mright)\lvert z_{1}-z_{2}\rvert
\end{equation}
provided \(2\le p<\infty\), and
\begin{equation}\label{Eq: Local Hoelder convergence estimate on Ep with p<2}
\lvert \nabla E_{p,\,\varepsilon}(z_{1})-\nabla E_{p,\,\varepsilon}(z_{2})\rvert\le \Lambda_{0}^{\prime} C(p)\lvert z_{1}-z_{2}\rvert^{p-1}
\end{equation}
provided \(1<p<2\).
Hence, from (\ref{Eq: Gradient bound for nabla Ep}) and (\ref{Eq: Local Hoelder convergence estimate on Ep with p>2})--(\ref{Eq: Local Hoelder convergence estimate on Ep with p<2}), it follows that
\begin{equation}\label{Eq: compact convergence of nabla Ep}
\nabla E_{p,\,\varepsilon}(z_{0})\rightarrow \nabla E_{p}(z_{0})\quad\textrm{as $\varepsilon\to 0$, locally uniformly for }z_{0}\in{\mathbb R}^{n}.
\end{equation}
For monotonicity estimates, we can find a constant \(C=C(p)\in(0,\,\infty)\) such that
\begin{equation}\label{Eq: Lp lower bound lemma case p>2}
\mleft\langle \nabla E_{p,\,\varepsilon}(z_{1})-\nabla E_{p,\,\varepsilon}(z_{2})\mid z_{1}-z_{2} \mright\rangle\ge \lambda C(p)\lvert z_{1}-z_{2}\rvert^{p}
\end{equation}
holds for all \(z_{1},\,z_{2}\in{\mathbb R}^{n},\,\varepsilon\in(0,\,1)\) when $2\le p<\infty$. When \(1<p<2\), we have 
\begin{equation}\label{Eq: L2 lower bound lemma case p<2}
\mleft\langle \nabla E_{p,\,\varepsilon}(z_{1})-\nabla E_{p,\,\varepsilon}(z_{2})\mid z_{1}-z_{2} \mright\rangle\ge \lambda(\varepsilon^{2}+\lvert z_{1}\rvert^{2}+\lvert z_{2}\rvert^{2})^{p/2-1} \lvert z_{1}-z_{2}\rvert^{2}
\end{equation}
for all \(z_{1},\,z_{2}\in{\mathbb R}^{n},\,\varepsilon\in(0,\,1)\).

For the positively one-homogeneous convex function \(E_{1}\in C({\mathbb R}) \cap C^{2}({\mathbb R}^{n}\setminus\{0\})\) and for each fixed \(\varepsilon\in(0,\,1)\), we aim to set a smooth convex function \(E_{1,\,\varepsilon}\in C^{\infty}({\mathbb R}^{n})\) such that 
\begin{equation}\label{Eq: approximated vector field bound in L-infty, E11}
\lvert \nabla E_{1,\,\varepsilon}(z_{0})\rvert\le K_{1}\quad\textrm{for all }z_{0}\in{\mathbb R}^{n},\,\varepsilon\in(0,\,1);
\end{equation}
\begin{equation}\label{Eq: smooth subgradient at 0}
\nabla E_{1,\,\varepsilon}(0) \textrm{ is independent of }\varepsilon\in(0,\,1),\textrm{ written by $c_{0}\in{\mathbb R}^{n}$;}
\end{equation}
\begin{equation}\label{Eq: Jensen-type inequality}
{\tilde E_{1}}\mleft(\nabla E_{1,\,\varepsilon}(z_{0})\mright)\le 1\quad \textrm{for all $z_{0}\in{\mathbb R}^{n},\,\varepsilon\in(0,\,1)$; and}
\end{equation}
\begin{equation}\label{Eq: Hessian bound of approximated E1}
O\leqslant \nabla^{2}E_{1,\,\varepsilon}(z_{0})\leqslant \frac{K_{2}^{\prime}}{\sqrt{\varepsilon^{2}+\lvert z_{0}\rvert^{2}}}\mathrm{id}\quad \textrm{for all $z_{0}\in{\mathbb R}^{n},\,\varepsilon\in(0,\,1)$.}
\end{equation}
Here the constant \(K_{2}^{\prime}\in(0,\,\infty)\) in (\ref{Eq: Hessian bound of approximated E1}) is independent of \(\varepsilon\in(0,\,1)\). 
This is possible by constructing a regularized function \(E_{1,\,\varepsilon}\in C^{\infty}({\mathbb R}^{n})\) by
\begin{equation}\label{Eq: general approximation scheme of density and operator 1}
E_{1,\,\varepsilon}\coloneqq j_{\varepsilon}\ast E_{1},\quad\textrm{so that}\quad \nabla E_{1,\,\varepsilon}=j_{\varepsilon}\ast\nabla E_{1}
\end{equation}
for each \(\varepsilon\in(0,\,1)\). 
In Lemma \ref{Lemma: mollifier approximation}, we check that this \(E_{1,\,\varepsilon}\) surely satisfies the desired properties (\ref{Eq: approximated vector field bound in L-infty, E11})--(\ref{Eq: Hessian bound of approximated E1}). 
\begin{lemma}\label{Lemma: mollifier approximation} Let \(E_{1}\in C({\mathbb R}^{n})\cap C^{1}({\mathbb R}^{n}\setminus\{ 0\})\) be a positively one-homogeneous convex function. Then, for each \(\varepsilon\in(0,\,1)\), the relaxed function \(E_{1,\,\varepsilon}\) defined by (\ref{Eq: general approximation scheme of density and operator 1}) satisfies (\ref{Eq: approximated vector field bound in L-infty, E11})--(\ref{Eq: Jensen-type inequality}). Moreover, if $E_{1}$ is in $E_{1}\in C^{2}({\mathbb R}^{n}\setminus\{ 0\})$, then there exists a constant \(K_{2}^{\prime}=K_{2}^{\prime}(K_{1},\,K_{2},\,n)\) such that (\ref{Eq: Hessian bound of approximated E1}) holds.
\end{lemma}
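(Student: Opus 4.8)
The plan is to verify the four displayed properties in turn, using only elementary facts about mollification together with the convex-analytic identities of Section~\ref{Subsect: Basic facts on convex integrands} and, for the Hessian estimate, part~\ref{Item 2/3: Growth molified} of Lemma~\ref{Lemma: Mollified estimates}. Throughout I would use that $E_1$, being convex, finite on $\mathbb{R}^n$ and positively one-homogeneous, is globally Lipschitz, so $E_1\in W^{1,\infty}_{\mathrm{loc}}(\mathbb{R}^n)$ and hence $\nabla E_{1,\varepsilon}=j_\varepsilon\ast\nabla E_1$ as already recorded in (\ref{Eq: general approximation scheme of density and operator 1}), and that $E_{1,\varepsilon}=j_\varepsilon\ast E_1\in C^{\infty}(\mathbb{R}^n)$ is convex since $j_\varepsilon\ge 0$.

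For (\ref{Eq: approximated vector field bound in L-infty, E11}) I would simply bound $\lvert\nabla E_{1,\varepsilon}(z_0)\rvert\le\int_{\mathbb{R}^n}j_\varepsilon(y)\,\lvert\nabla E_1(z_0-y)\rvert\,\mathrm{d}y\le K_1$, using $\int j_\varepsilon=1$, $j_\varepsilon\ge 0$ and (\ref{Eq: Lipschitz bound on E1}). For (\ref{Eq: smooth subgradient at 0}), the change of variables $y=\varepsilon\tilde y$ gives $\nabla E_{1,\varepsilon}(0)=\int_{B_1(0)}j(\tilde y)\,\nabla E_1(-\varepsilon\tilde y)\,\mathrm{d}\tilde y$, and the zero-homogeneity (\ref{Eq: gradient equality for one-hom}) of $\nabla E_1$ (valid for $\tilde y\neq 0$, a full-measure set) shows the integrand, hence the integral, is independent of $\varepsilon$; this defines $c_0$.

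Property (\ref{Eq: Jensen-type inequality}) is the one I expect to be the main obstacle, though it is still short: the point is to identify $\{w:\tilde E_1(w)\le 1\}$ with $\partial E_1(0)$ via (\ref{Eq: Representation of subdifferental set}), which is a closed convex set (bounded by (\ref{Eq: Inclusion on Subdifferential})), and to note that the same display gives $\nabla E_1(z)\in\partial E_1(z)\subset\partial E_1(0)$ for every $z\neq 0$. Hence for a.e.\ $y$ the point $\nabla E_1(z_0-y)$ lies in this closed convex set, so its $j_\varepsilon(y)\,\mathrm{d}y$-average $\nabla E_{1,\varepsilon}(z_0)$ lies there too, i.e.\ $\tilde E_1(\nabla E_{1,\varepsilon}(z_0))\le 1$; equivalently, one may apply Jensen's inequality to the convex function $\tilde E_1$. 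The only bookkeeping needed is that the exceptional set $\{y:z_0-y=0\}$, where $\nabla E_1$ is undefined, is negligible.

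Finally, for (\ref{Eq: Hessian bound of approximated E1}): the lower bound $\nabla^2 E_{1,\varepsilon}\geqslant O$ is immediate from convexity of $E_{1,\varepsilon}$. For the upper bound, now assuming $E_1\in C^2(\mathbb{R}^n\setminus\{0\})$, the bounds (\ref{Eq: Lipschitz bound on E1}) and (\ref{Eq: Bound of Hessian of E1}) are precisely hypotheses (\ref{Eq: H-condition 1/3})--(\ref{Eq: H-condition 2/3}) with $s=1$, $\sigma_1=\sigma_2=0$ and $L_0=\max\{K_1,K_2\}$, so part~\ref{Item 2/3: Growth molified} of Lemma~\ref{Lemma: Mollified estimates} yields (\ref{Eq: Hessian bound for mollified function}) for $E_{1,\varepsilon}$ with $L=\Gamma_0(n)L_0$, that is $\lVert\nabla^2 E_{1,\varepsilon}(z_0)\rVert\le\Gamma_0(n)\max\{K_1,K_2\}\,(\varepsilon^2+\lvert z_0\rvert^2)^{-1/2}$; since a symmetric matrix is dominated by its operator norm times $\mathrm{id}$, this is exactly (\ref{Eq: Hessian bound of approximated E1}) with $K_2'=\Gamma_0(n)\max\{K_1,K_2\}$, depending only on $n,K_1,K_2$. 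None of these steps involves a serious difficulty; the care is concentrated in the convex-analytic identification used for (\ref{Eq: Jensen-type inequality}).
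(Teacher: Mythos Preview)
Your proposal is correct and follows essentially the same route as the paper. The one cosmetic difference is in (\ref{Eq: Jensen-type inequality}): the paper tests against an arbitrary $w\in C_{E_1}$ and uses (\ref{Eq: Cauchy-Schwarz-type-ineq}) under the integral, while you average directly inside the closed convex set $\partial E_1(0)=\{\tilde E_1\le 1\}$---dual packagings of the same fact that $\nabla E_1(z)\in\partial E_1(0)$ for $z\neq 0$.
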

\begin{proof}
It is easy to get
\[\lVert\nabla (E_{1}\ast j_{\varepsilon})\rVert_{L^{\infty}({\mathbb R}^{n})}\le \lVert\nabla E_{1}\rVert_{L^{\infty}({\mathbb R}^{n})}\lVert j_{\varepsilon}\rVert_{L^{1}({\mathbb R}^{n})}\le K_{1},\]
which yields (\ref{Eq: approximated vector field bound in L-infty, E11}).

By change of variables and (\ref{Eq: gradient equality for one-hom}), we can check that
\begin{align*}
(j_{\varepsilon}\ast \nabla E_{1})(0)&=\int_{{\mathbb R}^{n}}\varepsilon^{-n}j(-y/\varepsilon)\nabla E_{1}(y)\,{\mathrm d}y\\& =\int_{{\mathbb R}^{n}}j(-x)\nabla E_{1}(\varepsilon x)\,{\mathrm d}x\\&=\int_{{\mathbb R}^{n}}j(-x)\nabla E_{1}(x)\,{\mathrm d}x\\&=(j\ast\nabla E_{1})(0)\eqqcolon c_{0}
\end{align*}
for all \(\varepsilon\in(0,\,1)\). Clearly, this \(c_{0}\in{\mathbb R}^{n}\) is independent of \(\varepsilon\in(0,\,1)\), which yields (\ref{Eq: smooth subgradient at 0}).

We take arbitrary \(w\in C_{E_{1}}\) and \(z_{0}\in{\mathbb R}^{n}\).
We recall that there holds \(\partial E_{1}(z_{0}-y)=\{\nabla E_{1}(z_{0}-y)\}\) for all \(y\in{\mathbb R}^{n}\setminus\{ z_{0}\}\) by \(E_{1}\in C^{1}({\mathbb R}^{n}\setminus\{ 0\})\), and hence we obtain \({\tilde E_{1}}(\nabla E_{1}(z_{0}-y))\le 1<\infty\). 
Combining with (\ref{Eq: Cauchy-Schwarz-type-ineq}), we compute
\begin{align*}
\mleft\langle \nabla(j_{\varepsilon}\ast E_{1})(z_{0})\mathrel{}\middle| \mathrel{}w \mright\rangle&=\mleft\langle (j_{\varepsilon}\ast\nabla E_{1})(z_{0})\mathrel{}\middle| \mathrel{}w \mright\rangle\\&=\mleft\langle \int_{{\mathbb R}^{n}}j_{\varepsilon}(y)\nabla E_{1}(z_{0}-y)\,{\mathrm d}y\mathrel{}\middle| \mathrel{}w \mright\rangle\\&=\int_{{\mathbb R}^{n}\setminus\{ z_{0}\}}\mleft\langle \nabla E_{1}(z_{0}-y)\mathrel{}\middle| \mathrel{}w \mright\rangle\,\cdot\,j_{\varepsilon}(y)\,{\mathrm d}y\\&\le E_{1}(w)\int_{{\mathbb R}^{n}\setminus\{ z_{0}\}}j_{\varepsilon}(y)\,{\mathrm d}y\le 1.  
\end{align*} 
Since \(w\in C_{E_{1}}\) is arbitrary, this completes the proof of (\ref{Eq: Jensen-type inequality}).

We let $E_{1}\in C^{2}({\mathbb R}^{n}\setminus\{0\})$, so that (\ref{Eq: Bound of Hessian of E1}) holds. Let \(z_{0}\in {\mathbb R}^{n}\), and \(\lambda\) be an arbitrary eigenvalue of \(\nabla^{2} E_{1,\,\varepsilon}(z_{0})\). Then, \(\lambda\ge 0\) is clear by convexity of \(E_{1,\,\varepsilon}\in C^{\infty}({\mathbb R}^{n})\).
Moreover, by (\ref{Eq: Lipschitz bound on E1})--(\ref{Eq: Bound of Hessian of E1}), we are able to apply Lemma \ref{Lemma: Mollified estimates} to obtain
\[\lambda=\lvert \lambda\rvert \le \mleft\lVert \nabla^{2}E_{1,\,\varepsilon}(z_{0}) \mright\rVert \le K_{2}^{\prime}\mleft(\varepsilon^{2}+\lvert z_{0}\rvert^{2}\mright)^{-1/2}\]
with \(K_{2}^{\prime}\) depending at most on \(K_{1},\,K_{2},\,n\).
This result yields (\ref{Eq: Hessian bound of approximated E1}).
\end{proof}
It should be noted that (\ref{Eq: Hessian bound of approximated E1}) clearly yields
\[\mleft\lVert \nabla^{2}E_{1,\,\varepsilon}(z_{0})\mright\rVert\le\frac{K_{2}^{\prime}}{\lvert z_{0}\rvert}\quad \textrm{for all \(\varepsilon\in(0,\,1)\) and \(z_{0}\in{\mathbb R}^{n}\setminus\{0\}\)}.\]
By this and (\ref{Eq: approximated vector field bound in L-infty, E11}), we are able to apply Lemma \ref{Lemma: Local C11 estimate} with $s=1,\,\sigma_{1}=\sigma_{2}=0$. Then, there exists a constant \(C=C(K_{1},\,K_{2}^{\prime})\) such that
\begin{equation}\label{Eq: Local Lipschitz estimate on A-1-epsilon}
\lvert \nabla E_{1,\,\varepsilon}(z_{1})-\nabla E_{1,\,\varepsilon}(z_{2})\rvert\le C\min\mleft\{\,\lvert z_{1}\rvert^{-1},\,\lvert z_{2}\rvert^{-1} \,\mright\}\lvert z_{1}-z_{2}\rvert
\end{equation}
for all \(z_{1},\,z_{2}\in{\mathbb R}^{n}\setminus\{0\}\) and \(\varepsilon\in(0,\,1)\).
Hence from (\ref{Eq: approximated vector field bound in L-infty, E11}) and (\ref{Eq: Local Lipschitz estimate on A-1-epsilon}), it follows that
\begin{equation}\label{Eq: uniformly local convergence}
\nabla E_{1,\,\varepsilon}(z_{0})\rightarrow \nabla E_{1}(z_{0})\quad\textrm{as $\varepsilon\to 0$, locally uniformly for }z_{0}\in{\mathbb R}^{n}\setminus\{ 0\}.
\end{equation}
By $E_{1,\,\varepsilon}\in C^{1}({\mathbb R}^{n})$, it should be noted that the inequality (\ref{Eq: Local Lipschitz estimate on A-1-epsilon}) is valid as long as $(z_{1},\,z_{2})\neq (0,\,0)$. Also, by convexity of $E_{1,\,\varepsilon}\in C^{1}({\mathbb R}^{n})$, it is easy to get a monotonicity estimate
\begin{equation}\label{Eq: non-quantitative monotonicity}
\mleft\langle \nabla E_{1,\,\varepsilon}(z_{1})-\nabla E_{1,\,\varepsilon}(z_{2})\mid z_{1}-z_{2} \mright\rangle\ge 0
\end{equation}for all $z_{1},\,z_{2}\in{\mathbb R}^{n}$. 

Hereinafter we set constants \(0<\lambda\le \Lambda<\infty,\,0<K<\infty\) by
\[\lambda\coloneqq \min\mleft\{\,\lambda_{0},\,\lambda_{0}^{\prime}\,\mright\},\quad \Lambda\coloneqq \max\mleft\{\,\Lambda_{0},\,\Lambda_{0}^{\prime}\,\mright\},\quad K\coloneqq\max \mleft\{\,K_{1},\,K_{2},\,K_{2}^{\prime},\,K_{2,\,\beta_{0}}\,\mright\}\]
for notational simplicity.
Finally, for an approximation parameter \(\varepsilon\in(0,\,1)\), the regularized operator ${\mathcal L}^{\varepsilon}$ is given by
\begin{equation}\label{Eq: Regularized equation}
{\mathcal L}^{\varepsilon}u_{\varepsilon}\coloneqq -\mathrm{div}\,(\nabla E_{1,\,\varepsilon}(\nabla u_{\varepsilon})+\nabla E_{p,\,\varepsilon}(\nabla u_{\varepsilon}))=-\mathrm{div}\,(\nabla E_{\varepsilon}(\nabla u_{\varepsilon})),
\end{equation}
with \[E_{\varepsilon}\coloneqq E_{1,\,\varepsilon}+E_{p,\,\varepsilon}\in C^{\infty}({\mathbb R}^{n}).\]
By (\ref{Eq: compact convergence of nabla Ep}), (\ref{Eq: smooth subgradient at 0}), and (\ref{Eq: uniformly local convergence}), we can easily check that
\begin{equation}\label{Eq: A-0 pointwise convergence to dummy vector field}
\nabla E_{\varepsilon}(z_{0})\rightarrow A_{0}(z_{0})\coloneqq \mleft\{\begin{array}{cc}\nabla E_{1}(z_{0})+\nabla E_{p}(z_{0}) & (z_{0}\not= 0),\\ c_{0}& (z_{0}=0),\end{array}\mright.\quad \textrm{as }\varepsilon\to 0
\end{equation}
for each fixed \(z_{0}\in{\mathbb R}^{n}\).
The limit vector field \(A_{0}\colon{\mathbb R}^{n}\rightarrow {\mathbb R}^{n}\) is in general not continuous at the origin, but Borel measurable in ${\mathbb R}^{n}$. In particular, for every \(v\in L^{p}(\Omega;\,{\mathbb R}^{n})\), we can define a Lebesgue measurable vector field \(A_{0}(v)\), which is in \(L^{p^{\prime}}(\Omega;\,{\mathbb R}^{n})\) by (\ref{Eq: Gradient bound condition for Ep}) and (\ref{Eq: Lipschitz bound on E1}).

By (\ref{Eq: Hessian estimate for Ep}) and (\ref{Eq: Hessian bound of approximated E1}), it is clear that \(E_{\varepsilon}\) satisfies
\begin{equation}\label{Eq: Hessian estimate for approximated E}
\lambda\mleft(\varepsilon^{2}+\lvert z_{0}\rvert^{2}\mright)^{p/2-1}\mathrm{id}\leqslant \nabla^{2}E_{\varepsilon}(z_{0})\leqslant\mleft[\Lambda \mleft(\varepsilon^{2}+\lvert z_{0}\rvert^{2}\mright)^{p/2-1}+ \frac{K}{\sqrt{\varepsilon^{2}+\lvert z_{0}\rvert^{2}}}\mright]\mathrm{id}
\end{equation}
for all $z_{0}\in{\mathbb R}^{n},\,\varepsilon\in(0,\,1)$. In particular, the ellipticity ratio of $\nabla^{2}E_{\varepsilon}(z_{0})$ can be measured by $\sqrt{\varepsilon^{2}+\lvert z_{0}\rvert^{2}}$.

From the assumptions \(E_{1},\,E_{p}\in C_{\mathrm{loc}}^{2,\,\beta_{0}}({\mathbb R}^{n}\setminus\{ 0\})\) and Lemma \ref{Lemma: Mollified estimates}, we would like to deduce an error estimate on the Hessian matrix \(\nabla^{2} E_{\varepsilon}\). The following Lemma \ref{Lemma: Error estimate on Hess E-epsilon} will be applied in our freezing coefficient argument. 
\begin{lemma}\label{Lemma: Error estimate on Hess E-epsilon}
Let positive numbers $\delta,\,\varepsilon$ satisfy (\ref{Eq: Range of delta-epsilon}). Assume that the convex functions $E_{1}\in C^{0}({\mathbb R}^{n})\cap C_{\mathrm{loc}}^{2,\,\beta_{0}}({\mathbb R}^{n}\setminus\{ 0\})$ and $E_{p}\in C^{1}({\mathbb R}^{n})\cap C_{\mathrm{loc}}^{2,\,\beta_{0}}({\mathbb R}^{n}\setminus\{ 0\})$ satisfy (\ref{Eq: positive hom of deg 1})--(\ref{Eq: C-2-beta growth for Ep}). Then, the relaxed function $E_{\varepsilon}=E_{1,\,\varepsilon}+E_{p,\,\varepsilon}$, defined by (\ref{Eq: General approximation scheme of density and operator p}) and (\ref{Eq: general approximation scheme of density and operator 1}) for each $\varepsilon\in(0,\,1)$, satisfies the following:
\begin{enumerate}
\item \label{Item 1/2: Monotonicity and Growth estimate} Let $M\in(\delta,\,\infty)$ be a fixed constant. Then, for all $z_{1},\,z_{2}\in{\mathbb R}^{n}$ satisfying
\[\lvert z_{1}\rvert\le M\quad \textrm{and}\quad \delta\le \lvert z_{2}\rvert\le M,\]
we have
\begin{equation}\label{Eq: Monotonicity outside}
\mleft\langle \nabla E_{\varepsilon}(z_{1})-\nabla E_{\varepsilon}(z_{2}) \mathrel{}\middle|\mathrel{}z_{1}-z_{2} \mright\rangle\ge C_{1}\lvert z_{1}-z_{2}\rvert^{2},
\end{equation}
and
\begin{equation}\label{Eq: Growth outside}
\mleft\lvert \nabla E_{\varepsilon}(z_{1})-\nabla E_{\varepsilon}(z_{2}) \mright\rvert\le C_{2}\lvert z_{1}-z_{2}\rvert.
\end{equation}
Here the constants $C_{1},\,C_{2}\in(0,\,\infty)$ depend at most on $p$, $\lambda$, $\Lambda$, $K$, $\delta$, and $M$.
\item \label{Item 2/2: Hessian errors on E-epsilon} For all $\mu\in(\delta,\,\infty)$, and \(z_{1},\,z_{2}\in{\mathbb R}^{n}\) satisfying (\ref{Eq: Outside condition on variable}), we have
\begin{equation}\label{Eq: Error of Hess}
\mleft\lvert \nabla^{2} E_{\varepsilon}(z_{1})(z_{2}-z_{1})-(\nabla E_{\varepsilon}(z_{2})-\nabla E_{\varepsilon}(z_{1})) \mright\rvert\le C\mu^{p-2-\beta_{0}}\lvert z_{1}-z_{2}\rvert^{1+\beta_{0}},
\end{equation}
Here the constant \(C\in(0,\,\infty)\) depends at most on $n$, $p$, $\beta_{0}$, $\lambda$, $\Lambda$, $K$ and $\delta$.
\end{enumerate}
\end{lemma}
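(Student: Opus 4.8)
The plan is to reduce both parts to facts already recorded in Sections \ref{Subsect: Some Lipschitz estimates}--\ref{Subsect: approximation of density}, using in an essential way that on the regions in question the argument of $\nabla E_{\varepsilon}$ stays quantitatively away from the origin (through $\lvert z_{2}\rvert\ge\delta$ in \ref{Item 1/2: Monotonicity and Growth estimate}, and through $\lvert z_{1}\rvert\ge\delta+\mu/4$ together with $\mu>\delta$ in \ref{Item 2/2: Hessian errors on E-epsilon}); this is what tames the singular part $E_{1,\,\varepsilon}$ of $E_{\varepsilon}=E_{1,\,\varepsilon}+E_{p,\,\varepsilon}$, while the $p$-growth part $E_{p,\,\varepsilon}$ is handled by the estimates of Section \ref{Subsect: approximation of density}.

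For \ref{Item 1/2: Monotonicity and Growth estimate}, the monotonicity bound (\ref{Eq: Monotonicity outside}) is the cleaner half: since $E_{\varepsilon}\in C^{\infty}({\mathbb R}^{n})$, write $\langle\nabla E_{\varepsilon}(z_{1})-\nabla E_{\varepsilon}(z_{2})\mid z_{1}-z_{2}\rangle=\int_{0}^{1}\langle\nabla^{2}E_{\varepsilon}(w(t))(z_{1}-z_{2})\mid z_{1}-z_{2}\rangle\,{\mathrm d}t$ with $w(t)=z_{2}+t(z_{1}-z_{2})$, use the lower Hessian bound in (\ref{Eq: Hessian estimate for approximated E}), and restrict the time integral to $t\in[0,\,\delta/(4M)]$, on which $\delta/2\le\lvert w(t)\rvert\le M$ (here $\lvert w(t)\rvert\ge\lvert z_{2}\rvert-t\lvert z_{1}-z_{2}\rvert$, $\lvert z_{1}-z_{2}\rvert\le 2M$, $\delta<M$); this bounds it below by $\tfrac{\delta}{4M}\min\{(\delta/2)^{p-2},\,(1+M^{2})^{p/2-1}\}$ regardless of the sign of $p/2-1$, which yields (\ref{Eq: Monotonicity outside}). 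For the growth bound (\ref{Eq: Growth outside}) the upper Hessian bound in (\ref{Eq: Hessian estimate for approximated E}) is useless, since its term $K(\varepsilon^{2}+\lvert z\rvert^{2})^{-1/2}$ is not integrable uniformly in $\varepsilon$ along a segment meeting the origin; instead split $\nabla E_{\varepsilon}=\nabla E_{1,\,\varepsilon}+\nabla E_{p,\,\varepsilon}$. The $E_{1,\,\varepsilon}$-difference is controlled by (\ref{Eq: Local Lipschitz estimate on A-1-epsilon}), whose factor $\min\{\lvert z_{1}\rvert^{-1},\,\lvert z_{2}\rvert^{-1}\}$ is $\le\delta^{-1}$ since $\lvert z_{2}\rvert\ge\delta$. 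For the $E_{p,\,\varepsilon}$-difference I would distinguish $\lvert z_{1}-z_{2}\rvert\le\delta/2$, in which case the whole segment stays in $\{\delta/2\le\lvert z\rvert\le M\}$ and the upper bound in (\ref{Eq: Hessian estimate for Ep}) gives Lipschitz continuity with constant $\Lambda\max\{(\delta/2)^{p-2},\,(1+M^{2})^{p/2-1}\}$, from $\lvert z_{1}-z_{2}\rvert>\delta/2$, in which case (\ref{Eq: Gradient bound for nabla Ep}) gives $\lvert\nabla E_{p,\,\varepsilon}(z_{1})\rvert+\lvert\nabla E_{p,\,\varepsilon}(z_{2})\rvert\le 2\Lambda(1+M^{2})^{(p-1)/2}\le 4\Lambda(1+M^{2})^{(p-1)/2}\delta^{-1}\lvert z_{1}-z_{2}\rvert$. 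All constants so produced depend only on $p,\,\lambda,\,\Lambda,\,K,\,\delta,\,M$.

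For \ref{Item 2/2: Hessian errors on E-epsilon}, the plan is to apply part \ref{Item 3/3: Hessian errors} of Lemma \ref{Lemma: Mollified estimates} to each summand of $E_{\varepsilon}$ and add. Taking $H=E_{p}$ and $s=p$, the hypotheses (\ref{Eq: Gradient bound condition for Ep}), (\ref{Eq: Hessian condition for Ep}), (\ref{Eq: C-2-beta growth for Ep}) are exactly (\ref{Eq: H-condition 1/3})--(\ref{Eq: H-condition 2/3}) with $\sigma_{1}=\sigma_{2}=0$ and (\ref{Eq: C-2-beta assumption on H}), so (\ref{Eq: Error estimate on Hessian}) yields a bound $C\Lambda\mu^{p-2-\beta_{0}}\lvert z_{1}-z_{2}\rvert^{1+\beta_{0}}$. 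Taking $H=E_{1}$ and $s=1$, the bounds (\ref{Eq: Lipschitz bound on E1}), (\ref{Eq: Bound of Hessian of E1}) and the $K_{2,\,\beta_{0}}$-estimate established at the end of Section \ref{Subsect: Basic facts on convex integrands} supply the same three hypotheses with $s=1$, so Lemma \ref{Lemma: Mollified estimates} gives a bound $CK\mu^{-1-\beta_{0}}\lvert z_{1}-z_{2}\rvert^{1+\beta_{0}}$. Since $\mu>\delta$ and $p>1$, one has $\mu^{-1-\beta_{0}}=\mu^{1-p}\mu^{p-2-\beta_{0}}\le\delta^{1-p}\mu^{p-2-\beta_{0}}$, so the $E_{1}$-contribution is absorbed into the required form; adding the two bounds gives (\ref{Eq: Error of Hess}) with a constant depending only on $n,\,p,\,\beta_{0},\,\Lambda,\,K,\,\delta$, hence on the quantities listed.

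I expect the main obstacle to lie in \ref{Item 1/2: Monotonicity and Growth estimate}: although each individual estimate is elementary, the growth bound (\ref{Eq: Growth outside}) defeats a naive Hessian argument precisely because $z_{1}$ may sit at or near the facet, where $\nabla^{2}E_{1,\,\varepsilon}$ is only $O(1/\varepsilon)$, so one must genuinely split $E_{\varepsilon}$ and, for $E_{p,\,\varepsilon}$ in the singular range $1<p<2$, carry out the further dichotomy on $\lvert z_{1}-z_{2}\rvert$; care is then needed to check that every constant depends only on the advertised parameters. Part \ref{Item 2/2: Hessian errors on E-epsilon}, by contrast, is essentially a bookkeeping application of Lemma \ref{Lemma: Mollified estimates}.
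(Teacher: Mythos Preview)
Your proposal is correct and tracks the paper's proof closely. For \ref{Item 2/2: Hessian errors on E-epsilon} the argument is identical: apply Lemma~\ref{Lemma: Mollified estimates}~\ref{Item 3/3: Hessian errors} to $E_{1}$ (with $s=1$) and $E_{p}$ (with $s=p$) separately and absorb the $s=1$ bound via $\mu^{-1-\beta_{0}}=\mu^{1-p}\mu^{p-2-\beta_{0}}\le\delta^{1-p}\mu^{p-2-\beta_{0}}$. For \ref{Item 1/2: Monotonicity and Growth estimate} the paper likewise splits $E_{\varepsilon}=E_{1,\,\varepsilon}+E_{p,\,\varepsilon}$ and simply cites the ready-made estimates (\ref{Eq: L2 upper bound lemma case p<2}), (\ref{Eq: L2 lower bound lemma case p>2}), (\ref{Eq: L2 lower bound lemma case p<2}), (\ref{Eq: non-quantitative monotonicity}), using that $\lvert z_{2}\rvert^{p-2}\le\delta^{p-2}$ or $\lvert z_{2}\rvert^{p-2}\ge\delta^{p-2}$ according to the sign of $p-2$; your direct Hessian-integral argument for (\ref{Eq: Monotonicity outside}) (restricting $t$ to $[0,\,\delta/(4M)]$) and your dichotomy on $\lvert z_{1}-z_{2}\rvert$ for the $E_{p,\,\varepsilon}$ part of (\ref{Eq: Growth outside}) are harmless reworkings of the same facts.
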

\begin{proof}
\ref{Item 1/2: Monotonicity and Growth estimate}. It is easy to check
\[\varepsilon^{l}+\lvert z_{1}\rvert^{l}+\lvert z_{2}\rvert^{l}\le 3M^{l}\quad \textrm{for every }l\in\lbrack 0,\,\infty),\]
and
\[\mleft\{\begin{array}{cccc}\lvert z_{2}\rvert^{p-2}&\le&\delta^{p-2}&\textrm{for }1< p\le 2,\\
\lvert z_{2}\rvert^{p-2}&\ge& \delta^{p-2}& \textrm{for }2\le p<\infty,\end{array}\mright.\]
by our settings of $z_{1}$ and $z_{2}$. 
With these results in mind, we are able to deduce (\ref{Eq: Monotonicity outside})--(\ref{Eq: Growth outside}) by applying (\ref{Eq: L2 upper bound lemma case p<2}) or (\ref{Eq: L2 lower bound lemma case p>2}) with $H_{\varepsilon}=E_{s,\,\varepsilon}\,(1\le s<\infty)$, or simply using (\ref{Eq: L2 lower bound lemma case p<2}) and (\ref{Eq: non-quantitative monotonicity}).

\ref{Item 2/2: Hessian errors on E-epsilon}.
We apply Lemma \ref{Lemma: Mollified estimates} \ref{Item 3/3: Hessian errors} to the functions \(E_{1,\,\varepsilon}\) and \(E_{p,\,\varepsilon}\). It should be mentioned that the condition \(0<\delta<\mu\) clearly yields \(\mu^{-1-\beta_{0}}=\mu^{1-p}\cdot \mu^{p-2-\beta_{0}}\le \delta^{1-p}\mu^{p-2-\beta_{0}}\), from which (\ref{Eq: Error of Hess}) easily follows.
\end{proof}

\subsection{An alternative approximation scheme}\label{Rmk: Uhlenbeck structure case} 
When the densities \(E_{1}\) and \(E_{p}\) have so called the Uhlenbeck structure, it is possible to give relaxed densities $E_{1,\,\varepsilon}$ and $E_{p,\,\varepsilon}$ without convolution by standard mollifiers. For this alternative approximation scheme, we are able to prove rather easily that the properties (\ref{Eq: Gradient bound for nabla Ep})--(\ref{Eq: Hessian estimate for Ep}), (\ref{Eq: approximated vector field bound in L-infty, E11})--(\ref{Eq: Hessian bound of approximated E1}) and (\ref{Eq: Error of Hess}), which are shown in Section \ref{Subsect: approximation of density}, are valid. In Section \ref{Rmk: Uhlenbeck structure case}, we would like to explain briefly how to justify these desired properties. For simplicity, we let the densities $E_{1}$ and $E_{p}$ be in $C^{3}$ outside the origin.

We consider the special case where the function \(E_{p}\) is of the form
\[E_{p}(z)=\frac{1}{2}g_{p}(\lvert z\rvert^{2})\quad \textrm{and therefore}\quad \nabla E_{p}(z)=g_{p}^{\prime}(\lvert z\rvert^{2})z\quad \textrm{for }z\in{\mathbb R}^{n}\setminus\{0\}.\]
Here \(g_{p}\colon [0,\,\infty)\rightarrow [0,\,\infty)\) is a continuous function satisfying \(g_{p}\in C^{3}((0,\,\infty))\), and we let this $g_{p}$ admit positive constants \(\gamma_{p},\,\Gamma_{p}\in(0,\,\infty)\) such that
\begin{equation}\label{Eq: Condition of gp}
\mleft\{\begin{array}{rcl}
\mleft\lvert g_{p}^{\prime}(\sigma)\mright\rvert &\le & \Gamma_{p}\sigma^{p/2-1},\\ \mleft\lvert g_{p}^{\prime\prime}(\sigma) \mright\rvert & \le & \Gamma_{p}\sigma^{p/2-2},\\ \gamma_{p}(\tau+\sigma)^{p/2-1} & \le & g_{p}^{\prime}(\tau+\sigma)+2\sigma\min\mleft\{\,0,\,g_{p}^{\prime\prime}(\tau+\sigma)\,\mright\},\\
\mleft\lvert g^{\prime\prime\prime}(\sigma)\mright\rvert  & \le & \Gamma_{p}\sigma^{p/2-3},\end{array} \mright. 
\end{equation}
for all $\sigma\in(0,\,\infty),\,\tau\in(0,\,1)$.
Then, for each \(z=(z_{1},\,\dots,\,z_{n})\in {\mathbb R}^{n}\setminus\{ 0\}\), we compute
\[\mleft\{\begin{array}{rcl}\nabla^{2}E_{p}(z)&=&g_{p}^{\prime}(\lvert z\rvert^{2})\mathrm{id}+2g_{p}^{\prime\prime}(\lvert z\rvert^{2})z\otimes z,\\ \nabla^{3}E_{p}(z)&=&4g_{p}^{\prime\prime\prime}(\lvert z\rvert^{2})(z_{i}z_{j}z_{k})_{i,\,j,\,k}+2g_{p}^{\prime\prime}(\lvert z\rvert^{2})(z_{j}\delta_{ki}+z_{k}\delta_{ij})_{i,\,j,\,k},\end{array}\mright.\]
where $\delta_{ij}$ denotes Kronecker's delta.
From this result, we can check that \(E_{p}=g_{p}(\lvert z\rvert^{2})/2\) satisfies (\ref{Eq: Gradient bound condition for Ep})--(\ref{Eq: Hessian condition for Ep}) with \(0<\lambda_{0}=\gamma_{p}\le \Lambda_{0}=\Lambda_{0}(\Gamma_{p})<\infty\). Moreover, we have
\begin{equation}\label{Eq: Third-order derivative bound for Ep}
\mleft\lvert \nabla^{3} E_{p}(z) \mright\rvert\le \Lambda_{1}\lvert z_{0}\rvert^{p-3}\quad \textrm{for all }z\in{\mathbb R}^{n}\setminus\{ 0\}.
\end{equation}
for some constant $\Lambda_{1}\in(0,\,\infty)$ depending at most on $\Gamma_{p}$ and $n$. From (\ref{Eq: Third-order derivative bound for Ep}), it is easy to conclude that (\ref{Eq: C-2-beta growth for Ep}) holds for all $z_{1},\,z_{2}\in{\mathbb R}^{n}$ with (\ref{Eq: C-2-beta variable condition}) and $\beta_{0}=1$. Indeed, the triangle inequality and (\ref{Eq: C-2-beta variable condition}) imply
\[0<\frac{\mu}{32}\le\lvert z_{1}+t(z_{2}-z_{1})\rvert\le 4\mu\quad \textrm{for all }t\in\lbrack0,\,1\rbrack.\]
Hence by $E_{p}\in C^{3}({\mathbb R}^{n}\setminus\{ 0\})$, (\ref{Eq: Third-order derivative bound for Ep}) and the Cauchy--Schwarz inequality, we have
\begin{align*}
\mleft\lVert \nabla^{2}E_{p}(z_{2})-\nabla^{2}E_{p}(z_{1})\mright\rVert&\le \mleft\lvert \nabla^{2}E_{p}(z_{2})-\nabla^{2}E_{p}(z_{1})\mright\rvert\\&\le \mleft(\int_{0}^{1}\mleft\lvert \nabla^{3}E_{p}(z_{1}+t(z_{2}-z_{1})) \mright\rvert^{2}\,{\mathrm{d}}t\mright)^{1/2}\lvert z_{1}-z_{2}\rvert\\&\le \mleft(32^{3-p}\vee 4^{p-3} \mright)\Lambda_{1}\lvert z_{1}-z_{2}\rvert.
\end{align*}
We mention that this symmetric setting generalizes our model case of the $p$-Laplace operator. In fact, it is easy to check that a special function \(g_{p}(\sigma)\coloneqq 2\sigma^{p/2}/p\) satisfies (\ref{Eq: Condition of gp}) with
\[\gamma_{p}\coloneqq \min\{\,1,\,p-1\,\},\quad \Gamma_{p}\coloneqq \max\mleft\{\,1,\,\frac{\lvert p-2\rvert}{2},\,\frac{\lvert (p-2)(p-4)\rvert}{4} \,\mright\}\]
In this case, the density \(E_{p}\) and the operator \({\mathcal L}_{p}\) become \(E_{p}(z)=\lvert z\rvert^{p}/p\) and \({\mathcal L}_{p}=-\Delta_{p}\) respectively. In particular, the density $E_{p}$ given by (\ref{Eq: Special case of E1 and Ep}) surely satisfies (\ref{Eq: C-2-beta growth for Ep}) with $\beta_{0}=1$. In this setting, a relaxed density \(E_{p,\,\varepsilon}\) is alternatively given by
\[E_{p,\,\varepsilon}(z)\coloneqq g_{p}(\varepsilon^{2}+\lvert z\rvert^{2}), \quad \textrm{so that}\quad \nabla E_{p,\,\varepsilon}(z)=g_{p}^{\prime}(\varepsilon^{2}+\lvert z\rvert^{2})z\]
for each \(\varepsilon\in(0,\,1)\).
By direct computations, we can check that this \(E_{p,\,\varepsilon}\) satisfies (\ref{Eq: Gradient bound for nabla Ep})--(\ref{Eq: Hessian estimate for Ep}) with \(\lambda_{0}^{\prime}=\lambda_{0},\,\Lambda_{0}^{\prime}=\Lambda_{0}\). Moreover, there holds
\begin{equation}\label{Eq: Third derivative growth for special E-p-epsilon}
\mleft\lvert \nabla^{3}E_{p,\,\varepsilon}(z)\mright\rvert\le \Lambda_{0}\mleft(\varepsilon^{2}+\lvert z\rvert^{2} \mright)^{(p-3)/2}\quad \textrm{for all }z\in{\mathbb R}^{n}.
\end{equation}

Similarly, we let \(E_{1}\) be of the form
\[E_{1}(z)=\frac{1}{2}g_{1}(\lvert z\rvert^{2})\quad \textrm{and therefore}\quad \nabla E_{1}(z)=g_{1}^{\prime}(\lvert z\rvert^{2})z\quad \textrm{for }z\in{\mathbb R}^{n}\setminus\{0\},\]
where \(g_{1}\colon [0,\,\infty)\rightarrow [0,\,\infty)\) is a non-decreasing continuous function with \(g_{1}\in C^{2}((0,\,\infty))\).
Since $E_{1}$ is positively one-homogeneous, this forces us to determine \(g_{1}\) explicitly. Indeed, (\ref{Eq: gradient equality for one-hom}) and the Uhlenbeck structure imply that \(\lvert \nabla E_{1}(z)\rvert=g_{1}^{\prime}(\lvert z\rvert^{2})\lvert z\rvert\) is constant for \(z\in{\mathbb R}^{n}\setminus\{ 0\}\). For this reason, \(g_{1}\) must satisfy \(\sqrt{\sigma}g_{1}^{\prime}(\sigma)=b\) for all \(\sigma\in(0,\,\infty)\), where $b\in\lbrack 0,\,\infty)$ is constant.
Combining with \(g_{1}(0)=E_{1}(0)=0\), we have \(g_{1}(\sigma)=2b\sigma^{1/2}\,(\sigma\ge 0)\), which yields \(E_{1}(z)=b\lvert z\rvert\) and \({\mathcal L}_{1}=-b\Delta_{1}\). In this special case, it is possible to give a relaxed density \(E_{1,\,\varepsilon}\) by
\[E_{1,\,\varepsilon}(z)\coloneqq b\sqrt{\varepsilon^{2}+\lvert z\rvert^{2}}, \quad \textrm{so that}\quad \nabla E_{1,\,\varepsilon}(z)=\frac{bz}{\sqrt{\varepsilon^{2}+\lvert z\rvert^{2}}}\]
for each \(\varepsilon\in(0,\,1)\), similarly to $E_{p,\,\varepsilon}$.  By direct computations, it is easy to check that this \(E_{1,\,\varepsilon}\) satisfies (\ref{Eq: approximated vector field bound in L-infty, E11})--(\ref{Eq: Hessian bound of approximated E1}) with \(c_{0}=0,\,K_{1}=K_{2}=K_{2}^{\prime}=b\). Moreover, there holds
\begin{equation}\label{Eq: Third derivative growth for special E-1-epsilon}
\mleft\lvert \nabla^{3}E_{1,\,\varepsilon}(z)\mright\rvert\le K_{3}\mleft(\varepsilon^{2}+\lvert z\rvert^{2} \mright)^{-1}\quad\textmd{for all }z\in{\mathbb R}^{n}.
\end{equation}
Here the constant $K_{3}\in(0,\,\infty)$ depends at most on $b$ and $n$.

It will be worth mentioning that under these settings, the inequality (\ref{Eq: Error of Hess}) in Lemma \ref{Lemma: Error estimate on Hess E-epsilon} can be deduced rather easily with $\beta_{0}=1$. This is possible by applying the following Lemma \ref{Lemma: Quantitative error estimate on Hessian} with $H_{\varepsilon}=E_{s,\,\varepsilon}\,(s\in\{\,1,\,p\,\})$. There it should be noted that these relaxed densities satisfy the assumption (\ref{Eq: H-epsilon-condition 3/3}) by (\ref{Eq: Third derivative growth for special E-p-epsilon})--(\ref{Eq: Third derivative growth for special E-1-epsilon}).
\begin{lemma}\label{Lemma: Quantitative error estimate on Hessian}
Let \(s\in\lbrack 1,\,\infty)\), and positive numbers \(\delta,\,\varepsilon\) satisfy (\ref{Eq: Range of delta-epsilon}).
Assume that a real-valued function \(H_{\varepsilon}\in C^{3}({\mathbb R}^{n})\) admits a constant \(L\in(0,\,\infty)\), independent of \(\varepsilon\), such that
\begin{equation}\label{Eq: H-epsilon-condition 1/3}
\mleft\lvert \nabla H_{\varepsilon}(z)\mright\rvert\le L\mleft(\varepsilon^{2}+\lvert z\rvert^{2}\mright)^{(s-1)/2},
\end{equation}
\begin{equation}\label{Eq: H-epsilon-condition 2/3}
\mleft\lVert \nabla^{2}H_{\varepsilon}(z)\mright\rVert\le L\mleft(\varepsilon^{2}+\lvert z\rvert^{2}\mright)^{s/2-1},
\end{equation}
\begin{equation}\label{Eq: H-epsilon-condition 3/3}
\mleft\lvert \nabla^{3}H_{\varepsilon}(z)\mright\rvert\le L\mleft(\varepsilon^{2}+\lvert z\rvert^{2}\mright)^{(s-3)/2},
\end{equation}
for all \(z\in{\mathbb R}^{n}\).
Then, for all \(z_{1},\,z_{2}\in{\mathbb R}^{n}\) satisfying (\ref{Eq: Outside condition on variable}), we have (\ref{Eq: Error estimate on Hessian}) with $\beta_{0}=1$
.\end{lemma}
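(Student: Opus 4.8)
The plan is to adapt the argument behind Lemma~\ref{Lemma: Local C11 estimate}, inequality~(\ref{Eq: Error estimate on Hess H}), and behind Lemma~\ref{Lemma: Mollified estimates}~\ref{Item 3/3: Hessian errors}, replacing the pure power moduli $\lvert z\rvert^{s-1}$, $\lvert z\rvert^{s-2}$, $\lvert z\rvert^{s-3}$ by the regularized ones $(\varepsilon^{2}+\lvert z\rvert^{2})^{(s-1)/2}$, etc. The key point is that on the region prescribed by (\ref{Eq: Outside condition on variable}) the regularized modulus $(\varepsilon^{2}+\lvert z\rvert^{2})^{1/2}$ is comparable to $\mu$, so the approximation parameter $\varepsilon$ plays no role. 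First I would record the elementary consequences of (\ref{Eq: Range of delta-epsilon}) together with $\mu>\delta$: since $\varepsilon<\delta/8<\mu$ and $\delta<\mu$, every $z$ with $\lvert z\rvert\le\delta+\mu$ satisfies $\varepsilon^{2}+\lvert z\rvert^{2}<5\mu^{2}$, while $\lvert z_{1}\rvert\ge\delta+\mu/4>\mu/4$ forces $\varepsilon^{2}+\lvert z_{1}\rvert^{2}\in(\mu^{2}/16,\,5\mu^{2})$. Hence, distinguishing according to the sign of the exponents $(s-1)/2\ge 0$ and $s/2-1$ and using (\ref{Eq: H-epsilon-condition 1/3})--(\ref{Eq: H-epsilon-condition 2/3}), one obtains $\lvert\nabla H_{\varepsilon}(z_{i})\rvert\le C(s)L\mu^{s-1}$ for $i=1,2$ and $\lVert\nabla^{2}H_{\varepsilon}(z_{1})\rVert\le C(s)L\mu^{s-2}$.

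Next I would split into two cases according to the size of $\lvert z_{1}-z_{2}\rvert$ relative to $\mu$. In the case $\lvert z_{1}-z_{2}\rvert>\mu/32$, apply the triangle inequality to bound the left-hand side of the claimed inequality by $\lVert\nabla^{2}H_{\varepsilon}(z_{1})\rVert\lvert z_{1}-z_{2}\rvert+\lvert\nabla H_{\varepsilon}(z_{2})-\nabla H_{\varepsilon}(z_{1})\rvert\le C(s)L\bigl(\mu^{s-2}\lvert z_{1}-z_{2}\rvert+\mu^{s-1}\bigr)$ by the preliminary bounds; since $\mu<32\lvert z_{1}-z_{2}\rvert$ in this case, trading factors of $\mu$ for $\lvert z_{1}-z_{2}\rvert$ (one factor in the first summand, two in the second) turns this into $C(s)L\mu^{s-3}\lvert z_{1}-z_{2}\rvert^{2}$, as required.

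In the case $\lvert z_{1}-z_{2}\rvert\le\mu/32$, every point of the segment $[z_{1},z_{2}]$ has norm between $\mu/8$ and $3\mu$ (using $\lvert z_{1}\rvert\ge\delta+\mu/4>\mu/4$ and $\delta<\mu$), so $\varepsilon^{2}+\lvert\cdot\rvert^{2}\in(\mu^{2}/64,\,10\mu^{2})$ on that segment, and (\ref{Eq: H-epsilon-condition 3/3}) together with a last sign-of-exponent distinction gives $\lvert\nabla^{3}H_{\varepsilon}\rvert\le C(s)L\mu^{s-3}$ there. Using $H_{\varepsilon}\in C^{3}(\mathbb{R}^{n})$ and the fundamental theorem of calculus twice, I would write
\[
\nabla^{2}H_{\varepsilon}(z_{1})(z_{2}-z_{1})-\bigl(\nabla H_{\varepsilon}(z_{2})-\nabla H_{\varepsilon}(z_{1})\bigr)=\left(\int_{0}^{1}\bigl[\nabla^{2}H_{\varepsilon}(z_{1})-\nabla^{2}H_{\varepsilon}(z_{1}+t(z_{2}-z_{1}))\bigr]\,{\mathrm d}t\right)(z_{2}-z_{1}),
\]
estimate the inner difference of Hessians along the segment by $\int_{0}^{1}\lvert\nabla^{3}H_{\varepsilon}(z_{1}+st(z_{2}-z_{1}))\rvert\,t\,\lvert z_{2}-z_{1}\rvert\,{\mathrm d}s$ (using $\lvert Tv\rvert\le\lvert T\rvert\lvert v\rvert$ for the contraction of a $3$-tensor $T$ with a vector $v$), and conclude with the bound $\tfrac12 C(s)L\mu^{s-3}\lvert z_{1}-z_{2}\rvert^{2}$. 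Combining the two cases yields (\ref{Eq: Error estimate on Hessian}) with $\beta_{0}=1$.

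There is no genuine obstacle here: the statement is a routine adaptation of results already established in the excerpt. The only points requiring a little care are (i) choosing, for each exponent occurring, whether to use $\varepsilon^{2}+\lvert z\rvert^{2}\le C\mu^{2}$ or $\varepsilon^{2}+\lvert z\rvert^{2}\ge c\mu^{2}$, so that the dependence on $\varepsilon$ cancels uniformly; and (ii) checking that in the case $\lvert z_{1}-z_{2}\rvert\le\mu/32$ the whole segment $[z_{1},z_{2}]$ stays uniformly away from the origin, which is exactly what the lower bound $\lvert z_{1}\rvert\ge\delta+\mu/4$ in (\ref{Eq: Outside condition on variable}) guarantees.
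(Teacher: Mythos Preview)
Your proposal is correct and follows essentially the same idea as the paper's proof: both rely on the fact that under (\ref{Eq: Range of delta-epsilon}) and (\ref{Eq: Outside condition on variable}) with $\mu>\delta$ the regularized modulus $(\varepsilon^{2}+\lvert z\rvert^{2})^{1/2}$ is comparable to $\mu$, and both ultimately use the two-case split (large versus small $\lvert z_{1}-z_{2}\rvert$) together with a Taylor/segment argument in the small case. The only organizational difference is that the paper first rewrites the hypotheses (\ref{Eq: H-epsilon-condition 1/3})--(\ref{Eq: H-epsilon-condition 3/3}) in the form $\sigma_{k}+\lvert z\rvert^{s-k}$ with $\sigma_{k}\in\{0,\varepsilon^{s-k}\}$ and then invokes Lemma~\ref{Lemma: Local C11 estimate} (inequality (\ref{Eq: Error estimate on Hess H})) as a black box, whereas you unpack that lemma and carry out the two-case argument by hand; the paper's route is slightly shorter, yours is more self-contained.
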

\begin{proof}
We recall that there holds
\[(s+t)^{\gamma}\le \mleft\{\begin{array}{cc} t^{\gamma}& (\gamma\le 0), \\ \mleft(1\vee 2^{\gamma-1}\mright)\mleft(s^{\gamma}+t^{\gamma}\mright) & (0<\gamma<\infty), \end{array}\mright.\quad \textrm{for all }s\in\lbrack 0,\,\infty),\,t\in(0,\,\infty).\]
Hence from (\ref{Eq: H-epsilon-condition 1/3})--(\ref{Eq: H-epsilon-condition 3/3}), it follows that
\[\mleft\{\begin{array}{rcl}
\lvert \nabla H_{\varepsilon}(z)\rvert&\le&\mleft(1\vee 2^{(s-3)/2}\mright)L\cdot \mleft(\sigma_{1}+\lvert z\rvert^{s-1}\mright),\\ \mleft\lVert\nabla^{2}H_{\varepsilon}(z) \mright\rVert &\le & \mleft(1\vee 2^{(s-4)/2} \mright)L\cdot \mleft(\sigma_{2}+\lvert z\rvert^{s-2}\mright),\\ \mleft\lvert\nabla^{3}H_{\varepsilon}(z) \mright\rvert &\le & \mleft(1\vee 2^{(s-5)/2} \mright)L\cdot \mleft(\sigma_{3}+\lvert z\rvert^{s-3}\mright),
\end{array} \mright. \quad \textrm{for all }z\in{\mathbb R}^{n}\setminus\{ 0\},\]
where the constants \(\sigma_{1},\,\sigma_{2},\,\sigma_{3}\ge 0\) are defined by
\[\sigma_{k}\coloneqq \mleft\{\begin{array}{cc}
0 & (1\le s\le k),\\ \varepsilon^{s-k} &(k<s<\infty),
\end{array}\mright.\quad \textrm{for each }k\in\{\,1,\,2,\,3\,\}.\]
By Lemma \ref{Lemma: Local C11 estimate} and \(\lvert z_{1}\rvert\ge \mu/4>0\), we are able to obtain 
\begin{align*}
&\mleft\lvert \nabla^{2} H_{\varepsilon}(z_{1})(z_{2}-z_{1})-(\nabla H_{\varepsilon}(z_{2})-\nabla H_{\varepsilon}(z_{1}))\mright\rvert\\&=C(s)L\cdot \frac{\mleft(\lvert z_{1}\rvert^{s-1}+\lvert z_{2}\rvert^{s-1}\mright)+\sigma_{1}+\sigma_{2}\mleft(\lvert z_{1}\rvert+\lvert z_{2}\rvert\mright)+\sigma_{3}\lvert z_{1}\rvert^{2}}{\lvert z_{1}\rvert^{2}}\cdot\lvert z_{1}-z_{2}\rvert^{2}\\&\le C(s)L\mu^{s-3}\lvert z_{1}-z_{2}\rvert^{2} \quad \textrm{for all }z_{2}\in{\mathbb R}^{n}\setminus\{ 0\}.
\end{align*}
Here we have used \(\lvert z_{1}\rvert,\,\lvert z_{2}\rvert\le 2\mu\) and \(\sigma_{k}\le \mu^{s-k}\,(k<s<\infty)\), which immediately follow from (\ref{Eq: Range of delta-epsilon}) and (\ref{Eq: Outside condition on variable}). By \(H_{\varepsilon}\in C^{3}({\mathbb R}^{n})\), the estimate above is valid even for \(z_{2}=0\), and this completes the proof.
\end{proof}
Finally, when considering the special case where the operators are of the form
\[{\mathcal L}_{1}u=-\divx\mleft(\frac{\nabla u}{\lvert \nabla u\rvert}\mright),\quad {\mathcal L}_{p}u=-\divx\mleft(g_{p}^{\prime}(\lvert \nabla u \rvert^{2})\nabla u\mright),\]
where $g_{p}$ satisfies (\ref{Eq: Condition of gp}), we may introduce relaxed operators alternatively by
\[{\mathcal L}_{1}^{\varepsilon}u_{\varepsilon}\coloneqq \divx\mleft(\frac{\nabla u_{\varepsilon}}{\sqrt{\varepsilon^{2}+\lvert \nabla u_{\varepsilon}\rvert^{2}}}\mright),\quad {\mathcal L}_{p}^{\varepsilon}u_{\varepsilon}=-\divx\mleft(g_{p}^{\prime}(\varepsilon^{2}+\lvert \nabla u_{\varepsilon} \rvert^{2})\nabla u_{\varepsilon}\mright).\]
We note that all of the proofs after Section \ref{Rmk: Uhlenbeck structure case} work for this different approximation scheme. Also, it should be worth mentioning that, under this setting, everywhere $C^{1}$-regularity of weak solutions for elliptic system problems have been established in another recent paper \cite{T-system} by the author. There our method works under the assumption $g_{p}\in C_{\mathrm{loc}}^{2,\,\beta_{0}}((0,\,\infty))$ with $\beta_{0}\in(0,\,1\rbrack$.

\subsection{A fundamental result on convergences of vector fields}\label{Subsect: Basic results on convergence of vector fields}
In this section, we prove Lemma \ref{Lemma: a key lemma in justifications of convergence} below, which plays important roles in a mathematical justification of convergence for approximated solutions.
\begin{lemma}\label{Lemma: a key lemma in justifications of convergence}
Assume that $E_{1}\in C^{0}({\mathbb R}^{n})\cap C^{1}({\mathbb R}^{n}\setminus\{ 0\})$ is a positively one-homogeneous convex function, and $E_{p}\in C^{1}({\mathbb R}^{n})$ satisfies (\ref{Eq: Gradient bound condition for Ep}).
Let \(U\subset{\mathbb R}^{m}\) be a measurable set, and let \(\{\varepsilon_{k}\}_{k=1}^{\infty}\subset(0,\,1)\) be a non-increasing sequence such that \(\varepsilon_{k}\to 0\) as \(k\to\infty\). Assume that relaxed functions \(E_{1,\,\varepsilon},\,E_{p,\,\varepsilon}\in C^{1}({\mathbb R}^{n})\) satisfy (\ref{Eq: Gradient bound for nabla Ep}), (\ref{Eq: compact convergence of nabla Ep})--(\ref{Eq: Jensen-type inequality}), and (\ref{Eq: uniformly local convergence}). Then, we have the following:
\begin{enumerate}
\item\label{Item 1/3 strong convergence of dummy vector fields} For each fixed \(v\in L^{p}(U;\,{\mathbb R}^{n})\), we have
\begin{equation}\label{Eq: Strong convergence of dummy vector fields}
\nabla E_{\varepsilon_{k}}(v)\rightarrow A_{0}(v)\quad \textrm{in }L^{p^{\prime}}(U;\,{\mathbb R}^{n}),
\end{equation}
where \(A_{0}\) is defined by (\ref{Eq: A-0 pointwise convergence to dummy vector field}).
\item\label{Item 2/3 strong convergence of p-th growth term} 
Assume that a vector field \(v_{0}\in L^{p}(U;\,{\mathbb R}^{n})\) and a sequence of vector fields \(\{v_{\varepsilon_{k}}\}_{k=1}^{\infty}\subset L^{p}(U;\,{\mathbb R}^{n})\) satisfy \(v_{\varepsilon_{k}}\to v_{0}\) in \(L^{p}(U;\,{\mathbb R}^{n})\) as \(k\to\infty\). Then, there exists a subsequence \(\{\varepsilon_{k_{j}}\}_{j=1}^{\infty}\) such that
\begin{equation}\label{Eq: Lp-convergence of relaxed vector fields}
\nabla E_{p,\,\varepsilon_{k_{j}}}(v_{\varepsilon_{k_{j}}})\to \nabla E_{p}(v_{0})\quad \textrm{in } L^{p^{\prime}}(U;\,{\mathbb R}^{n}).
\end{equation}
\item\label{Item 3/3 constructions of subgradient vector fields} Assume that a sequence of measurable vector fields \(\{v_{\varepsilon_{k}}\colon U\rightarrow{\mathbb R}^{n}\}_{k=1}^{\infty}\) satisfies
\begin{equation}\label{Eq: a.e. conv on v.f.}
v_{\varepsilon_{k}}(x)\rightarrow v_{0}(x)\quad\textrm{for a.e. }x\in U
\end{equation}
for some measurable vector field \(v_{0}\colon U\rightarrow {\mathbb R}^{n}\).
Then, we have
\begin{equation}\label{Eq: weak star conv outside}
Z_{k}(x)\coloneqq \nabla E_{1,\,\varepsilon_{k}}(v_{\varepsilon_{k}}(x)) \overset{\ast}{\rightharpoonup}  \nabla E_{1}(v_{0}(x))\eqqcolon Z_{0} \quad \textrm{in}\quad L^{\infty}(D;\, {\mathbb R}^{n}),\end{equation}
where \(D\coloneqq \{ x\in U\mid v_{0}(x)\not= 0\}\).
Moreover, there exists a vector fields \(Z\in L^{\infty}(U,\,{\mathbb R}^{n})\) and a subsequence \(\{\varepsilon_{k_{j}}\}_{j=1}^{\infty}\) such that
\begin{equation}\label{Eq: weak star conv}
Z_{k_{j}}\overset{\ast}{\rightharpoonup}  Z\quad \textrm{in}\quad L^{\infty}(U;\,{\mathbb R}^{n}),
\end{equation}
\begin{equation}\label{Eq: limit is subgradient}
Z(x)\in \partial E_{1}(v_{0}(x))\quad \textrm{for a.e.}\quad x\in U.
\end{equation}
In particular, if a sequence of vector fields \(\{v_{\varepsilon_{k}}\}_{k=1}^{\infty}\subset L^{s}(U;\,{\mathbb R}^{n})\,(1\le s<\infty)\) converges to \(v_{0}\in L^{s}(U;\,{\mathbb R}^{n})\) with respect to the strong topology in \(L^{s}(U;\,{\mathbb R}^{n})\), then there exist a subsequence \(\{\varepsilon_{k_{j}}\}_{j=1}^{\infty}\) and a vector field \(Z\in L^{\infty}(U;\,{\mathbb R}^{n})\) satisfying (\ref{Eq: weak star conv})--(\ref{Eq: limit is subgradient}).
\end{enumerate}
\end{lemma}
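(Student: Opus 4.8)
The plan is to establish the three items in turn, each time reducing the assertion to a combination of the pointwise convergence facts for the relaxed vector fields collected in Section \ref{Subsect: approximation of density} with either Lebesgue's dominated convergence theorem or weak-$\ast$ compactness in $L^{\infty}$. For \ref{Item 1/3 strong convergence of dummy vector fields}, note that for a.e.\ $x\in U$ (in fact for every $x$) the convergence $\nabla E_{\varepsilon_{k}}(v(x))\to A_{0}(v(x))$ holds by the pointwise statement (\ref{Eq: A-0 pointwise convergence to dummy vector field}) applied with $z_{0}=v(x)$. To upgrade this to convergence in $L^{p^{\prime}}$, combine the uniform bound $\lvert\nabla E_{1,\,\varepsilon_{k}}\rvert\le K_{1}$ from (\ref{Eq: approximated vector field bound in L-infty, E11}) with $\lvert\nabla E_{p,\,\varepsilon_{k}}(z)\rvert\le\Lambda_{0}^{\prime}(\varepsilon_{k}^{2}+\lvert z\rvert^{2})^{(p-1)/2}\le\Lambda_{0}^{\prime}(1+\lvert z\rvert^{2})^{(p-1)/2}$ from (\ref{Eq: Gradient bound for nabla Ep}) and $\varepsilon_{k}<1$; since $(p-1)p^{\prime}=p$ this yields $\lvert\nabla E_{\varepsilon_{k}}(v)\rvert^{p^{\prime}}\le C(1+\lvert v\rvert^{p})\in L^{1}(U)$, so dominated convergence gives (\ref{Eq: Strong convergence of dummy vector fields}). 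For \ref{Item 2/3 strong convergence of p-th growth term}, first pass to a subsequence along which $v_{\varepsilon_{k_{j}}}\to v_{0}$ a.e.\ in $U$ and $\lvert v_{\varepsilon_{k_{j}}}\rvert\le g$ for some $g\in L^{p}(U)$. For a.e.\ $x$, the points $v_{\varepsilon_{k_{j}}}(x)$ lie in a fixed compact ball, on which $\nabla E_{p,\,\varepsilon}\to\nabla E_{p}$ uniformly by (\ref{Eq: compact convergence of nabla Ep}); since $\nabla E_{p}$ is continuous, $\nabla E_{p,\,\varepsilon_{k_{j}}}(v_{\varepsilon_{k_{j}}}(x))\to\nabla E_{p}(v_{0}(x))$ a.e.\ in $U$. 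The majorant $C(1+g^{p})\in L^{1}(U)$ coming from (\ref{Eq: Gradient bound for nabla Ep}) again makes dominated convergence applicable, giving (\ref{Eq: Lp-convergence of relaxed vector fields}).

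For \ref{Item 3/3 constructions of subgradient vector fields}, the starting point is the $\varepsilon$-uniform bound $\lVert Z_{k}\rVert_{L^{\infty}(U)}\le K_{1}$ from (\ref{Eq: approximated vector field bound in L-infty, E11}). On $D=\{v_{0}\ne 0\}$, for a.e.\ $x$ the points $v_{\varepsilon_{k}}(x)$ eventually lie in a compact subset of ${\mathbb R}^{n}\setminus\{0\}$, on which $\nabla E_{1,\,\varepsilon_{k}}\to\nabla E_{1}$ uniformly by (\ref{Eq: uniformly local convergence}); since $\nabla E_{1}$ is continuous there, $Z_{k}\to\nabla E_{1}(v_{0})$ pointwise a.e.\ on $D$, and testing against an arbitrary $\phi\in L^{1}(D;\,{\mathbb R}^{n})$ while invoking the uniform bound $\lvert Z_{k}\rvert\le K_{1}$ and dominated convergence yields the weak-$\ast$ convergence (\ref{Eq: weak star conv outside}). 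Since $U$ is $\sigma$-finite, $L^{1}(U;\,{\mathbb R}^{n})$ is separable, so the bounded sequence $\{Z_{k}\}$ has, by Banach--Alaoglu, a subsequence with $Z_{k_{j}}\overset{\ast}{\rightharpoonup}Z$ in $L^{\infty}(U;\,{\mathbb R}^{n})$ for some $Z$ with $\lVert Z\rVert_{L^{\infty}}\le K_{1}$, which is (\ref{Eq: weak star conv}).

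It remains to verify (\ref{Eq: limit is subgradient}), and this is the only step I expect to require real care, since one must pass the pointwise constraint on $Z_{k}$ through the weak-$\ast$ limit. On $D$, uniqueness of weak-$\ast$ limits and (\ref{Eq: weak star conv outside}) force $Z=\nabla E_{1}(v_{0})$ a.e., which equals $\partial E_{1}(v_{0})$ there because $E_{1}\in C^{1}({\mathbb R}^{n}\setminus\{0\})$. On $U\setminus D=\{v_{0}=0\}$ one has $\partial E_{1}(0)=\{w\in{\mathbb R}^{n}:{\tilde E_{1}}(w)\le 1\}$, a compact convex set, so by the representation (\ref{Eq: Representation of subdifferental set}) — whose condition $\langle v_{0}\mid Z\rangle=E_{1}(v_{0})$ is vacuous when $v_{0}=0$ — it suffices to check ${\tilde E_{1}}(Z(x))\le 1$ for a.e.\ $x\in U$. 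By (\ref{Eq: Jensen-type inequality}) we have $Z_{k}(x)\in\partial E_{1}(0)$ for a.e.\ $x$ and every $k$; hence, for any measurable $A\subset U$ of finite measure, the set $\{W\in L^{2}(A;\,{\mathbb R}^{n}):W(x)\in\partial E_{1}(0)\textrm{ for a.e.\ }x\}$, which is convex and strongly closed and therefore weakly closed, contains every $Z_{k}$; since $Z_{k_{j}}\rightharpoonup Z$ weakly in $L^{2}(A;\,{\mathbb R}^{n})$ (a consequence of the weak-$\ast$ convergence in $L^{\infty}(U;\,{\mathbb R}^{n})$ restricted to the finite-measure set $A$), it also contains $Z$; exhausting $U$ by finite-measure sets gives ${\tilde E_{1}}(Z)\le 1$ a.e.\ on $U$, hence (\ref{Eq: limit is subgradient}). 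Finally, if $v_{\varepsilon_{k}}\to v_{0}$ strongly in $L^{s}(U;\,{\mathbb R}^{n})$, some subsequence converges a.e.\ in $U$, and applying the argument above to that subsequence produces a further subsequence along which (\ref{Eq: weak star conv})--(\ref{Eq: limit is subgradient}) hold.
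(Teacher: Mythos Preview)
Your proof is correct and follows the same overall architecture as the paper's: dominated convergence for \ref{Item 1/3 strong convergence of dummy vector fields} and \ref{Item 2/3 strong convergence of p-th growth term}, and weak-$\ast$ compactness plus pointwise identification on $D$ for \ref{Item 3/3 constructions of subgradient vector fields}. The one substantive difference lies in how you pass the pointwise constraint $Z_{k}(x)\in\partial E_{1}(0)$ through the weak-$\ast$ limit. The paper invokes a lower-semicontinuity estimate
\[
\esssup_{x\in U}{\tilde E_{1}}(Z(x))\le\liminf_{j\to\infty}\esssup_{x\in U}{\tilde E_{1}}(Z_{k_{j}}(x)),
\]
cited from \cite[Lemma 4]{giga2021continuity}, and then combines it with (\ref{Eq: Jensen-type inequality}). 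You instead argue directly that, for each finite-measure $A\subset U$, the set $\{W\in L^{2}(A;\,{\mathbb R}^{n}):W(x)\in\partial E_{1}(0)\ \text{a.e.}\}$ is convex and strongly closed, hence weakly closed, and therefore contains the weak $L^{2}(A)$-limit of $\{Z_{k_{j}}\}$. Your route is more self-contained, replacing an external lemma by the standard Mazur/Hahn--Banach fact about convex closed sets; the paper's route packages the same idea into a reusable statement about the functional $W\mapsto\esssup_{U}{\tilde E_{1}}(W)$. Both yield the same conclusion with essentially the same effort.
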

\begin{proof}
\ref{Item 1/3 strong convergence of dummy vector fields} We fix \(v\in L^{p}(U;\,{\mathbb R}^{n})\). We note that for all \(\varepsilon\in(0,\,1)\),
\[\lvert \nabla E_{\varepsilon}(z)-A_{0}(z)\rvert\le \lvert \nabla E_{\varepsilon}(z)\rvert+\lvert A_{0}(z)\rvert\le C(1+\lvert z\rvert^{p-1})\quad \textrm{for all }z\in{\mathbb R}^{n}.\]
Here the constant \(C=C(p,\,\Lambda,\,K)\) is independent of \(\varepsilon\).
This clearly yields
\[\lvert \nabla E_{\varepsilon_{k}}(v(x))-A_{0}(v(x))\rvert^{p^{\prime}}\le C(p,\,\Lambda,\,K)(1+\lvert v(x)\rvert^{p})\eqqcolon g(x)\quad \textrm{for a.e. }x\in U,\]
where \(g\in L^{1}(U)\) is independent of \(k\in{\mathbb N}\). From (\ref{Eq: A-0 pointwise convergence to dummy vector field}), we have already known that \(\nabla E_{\varepsilon_{k}}(v(x))\rightarrow A_{0}(v(x))\) for a.e. \(x\in U\). Therefore by Lebesgue's dominated convergence theorem, we obtain (\ref{Eq: Strong convergence of dummy vector fields}).

\ref{Item 2/3 strong convergence of p-th growth term} By \cite[Theorem 4.9]{MR2759829}, we may take a subsequence \(\{v_{\varepsilon_{k_{j}}}\}_{j=1}^{\infty}\) such that
\begin{equation}\label{Eq: a.e. conv of v-epsilon}
v_{\varepsilon_{k_{j}}}(x)\rightarrow v_{0}(x)\quad\textrm{for a.e. }x\in U,
\end{equation}
\begin{equation}\label{Eq: a.e. bound of v-epsilon}
\lvert v_{\varepsilon_{k_{j}}}(x)\rvert\le h(x)\quad\textrm{for a.e. }x\in U,
\end{equation}
where \(h\in L^{p}(U)\) is independent of \(j\in{\mathbb N}\). From (\ref{Eq: compact convergence of nabla Ep}) and (\ref{Eq: a.e. conv of v-epsilon}), we can check that \(\nabla E_{p,\,\varepsilon_{k_{j}}}(v_{\varepsilon_{k_{j}}}(x))\to \nabla E_{p}(v_{0}(x))\) for a.e. \(x\in U\). Also, as in the proof of \ref{Item 1/3 strong convergence of dummy vector fields}, we can check that for a.e. \(x\in U\) there holds
\[\lvert \nabla E_{p,\,\varepsilon_{k_{j}}}(v_{\varepsilon_{k_{j}}}(x))-\nabla E_{p}(v_{0}(x))\rvert^{p^{\prime}}\le C\mleft(1+\lvert v_{0}(x)\rvert^{p}+h(x)^{p}\mright)\eqqcolon g(x).\]
Here the constant \(C=C(p,\,\Lambda,\,K)>0\) is independent of \(j\in{\mathbb N}\), and so is \(g\in L^{1}(U)\). We can conclude (\ref{Eq: Lp-convergence of relaxed vector fields}) from Lebesgue's dominated convergence theorem.

\ref{Item 3/3 constructions of subgradient vector fields} By (\ref{Eq: uniformly local convergence}) and (\ref{Eq: a.e. conv on v.f.}), we can check that \(Z_{k}(x)\to Z_{0}(x)\) for a.e. \(x\in D\). 
By (\ref{Eq: approximated vector field bound in L-infty, E11}) the vector field $Z_{k}$ satisfies \(\lVert Z_{k}\rVert_{L^{\infty}(U)}\le K\) for all \(k\in{\mathbb N}\). Hence, the weak convergence (\ref{Eq: weak star conv outside}),
\[\textrm{i.e.,}\int_{D}\langle Z_{k}(x)\mid\phi(x)\rangle\,{\mathrm d}x\rightarrow \int_{D}\langle Z_{0}(x)\mid\phi(x)\rangle\,{\mathrm d}x\quad\textrm{for all }\phi\in L^{1}(D;\,{\mathbb R}^{n})\]
easily follows from Lebesgue's dominated convergence theorem.

We should recall that the sequence \(\{Z_{k}\}_{k}\subset L^{\infty}(U;\,{\mathbb R}^{n})\) is bounded, and that the function space \(L^{1}(U;\,{\mathbb R}^{n})\), which is the predual of \(L^{\infty}(U;\,{\mathbb R}^{n})\), is separable. Hence by \cite[Corollary 3.30]{MR2759829}, we may take a vector field \(Z\in L^{\infty}(U;\,{\mathbb R}^{n})\) and a subsequence \(\{\varepsilon_{k_{j}}\}_{j}\) such that (\ref{Eq: weak star conv}) holds.
We are left to show that this weak$^{\ast}$ limit \(Z\) satisfies (\ref{Eq: limit is subgradient}).
Since it is clear that \(Z_{k_{j}} \overset{\ast}{\rightharpoonup}  Z\) in \(L^{\infty}(D,\,{\mathbb R}^{n})\), we have already known that \(Z(x)=Z_{0}(x)\in\partial E_{1}(v_{0}(x))\) for a.e. \(x\in D\). From (\ref{Eq: weak star conv}), it follows that
\begin{equation}\label{Eq: lsc}
\esssup\limits_{x\in U} {\tilde E_{1}}(Z(x))\le \liminf_{j\to\infty} \esssup\limits_{x\in U} {\tilde E_{1}}(Z_{k_{j}}(x)).
\end{equation}
For the proof of (\ref{Eq: lsc}), see \cite[Lemma 4]{giga2021continuity}.
Recall (\ref{Eq: Jensen-type inequality}), and we have
\begin{equation}\label{Eq: lsc result}
Z(x)\in \mleft\{w\in{\mathbb R}^{n}\mathrel{}\middle|\mathrel{} {\tilde E_{1}}(w)\le 1 \mright\}=\partial E_{1}(0)
\end{equation}
for a.e. \(x\in U\). This result implies that (\ref{Eq: limit is subgradient}) holds for a.e. $x\in U\setminus D$. The last statement is a consequence from \cite[Theorem 4.9]{MR2759829}.
\end{proof}
\begin{Remark}\label{Rmk: Euler's id}\upshape
In the proof of Lemma \ref{Lemma: a key lemma in justifications of convergence}.\ref{Item 3/3 constructions of subgradient vector fields}, it is also possible to check that \(Z\in L^{\infty}(\Omega;\,{\mathbb R}^{n})\) satisfies the Euler's identity;
\begin{equation}\label{Eq: Euler's id of vector field}
\langle v_{0}(x)\mid Z(x)\rangle=E_{1}(v_{0}(x))
\end{equation}
for a.e. \(x\in U\). In fact, for a.e. \(x\in U\setminus D\), (\ref{Eq: Euler's id of vector field}) is clear by \(v_{0}(x)=0\) and \(E_{1}(0)=0\). In the remaining case \(x\in D\), we have for all \(k\in {\mathbb N}\)
\[E_{1,\,\varepsilon_{k}}(v_{\varepsilon_{k}}(x))=E_{1,\,\varepsilon_{k}}(0)+\int_{0}^{1}\mleft\langle \nabla E_{1,\,\varepsilon_{k}}(t v_{\varepsilon_{k}}(x))\mathrel{}\middle|\mathrel{} v_{\varepsilon_{k}}(x)\mright\rangle\,{\mathrm d}t,\]
applying the fundamental theorem of calculus for \(E_{1,\,\varepsilon_{k}}\in C^{\infty}({\mathbb R}^{n})\).
Since it is clear that \(E_{1,\,\varepsilon_{k}}\) converges to \(E_{1}\) locally uniformly in \({\mathbb R}^{n}\), it follows that \(E_{1,\,\varepsilon_{k}}(v_{\varepsilon_{k}}(x))\rightarrow E_{1}(v_{0}(x))\) for a.e. \(x\in U\). Also, for a.e. \(0<t<1\) and a.e. \(x\in D\), we have
\[\mleft\{\begin{array}{rclcc}
\langle \nabla E_{1,\,\varepsilon_{k}}(t v_{\varepsilon_{k}}(x))\mid v_{\varepsilon_{k}}(x)\rangle & \rightarrow & \langle \nabla E_{1}(tv_{0}(x))\mid v_{0}(x) \rangle & \textrm{as}&k\to\infty,\\ \lvert \langle \nabla E_{1,\,\varepsilon_{k}}(t v_{\varepsilon_{k}}(x))\mid v_{\varepsilon_{k}}(x)\rangle  \rvert & \le & K\sup\limits_{k}\,\lvert v_{\varepsilon_{k}}(x)\rvert\eqqcolon C_{x}& \textrm{for all}&k\in{\mathbb N}.
\end{array} \mright.\]
Here we have used (\ref{Eq: gradient equality for one-hom}), (\ref{Eq: uniformly local convergence}), (\ref{Eq: a.e. conv on v.f.}).
Since \(C_{x}\) is a finite constant for a.e. \(x\in D\), which is independent of \(k\in{\mathbb N}\), we can apply Lebesgue's dominated convergence theorem.
Letting \(k\to \infty\) and noting that $\langle \nabla E_{1}(tv_{0}(x))\mid v_{0}(x) \rangle=\langle Z_{0}(x)\mid v_{0}(x) \rangle=\langle Z(x)\mid v_{0}(x) \rangle$ for a.e. \(x\in D\) and a.e. \(t\in(0,\,1)\), we are able to obtain
\[E_{1}(v_{0}(x))=E_{1}(0)+\int_{0}^{1} \langle Z(x)\mid v_{0}(x)\rangle\,{\mathrm d}t=\langle Z(x)\mid v_{0}(x)\rangle\quad\textrm{for a.e. }x\in D.\]
From (\ref{Eq: Representation of subdifferental set}) and (\ref{Eq: lsc result})--(\ref{Eq: Euler's id of vector field}), it follows that \(Z\) satisfies (\ref{Eq: limit is subgradient}).
\end{Remark}

\subsection{Convergence results on approximated variational problems}\label{Subsect: Convergence and solvability}
Only in Section \ref{Subsect: Convergence and solvability}, the exponent \(q\) is assumed to satisfy (\ref{Eq: Condition of q general}).
Similarly to Definition \ref{Def of Weak sol}, we define a weak solution of the Dirichlet boundary value problem
\begin{equation}\label{Eq: Dirichlet boundary value problem}
\mleft\{\begin{array}{ccccc}
{\mathcal L}u & = & f & \textrm{in} & \Omega,\\ u & = & u_{0} & \textrm{on} & \partial\Omega.
\end{array} \mright.
\end{equation}
\begin{definition}\label{Def: Weak sol}\upshape
Let \(p\in(1,\,\infty)\) and \(q\in \lbrack 1,\,\infty\rbrack\) satisfy (\ref{Eq: Condition of q general}). Let functions \(f\in L^{q}(\Omega)\) $u_{0}\in W^{1,\,p}(\Omega)$ be given. A function \(u\in u_{0}W^{1,\,p}(\Omega)\) is called the \textit{weak} solution of the Dirichlet boundary value problem (\ref{Eq: Dirichlet boundary value problem}), when there exists a vector field \(Z\in L^{\infty}(\Omega;\,{\mathbb R}^{n})\) such that the pair \((u,\,Z)\) satisfies (\ref{Eq: Weak formulation on the equation})--(\ref{Eq: Subgradient Z}).
\end{definition}

The main aim of Section \ref{Subsect: Convergence and solvability} is to prove that there uniquely exists a solution of the Dirichlet problem (\ref{Eq: Dirichlet boundary value problem}), and this solution can be obtained as a limit function of the approximation problem
\begin{equation}\label{Eq: Dirichlet boundary value problem approximated}
\mleft\{\begin{array}{ccccc}
{\mathcal L}^{\varepsilon}u_{\varepsilon} & = & f_{\varepsilon} & \textrm{in} & \Omega,\\ u_{\varepsilon} & = & u_{0} & \textrm{on} & \partial\Omega.
\end{array} \mright.
\end{equation}
Here for each $\varepsilon\in(0,\,1)$, the divergence operator ${\mathcal L}^{\varepsilon}$ is given by (\ref{Eq: Regularized equation}), and the function $f_{\varepsilon}\in L^{q}(\Omega)$ satisfies
\begin{equation}\label{Eq: Weak convergence on external force term}
f_{\varepsilon}\rightharpoonup f\quad \textrm{in }\sigma (L^{q}(\Omega),\,L^{q^{\prime}}(\Omega))\quad \textrm{as }\varepsilon\to 0.
\end{equation}
In other words, we assume that the approximated external force term $f_{\varepsilon}\in L^{q}(\Omega)$ converges to $f$ with respect to the weak and the weak$^{\ast}$ topology of $L^{q}(\Omega)$, respectively when $q<\infty$ and $q=\infty$. 

We set 
\begin{equation}\label{Eq: Energy}
{\mathcal F}_{0}(v)\coloneqq \int_{\Omega}E_{1}(\nabla v)\,{\mathrm d}x+\int_{\Omega}E_{p}(\nabla v)\,{\mathrm d}x-\int_{\Omega}fv\,{\mathrm d}x
\end{equation}
for each \(v\in W^{1,\,p}(\Omega)\). We also set
\begin{equation}\label{Eq: Energy approximated}
{\mathcal F}_{\varepsilon}(v)\coloneqq \int_{\Omega}E_{1,\,\varepsilon}(\nabla v)\,{\mathrm d}x+\int_{\Omega}E_{p,\,\varepsilon}(\nabla v)\,{\mathrm d}x-\int_{\Omega}f_{\varepsilon}v\,{\mathrm d}x
\end{equation}
for each \(v\in W^{1,\,p}(\Omega),\,\varepsilon\in(0,\,1)\).
As mentioned in Section \ref{Sect: Introduction}, the equation (\ref{Eq: main eq elliptic}) is connected with a minimizing problem of the functional ${\mathcal F}_{0}$ given by (\ref{Eq: Energy}). However, it should be noted that this functional is, in general, neither G\^{a}teaux differentiable nor Fr\'{e}chet differentiable. This is substantially due to non-smoothness of the density function $E_{1}$ at the origin. Therefore, it is natural to regard the term $\nabla E_{1}(\nabla u)$ as a vector field that satisfies (\ref{Eq: Subgradient Z}), as in Definition \ref{Def: Weak sol}. There we face to check whether it is possible to construct a vector field $Z$ satisfying (\ref{Eq: Subgradient Z}). We can overcome this problem by introducing regularized functionals ${\mathcal F}_{\varepsilon}\,(0<\varepsilon<1)$ given by (\ref{Eq: Energy approximated}), and by applying Lemma \ref{Lemma: a key lemma in justifications of convergence}. Our strategy works even for a variational inequality problem (Proposition \ref{Prop: Elliptic variational inequality}).
\begin{proposition}\label{Prop: Elliptic variational inequality}
Let \(p\in(1,\,\infty)\) and $q\in\lbrack 1,\,\infty\rbrack$ satisfy (\ref{Eq: Condition of q general}). Assume that $f\in L^{q}(\Omega)$ and $\{f_{\varepsilon}\}_{0<\varepsilon<1}\subset L^{q}(\Omega)$ satisfy (\ref{Eq: Weak convergence on external force term}). We define functionals in \({\mathcal F}_{\varepsilon}\,(0\le \varepsilon<1)\) by (\ref{Eq: Energy})--(\ref{Eq: Energy approximated}), where the density $E_{\varepsilon}$ is given by $E_{\varepsilon}=j_{\varepsilon}\ast E$ with $E=E_{1}+E_{p}$ satisfying (\ref{Eq: positive hom of deg 1})--(\ref{Eq: Hessian condition for Ep}).  Assume that ${\mathcal K}\subset W^{1,\,p}(\Omega)$ is a non-empty closed convex set, and there exists a positive constant \(C_{\mathcal K}\in(0,\,\infty)\) such that
\begin{equation}\label{Eq: inclusion on K-K}
\lVert v_{1}-v_{2}\rVert_{L^{p}(\Omega)}\le C_{\mathcal K}\lVert\nabla v_{1}-\nabla v_{2}\rVert_{L^{p}(\Omega)}\quad \textrm{for all }v_{1},\,v_{2}\in {\mathcal K}.
\end{equation}
Then, for a function \(u\in {\mathcal K}\), these following are equivalent:
\begin{enumerate}
\item\label{Item 1/3 minimizer} The function $u$ satisfies
\begin{equation}\label{Eq: minimizer in K}
u=\argmin\mleft\{{\mathcal F}_{0}(v)\mathrel{}\middle|\mathrel{}v\in {\mathcal K}\mright\}.
\end{equation}
In other words, \(u\) is the unique minimizer of the functional \({\mathcal F}_{0}\colon {\mathcal K}\rightarrow {\mathbb R}\). 
\item\label{Item 2/3 variational inequality new} There exists \(Z\in L^{\infty}(\Omega;\,{\mathbb R}^{n})\) such that the pair \((u,\,Z)\) satisfies (\ref{Eq: Subgradient Z}), and 
\begin{equation}\label{Eq: variational inequality; new type}
\int_{\Omega}\langle Z\mid \nabla (\psi-u)\rangle\,{\mathrm d}x+\int_{\Omega}\langle \nabla E_{p}(\nabla u)\mid \nabla (\psi-u) \rangle\,{\mathrm d}x \ge \int_{\Omega}f(\psi-u)\,{\mathrm d}x
\end{equation}
for all \(\psi\in {\mathcal K}\).
\end{enumerate}
Moreover, there exists a unique function \(u\in {\mathcal K}\) satisfying these equivalent properties \ref{Item 1/3 minimizer}--\ref{Item 2/3 variational inequality new}, and there holds \(u_{\varepsilon}\rightarrow u\) in \(W^{1,\,p}(\Omega)\) up to a subsequence. Here \(u_{\varepsilon}\) is a function defined by
\begin{equation}\label{Eq: minimizer in K approximated}
u_{\varepsilon}\coloneqq \argmin\mleft\{{\mathcal F}_{\varepsilon}(v)\mathrel{}\middle|\mathrel{} v\in {\mathcal K}\mright\}\in {\mathcal K}
\end{equation}
for each \(\varepsilon\in(0,\,1)\). 
\end{proposition}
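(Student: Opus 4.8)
The plan is to prove the equivalence \ref{Item 1/3 minimizer} $\Leftrightarrow$ \ref{Item 2/3 variational inequality new} first, then establish existence, uniqueness, and the convergence $u_{\varepsilon}\to u$ via a $\Gamma$-convergence-type argument combined with Lemma \ref{Lemma: a key lemma in justifications of convergence}. For the equivalence, the direction \ref{Item 2/3 variational inequality new} $\Rightarrow$ \ref{Item 1/3 minimizer} is the easy one: given $\psi\in{\mathcal K}$, convexity of $E_{1}$ together with $Z(x)\in\partial E_{1}(\nabla u(x))$ gives $E_{1}(\nabla\psi)-E_{1}(\nabla u)\ge\langle Z\mid\nabla(\psi-u)\rangle$ pointwise, while convexity of $E_{p}$ gives $E_{p}(\nabla\psi)-E_{p}(\nabla u)\ge\langle\nabla E_{p}(\nabla u)\mid\nabla(\psi-u)\rangle$; integrating and using (\ref{Eq: variational inequality; new type}) yields ${\mathcal F}_{0}(\psi)\ge{\mathcal F}_{0}(u)$, and strict convexity of $E_{p}$ (from (\ref{Eq: Hessian condition for Ep})) together with (\ref{Eq: inclusion on K-K}) forces uniqueness of the minimizer. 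For \ref{Item 1/3 minimizer} $\Rightarrow$ \ref{Item 2/3 variational inequality new}, I would construct $Z$ through the approximation: let $u_{\varepsilon}$ be defined by (\ref{Eq: minimizer in K approximated}) — well-defined since ${\mathcal F}_{\varepsilon}$ is strictly convex, coercive on ${\mathcal K}$ by (\ref{Eq: Hessian estimate for approximated E}) and (\ref{Eq: inclusion on K-K}), and weakly lower semicontinuous — so that $u_{\varepsilon}$ satisfies the smooth Euler--Lagrange variational inequality $\int_{\Omega}\langle\nabla E_{\varepsilon}(\nabla u_{\varepsilon})\mid\nabla(\psi-u_{\varepsilon})\rangle\,{\mathrm d}x\ge\int_{\Omega}f_{\varepsilon}(\psi-u_{\varepsilon})\,{\mathrm d}x$ for all $\psi\in{\mathcal K}$.

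The core of the argument is then to pass to the limit $\varepsilon\to0$. First I would derive a uniform bound: testing the $u_{\varepsilon}$-inequality against a fixed $\psi_{0}\in{\mathcal K}$, using (\ref{Eq: Hessian estimate for approximated E}) to get a lower bound on $\langle\nabla E_{p,\,\varepsilon}(\nabla u_{\varepsilon})\mid\nabla u_{\varepsilon}\rangle$ of order $|\nabla u_{\varepsilon}|^{p}$ minus lower-order terms, the uniform $L^{\infty}$ bound (\ref{Eq: approximated vector field bound in L-infty, E11}) on $\nabla E_{1,\,\varepsilon}$, Poincaré-type inequality (\ref{Eq: inclusion on K-K}), and the uniform bound on $\|f_{\varepsilon}\|_{L^{q}}$ (weakly convergent sequences are bounded), I obtain $\sup_{\varepsilon}\|u_{\varepsilon}\|_{W^{1,\,p}(\Omega)}<\infty$. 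Hence up to a subsequence $u_{\varepsilon_{k}}\rightharpoonup u_{\ast}$ in $W^{1,\,p}(\Omega)$ with $u_{\ast}\in{\mathcal K}$ ($\mathcal K$ is weakly closed, being closed and convex). To upgrade to strong convergence $\nabla u_{\varepsilon_{k}}\to\nabla u_{\ast}$ in $L^{p}$, I would use the monotonicity of $\nabla E_{\varepsilon}$: from the two variational inequalities for $u_{\varepsilon_{k}}$ and (after establishing $u_{\ast}$ is a minimizer) for $u_{\ast}$, together with (\ref{Eq: Lp lower bound lemma case p>2})–(\ref{Eq: L2 lower bound lemma case p<2}) and (\ref{Eq: non-quantitative monotonicity}), one controls $\int_{\Omega}(\varepsilon^{2}+|\nabla u_{\varepsilon_{k}}|^{2}+|\nabla u_{\ast}|^{2})^{p/2-1}|\nabla u_{\varepsilon_{k}}-\nabla u_{\ast}|^{2}\,{\mathrm d}x$, which by standard Minty-type manipulations (and Hölder when $1<p<2$) gives $\nabla u_{\varepsilon_{k}}\to\nabla u_{\ast}$ in $L^{p}$; the cross-terms $\int\langle\nabla E_{p,\,\varepsilon_{k}}(\nabla u_{\varepsilon_{k}})-\nabla E_{p}(\nabla u_{\ast})\mid\nabla(u_{\varepsilon_{k}}-u_{\ast})\rangle$ and the $E_{1}$-analogue vanish in the limit by Lemma \ref{Lemma: a key lemma in justifications of convergence}.\ref{Item 1/3 strong convergence of dummy vector fields}–\ref{Item 2/3 strong convergence of p-th growth term} and the weak convergence of $f_{\varepsilon_{k}}$.

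With strong $W^{1,\,p}$-convergence in hand, Lemma \ref{Lemma: a key lemma in justifications of convergence}.\ref{Item 2/3 strong convergence of p-th growth term} gives $\nabla E_{p,\,\varepsilon_{k_{j}}}(\nabla u_{\varepsilon_{k_{j}}})\to\nabla E_{p}(\nabla u_{\ast})$ in $L^{p'}$ along a further subsequence, and Lemma \ref{Lemma: a key lemma in justifications of convergence}.\ref{Item 3/3 constructions of subgradient vector fields} produces $Z\in L^{\infty}(\Omega;\,{\mathbb R}^{n})$ with $Z(x)\in\partial E_{1}(\nabla u_{\ast}(x))$ a.e. and $\nabla E_{1,\,\varepsilon_{k_{j}}}(\nabla u_{\varepsilon_{k_{j}}})\overset{\ast}{\rightharpoonup}Z$; passing to the limit in the smooth variational inequality term by term yields (\ref{Eq: variational inequality; new type}), so $(u_{\ast},\,Z)$ satisfies \ref{Item 2/3 variational inequality new}, and by the already-proved implication $u_{\ast}$ is the (unique) minimizer, forcing $u_{\ast}=u$; to justify calling $u_{\ast}$ a minimizer before this, note ${\mathcal F}_{0}(u_{\ast})\le\liminf{\mathcal F}_{\varepsilon_{k}}(u_{\varepsilon_{k}})\le\liminf{\mathcal F}_{\varepsilon_{k}}(u)={\mathcal F}_{0}(u)$, where the first inequality uses $E_{\varepsilon}\to E$ locally uniformly together with weak lower semicontinuity and strong convergence, and the last equality uses $E_{1,\,\varepsilon}\to E_{1}$, $E_{p,\,\varepsilon}\to E_{p}$ locally uniformly with dominated convergence (using (\ref{Eq: Gradient bound for nabla Ep}), (\ref{Eq: approximated vector field bound in L-infty, E11})) plus (\ref{Eq: Weak convergence on external force term}). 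Uniqueness of the limit then promotes the subsequential convergence to convergence of the whole family $u_{\varepsilon}\to u$ in $W^{1,\,p}(\Omega)$ (every subsequence has a further subsequence converging to the same $u$). The main obstacle is the limit passage in the $E_{1}$-term: since ${\mathcal F}_{0}$ is not differentiable and $\nabla E_{1,\,\varepsilon}$ only converges weakly-$\ast$ (not strongly, and not pointwise on the facet $\{\nabla u=0\}$), one cannot identify the weak limit directly — this is exactly where the structural properties (\ref{Eq: Jensen-type inequality}) and the support-function characterization (\ref{Eq: Representation of subdifferental set}) of $\partial E_{1}(0)$, packaged in Lemma \ref{Lemma: a key lemma in justifications of convergence}.\ref{Item 3/3 constructions of subgradient vector fields}, do the essential work.
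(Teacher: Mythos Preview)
Your overall architecture matches the paper's: prove \ref{Item 2/3 variational inequality new}$\Rightarrow$\ref{Item 1/3 minimizer} by subgradient inequalities, establish well-definedness of $u_{\varepsilon}$ by coercivity and strict convexity, extract a weak limit $u_{\ast}\in{\mathcal K}$, upgrade to strong $W^{1,p}$-convergence, and then use Lemma~\ref{Lemma: a key lemma in justifications of convergence} to construct $Z$ and pass to the limit in the approximate variational inequality. The $\Gamma$-convergence step you add (${\mathcal F}_{0}(u_{\ast})\le\liminf{\mathcal F}_{\varepsilon_{k}}(u_{\varepsilon_{k}})\le\liminf{\mathcal F}_{\varepsilon_{k}}(v)$) is correct and not in the paper, but it is also unnecessary: the paper never shows $u_{\ast}$ is a minimizer before proving strong convergence.

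The genuine gap is in your strong convergence step. You propose to combine ``the two variational inequalities for $u_{\varepsilon_{k}}$ and \dots\ for $u_{\ast}$'' and to argue that the cross-term
\[
\int_{\Omega}\langle\nabla E_{p,\,\varepsilon_{k}}(\nabla u_{\varepsilon_{k}})-\nabla E_{p}(\nabla u_{\ast})\mid\nabla(u_{\varepsilon_{k}}-u_{\ast})\rangle\,{\mathrm d}x
\]
(and its $E_{1}$-analogue) vanishes via Lemma~\ref{Lemma: a key lemma in justifications of convergence}\ref{Item 1/3 strong convergence of dummy vector fields}--\ref{Item 2/3 strong convergence of p-th growth term}. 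This does not work as written: (i) there is no usable variational inequality for $u_{\ast}$ at this stage --- being a minimizer of ${\mathcal F}_{0}$ does not yield anything involving $\nabla E_{\varepsilon_{k}}$, and the inequality in \ref{Item 2/3 variational inequality new} is exactly what you are trying to prove; (ii) the quantity you display mixes two \emph{different} operators ($\nabla E_{p,\,\varepsilon_{k}}$ at $\nabla u_{\varepsilon_{k}}$ and $\nabla E_{p}$ at $\nabla u_{\ast}$), so the monotonicity estimates (\ref{Eq: Lp lower bound lemma case p>2})--(\ref{Eq: L2 lower bound lemma case p<2}) do not apply to it, and Lemma~\ref{Lemma: a key lemma in justifications of convergence}\ref{Item 2/3 strong convergence of p-th growth term} requires precisely the strong convergence you are trying to establish --- the reasoning is circular; (iii) the ``$E_{1}$-analogue'' would involve $\nabla E_{1}(\nabla u_{\ast})$, which is undefined on the facet $\{\nabla u_{\ast}=0\}$.

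The paper's fix is simple and clean: work with the \emph{same} regularized operator at both points,
\[
I(\varepsilon_{k})\coloneqq\int_{\Omega}\bigl\langle\nabla E_{\varepsilon_{k}}(\nabla u_{\ast})-\nabla E_{\varepsilon_{k}}(\nabla u_{\varepsilon_{k}})\bigm|\nabla u_{\ast}-\nabla u_{\varepsilon_{k}}\bigr\rangle\,{\mathrm d}x\ge 0,
\]
and split $I=I_{1}+I_{2}$. The piece $I_{1}=-\int\langle\nabla E_{\varepsilon_{k}}(\nabla u_{\varepsilon_{k}})\mid\nabla(u_{\ast}-u_{\varepsilon_{k}})\rangle$ is bounded above by $\int f_{\varepsilon_{k}}(u_{\varepsilon_{k}}-u_{\ast})\to 0$ using only the approximate variational inequality for $u_{\varepsilon_{k}}$ (tested at $\psi=u_{\ast}$) and the compact embedding $W^{1,p}\hookrightarrow L^{q'}$. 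The piece $I_{2}=\int\langle\nabla E_{\varepsilon_{k}}(\nabla u_{\ast})\mid\nabla(u_{\ast}-u_{\varepsilon_{k}})\rangle\to 0$ pairs the strong $L^{p'}$-convergence $\nabla E_{\varepsilon_{k}}(\nabla u_{\ast})\to A_{0}(\nabla u_{\ast})$ from Lemma~\ref{Lemma: a key lemma in justifications of convergence}\ref{Item 1/3 strong convergence of dummy vector fields} (applied to the \emph{fixed} field $\nabla u_{\ast}$) with the weak convergence $\nabla u_{\varepsilon_{k}}\rightharpoonup\nabla u_{\ast}$. Thus $I(\varepsilon_{k})\to 0$, and (\ref{Eq: Lp lower bound lemma case p>2})--(\ref{Eq: L2 lower bound lemma case p<2}) with (\ref{Eq: non-quantitative monotonicity}) convert this into $\nabla u_{\varepsilon_{k}}\to\nabla u_{\ast}$ in $L^{p}$. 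No information about $u_{\ast}$ beyond being the weak limit is used.
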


\begin{proof}
We first mention that the right hand sides of (\ref{Eq: minimizer in K}) and (\ref{Eq: minimizer in K approximated}) are well-defined. To prove this, we will check coerciveness of ${\mathcal F}_{\varepsilon}$.
For each fixed $\varepsilon\in (0,\,1)$, $E_{p,\,\varepsilon}$ satisfies
\[E_{p,\,\varepsilon}(z)\ge E_{p,\,\varepsilon}(0)+\mleft\langle\nabla E_{p,\,\varepsilon}(0)\mathrel{}\middle|\mathrel{}z\mright\rangle+\lambda C(p)\mleft(\lvert z\rvert^{p}-\varepsilon^{p}\mright)\quad \textrm{for all }z\in{\mathbb R}^{n}.\]
In fact, by (\ref{Eq: Lp lower bound lemma case p>2})--(\ref{Eq: L2 lower bound lemma case p<2}) we can compute
\begin{align*}
E_{p,\,\varepsilon}(z)-E_{p,\,\varepsilon}(0)-\mleft\langle \nabla E_{p,\,\varepsilon}(0)\mathrel{}\middle|\mathrel{} z\mright\rangle&=\int_{0}^{1}\mleft\langle \nabla E_{p,\,\varepsilon}(tz)-\nabla E_{p,\,\varepsilon}(0)\mathrel{}\middle|\mathrel{}tz \mright\rangle\,\frac{{\mathrm{d}}t}{t}\\&\ge \lambda C(p)\mleft(\lvert z\rvert^{p}\int_{0}^{1}t^{p}\,{\mathrm{d}}t-\varepsilon^{p}\mright).
\end{align*}
It is easy to get the last inequality in the case $2\le p<\infty$. When $1<p<2$, we should note that  
\[\mleft(\varepsilon^{2}+\lvert tz\rvert^{2}\mright)^{p/2-1}\lvert tz\rvert^{2}\ge\mleft[\mleft(\varepsilon^{2}+\lvert tz\rvert^{2}\mright)^{p/2}-\varepsilon^{2}\mleft(\varepsilon^{2}+\lvert tz\rvert^{2}\mright)^{p/2-1}\mright]t\ge \mleft(t^{p}\lvert z\rvert^{p}-\varepsilon^{p}\mright)t\]
for all $t\in\lbrack0,\,1\rbrack$, which yields the desired inequality. 
Hence, for each \(v\in W^{1,\,p}(\Omega)\), \({\mathcal F}_{\varepsilon}\) satisfies
\begin{align*}
{\mathcal F}_{\varepsilon}(v)&\ge \int_{\Omega}E_{p,\,\varepsilon}(\nabla v)\,{\mathrm d}x-\int_{\Omega}f_{\varepsilon}v\,{\mathrm d}x\\&\ge \int_{\Omega}\mleft\langle\nabla E_{p,\,\varepsilon}(0)\mathrel{}\middle|\mathrel{}\nabla v\mright\rangle\,{\mathrm d}x+\lambda C(p)\mleft(\lVert\nabla v \rVert_{L^{p}(\Omega)}^{p}-\lvert\Omega\rvert\varepsilon^{p}\mright)\\&\quad -C(n,\,p,\,q,\,\Omega)\lVert f_{\varepsilon}\rVert_{L^{q}(\Omega)}\lVert v\rVert_{W^{1,\,p}(\Omega)}
\end{align*}
by $E_{p,\,\varepsilon}(0)\ge 0$, H\"{o}lder's inequality and the continuous embedding $W^{1,\,p}(\Omega)\hookrightarrow L^{q^{\prime}}(\Omega)$.
We may take and fix a function \(v_{0}\in {\mathcal K}\). Then, by (\ref{Eq: inclusion on K-K}), we have
\[\lVert v\rVert_{W^{1,\,p}(\Omega)}\le \lVert v-v_{0}\rVert_{W^{1,\,p}(\Omega)}+\lVert v_{0}\rVert_{W^{1,\,p}(\Omega)}\le C\cdot\mleft(\lVert\nabla v\rVert_{L^{p}(\Omega)}+\lVert v_{0}\rVert_{W^{1,\,p}(\Omega)}\mright)\]
with a new constant $C$ depending on $C_{\mathcal K}$.
With this in mind, by applying Young's inequality, we are able to find constants $\gamma=\gamma(\lambda,\,p)\in(0,\,1)$ and $\Gamma=\Gamma(n,\,p,\,q,\,\lambda,\,\Omega,\,C_{\mathcal K})\in(1,\,\infty)$ such that
\begin{equation}\label{Eq: Coercivity of F-epsilon}
{\mathcal F}_{\varepsilon}(v)\ge \gamma\lVert \nabla v\rVert_{L^{p}(\Omega)}^{p}-\Gamma\cdot\mleft( 1+\mleft\lvert \nabla E_{p,\,\varepsilon}(0)\mright\rvert^{p^{\prime}}+\lVert v_{0}\rVert_{W^{1,\,p}(\Omega)}\lVert f_{\varepsilon}\rVert_{L^{q}(\Omega)}+\lVert f_{\varepsilon}\rVert_{L^{q}(\Omega)}^{p^{\prime}}\mright)
\end{equation}
for all $v\in{\mathcal K}$. By the estimate (\ref{Eq: Coercivity of F-epsilon}), which implies that ${\mathcal F}_{\varepsilon}$ is a coercive functional in ${\mathcal K}$, it is easy to construct a minimizer of ${\mathcal F}_{\varepsilon}$ in ${\mathcal K}$ by direct methods. Here we note that the convex functional ${\mathcal F}_{\varepsilon}$ is continuous with respect the strong topology of $W^{1,\,p}(\Omega)$, and hence sequentially lower-semicontinuous with respect to the weak topology of $W^{1,\,p}(\Omega)$.
We note that the density $E_{\varepsilon}$ is strictly convex by (\ref{Eq: Hessian estimate for approximated E}). From this fact and (\ref{Eq: inclusion on K-K}), we can easily conclude uniqueness of a minimizer of the functional ${\mathcal F}_{\varepsilon}$. Therefore, the right hand side of (\ref{Eq: minimizer in K approximated}) is well-defined. This result similarly holds for ${\mathcal F}_{0}$. In fact, (\ref{Eq: Coercivity of F-epsilon}) is valid even when $\varepsilon=0$, and strict convexity of $E$ also follows from (\ref{Eq: Hessian condition for Ep}). 
For detailed arguments and computations omitted in the proof, see e.g., \cite[Chapter 8]{MR1625845}, \cite[Chapter 4]{MR1962933}, \cite[\S 3]{MR4201656}.

Let $(u,\,Z)\in {\mathcal K}\times L^{\infty}(\Omega;\,{\mathbb R}^{n})$ satisfy (\ref{Eq: Subgradient Z}) and (\ref{Eq: variational inequality; new type}). Then, for each \(\psi\in {\mathcal K}\), it is easy to get
\[\langle Z\mid \nabla (\psi- u) \rangle\le E_{1}(\nabla\psi)-E_{1}(\nabla u)\quad\textrm{a.e. in }\Omega,\]
which is so called a subgradient inequality.
Similarly, for each \(\psi\in {\mathcal K}\), we have another subgradient inequality
\[\langle \nabla E_{p}(\nabla u)\mid \nabla(\psi-u)\rangle \le E_{p}(\nabla\psi)-E_{p}(\nabla u)\quad \textrm{a.e. in }\Omega\]
by \(\partial E_{p}(z)=\{\nabla E_{p}(z)\}\) for all \(z\in{\mathbb R}^{n}\). From these, we can easily check \(0\le {\mathcal F}(\psi)-{\mathcal F}(u)\) for all \(\psi\in {\mathcal K}\), which implies that \(u\) satisfies (\ref{Eq: minimizer in K}).

We have already proved the implication \ref{Item 2/3 variational inequality new} $\Rightarrow$ \ref{Item 1/3 minimizer}, and unique existence of the minimizer (\ref{Eq: minimizer in K}). Hence, in order to complete the proof, it suffices to show the following two claims.
The first is that the function \(u_{\varepsilon}\in{\mathcal K}\,(0<\varepsilon<1)\) defined by (\ref{Eq: minimizer in K approximated}) converges to a certain function \(u\in{\mathcal K}\) strongly in \(W^{1,\,p}(\Omega)\) up to a sequence.
The second is that this limit function $u$ satisfies \ref{Item 2/3 variational inequality new}. We would like to prove these assertions by applying Lemma \ref{Lemma: a key lemma in justifications of convergence}.

For each \(\varepsilon\in(0,\,1)\), we set a function \(u_{\varepsilon}\in {\mathcal K}\) by (\ref{Eq: minimizer in K approximated}).
Then, for each \(\psi\in {\mathcal K}\), we have
\[0\le \frac{{\mathcal F}_{\varepsilon}(u_{\varepsilon}+t(\psi-u_{\varepsilon}))-{\mathcal F}_{\varepsilon}(u_{\varepsilon})}{t}\quad\textrm{for all }t\in(0,\,1).\]
Letting \(t\to 0\) and noting \(E_{\varepsilon}\in C^{\infty}({\mathbb R}^{n})\), we are able to obtain
\begin{align}\label{Eq: approximated variational inequality}
&\int_{\Omega}\langle \nabla E_{1,\,\varepsilon}(\nabla u_{\varepsilon})\mid \nabla(\psi-u_{\varepsilon}) \rangle\,{\mathrm d}x+\int_{\Omega}\langle \nabla E_{p,\,\varepsilon}(\nabla u_{\varepsilon})\mid \nabla(\psi-u_{\varepsilon}) \rangle\,{\mathrm d}x\nonumber\\&\quad \ge \int_{\Omega}f_{\varepsilon}(\psi-u_{\varepsilon})\,{\mathrm d}x
\end{align}
for all \(\psi\in {\mathcal K}\). 

We choose the number $\varepsilon_{0}\in(0,\,1)$ satisfying
\[\sup_{0<\varepsilon\le \varepsilon_{0}}\,\lvert\nabla E_{p,\,\varepsilon}(0)\rvert\le 1\quad \textrm{and}\quad \sup_{0<\varepsilon\le \varepsilon_{0}}\,\lVert f_{\varepsilon}\rVert_{L^{q}(\Omega)}\le F_{\ast}\]
for some constant $F_{\ast}\in(0,\,\infty)$. This is possible by (\ref{Eq: Gradient bound condition for Ep}), (\ref{Eq: compact convergence of nabla Ep}) and (\ref{Eq: Weak convergence on external force term}).
We will show that the net $\{u_{\varepsilon}\}_{0<\varepsilon\le \varepsilon_{0}}\subset W^{1,\,p}(\Omega)$ is bounded.
To show this, by (\ref{Eq: inclusion on K-K}), it suffices to find a constant $C\in(0,\,\infty)$, independent of $\varepsilon\in(0,\,\varepsilon_{0}\rbrack$, such that there holds
\begin{equation}\label{Eq: Claim on Lp bound of minima}
\sup_{0<\varepsilon\le \varepsilon_{0}}\,\lVert \nabla v_{\varepsilon}\rVert_{L^{p}(\Omega)}\le C<\infty.
\end{equation}
Without loss of generality, we may let $E_{p}(0)=0$, then we get
\[\lvert E_{p}(z)\rvert\le C(p,\,\Lambda)\lvert z\rvert^{p}\quad \textrm{for all }z\in{\mathbb R}^{n}\]
by elementary computations based on $E_{p}\in C^{1}({\mathbb R}^{n})$ and (\ref{Eq: Gradient bound condition for Ep}) (see e.g., \cite[Lemma 3]{MR4201656}).
On the other hand, by Euler's identity (\ref{Eq: Euler's identity}), it is clear that $\lvert E_{1}(z)\rvert \le K\lvert z\rvert$ for all $z\in{\mathbb R}^{n}$.
We apply (\ref{Eq: Mollified estimate on absolute value functions}) with $\sigma=1$ and $\sigma=p$, so that we have
\[\lvert E_{\varepsilon}(z)\rvert\le C(n,\,p,\,\Lambda,\,K)\mleft(1+\lvert z\rvert^{p}\mright)\quad \textrm{for all }z\in{\mathbb R}^{n}\]
by Young's inequality.
Since $v_{\varepsilon}$ satisfies (\ref{Eq: minimizer in K approximated}), we have
\[{\mathcal F}_{\varepsilon}(v_{\varepsilon})\le {\mathcal F}_{\varepsilon}(v_{0})\le C(n,\,p,\,q,\,\Lambda,\,K,\,\Omega)\mleft( 1+\lVert \nabla v_{0}\rVert_{L^{p}(\Omega)}^{p}+\lVert f_{\varepsilon}\rVert_{L^{q}(\Omega)}\lVert v_{0}\rVert_{W^{1,\,p}(\Omega)} \mright)\]
by H\"{o}lder's inequality and the continuous embedding $W^{1,\,p}(\Omega)\hookrightarrow L^{q^{\prime}}(\Omega)$.
By applying (\ref{Eq: Coercivity of F-epsilon}) with $v=v_{\varepsilon}$, we can find a finite constant $C\in(0,\,\infty)$, depending at most on $n$, $p$, $q$, $\lambda$, $\Lambda$, $K$, $\Omega$, $C_{{\mathcal K}}$, and $F_{\ast}$, such that (\ref{Eq: Claim on Lp bound of minima}) holds.

Hence by a weak compactness argument, we may take a sequence \(\{\varepsilon_{N}\}_{N=1}^{\infty}\subset (0,\,\varepsilon_{0})\) and a function \(u\in {\mathcal K}\) such that \(\varepsilon_{N}\to 0\) and \(u_{\varepsilon_{N}}\rightharpoonup u\) in \(W^{1,\,p}(\Omega)\). By the compact embedding \(W^{1,\,p}(\Omega)\hookrightarrow L^{q^{\prime}}(\Omega)\), we may also assume that \(u_{\varepsilon_{N}}\to u\) in \(L^{q^{\prime}}(\Omega)\) by taking a subsequence. For this relabeled sequence, which we write again as \(\{u_{\varepsilon_{N}}\}_{N=1}^{\infty}\), we will show that \(\nabla u_{\varepsilon_{N}}\to \nabla u\) in \(L^{p}(\Omega;\,{\mathbb R}^{n})\). Here we set and compute an integral \(I(\varepsilon)\) as following;
\begin{align*}
I(\varepsilon)&\coloneqq \int_{\Omega}\mleft\langle \nabla E_{\varepsilon} (\nabla u)-\nabla E_{\varepsilon}(\nabla u_{\varepsilon})\mid \nabla u-\nabla u_{\varepsilon} \mright\rangle\,{\mathrm d}x\\&= -\int_{\Omega}\mleft\langle \nabla E_{\varepsilon} (\nabla u_{\varepsilon})\mathrel{}\middle|\mathrel{} \nabla u-\nabla u_{\varepsilon} \mright\rangle\,{\mathrm d}x+\int_{\Omega}\mleft\langle \nabla E_{\varepsilon} (\nabla u)\mathrel{}\middle|\mathrel{} \nabla u-\nabla u_{\varepsilon} \mright\rangle\,{\mathrm d}x \\&\eqqcolon I_{1}(\varepsilon)+I_{2}(\varepsilon).
\end{align*}
We note that \(I(\varepsilon)\ge 0\) follows from (\ref{Eq: Lp lower bound lemma case p>2})--(\ref{Eq: L2 lower bound lemma case p<2}) and (\ref{Eq: non-quantitative monotonicity}).
For \(I_{1}(\varepsilon_{N})\), by testing \(\psi\coloneqq u\in {\mathcal K}\) in (\ref{Eq: approximated variational inequality}), we obtain
\[I_{1}(\varepsilon_{N})\le \int_{\Omega}f_{\varepsilon_{N}}(u_{\varepsilon_{N}}-u)\,{\mathrm d}x\le F_{\ast}\lVert u_{\varepsilon_{N}}-u\rVert_{L^{q^{\prime}}(\Omega)} \to 0\]
as \(N\to\infty\). Here we have used H\"{o}lder's inequality and \(u_{\varepsilon_{N}}\to u\) in \(L^{q^{\prime}}(\Omega)\). For \(I_{2}(\varepsilon_{N})\), we have already known that \(\nabla E_{\varepsilon_{N}}(\nabla u)\to A_{0}(\nabla u)\) in \(L^{p^{\prime}}(\Omega;\,{\mathbb R}^{n})\) from Lemma \ref{Lemma: a key lemma in justifications of convergence}.\ref{Item 1/3 strong convergence of dummy vector fields}. Combining this result with \(\nabla u_{\varepsilon_{N}}\rightharpoonup \nabla u\) in \(L^{p}(\Omega;\,{\mathbb R}^{n})\), we have \(I_{2}(\varepsilon_{N})\to 0\) as \(N\to\infty\). Finally, we obtain
\[0\le\liminf_{N\to\infty}I(\varepsilon_{N})\le\limsup_{N\to\infty}I(\varepsilon_{N})\le 0,\]
which yields \(I(\varepsilon_{N})\to 0\) as \(N\to\infty\).
From this convergence result, we are able to conclude that \(\nabla u_{\varepsilon_{N}}\to \nabla u\) in \(L^{p}(\Omega;\,{\mathbb R}^{n})\). 
In fact, when \(1<p<2\), we use H\"{o}lder's inequality, (\ref{Eq: L2 lower bound lemma case p<2}) and (\ref{Eq: non-quantitative monotonicity}) to compute
\begin{align*}\label{Eq: Lp norm conv elliptic}
\lVert \nabla u_{\varepsilon_{N}}-\nabla u\rVert_{L^{p}(\Omega)}^{p}&\le \mleft(\int_{\Omega}\mleft(\varepsilon_{N}^{2}+\lvert \nabla u_{\varepsilon_{N}}\rvert^{2}+\lvert\nabla u\rvert^{2}\mright)^{p/2-1}\lvert\nabla u_{\varepsilon_{N}}-\nabla u \rvert^{2}\,{\mathrm d}x \mright)^{2/p}\nonumber \\&\quad \cdot\underbrace{\sup\limits_{N}\,\mleft(\int_{\Omega}(\varepsilon_{N}^{2}+\lvert \nabla u_{\varepsilon_{N}}\rvert^{2}+\lvert \nabla u\rvert^{2})^{p/2}\,{\mathrm d}x\mright)^{(2-p)/p}}_{\eqqcolon C}\\ &\le C\lambda^{-2/p}\cdot I(\varepsilon_{N})^{2/p}\rightarrow 0\quad \textrm{as }N\to\infty.
\end{align*}
Here we note that \(C\) is finite by \(\nabla u_{\varepsilon_{N}}\rightharpoonup \nabla u\) in \(L^{p}(\Omega;\,{\mathbb R}^{n})\). Similarly, when \(2\le p<\infty\), we use (\ref{Eq: Lp lower bound lemma case p>2}) and (\ref{Eq: non-quantitative monotonicity}) to get \[\lVert \nabla u_{\varepsilon_{N}}-\nabla u\rVert_{L^{p}(\Omega)}^{p}\le (\lambda C(p))^{-1}\cdot I(\varepsilon_{N})\to 0\quad \textrm{as }N\to\infty.\]
Therefore, we are able to apply Lemma \ref{Lemma: a key lemma in justifications of convergence}.\ref{Item 2/3 strong convergence of p-th growth term}--\ref{Item 3/3 constructions of subgradient vector fields}. By taking a subsequence, we may assume that 
\[\mleft\{\begin{array}{ccccc}
\nabla E_{1,\,\varepsilon_{N}}(\nabla u_{\varepsilon_{N}})& \overset{\ast}{\rightharpoonup} & Z& \textrm{in}& L^{\infty}(\Omega;\,{\mathbb R}^{n}),\\ \nabla E_{p,\,\varepsilon_{N}}(\nabla u_{\varepsilon_{N}}) &\rightarrow & \nabla E_{p}(\nabla u)& \textrm{in}& L^{p^{\prime}}(\Omega;\,{\mathbb R}^{n}),
\end{array}\mright.\]
for some \(Z\in L^{\infty}(\Omega;\,{\mathbb R}^{n})\) satisfying (\ref{Eq: Subgradient Z}). Combining these results with (\ref{Eq: approximated variational inequality}) and
\[\mleft\{\begin{array}{ccccc}
\nabla (\psi -u_{\varepsilon_{N}})& \rightarrow &\nabla(\psi-u)& \textrm{in}& L^{p}(\Omega;\,{\mathbb R}^{n}),\\ \psi-u_{\varepsilon_{N}} &\rightarrow & \psi-u& \textrm{in}& L^{q^{\prime}}(\Omega),
\end{array}\mright.\]
for each fixed \(\psi\in {\mathcal K}\), we are able to check that the pair $(u,\,Z)\in {\mathcal K}\times L^{\infty}(\Omega;\,{\mathbb R}^{n})$ satisfies (\ref{Eq: variational inequality; new type}) for all \(\psi\in {\mathcal K}\). This completes the proof.
\end{proof}
From Proposition \ref{Prop: Elliptic variational inequality}, we are able to show unique existence of a solution to the Dirichlet boundary value problem, as stated in Corollary \ref{Cor: unique existence of elliptic Dirichlet prob} below.
\begin{corollary}\label{Cor: unique existence of elliptic Dirichlet prob}
Let \(p\in(1,\,\infty)\) and $q\in\lbrack 1,\,\infty\rbrack$ satisfy (\ref{Eq: Condition of q general}). Assume that $f\in L^{q}(\Omega)$ and $\{f_{\varepsilon}\}_{0<\varepsilon<1}\subset L^{q}(\Omega)$ satisfy (\ref{Eq: Weak convergence on external force term}). We define functionals in \({\mathcal F}_{\varepsilon}\,(0\le \varepsilon<1)\) by (\ref{Eq: Energy})--(\ref{Eq: Energy approximated}), where the density $E_{\varepsilon}$ is given by $E_{\varepsilon}=j_{\varepsilon}\ast E$ with $E=E_{1}+E_{p}$ satisfying (\ref{Eq: positive hom of deg 1})--(\ref{Eq: Hessian condition for Ep}).  Then, the Dirichlet problem (\ref{Eq: Dirichlet boundary value problem}) has a unique solution for each \(u_{0}\in W^{1,\,p}(\Omega)\). Moreover, this solution \(u\in u_{0}+W_{0}^{1,\,p}(\Omega)\) is characterized by the minimizing property
\[u=\argmin\mleft\{{\mathcal F}_{0}(v)\mathrel{}\middle|\mathrel{}v\in u_{0}+W_{0}^{1,\,p}(\Omega)\mright\},\]
and there holds $u_{\varepsilon}\to u$ in $W^{1,\,p}(\Omega)$ up to a subsequence. Here $u_{\varepsilon}$ is the unique solution of (\ref{Eq: Dirichlet boundary value problem approximated}). In other words, $u_{\varepsilon}$ satisfies $u_{\varepsilon}\in u_{0}+W_{0}^{1,\,p}(\Omega)$ and 
\begin{equation}\label{Eq: Weak formulation regularized ver}
\int_{\Omega}\mleft\langle\nabla E_{1,\,\varepsilon}(\nabla u_{\varepsilon})\mathrel{}\middle|\mathrel{}\nabla \phi\mright\rangle\,{\mathrm d}x+ \int_{\Omega}\mleft\langle\nabla E_{p,\,\varepsilon}(\nabla u_{\varepsilon})\mathrel{}\middle|\mathrel{}\nabla \phi\mright\rangle\,{\mathrm d}x=\int_{\Omega}f_{\varepsilon}\phi\,{\mathrm d}x
\end{equation}
for all $\phi\in W_{0}^{1,\,p}(\Omega)$.
\end{corollary}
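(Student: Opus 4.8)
The plan is to specialize Proposition \ref{Prop: Elliptic variational inequality} to the affine set $\mathcal{K}\coloneqq u_{0}+W_{0}^{1,\,p}(\Omega)$. First I would check the hypotheses for this $\mathcal{K}$: it is non-empty, convex, and strongly (hence weakly) closed in $W^{1,\,p}(\Omega)$, and condition (\ref{Eq: inclusion on K-K}) is exactly the Poincar\'{e} inequality on the bounded domain $\Omega$, since $v_{1}-v_{2}\in W_{0}^{1,\,p}(\Omega)$ whenever $v_{1},\,v_{2}\in\mathcal{K}$; thus $C_{\mathcal{K}}$ depends only on $n,\,p,\,\Omega$. Proposition \ref{Prop: Elliptic variational inequality} then produces a unique $u\in\mathcal{K}$ minimizing $\mathcal{F}_{0}$ over $\mathcal{K}$, a field $Z\in L^{\infty}(\Omega;\,{\mathbb R}^{n})$ with (\ref{Eq: Subgradient Z}) satisfying the variational inequality (\ref{Eq: variational inequality; new type}) for all $\psi\in\mathcal{K}$, and the convergence $u_{\varepsilon}\to u$ in $W^{1,\,p}(\Omega)$ along a subsequence, where $u_{\varepsilon}$ is the minimizer (\ref{Eq: minimizer in K approximated}).

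The next step is to upgrade the variational inequality to the weak formulation, exploiting that $\mathcal{K}$ is translation-invariant under $W_{0}^{1,\,p}(\Omega)$. Given $\phi\in W_{0}^{1,\,p}(\Omega)$, both $\psi=u+\phi$ and $\psi=u-\phi$ lie in $\mathcal{K}$; inserting them into (\ref{Eq: variational inequality; new type}) yields the two opposite inequalities
\[
\pm\int_{\Omega}\langle Z+\nabla E_{p}(\nabla u)\mid\nabla\phi\rangle\,{\mathrm d}x\ge\pm\int_{\Omega}f\phi\,{\mathrm d}x,
\]
hence equality, i.e.\ $(u,\,Z)$ satisfies (\ref{Eq: Weak formulation on the equation}). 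Since $u\in u_{0}+W_{0}^{1,\,p}(\Omega)$, this shows that $u$ is a weak solution of (\ref{Eq: Dirichlet boundary value problem}) in the sense of Definition \ref{Def: Weak sol}, with the claimed minimizing characterization. Conversely, if $(\tilde u,\,\tilde Z)$ is any weak solution, then $\tilde u\in\mathcal{K}$, and testing (\ref{Eq: Weak formulation on the equation}) with $\phi=\psi-\tilde u\in W_{0}^{1,\,p}(\Omega)$ for $\psi\in\mathcal{K}$ gives, in particular, the inequality (\ref{Eq: variational inequality; new type}) for $\tilde u$; thus $\tilde u$ enjoys property \ref{Item 2/3 variational inequality new} of Proposition \ref{Prop: Elliptic variational inequality}, so $\tilde u=u$ by the uniqueness asserted there, which settles uniqueness for (\ref{Eq: Dirichlet boundary value problem}).

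For the regularized problem I would argue in the same way: $u_{\varepsilon}$ in (\ref{Eq: minimizer in K approximated}) satisfies the approximated variational inequality (\ref{Eq: approximated variational inequality}) established within the proof of Proposition \ref{Prop: Elliptic variational inequality}, and the affine-space trick with $\psi=u_{\varepsilon}\pm\phi$ turns it into the weak formulation (\ref{Eq: Weak formulation regularized ver}). Uniqueness of $u_{\varepsilon}$ follows from strict convexity of the smooth density $E_{\varepsilon}$, guaranteed by (\ref{Eq: Hessian estimate for approximated E}): any solution of (\ref{Eq: Weak formulation regularized ver}) is a critical point of the convex functional $\mathcal{F}_{\varepsilon}$ on $\mathcal{K}$, hence its unique minimizer; alternatively, one subtracts the weak formulations for two solutions, tests with their difference, and combines strict monotonicity of $\nabla E_{\varepsilon}$ with (\ref{Eq: inclusion on K-K}). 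The stated subsequential convergence $u_{\varepsilon}\to u$ in $W^{1,\,p}(\Omega)$ is then precisely the last assertion of Proposition \ref{Prop: Elliptic variational inequality}.

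There is no real analytic obstacle, since coercivity, existence of minimizers, the $W^{1,\,p}$-convergence of the regularized minimizers, and the construction of the subgradient $Z$ through Lemma \ref{Lemma: a key lemma in justifications of convergence} have all been carried out in Proposition \ref{Prop: Elliptic variational inequality}. The only points needing a line of care are (i) that $\mathcal{K}=u_{0}+W_{0}^{1,\,p}(\Omega)$ satisfies (\ref{Eq: inclusion on K-K}), which is the Poincar\'{e} inequality, and (ii) the passage between a variational inequality over the affine set $\mathcal{K}$ and a weak formulation tested against all of $W_{0}^{1,\,p}(\Omega)$, which works because $\mathcal{K}$ is invariant under translations by $W_{0}^{1,\,p}(\Omega)$; I would present (ii) as the conceptual heart of the deduction.
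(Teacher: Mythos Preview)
Your proposal is correct and follows essentially the same approach as the paper: apply Proposition \ref{Prop: Elliptic variational inequality} with $\mathcal{K}=u_{0}+W_{0}^{1,\,p}(\Omega)$, verify (\ref{Eq: inclusion on K-K}) via the Poincar\'{e} inequality, and convert the variational inequality into the weak formulation by testing $\psi=u\pm\phi$ (the paper writes $\psi=u_{0}\pm\phi$, which is equivalent). Your write-up is in fact more explicit than the paper's---in particular on the converse uniqueness argument and on the Euler--Lagrange characterization of $u_{\varepsilon}$---but the underlying route is identical.
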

\begin{proof}
We apply Proposition \ref{Prop: Elliptic variational inequality} with \({\mathcal K}=u_{0}+W_{0}^{1,\,p}(\Omega)\). Under this setting, it is easy to check that for each $\varepsilon\in(0,\,1)$, the unique minimizer $u_{\varepsilon}$ of the functional ${\mathcal F}_{\varepsilon}\colon u_{0}+W_{0}^{1,\,p}(\Omega)\to{\mathbb R}$ is characterized by the weak formulation (\ref{Eq: Weak formulation regularized ver}). Also, it should be mentioned that when a pair $(u,\,Z)\in W^{1,\,p}(\Omega)\times L^{\infty}(\Omega;\,{\mathbb R}^{n})$ satisfies (\ref{Eq: variational inequality; new type}) for all $\psi\in u_{0}+W_{0}^{1,\,p}(\Omega)$, then it also satisfies (\ref{Eq: Weak formulation on the equation}). In fact, for each $\phi\in W_{0}^{1,\,p}(\Omega)$, we can test $\psi\coloneqq u_{0}\pm \phi$ into (\ref{Eq: variational inequality; new type}), from which (\ref{Eq: Weak formulation on the equation}) is easy to deduce.
\end{proof}
\begin{Remark}\upshape
The proofs of Proposition \ref{Prop: Elliptic variational inequality} and Corollary \ref{Cor: unique existence of elliptic Dirichlet prob} work even when $E_{1}\in C^{0}({\mathbb R}^{n})\cap C^{1}({\mathbb R}^{n}\setminus \{ 0\}),\, E_{p}\in C^{1}({\mathbb R}^{n})\cap C^{2}({\mathbb R}^{n}\setminus \{ 0\})$. In fact, all of the inequalities applied in the proof of Proposition \ref{Prop: Elliptic variational inequality} follow from these regularity assumptions. It should be noted that (\ref{Eq: non-quantitative monotonicity}) follows from convexity of $E_{1,\,\varepsilon}=j_{\varepsilon}\ast E_{1}$.
\end{Remark}

\subsection{Proof of main theorem}\label{Subsect: Proof of main theorem}
Corollary \ref{Cor: unique existence of elliptic Dirichlet prob} in the previous Section \ref{Subsect: Convergence and solvability} implies that a weak solution to (\ref{Eq: main eq elliptic}) can be approximated by a weak solution to 
\begin{equation}\label{Eq: Reg Eq}
-\divx\mleft(\nabla E_{\varepsilon}(\nabla u_{\varepsilon})\mright)=f_{\varepsilon}\quad \textrm{with}\quad E_{\varepsilon}=j_{\varepsilon}\ast (E_{1}+E_{p}),
\end{equation}
under a suitable Dirichlet boundary condition, where $f_{\varepsilon}$ satisfies (\ref{Eq: Weak convergence on external force term}). In the proof of Theorem \ref{Theorem: C1-regularity}, we aim to prove ${\mathcal G}_{\delta}(\nabla u)$ is H\"{o}lder continuous. There Theorem \ref{Prop: A priori Hoelder estimate} below plays an important role.  
\begin{theorem}\label{Prop: A priori Hoelder estimate}
In addition to assumptions of Theorem \ref{Theorem: C1-regularity}, we let positive numbers \(\delta,\,\varepsilon\) satisfy (\ref{Eq: Range of delta-epsilon}), and a function \(u_{\varepsilon}\in W^{1,\,p}(\Omega)\) be a weak solution to (\ref{Eq: Reg Eq}) in \(\Omega\) with
\begin{equation}\label{Eq: Bound on external force term et al.}
\lVert f_{\varepsilon}\rVert_{L^{q}(\Omega)}\le F,
\end{equation}
and
\begin{equation}\label{Eq: Bound on nabla u-epsilon}
\lVert\nabla u_{\varepsilon}\rVert_{L^{p}(\Omega)}\le L
\end{equation}
for some constants $F,\,L\in(0,\,\infty)$.
Then, for each fixed \(x_{\ast}\in\Omega\), there exist a sufficiently small ball \(B_{\rho_{0}}(x_{\ast})\Subset\Omega\) and a sufficiently small number \(\alpha\in(0,\,1)\), such that \({\mathcal G}_{2\delta,\,\varepsilon}(\nabla u_{\varepsilon})\) is in \(C^{\alpha}\mleft(B_{\rho_{0}/2}(x_{\ast});\,{\mathbb R}^{n}\mright)\). Moreover, we have
\begin{equation}\label{Eq: a priori bounds of G-2delta-epsilon}
\mleft\lvert {\mathcal G}_{2\delta,\,\varepsilon}(\nabla u_{\varepsilon}(x)) \mright\rvert\le \mu_{0}\quad \textrm{for all }x\in B_{\rho_{0}/2}(x_{\ast})
\end{equation}
and
\begin{equation}\label{Eq: a priori Hoelder bounds of G-2delta-epsilon}
\mleft\lvert {\mathcal G}_{2\delta,\,\varepsilon}(\nabla u_{\varepsilon}(x_{1}))-{\mathcal G}_{2\delta,\,\varepsilon}(\nabla u_{\varepsilon}(x_{2}))\mright\rvert\le C\lvert x_{1}-x_{2}\rvert^{\alpha}\quad \textrm{for all }x_{1},\,x_{2}\in B_{\rho_{0}/2}(x_{\ast}).
\end{equation}
Here the exponent \(\alpha\in(0,\,1)\), the radius \(\rho_{0}\in(0,\,1)\) and the constants \(C,\,\mu_{0}\in(0,\,\infty)\) depend at most on $n$, $p$, $q$, $\beta_{0}$, $\lambda$, $\Lambda$, $K$, $F$, $L$, $\mathop{\mathrm{dist}}\,(x_{\ast},\,\partial\Omega)$, and $\delta$, but are independent of \(\varepsilon\). Moreover, the constant $\mu_{0}$ does not depend on $\delta$.
\end{theorem}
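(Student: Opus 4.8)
\emph{Strategy.} The plan is to combine an a priori Lipschitz bound for $\nabla u_{\varepsilon}$ with a dyadic oscillation--decay argument for the truncated field $\mathbf{v}_{\varepsilon}\coloneqq{\mathcal G}_{2\delta,\,\varepsilon}(\nabla u_{\varepsilon})$, following the scheme of \cite[\S 4--7]{BDGPdN}. First I would apply Proposition \ref{Prop: a priori Lipschitz} to obtain, for the fixed point $x_{\ast}$, a ball $B_{\rho_{0}}(x_{\ast})\Subset\Omega$ and a constant $M\in(0,\,\infty)$, depending only on $n,\,p,\,q,\,\lambda,\,\Lambda,\,K,\,F,\,L$ and $\mathop{\mathrm{dist}}(x_{\ast},\,\partial\Omega)$ but not on $\varepsilon$, with $\lVert\nabla u_{\varepsilon}\rVert_{L^{\infty}(B_{\rho_{0}}(x_{\ast}))}\le M$; since $\delta$ does not appear anywhere in the regularized equation (\ref{Eq: Reg Eq}), this bound is in particular $\delta$--independent. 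Using $\lvert{\mathcal G}_{2\delta,\,\varepsilon}(z)\rvert=\bigl(\sqrt{\varepsilon^{2}+\lvert z\rvert^{2}}-2\delta\bigr)_{+}\le\lvert z\rvert$ (valid since $\varepsilon<2\delta$), the bound (\ref{Eq: a priori bounds of G-2delta-epsilon}) then holds with $\mu_{0}\coloneqq M$, which does not depend on $\delta$. It remains to prove (\ref{Eq: a priori Hoelder bounds of G-2delta-epsilon}) on $B_{\rho_{0}/2}(x_{\ast})$.

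The core is a two--regime alternative on dyadic balls $B_{r}(x_{0})\subset B_{\rho_{0}/2}(x_{\ast})$, according to whether $\sqrt{\varepsilon^{2}+\lvert\nabla u_{\varepsilon}\rvert^{2}}$ is large relative to $\delta$ on a definite portion of $B_{r}(x_{0})$ or not. In the \emph{non--degenerate regime}, where this modulus is comparable to a scale $\mu$ bounded below by a fixed multiple of $\delta$ throughout a slightly smaller ball, the operator ${\mathcal L}^{\varepsilon}$ is uniformly elliptic there, with ellipticity ratio controlled through (\ref{Eq: Hessian estimate for approximated E}) and with the $\varepsilon$--uniform Hessian error bound (\ref{Eq: Error of Hess}) of Lemma \ref{Lemma: Error estimate on Hess E-epsilon} available; Proposition \ref{Prop: Perturbation result} then yields a Campanato--type excess--decay estimate for $\mathbf{v}_{\varepsilon}$, which produces the exponent $\alpha$ and a constant depending on $\delta$. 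In the \emph{near--facet regime}, where $\sqrt{\varepsilon^{2}+\lvert\nabla u_{\varepsilon}\rvert^{2}}$ stays below a fixed multiple of $\delta$ on $B_{r}(x_{0})$, I would exploit the gap between the two truncation levels: Proposition \ref{Prop: De Giorgi Oscillation lemma}, applied to the scalar function $\lvert{\mathcal G}_{\delta,\,\varepsilon}(\nabla u_{\varepsilon})\rvert$, gives a quantitative decay, up to a tail controlled by $F$ and a power of $r$, of the excess of $\sqrt{\varepsilon^{2}+\lvert\nabla u_{\varepsilon}\rvert^{2}}$ over $\delta$ on a concentric ball of half the radius. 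As long as this excess has not yet dropped below $\delta$ the alternative forces us, at the next scale, either to iterate in the same regime or to enter the non--degenerate one; once it drops below $\delta$ on some ball we have $\mathbf{v}_{\varepsilon}\equiv 0$ there, so the oscillation of $\mathbf{v}_{\varepsilon}$ is zero.

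Feeding this alternative into the usual geometric iteration over radii $r,\,\tau r,\,\tau^{2} r,\dots$ ($\tau\in(0,\,1)$ fixed), I would obtain an estimate of the form $\Phi(\tau r)\le\theta\,\Phi(r)+C\,r^{\beta}$ for a suitable phase--adapted excess $\Phi$ of $\mathbf{v}_{\varepsilon}$, with $\theta\in(0,\,1)$, $\beta>0$, and all constants independent of $\varepsilon$; patching the two regimes and invoking Campanato's integral characterization of Hölder spaces then yields $\mathbf{v}_{\varepsilon}\in C^{\alpha}\bigl(B_{\rho_{0}/2}(x_{\ast});\,{\mathbb R}^{n}\bigr)$ together with (\ref{Eq: a priori Hoelder bounds of G-2delta-epsilon}), the quantities $\rho_{0}$, $\alpha$ and $C$ depending on $n,\,p,\,q,\,\beta_{0},\,\lambda,\,\Lambda,\,K,\,F,\,L,\,\mathop{\mathrm{dist}}(x_{\ast},\,\partial\Omega)$ and $\delta$. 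The hypothesis $0<\varepsilon<\delta/8$ from (\ref{Eq: Range of delta-epsilon}) is used precisely where the two moduli $\lvert\nabla u_{\varepsilon}\rvert$ and $\sqrt{\varepsilon^{2}+\lvert\nabla u_{\varepsilon}\rvert^{2}}$ must be compared --- when relating the supports of ${\mathcal G}_{\delta,\,\varepsilon}$ and ${\mathcal G}_{2\delta,\,\varepsilon}$, and when verifying the domain conditions (\ref{Eq: Outside condition on variable}) needed to apply Lemma \ref{Lemma: Error estimate on Hess E-epsilon}.

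\emph{Main obstacle.} I expect the delicate point to be organizing the two--regime alternative so that it is scale invariant and every constant is uniform in $\varepsilon$: the De Giorgi iteration for $\lvert{\mathcal G}_{\delta,\,\varepsilon}(\nabla u_{\varepsilon})\rvert$ and the Caccioppoli inequalities must be carried out with the degenerate weight $(\varepsilon^{2}+\lvert\nabla u_{\varepsilon}\rvert^{2})^{p/2-1}$ handled only through (\ref{Eq: Hessian estimate for approximated E}), while the freezing--coefficient comparison in Proposition \ref{Prop: Perturbation result} needs the error estimate (\ref{Eq: Error of Hess}); reconciling the energy estimates of both regimes, and in particular controlling the accumulated oscillation across the scales at which the phase switches, is the heavy bookkeeping --- and it is exactly there that the non--symmetric density $E=E_{1}+E_{p}$ and the presence of two distinct moduli make the argument more involved than in \cite[\S 4--7]{BDGPdN}.
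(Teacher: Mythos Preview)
Your outline is correct and follows essentially the same route as the paper: after the $\varepsilon$-uniform Lipschitz bound from Proposition \ref{Prop: a priori Lipschitz}, the proof runs a dyadic iteration in which at each scale one tests the measure condition (\ref{Eq: Levelset assumption 2})/(\ref{Eq: Levelset assumption}) and applies either Proposition \ref{Prop: Perturbation result} (Campanato decay in the non-degenerate regime) or Proposition \ref{Prop: De Giorgi Oscillation lemma} (oscillation reduction for $\lvert{\mathcal G}_{\delta,\,\varepsilon}(\nabla u_{\varepsilon})\rvert$ in the near-facet regime), stopping once $\mu_{k}\le\delta$ forces ${\mathcal G}_{2\delta,\,\varepsilon}(\nabla u_{\varepsilon})\equiv 0$; the paper organizes this concretely via $\rho_{k}=4^{-k}\rho_{0}$, $\mu_{k}=\kappa^{k}\mu_{0}$ with $\kappa=4^{-\alpha}$ and a three-case split according to $k_{\star}\coloneqq\min\{k:\text{(\ref{Eq: Levelset assumption 2}) holds at scale }\rho_{k}/2\}$, but the mechanism is exactly what you describe.
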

Theorem \ref{Prop: A priori Hoelder estimate} is proved in Section \ref{Sect: A priori Hoelder estimate}.
We would like to conclude Section \ref{Sect: Approximation} by giving the proof of Theorem \ref{Theorem: C1-regularity}.
\begin{proof}
Let \(u\in W^{1,\,p}(\Omega)\) be a weak solution to (\ref{Eq: main eq elliptic}). For each \(\varepsilon\in(0,\,1)\), we put the unique function \(u_{\varepsilon}\in u+W_{0}^{1,\,p}(\Omega)\) that solves the Dirichlet boundary value problem
\[\mleft\{\begin{array}{ccccc}
{\mathcal L}^{\varepsilon}u_{\varepsilon} & = & f_{\varepsilon} & \textrm{in} & \Omega,\\ u_{\varepsilon} & = & u & \textrm{on} & \partial\Omega.
\end{array} \mright.\]
By Corollary \ref{Cor: unique existence of elliptic Dirichlet prob}, we may take a subsequence \(\{u_{\varepsilon_{j}}\}_{j=1}^{\infty}\subset u+W_{0}^{1,\,p}(\Omega)\) such that \(u_{\varepsilon_{j}}\rightarrow u\) in \(W^{1,\,p}(\Omega)\). Hence, we may take finite constants $F,\,L\in(0,\,\infty)$, which are independent of $j\in{\mathbb N}$, such that (\ref{Eq: Bound on external force term et al.}) holds for every $\varepsilon=\varepsilon_{j}$. Moreover, by taking a subsequence if necessary, we may assume that
\begin{equation}\label{Eq: Regularized solutions a.e. convergence}
\nabla u_{\varepsilon_{j}}(x)\rightarrow \nabla u(x)\quad \textrm{for a.e. }x\in\Omega.
\end{equation}

We fix arbitrary \(\delta\in(0,\,1)\). Without loss of generality we let \(\varepsilon_{j}\in(0,\,\delta/8)\) for all \(j\in{\mathbb N}\).
By Theorem \ref{Prop: A priori Hoelder estimate}, we can choose a sufficiently small ball \(B_{\rho_{0}}(x_{\ast})\Subset \Omega\) such that the sequence \(\{{\mathcal G}_{2\delta,\,\varepsilon_{j}}(\nabla u_{\varepsilon_{j}})\}_{j=1}^{\infty}\subset C^{\alpha}\mleft(B_{\rho_{0}/2}(x_{\ast}),\,{\mathbb R}^{n}\mright)\) is bounded. Here the H\"{o}lder exponent \(\alpha\in(0,\,1)\) depends on \(\delta\) but is independent of \(\varepsilon_{j}\). Hence, by taking a subsequence, we can find a continuous vector field \(v_{\delta}\in C^{\alpha}\mleft(B_{\rho_{0}/2}(x_{\ast});\,{\mathbb R}^{n}\mright)\) such that \({\mathcal G}_{2\delta,\,\varepsilon_{j}}(\nabla u_{\varepsilon_{j}})\rightarrow v_{\delta}\) uniformly in \({B_{\rho_{0}/2}}(x_{\ast})\).
On the other hand, from (\ref{Eq: Regularized solutions a.e. convergence}) we have already known that \({\mathcal G}_{2\delta,\,\varepsilon_{j}}(\nabla u_{\varepsilon_{j}})\to {\mathcal G}_{2\delta}(\nabla u)\) a.e. in \(\Omega\) as \(j\to\infty\).
This implies that \(v_{\delta}={\mathcal G}_{2\delta}(\nabla u)\in C^{\alpha}\mleft(B_{\rho_{0}/2}(x_{0});\,{\mathbb R}^{n}\mright)\) holds for each fixed \(\delta\in(0,\,1)\) with \(\alpha=\alpha(\delta)\in(0,\,1)\). Since \(x_{\ast}\in\Omega\) is arbitrary, this completes the proof of \({\mathcal G}_{2\delta}(\nabla u)\in C^{0}\mleft(\Omega;\,{\mathbb R}^{n}\mright)\).

By the definition of ${\mathcal G}_{\delta}$, it is clear that
\[\sup_{\Omega}\,\mleft\lvert{\mathcal G}_{\delta_{1}}(\nabla u)-{\mathcal G}_{\delta_{2}}(\nabla u)\mright\rvert\le \lvert \delta_{1}-\delta_{2}\rvert\quad \textrm{for all }\delta_{1},\,\delta_{2}\in(0,\,1).\]
In particular, there exists a continuous vector field \(v_{0}\in C^{0}\mleft(\Omega;\,{\mathbb R}^{n}\mright)\) such that \({\mathcal G}_{\delta}(\nabla u)\rightarrow v_{0}\) uniformly in \(\Omega\).
On the other hand, there clearly holds \({\mathcal G}_{\delta}(\nabla u)\rightarrow \nabla u\) a.e. in \(\Omega\) as $\delta\to 0+$. Thus, \(\nabla u=v_{0}\in C^{0}\mleft(\Omega;\,{\mathbb R}^{n}\mright)\) is realized, and this completes the proof of Theorem \ref{Theorem: C1-regularity}.
\end{proof}

\section{A priori H\"{o}lder estimates of the mapping ${\mathcal G}_{2\delta,\,\varepsilon}(\nabla u_{\varepsilon})$}\label{Sect: A priori Hoelder estimate}
In Section \ref{Sect: A priori Hoelder estimate}, we consider weak solutions to the regularized equation (\ref{Eq: Reg Eq}), and show a priori H\"{o}lder bounds of truncated gradients (Theorem \ref{Prop: A priori Hoelder estimate}).

\subsection{Higher regularity on approximated solutions}\label{Subsect: A priori Lipschitz bound}
In Section \ref{Subsect: A priori Lipschitz bound}, we briefly describe inner regularity on weak solutions to (\ref{Eq: Reg Eq}).

We first note that it is not restrictive to assume that \(u_{\varepsilon}\in W^{1,\,p}(\Omega)\), a weak solution to (\ref{Eq: Reg Eq}) in $\Omega$, satisfies $u_{\varepsilon}\in W_{\mathrm{loc}}^{1,\,\infty}(\Omega)\cap W_{\mathrm{loc}}^{2,\,2}(\Omega)$ for each \(\varepsilon\in(0,\,1)\). This is possible by appealing to existing elliptic regularity theory, since the ellipticity ratio of (\ref{Eq: Reg Eq}) is bounded for each fixed $\varepsilon\in(0,\,1)$.
In fact, (\ref{Eq: Hessian estimate for approximated E}) implies that the regularized density $E_{\varepsilon}\in C^{\infty}(\Omega)$ satisfies
\[c_{\varepsilon}\mleft(1+\lvert z\rvert^{2}\mright)^{p/2-1}\mathrm{id}\leqslant \nabla^{2}E_{\varepsilon}(z)\leqslant C_{\varepsilon}\mleft(1+\lvert z\rvert^{2}\mright)^{p/2-1}\mathrm{id}\quad \textrm{for all }z\in{\mathbb R}^{n}\]
for some constants $0<c_{\varepsilon}<C_{\varepsilon}<\infty$ that may depend on an approximation parameter $\varepsilon$. Moreover, if $f_{\varepsilon}\in C^{\infty}(\Omega)$, then $u_{\varepsilon}\in C^{\infty}(\Omega)$ follows from bootstrap arguments \cite[Chapters 4--5]{MR0244627}. We recall that our approximation arguments work as long as (\ref{Eq: Weak convergence on external force term}) holds. Under this setting, we may choose $f_{\varepsilon}\in C^{\infty}(\Omega)$, so that the solution $u_{\varepsilon}$ satisfies (\ref{Eq: Reg Eq}) even in the classical sense. Also, even when $f_{\varepsilon}$ is not smooth but in $L^{q}(\Omega)\,(n<q\le \infty)$, by standard arguments as in \cite[Chapter 8]{MR1962933}, it is possible to get $u_{\varepsilon}\in C_{\mathrm{loc}}^{1,\,\alpha}(\Omega)\cap W_{\mathrm{loc}}^{2,\,2}(\Omega)$ for some $\alpha=\alpha(n,\,p,\,q,\,c_{\varepsilon},\,C_{\varepsilon})\in(0,\,1)$. Here we note that the ratio $C_{\varepsilon}/c_{\varepsilon}$ substantially depends on $\varepsilon$, and hence the exponent $\alpha$ may tend to $0$ as $\varepsilon\to 0$.

Since our arguments are local, we often let $u_{\varepsilon}\in W^{1,\,\infty}(B_{\rho}(x_{0}))\cap W^{2,\,2}(B_{\rho}(x_{0}))$ for some fixed open ball $B_{\rho}(x_{0})\Subset \Omega$. Under this setting, integrating by parts, we are able to deduce a weak formulation 
\begin{equation}\label{Eq: Weak formulation from EL}
\int_{B_{\rho}(x_{0})}\mleft\langle\nabla^{2}E_{\varepsilon}(\nabla u_{\varepsilon})\nabla \partial_{x_{j}}u_{\varepsilon}\mathrel{}\middle|\mathrel{}\nabla\phi\mright\rangle\,{\mathrm d}x=-\int_{B_{\rho}(x_{0})}f\partial_{x_{j}}\phi\,{\mathrm d}x
\end{equation}
for all \(j\in\{\,1,\,\dots,\,n\,\}\) and \(\phi\in W_{0}^{1,\,2}(B_{\rho}(x_{0}))\). Here we mention that the inclusion $\nabla^{2}E_{\varepsilon}(\nabla u_{\varepsilon})\in L^{2}(B_{\rho}(x_{0});\,{\mathbb R}^{n})$ follows from $u_{\varepsilon}\in W^{1,\,\infty}(B_{\rho}(x_{0}))\cap W^{2,\,2}(B_{\rho}(x_{0}))$.

Without proofs, we use local Lipschitz bounds of $u_{\varepsilon}$, uniformly for an approximation parameter $\varepsilon\in(0,\,1)$ (Proposition \ref{Prop: a priori Lipschitz}). Based on Moser's iterations, these estimates were already shown in the author's work \cite[Proposition 4]{MR4201656} by choosing test functions whose supports never intersects the facets of regularized solutions. There higher regularity assumptions \(E_{\varepsilon},\,f_{\varepsilon}\in C^{\infty}(\Omega)\) are imposed, so that \(u_{\varepsilon}\in C^{\infty}(\Omega)\). However, the proof therein works as long as \(u_{\varepsilon}\in W_{\mathrm{loc}}^{1,\,\infty}(\Omega)\cap W_{\mathrm{loc}}^{2,\,2}(\Omega)\), since the test functions chosen in \cite[Proposition 4]{MR4201656} become admissible under this regularity.
\begin{proposition}\label{Prop: a priori Lipschitz}
Let \(u_{\varepsilon}\in W^{1,\,p}(\Omega)\) be a weak solution to (\ref{Eq: Reg Eq}) in \(\Omega\) with $\varepsilon\in(0,\,1)$. Fix an open ball \(B_{r}\Subset\Omega\) with \(r\in(0,\,1\rbrack\). Then, for $n\ge 3$, we have
\[\esssup_{B_{\theta r}}\,\lvert \nabla u_{\varepsilon}\rvert\le \frac{C(n,\,p,\,q,\,\lambda,\,\Lambda,\,K)}{(1-\theta)^{n/p}}\mleft(1+\lVert f_{\varepsilon}\rVert_{L^{q}(B_{r})}^{1/(p-1)} +r^{-n/p}\lvert \nabla u_{\varepsilon}\rVert_{L^{p}(B_{r})}\mright)\]
for all \(\theta\in(0,\,1)\). For $n=2$, we have 
\[\esssup_{B_{\theta r}}\,\lvert \nabla u_{\varepsilon}\rvert\le \frac{C(n,\,p,\,q,\,\lambda,\,\Lambda,\,K,\,\chi)}{(1-\theta)^{2\chi/p(\chi-1)}}\mleft(1+\lVert f_{\varepsilon}\rVert_{L^{q}(B_{r})}^{1/(p-1)} +r^{-2/p}\lVert \nabla u_{\varepsilon}\rVert_{L^{p}(B_{r})}\mright)\]
for all \(\theta\in(0,\,1),\,\chi\in(1,\,\infty)\). 
\end{proposition}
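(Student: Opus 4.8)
The plan is to obtain the bound by a Moser iteration applied to the differentiated equation (\ref{Eq: Weak formulation from EL}), run with test functions supported away from the facet $\{\nabla u_\varepsilon = 0\}$, so that the only ellipticity ratio entering the estimates stays bounded uniformly in $\varepsilon$. By the reductions recalled at the beginning of Section~\ref{Subsect: A priori Lipschitz bound}, I may assume $u_\varepsilon \in W^{1,\infty}_{\mathrm{loc}}(\Omega) \cap W^{2,2}_{\mathrm{loc}}(\Omega)$, so that (\ref{Eq: Weak formulation from EL}) holds on every ball compactly contained in $\Omega$. Fix $B_r \Subset \Omega$ and set $V \coloneqq \sqrt{\varepsilon^2 + |\nabla u_\varepsilon|^2} \in W^{1,2}_{\mathrm{loc}} \cap L^\infty_{\mathrm{loc}}$. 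Since $|\nabla u_\varepsilon| \le V$ and $V \le 1$ trivially on $\{V \le 1\}$, it is enough to control $V$ on $\{V > 1\}$; and there the two-sided Hessian estimate (\ref{Eq: Hessian estimate for approximated E}) gives $\lambda V^{p-2}\mathrm{id} \leqslant \nabla^2 E_\varepsilon(\nabla u_\varepsilon) \leqslant (\Lambda + K/\lambda)\,V^{p-2}\mathrm{id}$, an ellipticity ratio $\le \Lambda/\lambda + K/\lambda$ independent of $\varepsilon$, the remaining common factor $V^{p-2}$ being kept track of through the iteration (harmless, since $V^{p-2}\ge 1$ on $\{V>1\}$ when $p\ge 2$, while the case $1<p<2$ is handled as in \cite[Proposition~4]{MR4201656}).

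I would then test (\ref{Eq: Weak formulation from EL}), summed over $j = 1,\dots,n$, with $\phi \coloneqq \eta^2 \, \partial_{x_j} u_\varepsilon \, (V-1)_+^{2\beta}$, where $\eta \in C_c^\infty(B_r)$ is a cutoff and $\beta \ge 0$. By construction $\mathrm{supp}\,\phi \subset \{V > 1\}$, so the lower Hessian bound turns the left-hand side into a term controlling $\int |\nabla((V-1)_+^{\beta+1}\eta)|^2$ up to lower-order cutoff contributions. The forcing term $-\sum_j\int f_\varepsilon\,\partial_{x_j}\phi$ is handled by H\"older's and Young's inequalities: part of it is absorbed into the good second-derivative term, and the remainder is estimated via the Sobolev embedding — here the hypothesis $q > n$ is exactly what makes the resulting contribution of $\|f_\varepsilon\|_{L^q(B_r)}$ summable through the iteration, and it accounts for the power $1/(p-1)$ dictated by the $p$-Laplacian scaling. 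The outcome is a reverse-H\"older inequality relating a normalized $L^{2^\ast(\beta+1)}$-norm of $(V-1)_+$ on a smaller ball to its $L^{2(\beta+1)}$-norm on a slightly larger ball, plus data terms. Iterating over exponents $2(\beta_k+1) = p\,(2^\ast/2)^k$ and radii decreasing to $\theta r$, starting from the a priori datum $V \in L^p(B_r)$, and summing the resulting geometric series of constants, yields the claimed bound, the factor $(1-\theta)^{-n/p}$ coming from the cutoff gradients. When $n = 2$ the exponent $2^\ast$ is unavailable and is replaced by an arbitrary $2\chi/(\chi-1)$, $\chi \in (1,\infty)$, from $W^{1,2}(B) \hookrightarrow L^{2\chi/(\chi-1)}(B)$; this produces the modified power $2\chi/(p(\chi-1))$ of $(1-\theta)^{-1}$ and the $\chi$-dependence of the constant in that case.

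The main obstacle is the non-uniform ellipticity near the facets: the ratio of the two eigenvalue bounds in (\ref{Eq: Hessian estimate for approximated E}) degenerates like $V^{1-p}$ as $V \to 0$ together with $\varepsilon \to 0$, so the Caccioppoli estimate can only be derived with test functions supported in $\{V > 1\}$, and the whole argument then rests on patching this with the trivial bound $|\nabla u_\varepsilon| \le 1$ on the complement — exactly the device used in \cite[Proposition~4]{MR4201656}. A secondary difficulty is treating the term $f_\varepsilon \in L^q$, $q > n$, without differentiating $f_\varepsilon$, so that it enters only through H\"older's inequality and the Sobolev embedding while the second-derivative terms it generates are absorbed on the left. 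Since the test functions of \cite[Proposition~4]{MR4201656} remain admissible under the weaker regularity $u_\varepsilon \in W^{1,\infty}_{\mathrm{loc}}(\Omega) \cap W^{2,2}_{\mathrm{loc}}(\Omega)$, that proof carries over to the present setting and delivers the stated $\varepsilon$-independent constants.
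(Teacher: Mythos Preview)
Your proposal is correct and follows exactly the approach the paper indicates: the paper does not supply its own proof of this proposition but refers to \cite[Proposition~4]{MR4201656}, noting that the Moser-iteration argument there---based on test functions whose supports avoid the facet---goes through once $u_\varepsilon\in W^{1,\infty}_{\mathrm{loc}}\cap W^{2,2}_{\mathrm{loc}}$. Your outline reproduces precisely this strategy (truncation at level $\{V>1\}$ so that (\ref{Eq: Hessian estimate for approximated E}) yields a uniform ellipticity ratio $(\Lambda+K)/\lambda$, iteration of the resulting reverse-H\"older inequality, and absorption of the forcing via $q>n$), and you correctly identify the two technical points the paper singles out: the non-uniform ellipticity near facets, handled by the support restriction, and the admissibility of the test functions under the weaker $W^{1,\infty}_{\mathrm{loc}}\cap W^{2,2}_{\mathrm{loc}}$ regularity.
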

It is noted that even when the external force term $f_{\varepsilon}$ is less regular than $f_{\varepsilon}\in L^{q}$, local Lipschitz estimates of $u_{\varepsilon}$ can be obtained by the recent result of \cite[Theorems 1.9 \& 1.11]{MR4078712}. There, external force terms are assumed to be in a Lorentz space or an Orlicz space, and the computations therein are based on De Giorgi's truncation.

\subsection{Three basic propositions and the proof of Theorem \ref{Prop: A priori Hoelder estimate}}\label{Subsect: A priori Hoelder estimate completed}
We first fix some notations in Section \ref{Sect: A priori Hoelder estimate}.
Throughout Section \ref{Sect: A priori Hoelder estimate}, we assume that the positive numbers \(\delta,\,\varepsilon\) satisfy (\ref{Eq: Range of delta-epsilon}).
For each \(u_{\varepsilon}\in W_{\mathrm{loc}}^{1,\,\infty}(\Omega)\cap W_{\mathrm{loc}}^{2,\,2}(\Omega)\), a weak solution to the regularized equation (\ref{Eq: Reg Eq}) in \(\Omega\), we define
\[V_{\varepsilon}\coloneqq \sqrt{\varepsilon^{2}+\lvert \nabla u_{\varepsilon}\rvert^{2}}\in L_{\mathrm{loc}}^{\infty}(\Omega)\cap W_{\mathrm{loc}}^{1,\,2}(\Omega),\]
and 
\[U_{\delta,\,\varepsilon}\coloneqq \mleft(V_{\varepsilon}-\delta\mright)_{+}^{2}\in L_{\mathrm{loc}}^{\infty}(\Omega)\cap W_{\mathrm{loc}}^{1,\,2}(\Omega).\]
For given numbers \(\mu\in(0,\,\infty),\,\nu\in(0,\,1)\) and an open ball \(B_{\rho}(x_{0})\Subset \Omega\), we define a superlevel set
\[S_{\rho,\,\mu,\,\nu}(x_{0})\coloneqq \mleft\{x\in B_{\rho}(x_{0})\mathrel{}\middle|\mathrel{}V_{\varepsilon}(x)-\delta>(1-\nu)\mu\mright\}.\]
For $f\in L^{1}(B_{\rho}(x_{0});\,{\mathbb R}^{m})$, we define an average integral
\[\fint_{B_{\rho}(x_{0})}f\,{\mathrm{d}x} \coloneqq \frac{1}{\lvert B_{\rho}(x_{0})\rvert}\int_{B_{\rho}(x_{0})}f\,{\mathrm{d}x}\in{\mathbb R}^{m},\] which is often written by $(f)_{x_{0},\,\rho}$ for notational simplicity.

We set an exponent $\beta\in(0,\,1)$ by
\begin{equation}\label{Eq: Definition of betas}
\beta\coloneqq \mleft\{\begin{array}{cc}
1-n/q & (n<q<\infty),\\ {\hat\beta}_{0} & (q=\infty),
\end{array}\mright.
\end{equation}
where \({\hat\beta}_{0}\) is an arbitrary number satisfying \(0<{\hat\beta}_{0}<1\). The number $\beta$ often appears when one considers regularity of weak solutions to the Poisson equation $-\Delta v=f\in L^{q}$. It is well-known that this weak solution $v$ is locally $\beta$-H\"{o}lder continuous, which can be proved by the standard freezing coefficient method (see e.g., \cite[Theorem 3.13]{MR2777537}; see also \cite[Chapter 3]{MR3887613} and \cite[Chapter 5]{MR3099262} as related items).

To show local a priori H\"{o}lder estimates (Theorem \ref{Prop: A priori Hoelder estimate}), we apply Propositions \ref{Prop: Perturbation result}--\ref{Prop: De Giorgi Oscillation lemma} below.
\begin{proposition}\label{Prop: Perturbation result}
Let \(u_{\varepsilon}\) be a weak solution to (\ref{Eq: Reg Eq}) in \(\Omega\).
Assume that positive numbers $\delta$, $\varepsilon$, $\mu$, $F$, $M$, and an open ball $B_{\rho}(x_{0})\Subset\Omega$ satisfy (\ref{Eq: Range of delta-epsilon}), (\ref{Eq: Bound on external force term et al.}),
\begin{equation}\label{Eq: mu>delta}
0<\delta<\mu,
\end{equation}
and
\begin{equation}\label{Eq: Gradient bound in Growth estimate}
\esssup_{B_{\rho}(x_{0})}\,V_{\varepsilon}\le \delta+\mu\le M.
\end{equation}
Then, there exist sufficiently small numbers \(\nu\in(0,\,1/6)\), \(\rho_{\star}\in(0,\,1)\), which depend at most on $n$, $p$, $q$, $\beta_{0}$, $\lambda$, $\Lambda$, $K$, $F$, $M$, and $\delta$, but are independent of \(\varepsilon\), such that the following statement holds true.
If there hold \(0<\rho\le \rho_{\star}\) and
\begin{equation}\label{Eq: Levelset assumption 2}
\lvert S_{\rho,\,\mu,\,\nu}(x_{0})\rvert >(1-\nu)\lvert B_{\rho}(x_{0})\rvert,
\end{equation}
then the limit
\[\Gamma_{2\delta,\,\varepsilon}(x_{0})\coloneqq \lim\limits_{r\to 0}\mleft({\mathcal G}_{2\delta,\,\varepsilon}(\nabla u_{\varepsilon})\mright)_{x_{0},\,r}\in{\mathbb R}^{n}\]
exists. Moreover, this limit satisfies 
\begin{equation}\label{Eq: G-delta-epsilon average-limit bound}
\mleft\lvert \Gamma_{2\delta,\,\varepsilon}(x_{0})\mright\rvert\le \mu,
\end{equation}
and we have the following Campanato-type growth estimate
\begin{equation}\label{Eq: Campanato-type beta-growth estimate}
\fint_{B_{r}(x_{0})}\mleft\lvert {\mathcal G}_{2\delta,\,\varepsilon}(\nabla u_{\varepsilon})-\Gamma_{2\delta,\,\varepsilon}(x_{0})\mright\rvert^{2}\,{\mathrm d}x\le\mu^{2} r^{2\beta}\quad\textrm{for all }r\in(0,\,\rho\rbrack.
\end{equation}
Here the exponent \(\beta\in(0,\,1)\) is defined by (\ref{Eq: Definition of betas}).
\end{proposition}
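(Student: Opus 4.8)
The statement is a freezing-coefficient (Schauder-type) perturbation result: on a ball where the regularized gradient modulus $V_\varepsilon$ stays in a window $[\delta,\delta+\mu]$ and where the superlevel set $S_{\rho,\mu,\nu}$ occupies most of $B_\rho$, the truncated vector field ${\mathcal G}_{2\delta,\varepsilon}(\nabla u_\varepsilon)$ obeys a Campanato decay estimate with the H\"older exponent $\beta$ of the Poisson equation. I would structure the proof in four stages: (i) show that the measure-theoretic hypothesis (\ref{Eq: Levelset assumption 2}) forces $\nabla u_\varepsilon$ to lie outside the facet $\{V_\varepsilon\le\delta\}$ on a large portion of $B_\rho$, so that on a slightly smaller ball the equation (\ref{Eq: Reg Eq}) is genuinely uniformly elliptic with ellipticity constants controlled only by $p,\lambda,\Lambda,K,\delta,M$ (via (\ref{Eq: Hessian estimate for approximated E}) and Lemma \ref{Lemma: Error estimate on Hess E-epsilon}, part \ref{Item 1/2: Monotonicity and Growth estimate}); (ii) freeze the coefficient matrix $\mathbf A_0:=\nabla^2 E_\varepsilon\big((\nabla u_\varepsilon)_{x_0,\rho}\big)$ and compare $u_\varepsilon$ with the $\mathbf A_0$-harmonic function $h$ having the same boundary data on $B_\rho(x_0)$; (iii) use the linear Schauder estimate for constant-coefficient operators to get decay of $\fint_{B_r}|\nabla h-(\nabla h)_{x_0,r}|^2$, and control the error $\nabla u_\varepsilon-\nabla h$ by the Hessian-error estimate (\ref{Eq: Error of Hess}) together with the Caccioppoli/energy bound and the $L^q$-bound (\ref{Eq: Bound on external force term et al.}); (iv) transfer the decay from $\nabla u_\varepsilon$ to ${\mathcal G}_{2\delta,\varepsilon}(\nabla u_\varepsilon)$ using the uniform Lipschitz bound on the truncation map ${\mathcal G}_{2\delta,\varepsilon}$ from Lemma \ref{Lemma: Lipschitz estimate on relaxed vector fields}, and then iterate to pass from one-step decay to the full Campanato estimate (\ref{Eq: Campanato-type beta-growth estimate}), which also yields existence of the limit $\Gamma_{2\delta,\varepsilon}(x_0)$ and the bound (\ref{Eq: G-delta-epsilon average-limit bound}).

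First I would fix $\nu$ small (eventually $\nu<1/6$) and $\rho_\star$ small, both to be pinned down at the end of the iteration. From (\ref{Eq: Levelset assumption 2}) and the a priori Lipschitz bound (Proposition \ref{Prop: a priori Lipschitz}), one gets that on $S_{\rho,\mu,\nu}(x_0)$ the modulus $V_\varepsilon$ exceeds $\delta+(1-\nu)\mu\ge \delta+\tfrac56\mu$, and a De Giorgi / energy argument (this is where I would invoke the machinery behind Proposition \ref{Prop: De Giorgi Oscillation lemma}, or rather its ingredients) propagates this: after possibly shrinking to $B_{\rho/2}(x_0)$ one secures $V_\varepsilon\ge \delta+c\mu$ pointwise, hence $\nabla u_\varepsilon$ avoids the facet and the coefficient matrix $\nabla^2 E_\varepsilon(\nabla u_\varepsilon)$ has eigenvalues pinched between constants depending only on the listed parameters. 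On this good ball I differentiate the equation to get the weak formulation (\ref{Eq: Weak formulation from EL}) for each $\partial_{x_j}u_\varepsilon$, which is a linear uniformly elliptic system with bounded measurable coefficients and right-hand side in divergence form involving $f$. I then run the standard comparison: let $h$ solve the frozen system $-\divx(\mathbf A_0\nabla h)=0$ with $h=u_\varepsilon$ on $\partial B_r(x_0)$ for $r\le\rho$; the difference $w=u_\varepsilon-h$ satisfies an energy estimate whose right side splits into a "coefficient oscillation" term $\fint_{B_r}\|\nabla^2E_\varepsilon(\nabla u_\varepsilon)-\mathbf A_0\|^2|\nabla^2 u_\varepsilon|^2$ — estimated by (\ref{Eq: Error of Hess}) with exponent $\beta_0$, giving a gain $r^{2\beta_0}$ times the excess — plus a "forcing" term controlled by $\|f\|_{L^q}^2 r^{2\beta}$ via Sobolev embedding. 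Combining the interior Schauder decay of $h$ with these error terms produces a one-step excess-decay inequality
\[
\Phi(x_0,\tau r)\ \le\ C\tau^{2}\,\Phi(x_0,r)\ +\ C\bigl(\tau^{-n}r^{2\beta_0}\Phi(x_0,r)+\mu^2 r^{2\beta}\bigr),
\]
where $\Phi(x_0,r):=\fint_{B_r(x_0)}|\nabla u_\varepsilon-(\nabla u_\varepsilon)_{x_0,r}|^2\,{\mathrm d}x$; choosing $\tau$ then $\rho_\star$ suitably, a standard iteration lemma gives $\Phi(x_0,r)\le C\mu^2 r^{2\beta}$ and the convergence of the averages $(\nabla u_\varepsilon)_{x_0,r}$.

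To finish, I would translate back to the truncated field. Because ${\mathcal G}_{2\delta,\varepsilon}$ is globally Lipschitz with constant $c_\dagger$ uniform in $\varepsilon<\delta/8$ (Lemma \ref{Lemma: Lipschitz estimate on relaxed vector fields}), we have $|{\mathcal G}_{2\delta,\varepsilon}(\nabla u_\varepsilon)-{\mathcal G}_{2\delta,\varepsilon}(\xi)|\le c_\dagger|\nabla u_\varepsilon-\xi|$ for any constant $\xi$; applying this with $\xi=(\nabla u_\varepsilon)_{x_0,r}$ and using that $a\mapsto\fint_{B_r}|g-a|^2$ is minimized at $a=(g)_{x_0,r}$, the Campanato quantity for ${\mathcal G}_{2\delta,\varepsilon}(\nabla u_\varepsilon)$ is bounded by $c_\dagger^2\,\Phi(x_0,r)\le C\mu^2 r^{2\beta}$; after renormalizing the constant into the choice of $\rho_\star$ one arrives exactly at (\ref{Eq: Campanato-type beta-growth estimate}), and the existence of $\Gamma_{2\delta,\varepsilon}(x_0)=\lim_r({\mathcal G}_{2\delta,\varepsilon}(\nabla u_\varepsilon))_{x_0,r}$ follows since the averages form a Cauchy net. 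The bound $|\Gamma_{2\delta,\varepsilon}(x_0)|\le\mu$ comes from (\ref{Eq: Gradient bound in Growth estimate}): each average $({\mathcal G}_{2\delta,\varepsilon}(\nabla u_\varepsilon))_{x_0,r}$ has modulus at most $\esssup_{B_\rho}(V_\varepsilon-2\delta)_+\le\mu-\delta<\mu$ pointwise, hence so does the limit.

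\textbf{Main obstacle.} The delicate point is stage (i): extracting a genuine pointwise lower bound $V_\varepsilon\ge\delta+c\mu$ on a fixed fraction of $B_\rho$ from the purely measure-theoretic hypothesis (\ref{Eq: Levelset assumption 2}), uniformly in $\varepsilon$. This requires a De Giorgi-type argument on $U_{\delta,\varepsilon}=(V_\varepsilon-\delta)_+^2$ run with test functions supported away from the facet — the same subtlety that makes the whole paper nontrivial, since naive energy estimates degenerate as $V_\varepsilon\to\delta$. Once uniform ellipticity is in hand on a slightly smaller ball, the remaining perturbation argument is classical; the only bookkeeping cost is that the constant $\nu$ must be chosen small enough that the "bad set" $B_\rho\setminus S_{\rho,\mu,\nu}$ cannot destroy the lower bound, and $\rho_\star$ small enough that the $r^{2\beta_0}$ gain in the coefficient-oscillation term beats the fixed $\tau$-loss from the Schauder step.
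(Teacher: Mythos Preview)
Your overall scaffolding (freeze coefficients, compare to a harmonic function, iterate, then transfer to ${\mathcal G}_{2\delta,\varepsilon}$ via Lemma~\ref{Lemma: Lipschitz estimate on relaxed vector fields}) matches the paper. But there are two genuine gaps.

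\textbf{Stage (i) is wrong as stated.} You claim that from the measure condition (\ref{Eq: Levelset assumption 2}) a De Giorgi argument on $U_{\delta,\varepsilon}$ yields a \emph{pointwise} lower bound $V_\varepsilon\ge\delta+c\mu$ on a smaller ball. De Giorgi's method for subsolutions gives supremum bounds, not infimum bounds; the paper never proves (and does not need) uniform ellipticity of $\nabla^2 E_\varepsilon(\nabla u_\varepsilon)$ on a full ball. What the paper actually extracts from (\ref{Eq: Levelset assumption 2}) is much weaker but sufficient: via the energy estimates (Lemma~\ref{Lemma: Energy estimates}) and Lemma~\ref{Lemma: Excess decay estimate far from 0}, it shows (Lemma~\ref{Lemma: Average lemma from Energy estimates}) that the \emph{average} $(\nabla u_\varepsilon)_{x_0,\rho}$ has modulus at least $\delta+\mu/2$ and that the excess $\Phi(x_0,\rho)$ is small. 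The frozen matrix $\mathbf A_0=\nabla^2 E_\varepsilon((\nabla u_\varepsilon)_{x_0,\rho})$ is then uniformly elliptic because its argument lies in the annulus (\ref{Eq: zeta location}), and the comparison is run against that single matrix. Crucially, the iteration must track \emph{two} quantities simultaneously: the decay of $\Phi(x_0,\rho_k)$ \emph{and} the lower bound $|(\nabla u_\varepsilon)_{x_0,\rho_k}|\ge\delta+\mu/4$ (induction claims (\ref{Eq: Induction claim 1})--(\ref{Eq: Induction claim 2})). Without the second, you cannot re-freeze at the next scale. Your scheme, resting on a pointwise bound that is not available, does not address this.

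\textbf{The perturbation term is mis-estimated.} In your one-step decay you write the coefficient-oscillation error as producing a factor $r^{2\beta_0}$. That would require $\nabla u_\varepsilon$ to already be $C^{0,\beta_0}$, which is what you are proving. The paper's mechanism is different: the Hessian-error bound (\ref{Eq: Error of Hess}) gives $|\nabla^2 E_\varepsilon(\zeta)(\nabla u_\varepsilon-\zeta)-(\nabla E_\varepsilon(\nabla u_\varepsilon)-\nabla E_\varepsilon(\zeta))|\le C\mu^{p-2-\beta_0}|\nabla u_\varepsilon-\zeta|^{1+\beta_0}$, so after testing with $u_\varepsilon-v_\varepsilon$ one needs to control $\fint|\nabla u_\varepsilon-\zeta|^{2(1+\beta_0)}$. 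This is where Gehring's lemma (Lemma~\ref{Lemma: Higher integrability}) enters: it upgrades the exponent by some $\vartheta\le\beta_0$ and yields (\ref{Eq: Perturbation estimates}), namely an error of the form $[\Phi/\mu^2]^\vartheta\Phi + F^{2}\rho^{2\beta}$. The smallness hypothesis $\Phi\le\tau^{(n+2)/\vartheta}\mu^2$ (Lemma~\ref{Lemma: Average lemma from Perturbaltion}) then makes $\tau^{-n}[\Phi/\mu^2]^\vartheta\le\tau^2$, which is what allows the iteration to close. You omit the higher-integrability step entirely, and without it the perturbation argument does not go through.
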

\begin{proposition}\label{Prop: De Giorgi Oscillation lemma}
Let \(u_{\varepsilon}\) be a weak solution to (\ref{Eq: Reg Eq}) in \(\Omega\).
Assume that positive numbers $\delta$, $\varepsilon$, $\mu$, $F$, $M$, and an open ball $B_{\rho}(x_{0})\Subset\Omega$ satisfy $0<\rho\le 1$, (\ref{Eq: Range of delta-epsilon}), (\ref{Eq: Bound on external force term et al.}),
\begin{equation}\label{Eq: Gradient bound in De Giorgi estimate}
\esssup_{B_{\rho}(x_{0})}\,\mleft\lvert {\mathcal G}_{\delta,\,\varepsilon}(\nabla u_{\varepsilon})\mright\rvert\le \mu\le \mu+\delta\le M,
\end{equation}
and 
\begin{equation}\label{Eq: Levelset assumption}
\mleft\lvert S_{\rho/2,\,\mu,\,\nu}(x_{0})\mright\rvert\le (1-\nu)\lvert B_{\rho/2}(x_{0})\rvert
\end{equation}
for some constant \(\nu\in(0,\,1/6)\). Then, there exist constants \(\kappa\in(2^{-\beta},1)\) and \(C_{\star}\in\lbrack1,\,\infty)\), which depend at most on $n$, $p$, $q$, $F$, $\lambda$, $\Lambda$, $K$, $M$, $\delta$, and $\nu$, but are independent of \(\varepsilon\), such that we have either 
\begin{equation}\label{Eq: Case 1}
\mu^{2}<C_{\star}\rho^{\beta},
\end{equation}
or
\begin{equation}\label{Eq: Case 2}
\esssup_{B_{\rho/4}(x_{0})}\,\mleft\lvert{\mathcal G}_{\delta,\,\varepsilon}(\nabla u_{\varepsilon})\mright\rvert\le \kappa\mu.
\end{equation}
Here the exponent \(\beta\in(0,\,1)\) is defined by (\ref{Eq: Definition of betas}).
\end{proposition}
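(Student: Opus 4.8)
The plan is to run a De Giorgi-type oscillation argument for the scalar function $m_\varepsilon:=\lvert{\mathcal G}_{\delta,\,\varepsilon}(\nabla u_\varepsilon)\rvert=(V_\varepsilon-\delta)_+$, exploiting the fact that on the non-facet region $\{V_\varepsilon>\delta\}$ — which contains the support of every truncation $(V_\varepsilon-\ell)_+$ with $\ell>\delta$ — the regularized equation is \emph{uniformly} elliptic with constants independent of $\varepsilon$. Indeed, combining (\ref{Eq: Hessian estimate for approximated E}) with the hypothesis (\ref{Eq: Gradient bound in De Giorgi estimate}), which forces $\delta<V_\varepsilon\le M$ wherever $m_\varepsilon>0$, the matrix $A_\varepsilon:=\nabla^2E_\varepsilon(\nabla u_\varepsilon)$ satisfies $\lambda_\sharp\,\mathrm{id}\leqslant A_\varepsilon\leqslant\Lambda_\sharp\,\mathrm{id}$ there, with $\lambda_\sharp,\Lambda_\sharp$ depending only on $p$, $\lambda$, $\Lambda$, $K$, $\delta$, $M$. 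Thus on this region we are in the classical setting of a nonnegative subsolution of a uniformly elliptic equation with a divergence-form $L^q$ datum, $q>n$, and the only genuinely new point is the bookkeeping of the inhomogeneous term, which produces the alternative (\ref{Eq: Case 1}).

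\emph{Step 1 (Caccioppoli inequality away from the facet).} Starting from the differentiated weak formulation (\ref{Eq: Weak formulation from EL}), I would test the $j$-th equation with $\phi_j=\partial_{x_j}u_\varepsilon\,(V_\varepsilon-\ell)_+\,\eta^2$ for a cutoff $\eta$ and a level $\ell\ge\delta+(1-\nu)\mu$, and sum over $j$. This is admissible because $u_\varepsilon\in W^{1,\infty}_{\mathrm{loc}}\cap W^{2,2}_{\mathrm{loc}}$ and each $\phi_j$ is supported in $\{V_\varepsilon>\ell\}\subset\{V_\varepsilon>\delta\}$. Using $\sum_j\partial_{x_j}u_\varepsilon\,\nabla\partial_{x_j}u_\varepsilon=V_\varepsilon\nabla V_\varepsilon$, the ellipticity lower bound produces the good quadratic terms $\int\lvert\nabla^2u_\varepsilon\rvert^2(V_\varepsilon-\ell)_+\eta^2\,{\mathrm d}x$ and $\int\langle A_\varepsilon\nabla V_\varepsilon\mid\nabla V_\varepsilon\rangle V_\varepsilon\eta^2\,{\mathrm d}x$ on the support of $\nabla(V_\varepsilon-\ell)_+$; the cutoff terms are absorbed by Young's inequality, and the $f_\varepsilon$-contribution $-\sum_j\int f_\varepsilon\,\partial_{x_j}\bigl(\partial_{x_j}u_\varepsilon(V_\varepsilon-\ell)_+\eta^2\bigr)\,{\mathrm d}x$ is handled via $\lvert\nabla u_\varepsilon\rvert\le M$, Hölder's inequality with exponent $q>n$, Sobolev embedding, and the local $W^{2,2}$-bound for $u_\varepsilon$. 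The outcome is a standard energy inequality
\[
\int_{B_{r'}(x_0)}\bigl\lvert\nabla (V_\varepsilon-\ell)_+\bigr\rvert^2\,{\mathrm d}x\le\frac{C}{(r-r')^2}\int_{B_r(x_0)}(V_\varepsilon-\ell)_+^2\,{\mathrm d}x+C\,F^{2}\,\bigl\lvert A_{\ell,r}\bigr\rvert^{1-2/q},
\]
valid for concentric balls $B_{r'}\subset B_r\subset B_\rho(x_0)$ and all such $\ell$, where $A_{\ell,r}=\{V_\varepsilon>\ell\}\cap B_r(x_0)$ and $C$ depends only on the data, not on $\varepsilon$. This is precisely the computation carried out in the appendix.

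\emph{Step 2 (from measure to oscillation decay).} The hypothesis (\ref{Eq: Levelset assumption}) reads $\bigl\lvert\{V_\varepsilon-\delta\le(1-\nu)\mu\}\cap B_{\rho/2}(x_0)\bigr\rvert\ge\nu\lvert B_{\rho/2}(x_0)\rvert$, which, together with $V_\varepsilon-\delta\le\mu$ on $B_\rho(x_0)$ from (\ref{Eq: Gradient bound in De Giorgi estimate}), is exactly the hypothesis of a De Giorgi oscillation lemma for the bounded subsolution $V_\varepsilon-\delta$. Along the levels $\ell_j=\delta+\mu-2^{-j}\nu\mu\uparrow\delta+\mu$ I would first apply the De Giorgi isoperimetric (shrinking-measure) lemma, fed by the gradient bound of Step 1, to obtain an index $j_0$, depending only on the data, such that $\lvert\{V_\varepsilon>\ell_{j_0}\}\cap B_{\rho/2}(x_0)\rvert$ falls below the smallness threshold $\varepsilon_0$ required by the De Giorgi local-boundedness lemma; then the latter, applied to $V_\varepsilon-\ell_{j_0}$ on $B_{\rho/2}(x_0)$, gives $\esssup_{B_{\rho/4}(x_0)}(V_\varepsilon-\delta)_+\le\kappa\mu$ with $\kappa=1-c\,\nu\,2^{-j_0}<1$. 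In both lemmas the $f_\varepsilon$-term of Step 1 contributes an additive error of the order $F^{2}\rho^{2\beta}$ (hence $\le C_\star\rho^\beta$ since $\rho\le1$), with $\beta$ as in (\ref{Eq: Definition of betas}): this is the scaling of the Poisson equation with $L^q$ datum, and it is why the case $q=\infty$ is admitted with an arbitrary $\hat\beta_0\in(0,1)$. Tracking this error yields the dichotomy: either it dominates, giving (\ref{Eq: Case 1}) for a suitable $C_\star\in[1,\infty)$, or it is negligible and (\ref{Eq: Case 2}) holds. Since the $\kappa$ above is close to $1$, enlarging it if necessary (a weaker conclusion) we may assume $2^{-\beta}<\kappa<1$, as required in the proof of Theorem \ref{Prop: A priori Hoelder estimate}. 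The proofs of the two De Giorgi lemmata follow the standard templates of \cite{MR3887613}, \cite{BDGPdN}, with detailed computations relegated to the appendix.

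\emph{Main obstacle.} The real care is needed in Step 1, near the facet: one must keep every test function supported in $\{V_\varepsilon>\delta\}$ so that the $\varepsilon$-uniform ellipticity — which degenerates as $V_\varepsilon\downarrow0$ — remains in force, and one must absorb the extra Hessian terms arising from differentiating $V_\varepsilon=\sqrt{\varepsilon^2+\lvert\nabla u_\varepsilon\rvert^2}$ into the good quadratic term, a classical but delicate manipulation. The secondary, more clerical difficulty is to carry the $f_\varepsilon$-error through the two De Giorgi lemmata with exactly the exponent matching (\ref{Eq: Definition of betas}) and to obtain a dichotomy with $\varepsilon$-independent constants, uniformly over $0<\varepsilon<\delta/8$.
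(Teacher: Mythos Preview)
Your proposal is correct and follows essentially the same strategy as the paper: a Caccioppoli inequality obtained by testing the differentiated equation with $\partial_{x_j}u_\varepsilon$ times a truncation supported in $\{V_\varepsilon>\delta\}$, followed by the two De Giorgi lemmata (measure shrinking via the isoperimetric inequality, then local boundedness), with the $L^q$ forcing producing the $\rho^\beta$ alternative. The only cosmetic difference is that the paper packages the subsolution as $U_{\delta,\varepsilon}=(V_\varepsilon-\delta)_+^2$ and truncates at levels $k_i=(1-2^{-i}\nu)\mu^2$, whereas you work directly with $m_\varepsilon=(V_\varepsilon-\delta)_+$ and levels $\ell_j=\delta+\mu-2^{-j}\nu\mu$; both lead to the same iteration and the same $\kappa\in(2^{-\beta},1)$.
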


Our analysis broadly depends on whether a scalar function $V_{\varepsilon}$ degenerates or not, which can be judged by measure assumptions as in (\ref{Eq: Levelset assumption 2}) or (\ref{Eq: Levelset assumption}). We would like to describe each individual part.

The assumption (\ref{Eq: Levelset assumption 2}) in Proposition \ref{Prop: Perturbation result} suggests that $V_{\varepsilon}$ should be non-degenerate near the point $x_{0}$. This expectation enables us to apply freezing coefficient methods to show Campanato-type growth estimates as in (\ref{Eq: Campanato-type beta-growth estimate}). In other words, Proposition \ref{Prop: Perturbation result} is based on analysis over non-degenerate points of a scalar function $V_{\varepsilon}$ or a gradient $\nabla u_{\varepsilon}$. To justify this, however, we will face to check that the average integral \((\nabla u_{\varepsilon})_{x_{0},\,r}\in{\mathbb R}^{n}\) never vanishes even when the radius $r$ tends to $0$. To answer this affirmatively, we would like to find the suitable numbers $\nu$ and $\rho_{\star}$ as in Proposition \ref{Prop: Perturbation result}. These numbers are the key criteria for judging whether it is possible to separate degenerate and non-degenerate points of a gradient $\nabla u_{\varepsilon}$. When choosing these important numbers, we will need a variety of energy estimates. There, the assumptions (\ref{Eq: mu>delta}) and (\ref{Eq: Levelset assumption 2}) are used to show these energy estimates.

When (\ref{Eq: Levelset assumption 2}) fails but instead a reversed estimate like (\ref{Eq: Levelset assumption}) holds, we will have to consider a case where $V_{\varepsilon}$ may degenerate. There we appeal to a truncation method to deduce a De Giorgi-type oscillation lemma (Proposition \ref{Prop: De Giorgi Oscillation lemma}). There we have to check that the scalar function $U_{\delta,\,\varepsilon}$ is a subsolution to a certain uniformly elliptic equation. This is possible since $U_{\delta,\,\varepsilon}$ is supported in $\{\,V_{\varepsilon}>\delta\,\}$, where the ellipticity ratio of $\nabla^{2} E_{\varepsilon}(\nabla u_{\varepsilon})$ is bounded. 

\begin{Remark}\label{Rmk: Remark on truncated composition of vector fields}\upshape
Let \(\delta,\,\varepsilon\) satisfy (\ref{Eq: Range of delta-epsilon}).
For \(z\in{\mathbb R}^{n}\) and \(\mu\in(0,\,\infty)\), the inequalities $\mleft\lvert{\mathcal G}_{\delta,\,\varepsilon}(z)\mright\rvert\le \mu$ and $\varepsilon^{2}+\lvert z\rvert^{2}\le (\delta+\mu)^{2}$ are equivalent.
In particular, (\ref{Eq: Gradient bound in Growth estimate}) is equivalent to (\ref{Eq: Gradient bound in De Giorgi estimate}). Also, under these equivalent conditions, we can easily check that \(u_{\varepsilon}\) satisfies
\begin{equation}\label{Eq: Bound on G-2delta-epsilon}
\esssup_{B_{\rho}(x_{0})}\,\mleft\lvert {\mathcal G}_{2\delta,\,\varepsilon}(\nabla u_{\varepsilon})\mright\rvert\le (\mu-\delta)_{+}\le \mu.
\end{equation}
\end{Remark}

From Propositions \ref{Prop: a priori Lipschitz}--\ref{Prop: De Giorgi Oscillation lemma}, we would like to give the proof of Theorem \ref{Prop: A priori Hoelder estimate}.
\begin{proof}
For each fixed \(x_{\ast}\in\Omega\), we first fix
\[R\coloneqq \min\mleft\{\,\frac{1}{2},\,\frac{1}{3}\mathop{\mathrm{dist}}\,(x_{\ast},\,\partial\Omega)\mright\}>0,\]
so that \(B_{2R}(x_{\ast})\Subset \Omega\) holds. By Proposition \ref{Prop: a priori Lipschitz}, we may take a finite constant \(\mu_{0}\in(1,\,\infty)\), depending at most on $n$, $p$, $q$, $\lambda$, $\Lambda$, $K$, $F$, $L$ and $R$, such that we have
\begin{equation}\label{Eq: Lipschitz bound}
\esssup_{B_{R}(x_{\ast})}\,V_{\varepsilon}\le \mu_{0}.
\end{equation}
We set \(M\coloneqq 1+\mu_{0}\), so that \(\mu_{0}+\delta\le M\) clearly holds. 

We choose and fix the numbers \(\nu\in(0,\,1/6),\,\rho_{\star}\in(0,\,1)\) as in Proposition \ref{Prop: Perturbation result}, which depend at most on $n$, $p$, $q$, $\beta_{0}$, $\lambda$, $\Lambda$, $K$, $F$, $M$, and \(\delta\). Corresponding to this \(\nu\), we choose finite constants \(\kappa\in(2^{-\beta},\,1),\,C_{\star}\in\lbrack1,\,\infty)\) as in Proposition \ref{Prop: De Giorgi Oscillation lemma}.
We define the desired H\"{o}lder exponent \(\alpha\in(0,\,\beta/2)\) by \(\alpha\coloneqq -\log\kappa/\log 4\), so that the identity \(4^{-\alpha}=\kappa\) holds.
We also put the radius \(\rho_{0}\) such that it satisfies
\begin{equation}\label{Eq: determination of radius}
0<\rho_{0}\le \min\mleft\{\,\frac{R}{2},\,\rho_{\star}\,\mright\}<1\quad\textrm{and}\quad C_{\star}\rho_{0}^{\beta}\le \kappa^{2}\mu_{0}^{2},
\end{equation}
which depends at most on $n$, $p$, $q$, $\beta_{0}$, $\lambda$, $\Lambda$, $K$, $F$, $M$, and \(\delta\).
We set non-negative decreasing sequences \(\{\rho_{k}\}_{k=1}^{\infty},\,\{\mu_{k}\}_{k=1}^{\infty}\) by \(\rho_{k}\coloneqq 4^{-k}\rho_{0},\,\mu_{k}\coloneqq \kappa^{k}\mu_{0}\) for \(k\in{\mathbb N}\).
By \(2^{-\beta}<\kappa=4^{-\alpha}<1\) and (\ref{Eq: determination of radius}), we can easily check that 
\begin{equation}\label{Eq: An estimate for induction}
\sqrt{ C_{\star}\rho_{k}^{\beta}}\le 2^{-\beta k}\kappa\mu_{0}\le\kappa^{k+1}\mu_{0}= \mu_{k+1},
\end{equation}
and
\begin{equation}\label{Eq: Estimate on radius ratio}
\mu_{k}=4^{-\alpha k}\mu_{0}=\mleft(\frac{\rho_{k}}{\rho_{0}}\mright)^{\alpha}\mu_{0}
\end{equation}
for every \(k\in{\mathbb Z}_{\ge 0}\).

We claim that for every \(x_{0}\in B_{\rho_{0}}(x_{\ast})\), the limit
\[\Gamma_{2\delta,\,\varepsilon}(x_{0})\coloneqq\lim_{r\to 0}\mleft({\mathcal G}_{2\delta,\,\varepsilon}(\nabla u_{\varepsilon})\mright)_{x_{0},\,r}\in{\mathbb R}^{n}\]
exists, and this limit satisfies
\begin{equation}\label{Eq: Campanato-type alpha-growth estimate}
\fint_{B_{r}(x_{0})}\mleft\lvert {\mathcal G}_{2\delta,\,\varepsilon}(\nabla u_{\varepsilon})-\Gamma_{2\delta,\,\varepsilon}(x_{0}) \mright\rvert^{2}\,{\mathrm d}x\le 4^{2\alpha+1}\mleft(\frac{r}{\rho_{0}}\mright)^{2\alpha}\mu_{0}^{2}\quad \textrm{for all }r\in(0,\,\rho_{0}\rbrack.
\end{equation}
In the proof of (\ref{Eq: Campanato-type alpha-growth estimate}), we introduce a set
\[{\mathcal N}\coloneqq \mleft\{k\in{\mathbb Z}_{\ge 0}\mathrel{}\middle|\mathrel{} \lvert S_{\rho_{k}/2,\,\mu_{k},\,\nu}(x_{0})\rvert>(1-\nu) \lvert B_{\rho_{k}/2}(x_{0})\rvert\mright\},\]
and define a number \(k_{\star}\coloneqq \min{\mathcal N}\in{\mathbb Z}_{\ge 0}\) when \({\mathcal N}\neq\emptyset\).
To show (\ref{Eq: Campanato-type alpha-growth estimate}), we consider the three possible cases:
\settasks{counter-format=(\arabic*)}
\begin{tasks}(3)
\task \label{CASE1} \({\mathcal N}\neq \emptyset\) and \(\mu_{k_{\star}}> \delta\).
\task \label{CASE2}\({\mathcal N}\neq \emptyset\) and \(\mu_{k_{\star}}\le \delta\).
\task \label{CASE3}\({\mathcal N}=\emptyset\).
\end{tasks}
It should be mentioned that when \({\mathcal N}\neq \emptyset\), there clearly holds \(\lvert S_{\rho_{k}/2,\,\mu_{k},\,\nu_{k}}(x_{0})\rvert\le (1-\nu)\lvert B_{\rho_{k}/2}(x_{0})\rvert\) for every \(k\in\{\,0,\,1,\,\dots\,,\,k_{\star}-1\,\}\), since \(k_{\star}\) is the minimum number of \({\mathcal N}\). 
Thus, in the cases \ref{CASE1}--\ref{CASE2}, we are able to obtain
\begin{equation}\label{Eq: DG}
\esssup_{B_{k}}\,\mleft\lvert{\mathcal G}_{\delta,\,\varepsilon}(\nabla u_{\varepsilon})\mright\rvert\le \mu_{k}\quad \textrm{for every }k\in\{\,0,\,1,\,\dots\,,\,k_{\star}\,\}
\end{equation}
by repeatedly applying (\ref{Eq: An estimate for induction}) and Proposition \ref{Prop: De Giorgi Oscillation lemma} with \((\rho,\,\mu)=(\rho_{k},\,\mu_{k})\) for \(k\in\{\,0,\,1,\,\dots\,,\,k_{\star}-1\,\}\). Here we write $B_{k}\coloneqq B_{\rho_{k}}(x_{0})\,(k\in{\mathbb Z}_{\ge 0})$ for notational simplicity.

\ref{CASE1}. By \(k_{\star}\in {\mathcal N}\), we are able to apply Proposition \ref{Prop: Perturbation result} in the open ball \(B_{\rho_{k_{\star}}/2}(x_{0})\) with \(\mu=\mu_{k_{\star}}\) (see Remark \ref{Rmk: Remark on truncated composition of vector fields}). In particular, the limit \(\Gamma_{2\delta,\,\varepsilon}(x_{0})\) exists and it satisfies
\begin{equation}\label{Eq: Campanato-type beta-growth in main theorem}
\fint_{B_{r}(x_{0})}\mleft\lvert{\mathcal G}_{2\delta,\,\varepsilon}(\nabla u_{\varepsilon})-\Gamma_{2\delta,\,\varepsilon}(x_{0})\mright\rvert^{2}\,{\mathrm d}x\le \mleft(\frac{2r}{\rho_{k_{\star}}}\mright)^{2\beta}\mu_{k_{\star}}^{2}\quad \textrm{for all }r\in\mleft( 0,\,\frac{\rho_{k_{\star}}}{2}\mright],
\end{equation}
and
\begin{equation}
\mleft\lvert \Gamma_{2\delta,\,\varepsilon}(x_{0})\mright\rvert\le \mu_{k_{\star}}.
\end{equation}
When \(0<r\le \rho_{k_{\star}}/2\), we use (\ref{Eq: Estimate on radius ratio}), (\ref{Eq: Campanato-type beta-growth in main theorem}) and \(\alpha<\beta\) to get
\[\fint_{B_{r}(x_{0})}\mleft\lvert{\mathcal G}_{2\delta,\,\varepsilon}(\nabla u_{\varepsilon})-\Gamma_{2\delta,\,\varepsilon}(x_{0})\mright\rvert^{2}\,{\mathrm d}x\le \mleft(\frac{2r}{\rho_{k_{\star}}}\mright)^{2\alpha}\mleft(\frac{\rho_{k_{\star}}}{\rho_{0}}\mright)^{2\alpha}\mu_{0}^{2}= 4^{\alpha}\mleft(\frac{r}{\rho_{0}}\mright)^{2\alpha}\mu_{0}^{2}.\]
To every \(r\in(\rho_{k_{\star}}/2,\,\rho_{0}\rbrack\), there corresponds a unique integer \(k\in\{\,0,\,\dots\,,\,k_{\star}\,\}\) such that \(\rho_{k+1}<r\le \rho_{k}\). By (\ref{Eq: DG}), we compute
\begin{align*}
\fint_{B_{r}(x_{0})}\mleft\lvert{\mathcal G}_{2\delta,\,\varepsilon}(\nabla u_{\varepsilon})-\Gamma_{2\delta,\,\varepsilon}(x_{0})\mright\rvert^{2}\,{\mathrm d}x&\le 2\fint_{B_{r}(x_{0})}\mleft(\mleft\lvert{\mathcal G}_{2\delta,\,\varepsilon}(\nabla u_{\varepsilon})\mright\rvert^{2}+\mleft\lvert\Gamma_{2\delta,\,\varepsilon}(x_{0})\mright\rvert^{2}\mright)\,{\mathrm d}x\\&\le 2\mleft(\esssup_{B_{k}}\,\mleft\lvert {\mathcal G}_{\delta,\,\varepsilon}(\nabla u_{\varepsilon})\mright\rvert^{2}+\mleft\lvert\Gamma_{2\delta,\,\varepsilon} \mright\rvert^{2}\mright)\\&\le 4\mu_{k}^{2}\le 4 \mleft(\frac{\rho_{k}}{\rho_{0}}\mright)^{2\alpha}\mu_{0}^{2}\le 4 \mleft(\frac{4r}{\rho_{0}}\mright)^{2\alpha}\mu_{0}^{2}.
\end{align*}

\ref{CASE2}. We recall (\ref{Eq: Bound on G-2delta-epsilon}) in Remark \ref{Rmk: Remark on truncated composition of vector fields}, which yields \({\mathcal G}_{2\delta,\,\varepsilon}(\nabla u_{\varepsilon})=0\) a.e. in \(B_{k_{\star}}\), and
\begin{equation}\label{Eq: G-2delta-epsilon bounded by G-delta-epsilon}
\mleft\lvert{\mathcal G}_{2\delta,\,\varepsilon}(\nabla u_{\varepsilon})\mright\rvert\le\mleft\lvert {\mathcal G}_{\delta,\,\varepsilon}(\nabla u_{\varepsilon})\mright\rvert\quad \textrm{a.e. in }\Omega.
\end{equation}
Combining with (\ref{Eq: DG}), we have
\begin{equation}\label{Eq: G-2delta-epsilon decay in digital}
\esssup_{B_{k}}\,\mleft\lvert{\mathcal G}_{2\delta,\,\varepsilon}(\nabla u_{\varepsilon})\mright\rvert\le \mu_{k}\quad \textrm{for every }k\in{\mathbb Z}_{\ge 0}.
\end{equation}
This clearly yields \(\Gamma_{2\delta,\,\varepsilon}(x_{0})=0\). To every \(r\in (0,\, \rho_{0}\rbrack\), there corresponds a unique \(k\in{\mathbb Z}_{\ge 0}\) such that \(\rho_{k+1}<r\le \rho_{k}\). By (\ref{Eq: G-2delta-epsilon decay in digital}) and \(\kappa=4^{-\alpha}\), we have
\begin{align*}
\fint_{B_{r}(x_{0})}\mleft\lvert {\mathcal G}_{2\delta,\,\varepsilon}(\nabla u_{\varepsilon})-\Gamma_{2\delta,\,\varepsilon}(x_{0})\mright\rvert^{2}\,{\mathrm d}x&=\fint_{B_{r}(x_{0})}\mleft\lvert {\mathcal G}_{2\delta,\,\varepsilon}(\nabla u_{\varepsilon})\mright\rvert^{2}\,{\mathrm d}x\rvert\\&\le \esssup_{B_{k}}\,\mleft\lvert{\mathcal G}_{2\delta,\,\varepsilon}(\nabla u_{\varepsilon})\mright\rvert^{2}\le \mu_{k}^{2}=4^{-2\alpha k}\mu_{0}^{2}\\&\le \mleft[4\cdot\mleft(\frac{r}{\rho_{0}}\mright)\mright]^{2\alpha}\mu_{0}^{2}=16^{\alpha}\mleft(\frac{r}{\rho_{0}}\mright)^{2\alpha}\mu_{0}^{2}.
\end{align*}

\ref{CASE3}. There clearly holds \(\lvert S_{\rho_{k}/2,\,\mu_{k},\,\nu_{k}}\rvert \le (1-\nu)\lvert B_{\rho_{k}/2}(x_{0})\rvert\) for every \(k\in{\mathbb Z}_{\ge 0}\). Applying (\ref{Eq: An estimate for induction}) and Proposition \ref{Prop: De Giorgi Oscillation lemma} with \((\rho,\,\mu)=(\rho_{k},\,\mu_{k})\,(k\in{\mathbb Z}_{\ge 0})\) repeatedly, we can easily check that 
\[\esssup_{B_{k}}\,\mleft\lvert{\mathcal G}_{\delta,\,\varepsilon}(\nabla u_{\varepsilon})\mright\rvert\le \mu_{k}\quad \textrm{for every }k\in{\mathbb Z}_{\ge 0}.\]
In particular, (\ref{Eq: G-2delta-epsilon decay in digital}) clearly follows from this result and (\ref{Eq: G-2delta-epsilon bounded by G-delta-epsilon}), and therefore the proof of (\ref{Eq: Campanato-type alpha-growth estimate}) can be accomplished, similarly to \ref{CASE2}.

In all possible cases, $\Gamma_{2\delta,\,\varepsilon}(x_{0})$ exists and satisfies (\ref{Eq: Campanato-type alpha-growth estimate}). Here it should be noted that the limit \(\Gamma_{2\delta,\,\varepsilon}\) satisfies
\begin{equation}\label{Eq: Boundedness on Gamma-2delta-epsilon}
\mleft\lvert \Gamma_{2\delta,\,\varepsilon}(x_{0})\mright\rvert\le \mu_{0}\quad \textrm{for all }x_{0}\in B_{\rho_{0}}(x_{\ast})
\end{equation}
by (\ref{Eq: Lipschitz bound}) and (\ref{Eq: G-2delta-epsilon bounded by G-delta-epsilon}).
From (\ref{Eq: Campanato-type alpha-growth estimate}) and (\ref{Eq: Boundedness on Gamma-2delta-epsilon}), we would like to show that
\begin{equation}\label{Eq: Hoelder estimate on Gamma-2delta-epsilon}
\mleft\lvert \Gamma_{2\delta,\,\varepsilon}(x_{1})-\Gamma_{2\delta,\,\varepsilon}(x_{2})\mright\rvert\le \mleft(\frac{2^{2\alpha+2+n/2}}{\rho_{0}^{\alpha}}\mu_{0}\mright)\lvert x_{1}-x_{2}\rvert^{\alpha}
\end{equation}
for all \(x_{1},\,x_{2}\in B_{\rho_{0}/2}(x_{\ast})\). We prove (\ref{Eq: Hoelder estimate on Gamma-2delta-epsilon}) by dividing into the two cases. In the case \(r\coloneqq \lvert x_{1}-x_{2}\rvert\le \rho_{0}/2\), we set a point \(x_{3}\coloneqq (x_{1}+x_{2})/2\in B_{\rho_{0}}(x_{\ast})\). Noting the inclusions \(B_{r/2}(x_{3})\subset B_{r}(x_{j})\subset B_{\rho_{0}/2}(x_{j})\subset B_{\rho_{0}}(x_{\ast})\) for each \(j\in\{\,1,\,2\,\}\), we use (\ref{Eq: Campanato-type alpha-growth estimate}) to obtain
\begin{align*}
&\mleft\lvert\Gamma_{2\delta,\,\varepsilon}(x_{1})-\Gamma_{2\delta,\,\varepsilon}(x_{2}) \mright\rvert^{2}\\&=\fint_{B_{r/2}(x_{3})}\mleft\lvert\Gamma_{2\delta,\,\varepsilon}(x_{1})-\Gamma_{2\delta,\,\varepsilon}(x_{2}) \mright\rvert^{2}\,{\mathrm d}x\\&\le 2\mleft(\fint_{B_{r/2}(x_{3})}\mleft\lvert{\mathcal G}_{2\delta,\,\varepsilon}(\nabla u_{\varepsilon})-\Gamma_{2\delta,\,\varepsilon}(x_{1})\mright\rvert^{2}\,{\mathrm d}x +\fint_{B_{r/2}(x_{3})}\mleft\lvert{\mathcal G}_{2\delta,\,\varepsilon}(\nabla u_{\varepsilon})-\Gamma_{2\delta,\,\varepsilon}(x_{2})\mright\rvert^{2}\,{\mathrm d}x\mright) \\& \le 2^{n+1}\mleft(\fint_{B_{r}(x_{1})}\mleft\lvert{\mathcal G}_{2\delta,\,\varepsilon}(\nabla u_{\varepsilon})-\Gamma_{2\delta,\,\varepsilon}(x_{1})\mright\rvert^{2}\,{\mathrm d}x +\fint_{B_{r}(x_{2})}\mleft\lvert{\mathcal G}_{2\delta,\,\varepsilon}(\nabla u_{\varepsilon})-\Gamma_{2\delta,\,\varepsilon}(x_{2})\mright\rvert^{2}\,{\mathrm d}x\mright)\\&\le 2^{n+4}\cdot 16^{\alpha}\mleft(\frac{r}{\rho_{0}}\mright)^{2\alpha}\mu_{0}^{2}=\mleft(\frac{2^{2\alpha+2+n/2}}{\rho_{0}^{\alpha}}\mu_{0}\mright)^{2}\lvert x_{1}-x_{2}\rvert^{2\alpha}, 
\end{align*}
which yields (\ref{Eq: Hoelder estimate on Gamma-2delta-epsilon}). In the remaining case \(\lvert x_{1}-x_{2}\rvert>\rho_{0}/2\), we simply use (\ref{Eq: Boundedness on Gamma-2delta-epsilon}) to get
\[\mleft\lvert\Gamma_{2\delta,\,\varepsilon}(x_{1})-\Gamma_{2\delta,\,\varepsilon}(x_{2})\mright\rvert\le 2\mu_{0}\le 2\cdot \frac{2^{\alpha}\lvert x_{1}-x_{2}\rvert^{\alpha}}{\rho_{0}^{\alpha}}\mu_{0},\]
which completes the proof of (\ref{Eq: Hoelder estimate on Gamma-2delta-epsilon}).
Finally, we mention that the mapping \(\Gamma_{2\delta,\,\varepsilon}\) is a Lebesgue representative of \({\mathcal G}_{2\delta,\,\varepsilon}\in L^{p}(\Omega;\,{\mathbb R}^{n})\) by Lebesgue's differentiation theorem, and therefore the desired estimates (\ref{Eq: a priori bounds of G-2delta-epsilon})--(\ref{Eq: a priori Hoelder bounds of G-2delta-epsilon}) immediately follow from (\ref{Eq: Boundedness on Gamma-2delta-epsilon})--(\ref{Eq: Hoelder estimate on Gamma-2delta-epsilon}) with $C=C(n,\,\alpha,\,\rho_{0})\in(0,\,\infty)$.
\end{proof}

\subsection{A weak formulation}\label{Subsect: Weak formulations of regularized equations}
In Section \ref{Subsect: Weak formulations of regularized equations}, we deduce a weak formulation on regularized solutions (Lemma \ref{Lemma: Weak formulations of approximated equations}).
\begin{lemma}\label{Lemma: Weak formulations of approximated equations}
Let \(u_{\varepsilon}\) be a weak solution to (\ref{Eq: Regularized equation}) in \(\Omega\) with \(0<\varepsilon<1\). Assume that \(\psi\colon[0,\,\infty)\rightarrow [0,\,\infty)\) is a non-decreasing Lipschitz function whose non-differentiable points are finitely many.
For any non-negative function \(\zeta\in W^{1,\,\infty}(B_{\rho}(x_{0}))\) that is compactly supported in an open ball \(B_{\rho}(x_{0})\Subset \Omega\), we set
\begin{equation}\label{Eq: Definitions of Jk}
\mleft\{\begin{array}{rcl}J_{1}&\coloneqq & \displaystyle\int_{B_{\rho}(x_{0})}\mleft\langle\nabla^{2}E_{\varepsilon}(\nabla u_{\varepsilon})\nabla V_{\varepsilon}\mathrel{}\middle|\mathrel{}\nabla\zeta\mright\rangle\psi(V_{\varepsilon})V_{\varepsilon}\,{\mathrm d}x,\\ J_{2}&\coloneqq & \displaystyle\int_{B_{\rho}(x_{0})}\mleft\langle\nabla^{2}E_{\varepsilon}(\nabla u_{\varepsilon})\nabla V_{\varepsilon}\mathrel{}\middle|\mathrel{}\nabla V_{\varepsilon}\mright\rangle\zeta\psi^{\prime}(V_{\varepsilon})V_{\varepsilon}\,{\mathrm d}x, \\ J_{3}&\coloneqq & \displaystyle\sum_{j=1}^{n}\displaystyle\int_{B_{\rho}(x_{0})}\mleft\langle\nabla^{2}E_{\varepsilon}(\nabla u_{\varepsilon})\nabla\partial_{x_{j}}u_{\varepsilon}\mathrel{}\middle|\mathrel{}\nabla\partial_{x_{j}}u_{\varepsilon}\mright\rangle\zeta\psi(V_{\varepsilon})\,{\mathrm d}x,\\ J_{4}&\coloneqq & \displaystyle\int_{B_{\rho}(x_{0})}\lvert f_{\varepsilon}\rvert^{2}\psi(V_{\varepsilon})V_{\varepsilon}^{2-p}\zeta\,{\mathrm d}x, \\ J_{5}&\coloneqq & \displaystyle\int_{B_{\rho}(x_{0})}\lvert f_{\varepsilon}\rvert^{2}\psi^{\prime}(V_{\varepsilon}) V_{\varepsilon}^{3-p}\zeta\,{\mathrm d}x, \\ J_{6}&\coloneqq & \displaystyle\int_{B_{\rho}(x_{0})}\lvert f_{\varepsilon}\rvert\lvert \nabla\zeta\rvert \psi(V_{\varepsilon})V_{\varepsilon}\,{\mathrm d}x. \end{array} \mright.
\end{equation}
Then, we have
\begin{equation}\label{Eq: Weak formulation of regularized equations}
2J_{1}+J_{2}+J_{3}\le \frac{n}{\lambda}(J_{4}+J_{5}) +2J_{6}.
\end{equation}
\end{lemma}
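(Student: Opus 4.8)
The plan is to insert a suitable test function into the differentiated weak formulation (\ref{Eq: Weak formulation from EL}), which is available because $u_{\varepsilon}\in W_{\mathrm{loc}}^{1,\,\infty}(\Omega)\cap W_{\mathrm{loc}}^{2,\,2}(\Omega)$ (Section \ref{Subsect: A priori Lipschitz bound}). Concretely, I would test (\ref{Eq: Weak formulation from EL}) with $\phi=\zeta\,\psi(V_{\varepsilon})\,\partial_{x_{j}}u_{\varepsilon}$ and then sum over $j\in\{\,1,\,\dots,\,n\,\}$. First one checks that this is an admissible element of $W_{0}^{1,\,2}(B_{\rho}(x_{0}))$: $\zeta$ is Lipschitz with compact support in $B_{\rho}(x_{0})$, one has $V_{\varepsilon}\in W_{\mathrm{loc}}^{1,\,2}\cap L_{\mathrm{loc}}^{\infty}$, and since $\psi$ is Lipschitz, non-decreasing, with only finitely many non-differentiable points, the chain rule gives $\psi(V_{\varepsilon})\in W_{\mathrm{loc}}^{1,\,2}\cap L_{\mathrm{loc}}^{\infty}$ with $\nabla\psi(V_{\varepsilon})=\psi^{\prime}(V_{\varepsilon})\nabla V_{\varepsilon}$ almost everywhere (the values of $\psi^{\prime}$ at the corner points of $\psi$ being irrelevant, since $\nabla V_{\varepsilon}$ vanishes a.e.\ on each level set of $V_{\varepsilon}$). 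Hence
\[\nabla\bigl(\zeta\psi(V_{\varepsilon})\partial_{x_{j}}u_{\varepsilon}\bigr)=\psi(V_{\varepsilon})\partial_{x_{j}}u_{\varepsilon}\,\nabla\zeta+\zeta\psi^{\prime}(V_{\varepsilon})\partial_{x_{j}}u_{\varepsilon}\,\nabla V_{\varepsilon}+\zeta\psi(V_{\varepsilon})\,\nabla\partial_{x_{j}}u_{\varepsilon},\]
and all integrals below are finite because on $\mathrm{supp}\,\zeta$ the function $V_{\varepsilon}$ is bounded and bounded away from $0$, while $\nabla V_{\varepsilon},\,\nabla^{2}u_{\varepsilon}\in L^{2}$ and $f_{\varepsilon}\in L^{q}\subset L_{\mathrm{loc}}^{2}$.

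Next I would carry out the summation over $j$, using the a.e.\ identity $\sum_{j=1}^{n}\partial_{x_{j}}u_{\varepsilon}\,\nabla\partial_{x_{j}}u_{\varepsilon}=(\nabla^{2}u_{\varepsilon})\nabla u_{\varepsilon}=V_{\varepsilon}\nabla V_{\varepsilon}$ (valid since $V_{\varepsilon}\ge\varepsilon>0$). Then the left-hand side of (\ref{Eq: Weak formulation from EL}) summed over $j$ collapses precisely to $J_{1}+J_{2}+J_{3}$. On the right-hand side, $\sum_{j}\partial_{x_{j}}\bigl(\zeta\psi(V_{\varepsilon})\partial_{x_{j}}u_{\varepsilon}\bigr)=\psi(V_{\varepsilon})\langle\nabla u_{\varepsilon}\mid\nabla\zeta\rangle+\zeta\psi^{\prime}(V_{\varepsilon})\langle\nabla u_{\varepsilon}\mid\nabla V_{\varepsilon}\rangle+\zeta\psi(V_{\varepsilon})\Delta u_{\varepsilon}$, so that
\[J_{1}+J_{2}+J_{3}=-\int_{B_{\rho}(x_{0})}f_{\varepsilon}\Bigl(\psi(V_{\varepsilon})\langle\nabla u_{\varepsilon}\mid\nabla\zeta\rangle+\zeta\psi^{\prime}(V_{\varepsilon})\langle\nabla u_{\varepsilon}\mid\nabla V_{\varepsilon}\rangle+\zeta\psi(V_{\varepsilon})\Delta u_{\varepsilon}\Bigr)\,{\mathrm d}x.\]

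It then remains to estimate the three terms on the right. The first is bounded by $J_{6}$ via $\lvert\nabla u_{\varepsilon}\rvert\le V_{\varepsilon}$. For the second I use $\lvert\langle\nabla u_{\varepsilon}\mid\nabla V_{\varepsilon}\rangle\rvert\le V_{\varepsilon}\lvert\nabla V_{\varepsilon}\rvert$ together with the uniform lower bound $\langle\nabla^{2}E_{\varepsilon}(\nabla u_{\varepsilon})\nabla V_{\varepsilon}\mid\nabla V_{\varepsilon}\rangle\ge\lambda V_{\varepsilon}^{p-2}\lvert\nabla V_{\varepsilon}\rvert^{2}$ from (\ref{Eq: Hessian estimate for approximated E}); a weighted Young inequality with the splitting $V_{\varepsilon}=V_{\varepsilon}^{(3-p)/2}\cdot V_{\varepsilon}^{(p-1)/2}$ and Young constant $1/\lambda$ bounds this term by $\tfrac{1}{2\lambda}J_{5}+\tfrac{1}{2}J_{2}$. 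For the third I use $\lvert\Delta u_{\varepsilon}\rvert\le\sqrt{n}\,\lvert\nabla^{2}u_{\varepsilon}\rvert$ together with $\sum_{j}\langle\nabla^{2}E_{\varepsilon}(\nabla u_{\varepsilon})\nabla\partial_{x_{j}}u_{\varepsilon}\mid\nabla\partial_{x_{j}}u_{\varepsilon}\rangle\ge\lambda V_{\varepsilon}^{p-2}\lvert\nabla^{2}u_{\varepsilon}\rvert^{2}$ and the splitting $1=V_{\varepsilon}^{(2-p)/2}\cdot V_{\varepsilon}^{(p-2)/2}$, which bounds it by $\tfrac{n}{2\lambda}J_{4}+\tfrac{1}{2}J_{3}$. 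Collecting these, $J_{1}+J_{2}+J_{3}\le J_{6}+\tfrac{1}{2\lambda}J_{5}+\tfrac{1}{2}J_{2}+\tfrac{n}{2\lambda}J_{4}+\tfrac{1}{2}J_{3}$; absorbing $\tfrac{1}{2}J_{2}$ and $\tfrac{1}{2}J_{3}$ (both finite and non-negative) into the left, using $\tfrac{1}{2\lambda}J_{5}\le\tfrac{n}{2\lambda}J_{5}$, and multiplying by $2$ gives exactly (\ref{Eq: Weak formulation of regularized equations}).

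The only genuinely delicate points are bookkeeping ones: justifying the chain rule and $W_{0}^{1,\,2}$-admissibility of $\zeta\psi(V_{\varepsilon})\partial_{x_{j}}u_{\varepsilon}$ when $\psi$ has corners, and tracking the powers of $V_{\varepsilon}$ in the two weighted Young inequalities so that precisely $J_{2},\dots,J_{5}$ — and not some variants of them — are produced. The structure of $E_{\varepsilon}$ enters only through the uniform lower ellipticity bound in (\ref{Eq: Hessian estimate for approximated E}), and the monotonicity of $\psi$ (so that $\psi^{\prime}\ge0$) is used both to keep $J_{2},J_{5}\ge0$ and to preserve inequalities in the Young estimates.
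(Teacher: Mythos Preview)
Your proof is correct and follows essentially the same approach as the paper: test (\ref{Eq: Weak formulation from EL}) with $\phi=\zeta\psi(V_{\varepsilon})\partial_{x_{j}}u_{\varepsilon}$, sum over $j$ using $\sum_{j}\partial_{x_{j}}u_{\varepsilon}\nabla\partial_{x_{j}}u_{\varepsilon}=V_{\varepsilon}\nabla V_{\varepsilon}$, and then estimate the three right-hand terms by $J_{6}$, $\tfrac{1}{2}J_{2}+\tfrac{1}{2\lambda}J_{5}$, and $\tfrac{1}{2}J_{3}+\tfrac{n}{2\lambda}J_{4}$ via the lower ellipticity bound in (\ref{Eq: Hessian estimate for approximated E}) and Young's inequality. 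The only cosmetic difference is that you make the admissibility of the test function and the use of $\tfrac{1}{2\lambda}J_{5}\le\tfrac{n}{2\lambda}J_{5}$ explicit, while the paper leaves these implicit.
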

The resulting weak formulation (\ref{Eq: Weak formulation of regularized equations}) is fully used in Sections \ref{Subsect: Energy estimates} and \ref{Subsect: De Giorgi Oscillation}.
\begin{proof}
For each $j\in\{\,1,\,\dots,\,n\,\}$, we test \(\phi\coloneqq \zeta\psi(V_{\varepsilon})\partial_{x_{j}}u_{\varepsilon}\in W_{0}^{1,\,2}(B)\) into (\ref{Eq: Weak formulation from EL}). By summing with respect to \(j\in\{\,1,\,\dots,\,n\,\}\) and using
\[\sum\limits_{j=1}^{n}\partial_{x_{j}}u_{\varepsilon}\nabla\partial_{x_{j}}u_{\varepsilon}=\frac{1}{2}\sum\limits_{j=1}^{n}\nabla\mleft(\partial_{x_{j}}u_{\varepsilon}\mright)^{2}=\frac{1}{2}\nabla V_{\varepsilon}^{2}=V_{\varepsilon}\nabla V_{\varepsilon},\]
we get
\begin{align*}
J_{1}+J_{2}+J_{3}&=-\int_{B}f_{\varepsilon}\sum_{j=1}^{n}\partial_{x_{j}}\mleft(\zeta\psi(V_{\varepsilon})\partial_{x_{j}}u_{\varepsilon}\mright)\,{\mathrm d}x\\&=-\int_{B}f_{\varepsilon}\psi(V_{\varepsilon})\langle\nabla u_{\varepsilon}\mid \nabla\zeta\rangle\,{\mathrm d}x-\int_{B}f_{\varepsilon}\zeta\psi^{\prime}(V_{\varepsilon})\langle\nabla V_{\varepsilon}\mid \nabla u_{\varepsilon}\rangle\,{\mathrm d}x\\&\quad -\int_{B}f_{\varepsilon}\zeta\psi(V_{\varepsilon})\Delta u_{\varepsilon}\,{\mathrm d}x\\&\eqqcolon -(J_{7}+J_{8}+J_{9}).
\end{align*}
For \(J_{2},\,J_{3}\), we use (\ref{Eq: Hessian estimate for approximated E}) to get
\begin{equation}\label{Eq: positivity of J2-J3}
\mleft\{\begin{array}{rcl}J_{2}&\ge & \lambda\displaystyle\int_{B}V_{\varepsilon}^{p-1}\lvert \nabla V_{\varepsilon}\rvert^{2}\zeta\psi^{\prime}(V_{\varepsilon})\,{\mathrm d}x,\\ J_{3}& \ge & \lambda \displaystyle\int_{B}V_{\varepsilon}^{p-1}\mleft\lvert\nabla^{2}u_{\varepsilon}\mright\rvert^{2}\zeta\psi(V_{\varepsilon})\,{\mathrm d}x.\end{array} \mright.
\end{equation}
With this in mind, we compute
\[\lvert J_{8}\rvert\le \frac{\lambda}{2}\int_{B}V_{\varepsilon}^{p-1}\lvert \nabla V_{\varepsilon}\rvert^{2}\zeta\psi^{\prime}(V_{\varepsilon})\,{\mathrm d}x+\frac{1}{2\lambda}\int_{B}\lvert f_{\varepsilon}\rvert^{2} V_{\varepsilon}^{1-p}\lvert\nabla u_{\varepsilon}\rvert^{2}\zeta\psi^{\prime}(V_{\varepsilon})\,{\mathrm d}x\le \frac{J_{2}}{2}+\frac{J_{5}}{2\lambda},\]
and
\begin{align*}
\lvert J_{9}\rvert&\le \sqrt{n}\int_{B}\lvert f_{\varepsilon}\rvert\zeta\psi(V_{\varepsilon})\lvert\nabla^{2}u_{\varepsilon}\rvert\,{\mathrm d}x\\& \le \frac{\lambda}{2}\int_{B}V_{\varepsilon}^{p-2}\mleft\lvert\nabla^{2}u_{\varepsilon}\mright\rvert^{2}\zeta\psi(V_{\varepsilon})\,{\mathrm d}x+\frac{n}{2\lambda}\int_{B}\lvert f_{\varepsilon}\rvert^{2}\psi(V_{\varepsilon})V_{\varepsilon}^{2-p}\zeta\,{\mathrm d}x\\&\le \frac{J_{3}}{2}+\frac{n}{2\lambda}J_{4}
\end{align*}
by Young's inequality. Combining these results with \(\lvert J_{7}\rvert\le J_{6}\), we easily conclude (\ref{Eq: Weak formulation of regularized equations}).
\end{proof}

\subsection{Perturbation results from a higher integrability lemma}\label{Subsect: Perturbation outside facets}
For a given ball $B_{\rho}(x_{0})\Subset \Omega$, we consider an $L^{2}$-mean oscillation of the gradient $\nabla u_{\varepsilon}$, which is given by
\[\Phi(x_{0},\,\rho)\coloneqq \fint_{B_{\rho}(x_{0})}\mleft\lvert \nabla u_{\varepsilon}-(\nabla u_{\varepsilon})_{x_{0},\,\rho}\mright\rvert^{2}\,{\mathrm{d}}x.\]
Sections \ref{Subsect: Perturbation outside facets}--\ref{Subsect: Average integral estimates} are devoted to prove Proposition \ref{Prop: Perturbation result} by showing a variety of quantitative estimates related to this $\Phi$. Among them, comparisons between weak solutions to (\ref{Eq: Regularized equation}) and harmonic functions play an important role. 

Before making a comparison argument, we have to verify higher integrability estimates for $\lvert \nabla u_{\varepsilon}-(\nabla u_{\varepsilon})_{x_{0},\,\rho}\rvert$. This is possible by showing Lemma \ref{Lemma: Higher integrability} below. 
\begin{lemma}[Higher integrability lemma]\label{Lemma: Higher integrability}
Let \(u_{\varepsilon}\) be a weak solution to (\ref{Eq: Regularized equation}) in \(\Omega\). Assume that positive numbers $\delta$, $\varepsilon$, $\mu$, $F$, $M$, and an open ball $B_{\rho}(x_{0})\Subset\Omega$ satisfy (\ref{Eq: Range of delta-epsilon}), (\ref{Eq: Bound on external force term et al.}) and (\ref{Eq: mu>delta})--(\ref{Eq: Gradient bound in Growth estimate}). Then, for any vector \(\zeta\in{\mathbb R}^{n}\) satisfying
\begin{equation}\label{Eq: zeta location}
\delta+\frac{\mu}{4}\le \lvert \zeta\rvert\le \delta+\mu,
\end{equation}
there exists a constant \(\vartheta=\vartheta(n,\,p,\,q,\,\lambda,\,\Lambda,\,K,\,M,\,\delta)\) such that
\begin{equation}\label{Eq: integrability up}
0<\vartheta\le\min\{\,\beta,\,\beta_{0}\,\}<1,
\end{equation}
and 
\begin{equation}\label{Eq: Higher integrability result}
\fint_{B_{\rho/2}(x_{0})}\lvert \nabla u_{\varepsilon}-\zeta\rvert^{2(1+\vartheta)}\,{\mathrm d}x\le C\mleft[\mleft(\fint_{B_{\rho}(x_{0})}\lvert \nabla u_{\varepsilon}-\zeta\rvert^{2}\,{\mathrm d}x \mright)^{1+\vartheta}+F^{2(1+\vartheta)}\rho^{2\beta(1+\vartheta)}\mright].
\end{equation}
Here the constant \(C\in(0,\,\infty)\) depends at most on $n$, $p$, $q$, $\beta_{0}$, $\lambda$, $\Lambda$, $K$, $M$, and $\delta$.
\end{lemma}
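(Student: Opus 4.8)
The plan is to establish a reverse Hölder inequality for the function $\lvert\nabla w\rvert$, where $w\coloneqq u_{\varepsilon}-\ell$ and $\ell(x)\coloneqq (u_{\varepsilon})_{x_{0},\,\rho}+\langle\zeta\mid x-x_{0}\rangle$ is the affine function with $\nabla\ell=\zeta$, so that $\nabla w=\nabla u_{\varepsilon}-\zeta$; once this is in hand, the conclusion follows from Gehring's lemma. The crucial structural observation is that, although the relaxed equation (\ref{Eq: Reg Eq}) degenerates on the facet, the frozen vector $\zeta$ lies strictly outside it: $\delta<\delta+\mu/4\le\lvert\zeta\rvert\le\delta+\mu\le M$ by (\ref{Eq: mu>delta}), (\ref{Eq: Gradient bound in Growth estimate}) and (\ref{Eq: zeta location}). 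Combined with $\lvert\nabla u_{\varepsilon}(x)\rvert\le V_{\varepsilon}(x)\le\delta+\mu\le M$ for a.e.\ $x\in B_{\rho}(x_{0})$, this lets us apply Lemma \ref{Lemma: Error estimate on Hess E-epsilon}.\ref{Item 1/2: Monotonicity and Growth estimate} pointwise with $z_{1}=\nabla u_{\varepsilon}(x)$ and $z_{2}=\zeta$, giving $\langle\nabla E_{\varepsilon}(\nabla u_{\varepsilon})-\nabla E_{\varepsilon}(\zeta)\mid\nabla u_{\varepsilon}-\zeta\rangle\ge C_{1}\lvert\nabla w\rvert^{2}$ and $\lvert\nabla E_{\varepsilon}(\nabla u_{\varepsilon})-\nabla E_{\varepsilon}(\zeta)\rvert\le C_{2}\lvert\nabla w\rvert$ with $C_{1},\,C_{2}$ depending only on $p,\lambda,\Lambda,K,\delta,M$ and, crucially, independent of $\varepsilon$.

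First I would derive a Caccioppoli-type inequality on concentric balls. Fix any ball with $B_{2R}(y)\subset B_{\rho}(x_{0})$ and a cutoff $\eta\in C_{c}^{\infty}(B_{2R}(y))$ with $\eta\equiv1$ on $B_{R}(y)$, $0\le\eta\le1$, $\lvert\nabla\eta\rvert\le C/R$. Testing the weak formulation (\ref{Eq: Weak formulation regularized ver}) with $\phi\coloneqq\eta^{2}\mleft(w-(w)_{y,2R}\mright)\in W_{0}^{1,\,p}(\Omega)$ and using that $\nabla E_{\varepsilon}(\zeta)$ is a constant vector (so $\int\langle\nabla E_{\varepsilon}(\zeta)\mid\nabla\phi\rangle=0$), one writes $\nabla E_{\varepsilon}(\nabla u_{\varepsilon})=\bigl(\nabla E_{\varepsilon}(\nabla u_{\varepsilon})-\nabla E_{\varepsilon}(\zeta)\bigr)+\nabla E_{\varepsilon}(\zeta)$; the monotonicity bound controls the principal term $C_{1}\int\eta^{2}\lvert\nabla w\rvert^{2}$ from below, the Lipschitz bound together with Young's inequality absorbs the term carrying $\nabla\eta$, and Hölder's inequality handles $\int f_{\varepsilon}\eta^{2}(w-(w)_{y,2R})$. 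This yields
\[
\fint_{B_{R}(y)}\lvert\nabla w\rvert^{2}\,{\mathrm d}x\le\frac{C}{R^{2}}\fint_{B_{2R}(y)}\mleft\lvert w-(w)_{y,2R}\mright\rvert^{2}\,{\mathrm d}x+CR^{2}\fint_{B_{2R}(y)}\lvert f_{\varepsilon}\rvert^{2}\,{\mathrm d}x,
\]
with $C$ independent of $\varepsilon$. Next I would apply the Sobolev--Poincaré inequality: with $2_{\ast}\coloneqq\tfrac{2n}{n+2}$ when $n\ge3$, or any fixed $2_{\ast}\in(1,2)$ when $n=2$, one has $\bigl(\fint_{B_{2R}(y)}\lvert w-(w)_{y,2R}\rvert^{2}\bigr)^{1/2}\le CR\bigl(\fint_{B_{2R}(y)}\lvert\nabla w\rvert^{2_{\ast}}\bigr)^{1/2_{\ast}}$, while $\fint_{B_{2R}(y)}\lvert f_{\varepsilon}\rvert^{2}\le\bigl(\fint_{B_{2R}(y)}\lvert f_{\varepsilon}\rvert^{q}\bigr)^{2/q}$ since $q>n\ge2$. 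Combining, one obtains the reverse Hölder inequality
\[
\fint_{B_{R}(y)}\lvert\nabla w\rvert^{2}\,{\mathrm d}x\le C\Bigl(\fint_{B_{2R}(y)}\lvert\nabla w\rvert^{2_{\ast}}\,{\mathrm d}x\Bigr)^{2/2_{\ast}}+C\,R^{2}\Bigl(\fint_{B_{2R}(y)}\lvert f_{\varepsilon}\rvert^{q}\,{\mathrm d}x\Bigr)^{2/q}
\]
for every $B_{2R}(y)\subset B_{\rho}(x_{0})$.

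Finally, since $\lvert\nabla w\rvert\in L^{2}(B_{\rho}(x_{0}))$ and $f_{\varepsilon}\in L^{q}(B_{\rho}(x_{0}))$ with $q>2$, Gehring's lemma in its interior form (see e.g.\ \cite[Chapter V]{MR3887613} or \cite[Chapter 6]{MR1962933}) yields an exponent $\vartheta_{1}=\vartheta_{1}(n,p,q,\lambda,\Lambda,K,M,\delta)>0$ and a constant $C$, both independent of $\varepsilon$, such that
\[
\Bigl(\fint_{B_{\rho/2}(x_{0})}\lvert\nabla w\rvert^{2(1+\vartheta_{1})}\,{\mathrm d}x\Bigr)^{\frac{1}{1+\vartheta_{1}}}\le C\fint_{B_{\rho}(x_{0})}\lvert\nabla w\rvert^{2}\,{\mathrm d}x+C\rho^{2}\Bigl(\fint_{B_{\rho}(x_{0})}\lvert f_{\varepsilon}\rvert^{q}\,{\mathrm d}x\Bigr)^{2/q}.
\]
Using $\bigl(\fint_{B_{\rho}(x_{0})}\lvert f_{\varepsilon}\rvert^{q}\bigr)^{2/q}\le(\lvert B_{\rho}\rvert^{-1}F^{q})^{2/q}\le C\rho^{-2n/q}F^{2}$ and $\beta=1-n/q$ (cf.\ (\ref{Eq: Definition of betas})), the last term is bounded by $CF^{2}\rho^{2\beta}$; then, setting $\vartheta\coloneqq\min\{\vartheta_{1},\beta,\beta_{0}\}\in(0,1)$, Jensen's inequality passes from the exponent $2(1+\vartheta_{1})$ to the smaller exponent $2(1+\vartheta)$, and raising both sides to the power $1+\vartheta$ gives (\ref{Eq: Higher integrability result}), with the required integrability range (\ref{Eq: integrability up}). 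I expect the only genuinely delicate point to be the uniform-in-$\varepsilon$ Caccioppoli inequality, which rests entirely on the observation above that $z_{2}=\zeta$ sits strictly away from the facet so that Lemma \ref{Lemma: Error estimate on Hess E-epsilon}.\ref{Item 1/2: Monotonicity and Growth estimate} provides $\varepsilon$-independent ellipticity and Lipschitz constants even at points where $\nabla u_{\varepsilon}$ itself is small; the remainder is routine, the single mild case distinction being $n=2$ versus $n\ge3$ in the choice of the Sobolev exponent.
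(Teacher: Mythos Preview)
Your proposal is correct and follows essentially the same route as the paper's own proof: both set $w=u_{\varepsilon}-\ell$ with $\nabla\ell=\zeta$, exploit Lemma \ref{Lemma: Error estimate on Hess E-epsilon}\ref{Item 1/2: Monotonicity and Growth estimate} (using that $\zeta$ sits strictly away from the facet) to obtain an $\varepsilon$-independent Caccioppoli inequality from the test function $\eta^{2}(w-\text{mean})$, combine with Poincar\'e--Sobolev to get a reverse H\"older inequality, and conclude via Gehring's lemma; the paper uses the unified Sobolev exponent $\tfrac{2n}{n+2}$ (valid also for $n=2$) and writes the inhomogeneity as $\lvert\rho f_{\varepsilon}\rvert^{2}$, but these are only cosmetic differences from your presentation.
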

In the proof of Lemma \ref{Lemma: Higher integrability}, we use so called Gehring's lemma without proofs. 
\begin{lemma}[Gehring's lemma]\label{Lemma: Gehring-type lemma}
Let \(B=B_{R}(x_{0})\subset{\mathbb R}^{n}\) be an open ball, and non-negative function \(g,\,h\) satisfy \(g\in L^{s}(B),\,h\in L^{{\tilde s}}(B)\) with \(1<s<{\tilde s}\le \infty\). Suppose that there holds
\[\fint_{B_{r}(z_{0})}g^{s}{\mathrm d}x\le {\hat C}\mleft[\mleft(\fint_{B_{2r}(z_{0})}g\,{\mathrm d}x \mright)^{s}+\fint_{B_{2r}(z_{0})}h^{s}\,{\mathrm d}x\mright]\]
for all \(B_{2r}(z_{0})\subset B\). Here \({\hat C}\in(0,\,\infty)\) is a constant independent of \(z_{0}\) and \(r>0\). Then, there exists a sufficiently small positive number \(\vartheta=\vartheta(s,\,{\tilde s},\,n,\,{\hat C})\) such that \(g\in L_{\mathrm{loc}}^{\sigma_{0}}(B)\) with \(\sigma_{0}\coloneqq s(1+\vartheta)\in(s,\,{\tilde s})\). Moreover, for each \(\sigma\in\lbrack s,\,\sigma_{0}\rbrack\), we have
\[\mleft(\fint_{B_{R/2}(x_{0})}g^{\sigma}{\mathrm d}x\mright)^{1/\sigma}\le C\mleft[\mleft(\fint_{B_{R}(x_{0})}g^{s}\,{\mathrm d}x\mright)^{1/s}+\mleft(\fint_{B_{R}(x_{0})}h^{\sigma}\,{\mathrm d}x\mright)^{1/\sigma}\mright],\]
where the constant ${\hat C}\in(0,\,\infty)$ depends at most on $\sigma$, $n$, $s$, ${\tilde s}$, and ${\hat C}$.
\end{lemma}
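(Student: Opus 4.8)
The plan is to derive (\ref{Eq: Higher integrability result}) from a reverse H\"older inequality for $g\coloneqq \lvert \nabla u_{\varepsilon}-\zeta\rvert$, to be fed into Gehring's lemma (Lemma \ref{Lemma: Gehring-type lemma}). The structural point that makes the whole scheme run is that, by (\ref{Eq: zeta location}), $\lvert \zeta\rvert\ge \delta+\mu/4>\delta$, while (\ref{Eq: Gradient bound in Growth estimate}) gives $\lvert \nabla u_{\varepsilon}\rvert\le V_{\varepsilon}\le \delta+\mu\le M$ a.e.\ in $B_{\rho}(x_{0})$. Thus for a.e.\ $x\in B_{\rho}(x_{0})$ the pair $z_{1}=\nabla u_{\varepsilon}(x)$, $z_{2}=\zeta$ meets the hypotheses of Lemma \ref{Lemma: Error estimate on Hess E-epsilon}.\ref{Item 1/2: Monotonicity and Growth estimate}, so that $\langle \nabla E_{\varepsilon}(\nabla u_{\varepsilon})-\nabla E_{\varepsilon}(\zeta)\mid \nabla u_{\varepsilon}-\zeta\rangle\ge C_{1}g^{2}$ and $\lvert \nabla E_{\varepsilon}(\nabla u_{\varepsilon})-\nabla E_{\varepsilon}(\zeta)\rvert\le C_{2}g$ with $C_{1},\,C_{2}$ depending only on $p,\,\lambda,\,\Lambda,\,K,\,\delta,\,M$ and, crucially, \emph{not} on $\varepsilon$. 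This is precisely the uniform ellipticity that the raw Hessian bound (\ref{Eq: Hessian estimate for approximated E}), with its singular $K/\sqrt{\varepsilon^{2}+\lvert z\rvert^{2}}$ term, fails to deliver near a facet; securing $\varepsilon$-independence this way is the heart of the matter.

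First I would establish a Caccioppoli-type estimate on concentric balls $B_{r}(z_{0})\subset B_{2r}(z_{0})\subset B_{\rho}(x_{0})$. Put $\ell(x)\coloneqq \langle \zeta\mid x-z_{0}\rangle+(u_{\varepsilon})_{z_{0},\,2r}$, so that $\nabla\ell\equiv\zeta$ and $u_{\varepsilon}-\ell$ has vanishing mean on $B_{2r}(z_{0})$, and take a cut-off $\eta\in C_{c}^{\infty}(B_{2r}(z_{0}))$ with $\eta\equiv 1$ on $B_{r}(z_{0})$ and $\lvert \nabla\eta\rvert\le C/r$. Since $u_{\varepsilon}\in W_{\mathrm{loc}}^{1,\,\infty}(\Omega)$ (Section \ref{Subsect: A priori Lipschitz bound}), the function $\phi\coloneqq \eta^{2}(u_{\varepsilon}-\ell)$ is admissible in the weak formulation of (\ref{Eq: Reg Eq}). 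Writing $\nabla E_{\varepsilon}(\nabla u_{\varepsilon})=\bigl(\nabla E_{\varepsilon}(\nabla u_{\varepsilon})-\nabla E_{\varepsilon}(\zeta)\bigr)+\nabla E_{\varepsilon}(\zeta)$ and integrating the constant part by parts (it produces a divergence of compact support, hence zero, together with a $\nabla\eta$-term that recombines into the difference on both sides), one is left with
\[C_{1}\int \eta^{2}g^{2}\,{\mathrm d}x\le 2C_{2}\int \eta\lvert u_{\varepsilon}-\ell\rvert\lvert \nabla\eta\rvert\, g\,{\mathrm d}x+\int \lvert f_{\varepsilon}\rvert\eta^{2}\lvert u_{\varepsilon}-\ell\rvert\,{\mathrm d}x.\]
Absorbing the first term by Young's inequality and applying the Sobolev--Poincar\'e inequality with exponent $2n/(n+2)$ (whose Sobolev conjugate is $2$) to the mean-zero function $u_{\varepsilon}-\ell$ turns $r^{-2}\fint_{B_{2r}}\lvert u_{\varepsilon}-\ell\rvert^{2}$ into $\mleft(\fint_{B_{2r}}g^{2n/(n+2)}\mright)^{(n+2)/n}$; for the $f_{\varepsilon}$-term I would use H\"older with exponents $q,\,q'$, the bound (\ref{Eq: Bound on external force term et al.}), Jensen (as $q'<2$), the same Sobolev--Poincar\'e inequality and Young, which contributes an extra $CF^{2}r^{2\beta}$ with $\beta=1-n/q$ as in (\ref{Eq: Definition of betas}). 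Using $r\le\rho$ and $\beta>0$, the outcome is
\[\fint_{B_{r}(z_{0})}g^{2}\,{\mathrm d}x\le C\mleft[\mleft(\fint_{B_{2r}(z_{0})}g^{\frac{2n}{n+2}}\,{\mathrm d}x\mright)^{\frac{n+2}{n}}+F^{2}\rho^{2\beta}\mright]\qquad \textrm{for all }B_{2r}(z_{0})\subset B_{\rho}(x_{0}),\]
with $C=C(n,\,p,\,q,\,\lambda,\,\Lambda,\,K,\,M,\,\delta)$.

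It remains to run Gehring's lemma. Apply Lemma \ref{Lemma: Gehring-type lemma} on $B=B_{\rho}(x_{0})$ to $G\coloneqq g^{2n/(n+2)}$ with $s\coloneqq (n+2)/n$ and to the \emph{constant} function $H\coloneqq (CF^{2}\rho^{2\beta})^{n/(n+2)}$: then $G^{s}=g^{2}$, $H^{s}=CF^{2}\rho^{2\beta}\ge CF^{2}r^{2\beta}$ dominates the $f_{\varepsilon}$-contribution on every sub-ball, $H\in L^{\infty}(B)\subset L^{{\tilde s}}(B)$ for every ${\tilde s}>s$, and $G\in L^{\infty}(B)$ by the a priori Lipschitz bound for $u_{\varepsilon}$; so the displayed reverse H\"older estimate is exactly the hypothesis of Lemma \ref{Lemma: Gehring-type lemma}. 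It yields a number $\vartheta_{0}=\vartheta_{0}(n,\,p,\,q,\,\lambda,\,\Lambda,\,K,\,M,\,\delta)>0$ and the estimate at exponent $\sigma=s(1+\vartheta_{0})$, which after raising both sides to the power $2(1+\vartheta_{0})$ and using that $H$ is constant is exactly (\ref{Eq: Higher integrability result}) with $\vartheta=\vartheta_{0}$. Finally, since (\ref{Eq: Higher integrability result}) for some $\vartheta_{0}$ implies it for every $\vartheta\in(0,\,\vartheta_{0}]$ (via Jensen's inequality and $(a+b)^{\theta}\le a^{\theta}+b^{\theta}$ for $0<\theta\le 1$), I would replace $\vartheta_{0}$ by $\vartheta\coloneqq \min\{\vartheta_{0},\,\beta,\,\beta_{0}\}$ to secure (\ref{Eq: integrability up}). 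The only genuinely delicate step is the Caccioppoli inequality with $\varepsilon$-uniform constants; everything else is bookkeeping, and the condition $\lvert \zeta\rvert\ge\delta+\mu/4$ in (\ref{Eq: zeta location}) is precisely what activates Lemma \ref{Lemma: Error estimate on Hess E-epsilon}.\ref{Item 1/2: Monotonicity and Growth estimate} and makes that step go through.
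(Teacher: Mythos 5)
Your proposal does not prove the statement it was assigned. The statement here is Gehring's lemma itself: starting from the reverse H\"older inequality $\fint_{B_{r}(z_{0})}g^{s}\,{\mathrm d}x\le {\hat C}\bigl[\bigl(\fint_{B_{2r}(z_{0})}g\,{\mathrm d}x\bigr)^{s}+\fint_{B_{2r}(z_{0})}h^{s}\,{\mathrm d}x\bigr]$ on all sub-balls, one must \emph{produce} the exponent $\vartheta(s,\,{\tilde s},\,n,\,{\hat C})$, the local $L^{s(1+\vartheta)}$-integrability of $g$, and the quantitative estimate on $B_{R/2}(x_{0})$. What you wrote is instead a proof of Lemma \ref{Lemma: Higher integrability}: you take $g=\lvert\nabla u_{\varepsilon}-\zeta\rvert$, derive the Caccioppoli-type reverse H\"older inequality from the equation via Lemma \ref{Lemma: Error estimate on Hess E-epsilon} and the Sobolev--Poincar\'e inequality, and then you ``apply Lemma \ref{Lemma: Gehring-type lemma}'' --- that is, you invoke the very lemma you were asked to prove as a black box. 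Relative to the assigned statement this is circular: nothing in your argument touches the actual content of Gehring's lemma, namely the self-improvement mechanism --- a Calder\'on--Zygmund or Vitali-type covering (the ball decomposition of Zatorska-Goldstein cited in the paper), level-set estimates for the maximal function of $g^{s}$, the resulting good-$\lambda$ inequality, and the absorption step that determines how small $\vartheta$ must be in terms of $n$, $s$, ${\tilde s}$, ${\hat C}$.

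For what it is worth, the paper itself does not prove this lemma either; it is stated without proof and referred to \cite[Theorem 3.3]{MR2173373} and \cite[Section 6.4]{MR1962933}, and the proof environment that follows it in the text is the proof of Lemma \ref{Lemma: Higher integrability}, which proceeds essentially as your proposal does (same freezing of $\zeta$, same $\varepsilon$-uniform monotonicity and growth bounds, same exponent $2n/(n+2)$ in the Sobolev--Poincar\'e step, same treatment of the $f_{\varepsilon}$-term producing $F^{2}\rho^{2\beta}$). So your text would be a reasonable blind reconstruction of \emph{that} lemma's proof, but as a proof of the Gehring lemma it has a genuine gap: the entire covering/maximal-function argument is missing, and it cannot be repaired by the PDE estimates you develop, since those concern a particular $g$ rather than the general dyadic self-improvement claimed in the statement.
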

The proof of Gehring's lemma is found in \cite[Theorem 3.3]{MR2173373}, which is based on ball decompositions \cite[Lemma 3.1]{MR2173373} and generally works for a metric space with a doubling measure. As related items, see also \cite[Section 6.4]{MR1962933}, where the classical Calderon--Zygmund cube decomposition is fully applied to prove Gehring's lemma.
\begin{proof}
We later prove that there holds
\begin{equation}\label{Eq: Claim for Gehring's lemma}
\fint_{B_{r/2}(z_{0})}\lvert \nabla u_{\varepsilon}-\zeta\rvert^{2}\,{\mathrm d}x\le {\hat C}\mleft[\mleft(\fint_{B_{r}(z_{0})}\lvert \nabla u_{\varepsilon}-\zeta\rvert^{\frac{2n}{n+2}}\,{\mathrm d}x\mright)^{\frac{n+2}{n}}+\fint_{B_{r}(z_{0})}\lvert \rho f_{\varepsilon}\rvert^{2}\,\mathrm{d}x\mright] 
\end{equation}
for any open ball \(B_{r}(z_{0})\subset B\coloneqq B_{\rho}(x_{0})\).
Here \({\hat C}\in(0,\,1)\) is a constant depending on $n$, $p$, $\lambda$, $\Lambda$, $K$, $M$, and $\delta$.
Then, by applying Lemma \ref{Lemma: Gehring-type lemma} with \((s,\,{\tilde s})\coloneqq (1+2/n,\,q(n+2)/2n)\), \(g\coloneqq \lvert \nabla u_{\varepsilon}-\zeta\rvert^{\frac{2n}{n+2}}\in L^{s}(B),\,h\coloneqq \lvert \rho f_{\varepsilon}\rvert^{\frac{2n}{n+2}}\in L^{\tilde s}(B)\), we are able to find a small exponent \(\vartheta=\vartheta({\hat C},\,n,\,q)>0\) and a constant \(C=C(n,\,q,\,{\hat C},\,\vartheta)>0\) such that there hold (\ref{Eq: integrability up}) and \[\fint_{B_{\rho/2}(x_{0})}\lvert \nabla u_{\varepsilon}-\zeta\rvert^{2(1+\vartheta)}\,{\mathrm d}x\le C\mleft[\mleft(\fint_{B_{\rho}(x_{0})}\lvert \nabla u_{\varepsilon}-\zeta\rvert^{2}\,{\mathrm d}x \mright)^{1+\vartheta}+\fint_{B_{\rho}(x_{0})}\lvert \rho f_{\varepsilon}\rvert^{2(1+\vartheta)}\,{\mathrm d}x \mright].\]
We note that (\ref{Eq: integrability up}) yields $2(1+\vartheta)\le q$, and therefore the function $\lvert\rho f_{\varepsilon}\rvert^{2(1+\vartheta)}$ is integrable in $B_{\rho}(x_{0})$. Moreover, by H\"{o}lder's inequality, we have
\[\fint_{B_{\rho}(x_{0})}\lvert\rho f_{\varepsilon}\rvert^{2(1+\vartheta)}\,{\mathrm d}x\le C(n,\,\vartheta)F^{2(1+\vartheta)}\rho^{2\beta(1+\vartheta)},\]
from which (\ref{Eq: Higher integrability result}) follows.

We take and fix an arbitrary open ball \(B_{r}(z_{0})\subset B\).
To show (\ref{Eq: Claim for Gehring's lemma}), we set a function \(w_{\varepsilon}\in W^{1,\,\infty}(B_{r}(z_{0}))\) by
\[w_{\varepsilon}(x)\coloneqq u_{\varepsilon}(x)-(u_{\varepsilon})_{z_{0},\,r}-\langle\zeta\mid x-z_{0}\rangle\quad \textrm{for }x\in B_{r}(z_{0}),\]
so that \(\nabla w_{\varepsilon}=\nabla u_{\varepsilon}-\zeta\) holds. We choose a cutoff function \(\eta\in C_{c}^{1}(B_{r}(z_{0}))\) satisfying 
\[\eta\equiv 1\quad\textrm{on $B_{r/2}(z_{0})$}\quad\textrm{and}\quad\lvert\nabla\eta\rvert\le \frac{4}{r}\quad\textrm{in $B_{r}(z_{0})$},\]
and test \(\phi\coloneqq \eta^{2}w_{\varepsilon}\in W_{0}^{1,\,p}(B_{r}(z_{0}))\) into (\ref{Eq: Reg Eq}). Then, we obtain
\begin{align}
0&=\int_{B_{r}(z_{0})}\langle \nabla E_{\varepsilon}(\nabla u_{\varepsilon})-\nabla E_{\varepsilon}(\zeta)\mid \nabla \phi\rangle\,{\mathrm d}x-\int_{B_{r}(z_{0})}f_{\varepsilon}\phi\,{\mathrm d}x\nonumber\\&= \int_{B_{r}(z_{0})}\eta^{2}\langle \nabla E_{\varepsilon}(\nabla u_{\varepsilon})-\nabla E_{\varepsilon}(\zeta)\mid \nabla w_{\varepsilon}\rangle\,{\mathrm d}x\nonumber\\&\quad+2\int_{B_{r}(z_{0})}\eta w_{\varepsilon}\langle \nabla E_{\varepsilon}(\nabla u_{\varepsilon})-\nabla E_{\varepsilon}(\zeta)\mid \nabla \eta\rangle\,{\mathrm d}x-\int_{B_{r}(z_{0})}\eta^{2}f_{\varepsilon}w_{\varepsilon}\,{\mathrm d}x\nonumber\\&\eqqcolon {\mathbf J}_{1}+{\mathbf J}_{2}+{\mathbf J}_{3}.\nonumber
\end{align}
The assumptions (\ref{Eq: mu>delta})--(\ref{Eq: Gradient bound in Growth estimate}) and (\ref{Eq: zeta location}) enables us to apply Lemma \ref{Lemma: Quantitative error estimate on Hessian} \ref{Item 1/2: Monotonicity and Growth estimate}. 
As a result, we have 
\[{\mathbf J}_{1}\ge C_{1}\int_{B_{r}(z_{0})}\eta^{2}\lvert \nabla w_{\varepsilon}\rvert^{2}{\mathrm d}x\]
by (\ref{Eq: Monotonicity outside}). Similarly for \({\mathbf J}_{2}\), we can use (\ref{Eq: Growth outside}) and Young's inequality to obtain
\begin{align*}
\lvert {\mathbf J}_{2}\rvert&\le C_{2}\int_{B_{r}(z_{0})}\eta \lvert w_{\varepsilon}\rvert\lvert\nabla w_{\varepsilon}\rvert\lvert\nabla\eta\rvert\,{\mathrm d}x\\&\le \frac{C_{1}}{2}\int_{B_{r}(z_{0})}\eta^{2}\lvert \nabla w_{\varepsilon}\rvert^{2}{\mathrm d}x+\frac{C_{2}^{2}}{2C_{1}}\int_{B_{r}(z_{0})}\lvert \nabla \eta\rvert^{2}w_{\varepsilon}^{2}\,{\mathrm d}x.
\end{align*}
For \({\mathbf J}_{3}\), we use Young's inequality to obtain
\[\lvert{\mathbf J}_{3}\rvert\le\frac{1}{r^{2}}\int_{B_{r}(z_{0})}\eta^{2}w_{\varepsilon}^{2}\,{\mathrm d}x+\frac{r^{2}}{4}\int_{B_{r}(z_{0})}\eta^{2}\lvert f_{\varepsilon}\rvert^{2}\,{\mathrm d}x.\]
By these estimates and our choice of $\eta$, we are able to compute
\begin{align*}
\fint_{B_{r/2}(z_{0})}\mleft\lvert\nabla w_{\varepsilon}\mright\rvert^{2}\,{\mathrm d}x&\le 2^{n}\fint_{B_{r}(z_{0})}\mleft\lvert\nabla(\eta w_{\varepsilon})\mright\rvert^{2}\,{\mathrm d}x\\&\le C\mleft[\fint_{B_{r}(z_{0})}\mleft(\lvert\nabla \eta\rvert^{2}+\frac{\eta^{2}}{r^{2}}\mright) w_{\varepsilon}^{2}\,{\mathrm d}x+r^{2}\fint_{B_{r}(z_{0})}\eta^{2}\lvert f_{\varepsilon}\rvert^{2}\,{\mathrm d}x \mright]\\&\le C\mleft[r^{-2}\fint_{B_{r}(z_{0})} w_{\varepsilon}^{2}\,{\mathrm d}x+\fint_{B_{r}(z_{0})}\lvert \rho f_{\varepsilon}\rvert^{2}\,{\mathrm d}x\mright]
\end{align*}
with \(C\in(0,\,\infty)\) depending on \(n,\,C_{1},\,C_{2}\).
By the definition, \(w_{\varepsilon}\) satisfies
\[\fint_{B_{r}(z_{0})}w_{\varepsilon}\,{\mathrm d}x=0,\]
which enables us to use the Poincar\'{e}--Sobolev inequality \cite[Chapter IX, Theorem 10.1]{MR1897317};
\[\fint_{B_{r}(z_{0})}w_{\varepsilon}^{2}\,{\mathrm d}x\le C(n)r^{2}\mleft(\fint_{B_{r}(z_{0})}\lvert\nabla w_{\varepsilon}\rvert^{\frac{2n}{n+2}}\,{\mathrm d}x\mright)^{\frac{n+2}{n}}.\]
Recall \(\nabla w_{\varepsilon}=\nabla u_{\varepsilon}-\zeta\), and this completes the proof of (\ref{Eq: Claim for Gehring's lemma}).
\end{proof}
Now we would like to give perturbation estimates based on comparisons to harmonic functions (Lemma \ref{Lemma: Excess decay estimate 1}). This result can be obtained by the regularity assumption $C_{\mathrm{loc}}^{2,\,\beta_{0}}({\mathbb R}^{n}\setminus\{0\})$, and plays an important role in the proof of Proposition \ref{Prop: Perturbation result}.
\begin{lemma}\label{Lemma: Excess decay estimate 1}
Let \(u_{\varepsilon}\) be a weak solution to (\ref{Eq: Regularized equation}) in $\Omega$. Assume that positive numbers $\delta$, $\varepsilon$, $\mu$, $F$, $M$, and an open ball $B_{\rho}(x_{0})\Subset\Omega$ satisfy $0<\rho\le 1$, (\ref{Eq: Range of delta-epsilon}), (\ref{Eq: Bound on external force term et al.}) and (\ref{Eq: mu>delta})--(\ref{Eq: Gradient bound in Growth estimate}). In addition, we let
\begin{equation}\label{Eq: Average assumption}
\mleft\lvert(\nabla u_{\varepsilon})_{x_{0},\,\rho} \mright\rvert\ge \delta+\frac{\mu}{4},
\end{equation}
and consider the Dirichlet boundary problem
\begin{equation}\label{Eq: Dirichlet boundary Poisson problem}
\mleft\{\begin{array}{rclcc}
-\mathrm{div}\,\mleft(\nabla^{2} E_{\varepsilon}\mleft((\nabla u_{\varepsilon})_{x_{0},\,\rho}\mright)\nabla v_{\varepsilon}\mright) &=&0 &\textrm{in}& B_{\rho/2}(x_{0}),\\ v_{\varepsilon}&=&u_{\varepsilon}&\textrm{on} &\partial B_{\rho/2}(x_{0}).
\end{array} \mright.
\end{equation}
Then, there uniquely exists a function \(v_{\varepsilon}\) that solves (\ref{Eq: Dirichlet boundary Poisson problem}). Moreover, we have 
\begin{equation}\label{Eq: Perturbation estimates}
\fint_{B_{\rho/2}(x_{0})}\lvert\nabla u_{\varepsilon}-\nabla v_{\varepsilon}\rvert^{2}\,{\mathrm d}x\le C\mleft\{ \mleft[\frac{\Phi(x_{0},\,\rho)}{\mu^{2}}\mright]^{\vartheta}\Phi(x_{0},\,\rho)+\mleft(F^{2}+F^{2(1+\vartheta)}\mright)\rho^{2\beta}\mright\},
\end{equation}
where \(\vartheta\) is the positive constant given in Lemma \ref{Lemma: Higher integrability}, and
\begin{equation}\label{Eq: Dirichlet Growth estimate on Poisson equations}
\fint_{B_{\tau \rho}(x_{0})}\lvert\nabla v_{\varepsilon}-(\nabla v_{\varepsilon})_{x_{0},\,\tau\rho }\rvert^{2}\,{\mathrm d}x\le C\tau^{2}\fint_{B_{\rho/2}(x_{0})}\mleft\lvert\nabla v_{\varepsilon}-(\nabla v_{\varepsilon})_{x_{0},\,\rho/2}\mright\rvert^{2}\,{\mathrm d}x
\end{equation}
for all \(\tau\in(0,\,1/2\rbrack\).
Here the exponent \(\beta\) is defined by (\ref{Eq: Definition of betas}), and the constants \(C\in(0,\,\infty)\) in (\ref{Eq: Perturbation estimates})--(\ref{Eq: Dirichlet Growth estimate on Poisson equations}) depend at most on $n$, $p$, $q$, $\beta_{0}$, $\lambda$, $\Lambda$, $K$, $M$, and $\delta$.
\end{lemma}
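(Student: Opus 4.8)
The plan is to follow the classical freezing-coefficient scheme for establishing $C^{1,\alpha}$-type decay estimates: freeze the Hessian of the regularized density at the averaged gradient $\bar z\coloneqq (\nabla u_\varepsilon)_{x_0,\rho}$, compare $u_\varepsilon$ with the solution $v_\varepsilon$ of the constant-coefficient equation \eqref{Eq: Dirichlet boundary Poisson problem}, and then exploit elliptic regularity for the (uniformly elliptic) frozen equation. The assumption \eqref{Eq: Average assumption} places $\bar z$ in the annulus where $E_\varepsilon$ is uniformly elliptic, so $\nabla^2 E_\varepsilon(\bar z)$ is a fixed symmetric matrix with eigenvalues comparable to constants depending only on $n,p,q,\beta_0,\lambda,\Lambda,K,M,\delta$ (use \eqref{Eq: Hessian estimate for approximated E} together with $|\bar z|\ge \delta+\mu/4\ge\delta$ and $|\bar z|\le\delta+\mu\le M$). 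Existence and uniqueness of $v_\varepsilon$ is then Lax--Milgram in $u_\varepsilon+W_0^{1,2}(B_{\rho/2}(x_0))$, and \eqref{Eq: Dirichlet Growth estimate on Poisson equations} is the standard interior Campanato/De Giorgi--Nash estimate for derivatives of solutions to linear constant-coefficient elliptic equations — since $\partial_{x_j}v_\varepsilon$ solves the same homogeneous equation, its $L^2$-mean oscillation decays like $\tau^2$; I would just cite, e.g., \cite[Chapter 3]{MR3887613} or \cite[Chapter 10]{MR2566733} as is done elsewhere in the paper.

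The core is \eqref{Eq: Perturbation estimates}. First I would test the difference of the weak formulations of \eqref{Eq: Reg Eq} and \eqref{Eq: Dirichlet boundary Poisson problem} with $\phi\coloneqq u_\varepsilon-v_\varepsilon\in W_0^{1,2}(B_{\rho/2}(x_0))$. This yields
\[
\int_{B_{\rho/2}(x_0)}\bigl\langle \nabla^2 E_\varepsilon(\bar z)(\nabla u_\varepsilon-\nabla v_\varepsilon)\mathrel{}\middle|\mathrel{}\nabla u_\varepsilon-\nabla v_\varepsilon\bigr\rangle\,{\mathrm d}x
=\int_{B_{\rho/2}(x_0)}\bigl\langle \nabla^2 E_\varepsilon(\bar z)\nabla u_\varepsilon-(\nabla E_\varepsilon(\nabla u_\varepsilon)-\nabla E_\varepsilon(\bar z))\mathrel{}\middle|\mathrel{}\nabla\phi\bigr\rangle\,{\mathrm d}x+\int_{B_{\rho/2}(x_0)}f_\varepsilon\phi\,{\mathrm d}x,
\]
using that $\bar z$ is constant so $\operatorname{div}(\nabla^2E_\varepsilon(\bar z)\,\bar z)=0$ and $\operatorname{div}(\nabla E_\varepsilon(\bar z))=0$ may be inserted freely. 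The left side is bounded below by $c\int|\nabla u_\varepsilon-\nabla v_\varepsilon|^2$. The first term on the right is controlled by the pointwise linearization error $|\nabla^2 E_\varepsilon(\bar z)(\nabla u_\varepsilon-\bar z)-(\nabla E_\varepsilon(\nabla u_\varepsilon)-\nabla E_\varepsilon(\bar z))|$, which is exactly what Lemma \ref{Lemma: Error estimate on Hess E-epsilon}\ref{Item 2/2: Hessian errors on E-epsilon} estimates (with $\mu$ there $\asymp\mu$ here; the hypotheses \eqref{Eq: Outside condition on variable} need $|\nabla u_\varepsilon|$ to lie in the correct annulus, which holds pointwise only on a good set, so one splits $B_{\rho/2}(x_0)$ into $\{|\nabla u_\varepsilon|$ close to $\bar z\}$ and its complement, handling the bad set by the uniform Lipschitz bound \eqref{Eq: Growth outside} of $\nabla E_\varepsilon$ and an estimate on the measure of that set via Chebyshev applied to $\Phi(x_0,\rho)$). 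After integrating, Hölder with exponent $1+\vartheta$ and the higher integrability estimate \eqref{Eq: Higher integrability result} from Lemma \ref{Lemma: Higher integrability} (applied with $\zeta=\bar z$, legitimate by \eqref{Eq: Average assumption} which is \eqref{Eq: zeta location}) convert the $(1+\beta_0)$-power excess into $[\Phi/\mu^2]^\vartheta\Phi$ plus the $F$-terms with the correct $\rho^{2\beta}$ scaling. The $f_\varepsilon$-term is handled by Poincaré on $\phi$, Young's inequality (to absorb $\|\nabla u_\varepsilon-\nabla v_\varepsilon\|_{L^2}$ into the left side), and Hölder in the exponent $q>n$ to produce $F^2\rho^{2\beta}$.

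The step I expect to be the main obstacle is the careful bookkeeping around the annulus condition \eqref{Eq: Outside condition on variable} required by Lemma \ref{Lemma: Error estimate on Hess E-epsilon}: the linearization error estimate is only available where $|\nabla u_\varepsilon|$ is comparable to $|\bar z|$, so one must introduce a good set $G\coloneqq\{x\in B_{\rho/2}(x_0): |\nabla u_\varepsilon(x)-\bar z|\le\mu/64\}$ (say), use the pointwise bound $CL_0\mu^{p-2-\beta_0}|\nabla u_\varepsilon-\bar z|^{1+\beta_0}$ there, and on $B_{\rho/2}(x_0)\setminus G$ use only $|\nabla E_\varepsilon(\nabla u_\varepsilon)-\nabla E_\varepsilon(\bar z)-\nabla^2E_\varepsilon(\bar z)(\nabla u_\varepsilon-\bar z)|\le C|\nabla u_\varepsilon-\bar z|$ combined with $|B_{\rho/2}(x_0)\setminus G|\le C\mu^{-2}\Phi(x_0,\rho)|B_{\rho/2}(x_0)|$ by Chebyshev; the $\mu^{p-2-\beta_0}$ versus $\mu^{-2}$ factors must be reconciled, and this is where one sees the estimate genuinely depends on $\delta$ (via $\mu>\delta$) and produces the factor $[\Phi/\mu^2]^\vartheta$. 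Once these pieces are assembled the remaining computations — Poincaré, Hölder, Young, and collecting constants — are routine and I would not spell them out in full, referring instead to \cite[\S 4--6]{BDGPdN} and \cite{T-system} for the analogous arguments.
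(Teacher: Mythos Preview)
Your approach is the same as the paper's --- freeze at $\bar z=(\nabla u_\varepsilon)_{x_0,\rho}$, test the difference with $\phi=u_\varepsilon-v_\varepsilon$, invoke Lemma~\ref{Lemma: Error estimate on Hess E-epsilon} for the linearization error, then use Lemma~\ref{Lemma: Higher integrability} with $\vartheta\le\beta_0$ to pass from the $(1+\beta_0)$-power to the $(1+\vartheta)$-power --- and the handling of existence, \eqref{Eq: Dirichlet Growth estimate on Poisson equations}, and the $f_\varepsilon$-term via Poincar\'e/H\"older is exactly right.

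The one point where you make your life harder than necessary is the ``main obstacle'' you anticipate. Read condition~\eqref{Eq: Outside condition on variable} again: it requires the \emph{frozen} point $z_1$ to lie in the annulus $\delta+\mu/4\le|z_1|\le\delta+\mu$, but for $z_2$ it only asks $|z_2|\le\delta+\mu$. In the application $z_1=\bar z$ (annulus condition guaranteed by~\eqref{Eq: Average assumption} and~\eqref{Eq: Gradient bound in Growth estimate}) and $z_2=\nabla u_\varepsilon(x)$, which satisfies $|\nabla u_\varepsilon|\le V_\varepsilon\le\delta+\mu$ \emph{everywhere} in $B_\rho(x_0)$. So the pointwise bound
\[
\bigl|\nabla^2 E_\varepsilon(\bar z)(\nabla u_\varepsilon-\bar z)-(\nabla E_\varepsilon(\nabla u_\varepsilon)-\nabla E_\varepsilon(\bar z))\bigr|\le C\mu^{p-2-\beta_0}|\nabla u_\varepsilon-\bar z|^{1+\beta_0}
\]
from~\eqref{Eq: Error of Hess} holds on the whole ball, and no good/bad set decomposition or Chebyshev argument is needed. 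The paper proceeds directly: H\"older against $|\nabla u_\varepsilon-\bar z|^{2(1+\beta_0)}$, then the trivial bound $|\nabla u_\varepsilon-\bar z|^{2(\beta_0-\vartheta)}\le(4\mu)^{2(\beta_0-\vartheta)}$ reduces the exponent to $2(1+\vartheta)$, after which Lemma~\ref{Lemma: Higher integrability} applies.
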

\begin{proof}
For notational simplicity, we write \(\zeta\coloneqq(\nabla u_{\varepsilon})_{x_{0},\,r}\in{\mathbb R}^{n}\) and \(B\coloneqq B_{\rho/2}(x_{0})\).
Then by (\ref{Eq: Gradient bound in Growth estimate}), it is easy to check that 
\begin{equation}\label{Eq: Zeta bound}
\lvert \zeta\rvert\le \fint_{B_{\rho}(x_{0})}V_{\varepsilon}\,{\mathrm{d}}x\le \delta+\mu.
\end{equation}
Combining with (\ref{Eq: Range of delta-epsilon}), (\ref{Eq: mu>delta}) and (\ref{Eq: Average assumption}), we have already known that \[0<\frac{\delta^{2}}{16}\le\frac{\mu^{2}}{16}\le \varepsilon^{2}+\lvert \zeta\rvert^{2}\le \delta^{2}+(2\mu)^{2}\le 5\mu^{2}\le 5M^{2}.\] Hence by (\ref{Eq: Hessian estimate for approximated E}), there exist a constant \(l_{0}=l_{0}(p)\in(0,\,1)\) and another constant \(m\in(0,\,1)\), which depends at most on \(p,\,\lambda,\,\Lambda,\,K,\,M,\,\delta\), such that
\[m\mathrm{id}\leqslant l_{0}\lambda \mu^{p-2}\mathrm{id} \leqslant \nabla^{2} E_{\varepsilon}(\zeta)\leqslant m^{-1}\mathrm{id}.\] 
In other words, \(\nabla^{2}E_{\varepsilon}(\zeta)\) is uniformly elliptic, which yields unique existence of the solution \(v_{\varepsilon}\in u_{\varepsilon}+W_{0}^{1,\,2}(B)\) of the problem (\ref{Eq: Dirichlet boundary Poisson problem}). Moreover, since the coefficient matrix \(\nabla^{2}E(\zeta)\) is constant, we are able to find a constant \(C=C(m,\,n)\in(0,\,\infty)\) such that (\ref{Eq: Dirichlet Growth estimate on Poisson equations}) holds (see e.g., \cite[Lemma 2.17]{MR3887613}, \cite[Proposition 5.8]{MR3099262}).

We note that functions \(u_{\varepsilon}\in W^{1,\,\infty}(B)\) and \(v_{\varepsilon}\in u_{\varepsilon}+W_{0}^{1,\,2}(B)\) respectively satisfy
\[\int_{B}\mleft\langle\nabla E_{\varepsilon}(\nabla u_{\varepsilon})\mathrel{}\middle|\mathrel{} \nabla\phi\mright\rangle\,{\mathrm d}x=\int_{B}f_{\varepsilon}\phi\,{\mathrm d}x\]
for all \(\phi\in W_{0}^{1,\,1}(B)\), and
\[\int_{B}\mleft\langle \nabla^{2} E_{\varepsilon}(\zeta)\nabla v_{\varepsilon}\mathrel{}\middle|\mathrel{} \nabla \phi\mright\rangle\,{\mathrm d}x=0\]
for all \(\phi\in W_{0}^{1,\,2}(B)\). In particular, for all \(\phi\in W_{0}^{1,\,2}(B)\), we have
\begin{align}
&\int_{B}\mleft\langle \nabla^{2} E_{\varepsilon}(\zeta)(\nabla u_{\varepsilon}-\nabla v_{\varepsilon})\mathrel{}\middle|\mathrel{} \nabla \phi\mright\rangle\,{\mathrm d}x\nonumber\\&=\int_{B}\mleft\langle \nabla^{2} E_{\varepsilon}(\zeta)\nabla u_{\varepsilon}-\nabla E_{\varepsilon}(\nabla u_{\varepsilon})\mathrel{}\middle|\mathrel{} \nabla \phi\mright\rangle\,{\mathrm d}x+\int_{B}f_{\varepsilon}\phi\,{\mathrm{d}}x\nonumber\\&=\int_{B}\mleft\langle \nabla^{2}E_{\varepsilon}(\zeta)(\nabla u_{\varepsilon}-\zeta)-\mleft(\nabla E_{\varepsilon}(\nabla u_{\varepsilon})-\nabla E_{\varepsilon}(\zeta)\mright)\mathrel{}\middle|\mathrel{}\nabla\phi\mright\rangle\,{\mathrm d}x+\int_{B}f_{\varepsilon}\phi\,{\mathrm{d}}x,\nonumber
\end{align}
where it is noted that the vectors \(\nabla^{2}E_{\varepsilon}(\zeta)\zeta,\,\,\nabla E_{\varepsilon}(\zeta)\in{\mathbb R}^{n}\) are constant.
By (\ref{Eq: Gradient bound in Growth estimate}), (\ref{Eq: Average assumption}) and (\ref{Eq: Zeta bound}), we are able to apply Lemma \ref{Lemma: Error estimate on Hess E-epsilon}. As a result, there exists a constant $C\in(0,\,\infty)$, depending at most on $n$, $p$, $\beta_{0}$, $\Lambda$, $K$ and $\delta$, such that a weak formulation
\[\int_{B}\mleft\langle \nabla^{2} E_{\varepsilon}(\zeta)(\nabla u_{\varepsilon}-\nabla v_{\varepsilon})\mathrel{}\middle|\mathrel{} \nabla \phi\mright\rangle\,{\mathrm d}x\le C\mu^{p-2-\beta_{0}}\int_{B}\lvert\nabla u_{\varepsilon}-\zeta\rvert^{1+\beta_{0}}\lvert \nabla\phi\rvert\,{\mathrm d}x+\int_{B}\lvert f_{\varepsilon}\rvert\lvert \phi\rvert\,{\mathrm{d}}x\]
holds for all \(\phi\in W_{0}^{1,\,2}(B)\).
Now we test \(\phi\coloneqq u_{\varepsilon}-v_{\varepsilon}\in W_{0}^{1,\,2}(B)\) into this weak formulation. By H\"{o}lder's inequality and the Poincar\'{e} inequality, we have
\begin{align*}
&l_{0}\lambda\mu^{p-2}\int_{B}\lvert \nabla u_{\varepsilon}-\nabla v_{\varepsilon}\rvert^{2}\,{\mathrm d}x\\&\le C(n,\,p,\,\beta_{0},\,\Lambda,\,K,\,\delta)\mu^{p-2-\beta_{0}}\int_{B}\lvert \nabla u_{\varepsilon}-\zeta\rvert^{1+\beta_{0}}\lvert \nabla u_{\varepsilon}-\nabla v_{\varepsilon}\rvert\,{\mathrm d}x+\int_{B}\lvert f_{\varepsilon}\rvert\lvert u_{\varepsilon}-v_{\varepsilon}\rvert\,{\mathrm{d}}x \\&\le C(n,\,p,\,\beta_{0},\,\Lambda,\,K,\,\delta)\mu^{p-2-\beta_{0}}\mleft(\int_{B}\lvert \nabla u_{\varepsilon}-\zeta\rvert^{2(1+\beta_{0})}\,{\mathrm d}x \mright)^{1/2}\mleft(\int_{B}\lvert \nabla u_{\varepsilon}-\nabla v_{\varepsilon}\rvert^{2}\,{\mathrm d}x\mright)^{1/2}\\&\quad+C(n,\,q)F\rho^{\beta+\frac{n}{2}}\mleft(\int_{B}\lvert \nabla u_{\varepsilon}-\nabla v_{\varepsilon}\rvert^{2}\,{\mathrm d}x \mright)^{1/2}.
\end{align*}
As a result, we obtain
\[\fint_{B}\lvert \nabla u_{\varepsilon}-\nabla v_{\varepsilon}\rvert^{2}\,{\mathrm d}x\le C\mleft[\frac{1}{\mu^{2\beta_{0}}}\fint_{B}\lvert \nabla u_{\varepsilon}-\zeta\rvert^{2(1+\beta_{0})}\,{\mathrm d}x+F^{2}\mu^{2(2-p)}\rho^{2\beta}\mright],\]
where the constant $C\in(0,\,\infty)$ depends at most on $n$, $p$, $q$, $\beta_{0}$, $\lambda$, $\Lambda$, $K$, $M$, and $\delta$. By (\ref{Eq: mu>delta}) and (\ref{Eq: Zeta bound}), it is easy to get
\[\lvert \nabla u_{\varepsilon}-\zeta\rvert\le 2(\delta+\mu) \le 4\mu\quad \textrm{a.e. in }B_{\rho}(x_{0}).\]
Since \(\zeta\) clearly satisfies (\ref{Eq: zeta location}) by (\ref{Eq: Average assumption}) and (\ref{Eq: Zeta bound}), we are able to apply Lemma \ref{Lemma: Higher integrability}. Noting $\vartheta\le \beta_{0}$, we obtain
\begin{align*}
\fint_{B}\lvert \nabla u_{\varepsilon}-\nabla v_{\varepsilon}\rvert^{2}\,{\mathrm d}x&\le C\mleft[\frac{C(n,\,\vartheta)}{\mu^{2\vartheta}}\fint_{B}\lvert\nabla u_{\varepsilon}-\zeta\rvert^{2(1+\vartheta)}\,{\mathrm d}x+F^{2}\mu^{2(2-p)}\rho^{2\beta}\mright]\\&\le C\mleft[\frac{\Phi(x_{0},\,\rho)^{1+\vartheta}}{\mu^{2\vartheta}}+\frac{F^{2(1+\vartheta)}\rho^{2\beta(1+\vartheta)}}{\mu^{2\vartheta}}+F^{2}\mu^{2(2-p)}\rho^{2\beta}\mright]
\end{align*}
for some constant \(C\in(0,\,\infty)\) depending at most on $n$, $p$, $q$, $\beta_{0}$, $\lambda$, $\Lambda$, $K$, $M$, and $\delta$.
Recalling $0<\rho\le 1$ and $\delta<\mu<M$ by (\ref{Eq: mu>delta})--(\ref{Eq: Gradient bound in Growth estimate}), we finally obtain (\ref{Eq: Perturbation estimates}).
\end{proof}

\subsection{Energy estimates}\label{Subsect: Energy estimates}
We go back to the weak formulation given in Lemma \ref{Lemma: Weak formulations of approximated equations}, and deduce some energy estimates under suitable assumptions as in Proposition \ref{Prop: Perturbation result}.

In Section \ref{Subsect: Energy estimates}, we introduce a vector field \(G_{p,\,\varepsilon}\in C^{\infty}({\mathbb R}^{n};\,{\mathbb R}^{n})\) by
\begin{equation}\label{Eq: G-p-epsilon}
G_{p,\,\varepsilon}(z)\coloneqq g_{p,\,\varepsilon}(\lvert z\rvert^{2})z\quad \textrm{for }z\in{\mathbb R}^{n}\textrm{ with } g_{p,\,\varepsilon}(t)\coloneqq \mleft(\varepsilon^{2}+t\mright)^{(p-1)/2}\,(0\le t<\infty)
\end{equation}
for each \(\varepsilon\in(0,\,1)\).
By definition, it is clear that the mapping \(G_{p,\,\varepsilon}\colon{\mathbb R}^{n}\rightarrow{\mathbb R}^{n}\) is bijective. 
By direct computations, it is easy to notice that the Jacobian matrix $DG_{p,\,\varepsilon}$ satisfies
\[\mleft(\varepsilon^{2}+\lvert z\rvert^{2}\mright)^{(p-1)/2}\mathrm{id}\leqslant DG_{p,\,\varepsilon}(z)\leqslant p\mleft(\varepsilon^{2}+\lvert z\rvert^{2}\mright)^{(p-1)/2}\mathrm{id}\]
for all $\varepsilon\in(0,\,1)$ and $z\in{\mathbb R}^{n}$.
In particular, similarly to (\ref{Eq: Lp lower bound lemma case p>2}), we can find a constant \(c=c(p)\in(0,\,\infty)\) such that there holds
\[\mleft\langle G_{p,\,\varepsilon}(z_{1})-G_{p,\,\varepsilon}(z_{2})\mathrel{}\middle|\mathrel{}z_{1}-z_{2}\mright\rangle\ge c(p)\lvert z_{1}-z_{2}\rvert^{p+1}\quad \textrm{for all }z_{1},\,z_{2}\in {\mathbb R}^{n},\]
which clearly yields
\[\mleft\lvert G_{p,\,\varepsilon}(z_{1})-G_{p,\,\varepsilon}(z_{2}) \mright\rvert\ge c(p)\lvert z_{1}-z_{2}\rvert^{p}\quad \textrm{for all } z_{1},\,z_{2}\in{\mathbb R}^{n}.\]
By this estimate and \(G_{p,\,\varepsilon}(0)=0\), the inverse mapping \(G_{p,\,\varepsilon}^{-1}\) satisfies
\begin{equation}\label{Eq: Boundedness of inversed mapping G-p-epsilon}
\mleft\lvert G_{p,\,\varepsilon}^{-1}(w)\mright\rvert\le C(p)\lvert w\rvert^{1/p}\quad \textrm{for all }w\in {\mathbb R}^{n}
\end{equation}
with \(C=c(p)^{-1}\in(0,\,\infty)\).
Also, similarly to Lemma \ref{Lemma: Coerciveness keeps under convolution} \ref{Item 1/2: Monotonicity}, it is possible to find a constant \(C=C(p)\in(0,\,\infty)\) such that there holds
\[\langle G_{p,\,\varepsilon}(z_{1})-G_{p,\,\varepsilon}(z_{2})\mid z_{1}-z_{2} \rangle\ge C(p) \max\mleft\{\,\lvert z_{1}\rvert^{p-1},\,\lvert z_{2}\rvert^{p-1}\,\mright\} \lvert z_{1}-z_{2}\rvert^{2},\]
and therefore
\begin{equation}\label{Eq: L1 lower bound lemma on G-p-epsilon}
\mleft\lvert G_{p,\,\varepsilon}(z_{1})-G_{p,\,\varepsilon}(z_{2})\mright\rvert\ge C(p) \max\mleft\{\,\lvert z_{1}\rvert^{p-1},\,\lvert z_{2}\rvert^{p-1}\,\mright\} \lvert z_{1}-z_{2}\rvert
\end{equation}
for all \(z_{1},\,z_{2}\in{\mathbb R}^{n}\). The estimates (\ref{Eq: Boundedness of inversed mapping G-p-epsilon})--(\ref{Eq: L1 lower bound lemma on G-p-epsilon}) are used in Section \ref{Subsect: Energy estimates}.
\begin{lemma}\label{Lemma: Energy estimates}
Let \(u_{\varepsilon}\) be a weak solution to (\ref{Eq: Regularized equation}) in \(\Omega\). Assume that positive numbers $\delta$, $\varepsilon$, $\mu$, $F$, $M$, and an open ball $B_{\rho}(x_{0})\Subset\Omega$ satisfy (\ref{Eq: Range of delta-epsilon}), (\ref{Eq: Bound on external force term et al.}) and (\ref{Eq: mu>delta})--(\ref{Eq: Gradient bound in Growth estimate}). Then, for a fixed constant $\nu\in(0,\,1)$, we have
\begin{equation}\label{Eq: Energy estimate 1}
\fint_{B_{\tau \rho}(x_{0})}\mleft\lvert D\mleft[G_{p,\,\varepsilon}(\nabla u_{\varepsilon})\mright]\mright\rvert^{2}\,{\mathrm d}x\le \frac{C}{\tau^{n}}\mleft[\frac{1}{(1-\tau)^{2}\rho^{2}}+F^{2}\rho^{-\frac{2n}{q}}\mright]\mu^{2p},
\end{equation}
and
\begin{equation}\label{Eq: Energy estimate 2}
\frac{1}{\lvert B_{\tau\rho}(x_{0})\rvert}\int_{S_{\tau \rho,\,\mu,\,\nu}(x_{0})}\mleft\lvert D\mleft[G_{p,\,\varepsilon}(\nabla u_{\varepsilon})\mright]\mright\rvert^{2}\,{\mathrm d}x\le \frac{C}{\tau^{n}}\mleft[\frac{\nu}{(1-\tau)^{2}\rho^{2}}+\frac{F^{2}\rho^{-\frac{2n}{q}}}{\nu}\mright]\mu^{2p}
\end{equation}
for all $\tau\in(0,\,1)$.
Here \(G_{p,\,\varepsilon}\) is the mapping defined by (\ref{Eq: G-p-epsilon}), and the constants \(C\in(0,\,\infty)\) in (\ref{Eq: Energy estimate 1})--(\ref{Eq: Energy estimate 2}) depend at most on $n$, $p$, $q$, $\lambda$, $\Lambda$, $K$, and $\delta$.
\end{lemma}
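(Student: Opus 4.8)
The plan is to feed a carefully chosen non-decreasing Lipschitz multiplier into the Caccioppoli-type inequality of Lemma \ref{Lemma: Weak formulations of approximated equations} and to read off the two estimates after discarding the nonnegative term $J_2$. First note that, by the chain rule and the bound $DG_{p,\,\varepsilon}(z)\leqslant p\,(\varepsilon^{2}+\lvert z\rvert^{2})^{(p-1)/2}\mathrm{id}$ recorded just before the statement, we have $\lvert D[G_{p,\,\varepsilon}(\nabla u_{\varepsilon})]\rvert\le p\,V_{\varepsilon}^{p-1}\lvert\nabla^{2}u_{\varepsilon}\rvert$ a.e. Combined with the coercivity $\nabla^{2}E_{\varepsilon}(\nabla u_{\varepsilon})\geqslant\lambda V_{\varepsilon}^{p-2}\mathrm{id}$ from (\ref{Eq: Hessian estimate for approximated E}), this shows that if $\psi$ is chosen so that the coercive term $J_{3}$ in (\ref{Eq: Definitions of Jk}) dominates $\lambda\int V_{\varepsilon}^{2(p-1)}\lvert\nabla^{2}u_{\varepsilon}\rvert^{2}\zeta\,{\mathrm d}x$, then $J_{3}$ controls $\int_{\{\zeta=1\}}\lvert D[G_{p,\,\varepsilon}(\nabla u_{\varepsilon})]\rvert^{2}\,{\mathrm d}x$ up to a constant depending only on $p$ and $\lambda$. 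For (\ref{Eq: Energy estimate 1}) it therefore suffices to take $\psi(t)$ equal to $t^{p}$ truncated at a level $\ge\delta+\mu$ so as to be globally Lipschitz (and, when $1<p<2$, modified to be linear on $[0,\,\varepsilon]$, which is harmless since $V_{\varepsilon}\ge\varepsilon$), together with a standard cutoff $\zeta=\eta^{2}$ with $\eta\equiv1$ on $B_{\tau\rho}(x_{0})$ and $\lvert\nabla\eta\rvert\le C/((1-\tau)\rho)$.

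Next I would bound the right-hand side of (\ref{Eq: Weak formulation of regularized equations}) and the term $2J_{1}$, the latter being moved to the right after dropping $J_{2}\ge0$. For $J_{1}$ the plan is to use the Cauchy--Schwarz inequality for the positive semidefinite form $\nabla^{2}E_{\varepsilon}(\nabla u_{\varepsilon})$, the pointwise bound $\langle\nabla^{2}E_{\varepsilon}(\nabla u_{\varepsilon})\nabla V_{\varepsilon}\mid\nabla V_{\varepsilon}\rangle\le\sum_{j}\langle\nabla^{2}E_{\varepsilon}(\nabla u_{\varepsilon})\nabla\partial_{x_{j}}u_{\varepsilon}\mid\nabla\partial_{x_{j}}u_{\varepsilon}\rangle$ (a consequence of $V_{\varepsilon}\nabla V_{\varepsilon}=\sum_{j}\partial_{x_{j}}u_{\varepsilon}\nabla\partial_{x_{j}}u_{\varepsilon}$), and Young's inequality, so that a small multiple of $J_{3}$ is absorbed back into the left side and the remainder is controlled by $\int\lVert\nabla^{2}E_{\varepsilon}(\nabla u_{\varepsilon})\rVert\,\psi(V_{\varepsilon})V_{\varepsilon}^{2}\,\lvert\nabla\zeta\rvert^{2}\zeta^{-1}\,{\mathrm d}x$. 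On the support of $\psi(V_{\varepsilon})$ one has $\varepsilon\le V_{\varepsilon}\le\delta+\mu<2\mu$, while $\lVert\nabla^{2}E_{\varepsilon}(\nabla u_{\varepsilon})\rVert\le\Lambda V_{\varepsilon}^{p-2}+KV_{\varepsilon}^{-1}$ by (\ref{Eq: Hessian estimate for approximated E}); crucially the singular factor $V_{\varepsilon}^{-1}$ is always multiplied by a positive power of $V_{\varepsilon}$ coming from $\psi(V_{\varepsilon})V_{\varepsilon}^{2}$, so no $\varepsilon$-blow-up occurs and every power of $V_{\varepsilon}$ collapses to a power of $\mu$. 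The force terms $J_{4},\,J_{5},\,J_{6}$ are treated the same way, collapsing $V_{\varepsilon}$-powers to $\mu$-powers and using Hölder's inequality in the form $\lVert f_{\varepsilon}\rVert_{L^{2}(B_{\rho}(x_{0}))}^{2}\le C(n)F^{2}\rho^{n-2n/q}$ (and its $L^{1}$ analogue for $J_{6}$). Dividing by $\lvert B_{\tau\rho}(x_{0})\rvert$, applying Young's inequality to split the mixed term $\mu^{p}F$ into the two displayed summands, and absorbing residual constants into $\mu^{2p}$ by means of $\delta<\mu$ yields (\ref{Eq: Energy estimate 1}).

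For (\ref{Eq: Energy estimate 2}) the scheme is identical, but $\psi$ is in addition localized to a neighborhood of the top level: one takes $\psi$ supported in $\{t:t-\delta>(1-2\nu)\mu\}$ and of size comparable to $\nu\mu^{p}$ there — a ramp from $0$ to $\approx\nu\mu^{p}$ between the levels $\delta+(1-2\nu)\mu$ and $\delta+(1-\nu)\mu$, then constant — so that all of $J_{1},\dots,J_{6}$ are effectively integrated over $\{V_{\varepsilon}\asymp\mu\}$ and each acquires an explicit power of $\nu$, while on $S_{\tau\rho,\,\mu,\,\nu}(x_{0})$ the coercive term $J_{3}$ still governs $\int_{S_{\tau\rho,\,\mu,\,\nu}(x_{0})}\lvert D[G_{p,\,\varepsilon}(\nabla u_{\varepsilon})]\rvert^{2}\,{\mathrm d}x$. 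Propagating these $\nu$-powers through the same Cauchy--Schwarz/Young/Hölder chain should produce the factor $\nu$ in front of $\mu^{2p}/((1-\tau)^{2}\rho^{2})$ and the factor $\nu^{-1}$ in front of $\mu^{2p}F^{2}\rho^{-2n/q}$.

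\textbf{Main obstacle.} I expect the delicate point to be precisely this last bookkeeping of $\nu$-powers. The restriction to $S_{\tau\rho,\,\mu,\,\nu}(x_{0})$ cannot be exploited at the level of measure (no measure hypothesis is available in this lemma, and $S_{\tau\rho,\,\mu,\,\nu}(x_{0})$ may coincide with $B_{\tau\rho}(x_{0})$), so the gain must be produced entirely by the shape of $\psi$; one must choose the intermediate levels and the height $\approx\nu\mu^{p}$ of the ramp so that the loss incurred when $J_{3}$ is converted back into $\int_{S_{\tau\rho,\,\mu,\,\nu}(x_{0})}\lvert D[G_{p,\,\varepsilon}(\nabla u_{\varepsilon})]\rvert^{2}\,{\mathrm d}x$ is exactly compensated by the $\nu$-smallness carried by the cutoff terms, possibly exploiting in addition the nonnegative contribution $J_{2}$ (which carries $\psi'(V_{\varepsilon})$) rather than discarding it. Everything else is routine Caccioppoli-type estimation of the kind already used for Lemma \ref{Lemma: Weak formulations of approximated equations}.
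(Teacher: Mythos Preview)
Your plan for (\ref{Eq: Energy estimate 1}) is essentially the paper's: choose $\psi(t)=t^{p}$ (the paper writes $\psi=t^{p}\tilde\psi$ with $\tilde\psi\equiv 1$), set $\zeta=\eta^{2}$, and read off the bound from (\ref{Eq: Weak formulation of regularized equations}). One difference worth noting: the paper absorbs the cross term $2J_{1}$ into $J_{2}$ (not $J_{3}$), via the Cauchy--Schwarz/Young step
\[
\mathbf{R}_{1}\le \mathbf{L}_{2}+4\int_{B}\langle\nabla^{2}E_{\varepsilon}(\nabla u_{\varepsilon})\nabla\eta\mid\nabla\eta\rangle\,\frac{V_{\varepsilon}\psi(V_{\varepsilon})^{2}}{\psi'(V_{\varepsilon})}\,dx,
\]
so that the remainder carries the ratio $\psi^{2}/\psi'$. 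For (\ref{Eq: Energy estimate 1}) your absorption into $J_{3}$ works just as well, so this is only a cosmetic difference there.

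For (\ref{Eq: Energy estimate 2}), however, your proposed multiplier does not deliver the $\nu$-gain, and this is the genuine gap. With a linear-then-constant ramp of height $\approx\nu\mu^{p}$ you have $\psi(V_{\varepsilon})/V_{\varepsilon}^{p}\asymp\nu$ on $S_{\tau\rho,\mu,\nu}$, so converting $J_{3}$ back to $\int_{S}|D[G_{p,\varepsilon}(\nabla u_{\varepsilon})]|^{2}$ costs exactly a factor $\nu^{-1}$; meanwhile the $J_{1}$-remainder $\int\|\nabla^{2}E_{\varepsilon}\|\psi V_{\varepsilon}^{2}|\nabla\eta|^{2}\zeta^{-1}$ only gains one factor $\nu$ from $\psi$. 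The two cancel, and you end up with $C\mu^{2p}/((1-\tau)^{2}\rho^{2})$ in place of $C\nu\mu^{2p}/((1-\tau)^{2}\rho^{2})$. Exact compensation is not enough: the estimate needs a \emph{net} factor $\nu$. Your fallback of using $J_{2}$ is blocked as well, because on the constant part of your ramp $\psi'=0$ while $\psi>0$, so $\psi^{2}/\psi'$ is infinite there.

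The paper resolves this by taking $\psi(t)=t^{p}\tilde\psi(t)$ with the \emph{quadratic} truncation $\tilde\psi(t)=(t-\delta-k)_{+}^{2}$, $k=(1-2\nu)\mu$, and absorbing $2J_{1}$ into $J_{2}$. The point is that $\psi'>0$ wherever $\psi>0$, and the ratio satisfies
\[
\frac{\tilde\psi(V_{\varepsilon})^{2}}{p\tilde\psi(V_{\varepsilon})+V_{\varepsilon}\tilde\psi'(V_{\varepsilon})}
=\frac{(V_{\varepsilon}-\delta-k)_{+}^{3}}{p(V_{\varepsilon}-\delta-k)_{+}+2V_{\varepsilon}}
\le \frac{(2\nu\mu)^{3}}{2(\delta+k)}\le 8\nu^{3}\mu^{2},
\]
so the $J_{1}$-remainder carries $\nu^{3}$. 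Since $\tilde\psi\ge\nu^{2}\mu^{2}$ on $S_{\tau\rho,\mu,\nu}$, dividing through leaves the extra factor $\nu$. Likewise $\tilde\psi+V_{\varepsilon}\tilde\psi'\le 12\nu\mu^{2}$ gives only $\nu^{1}$ on the force terms, producing the $\nu^{-1}$ after division. In short, the cubic power in $\psi^{2}/\psi'$ (which a linear ramp cannot provide) is the mechanism for the asymmetric $\nu,\,\nu^{-1}$ splitting; replacing your ramp by this quadratic truncation and routing $J_{1}$ through $J_{2}$ fixes the argument.
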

\begin{proof}
We often write \(B=B_{\rho}(x_{0})\) for notational simplicity.
By (\ref{Eq: Hessian estimate for approximated E}) and direct calculations, we have
\begin{align}
&\mleft\lvert D\mleft[G_{p,\,\varepsilon}(\nabla u_{\varepsilon})\mright]\mright\rvert^{2}\nonumber\\&=\mleft\lvert g_{p,\,\varepsilon}\mleft(\lvert\nabla u_{\varepsilon}\rvert^{2}\mright)\nabla^{2}u_{\varepsilon}+2g_{p,\,\varepsilon}^{\prime}\mleft(\lvert\nabla u_{\varepsilon}\rvert^{2}\mright)(\nabla u_{\varepsilon}\otimes\nabla^{2}u_{\varepsilon}\nabla u_{\varepsilon})\mright\rvert^{2}\nonumber\\&\le 2V_{\varepsilon}^{2(p-1)}\lvert\nabla^{2} u_{\varepsilon}\rvert^{2}+2(p-1)^{2}V_{\varepsilon}^{2(p-3)}\lvert\nabla u_{\varepsilon}\otimes \nabla^{2}u_{\varepsilon}\nabla u_{\varepsilon} \rvert^{2}\nonumber\\&\le \gamma_{p}V_{\varepsilon}^{2(p-1)}\lvert \nabla^{2}u_{\varepsilon}\rvert^{2}\nonumber\\&\le\frac{\gamma_{p}}{\lambda}\sum_{j=1}^{n}\mleft\langle\nabla E_{\varepsilon}(\nabla u_{\varepsilon})\nabla\partial_{x_{j}}u_{\varepsilon}\mathrel{}\middle|\mathrel{}\nabla\partial_{x_{j}}u_{\varepsilon}\mright\rangle V_{\varepsilon}^{p}\quad \textrm{a.e. in }B\label{Eq: Energy estimate mid 0}
\end{align}
with \(\gamma_{p}\coloneqq 2(p^{2}-2p+2)>0\).
Noting this, we apply Lemma \ref{Lemma: Weak formulations of approximated equations} with
\[\psi(t)=t^{p}{\tilde\psi}(t)\quad (0\le t<\infty).\]
Here \({\tilde\psi}\) is a convex function that is to be chosen later as either of the following;
\begin{equation}\label{Eq: Energy estimate choice 1}
{\tilde\psi}(t)\equiv 1,
\end{equation}
\begin{equation}\label{Eq: Energy estimate choice 2}
{\tilde\psi}(t)=(t-\delta-k)_{+}^{2}\,\quad (0\le t<\infty)\quad \textrm{for some constant }k>0. 
\end{equation}
We choose a cutoff function \(\eta\in C_{c}^{1}(B_{\rho}(x_{0}))\) such that
\begin{equation}\label{Eq: Choice of cutoff function}
\eta\equiv 1\quad\textrm{on $B_{\tau \rho}(x_{0})$}\quad\textrm{and}\quad\lvert\nabla\eta\rvert\le \frac{2}{(1-\tau)\rho}\quad\textrm{in $B_{\rho}(x_{0})$},
\end{equation}
and set \(\zeta\coloneqq \eta^{2}\).
Then, we have 
\begin{align*}
&{\mathbf L}_{1}+{\mathbf L}_{2}\\
&
\coloneqq \int_{B}\sum\limits_{j=1}^{n}\mleft\langle\nabla^{2}E_{\varepsilon}(\nabla u_{\varepsilon})\nabla \partial_{x_{j}}u_{\varepsilon}\mathrel{}\middle|\mathrel{}\nabla\partial_{x_{j}}u_{\varepsilon}\mright\rangle V_{\varepsilon}^{p}{\tilde\psi}(V_{\varepsilon})\eta^{2}\,{\mathrm d}x\\&\quad
+
\int_{B}\mleft\langle\nabla^{2}E_{\varepsilon}(\nabla u_{\varepsilon})\nabla V_{\varepsilon}\mathrel{}\middle|\mathrel{}\nabla V_{\varepsilon}\mright\rangle \psi^{\prime}(V_{\varepsilon})V_{\varepsilon}\eta^{2}\,{\mathrm d}x
\\&\le 4\int_{B}\mleft\lvert \mleft\langle\nabla^{2}E_{\varepsilon}(\nabla u_{\varepsilon})\nabla V_{\varepsilon}\mathrel{}\middle|\mathrel{}\nabla\eta\mright\rangle\mright\rvert V_{\varepsilon}\psi(V_{\varepsilon})\eta\,{\mathrm d}x\\&\quad+
\frac{n}{\lambda}\int_{B}\lvert f_{\varepsilon}\rvert^{2}V_{\varepsilon}^{2-p}\mleft({\psi}(V_{\varepsilon})+V_{\varepsilon}\psi^{\prime}(V_{\varepsilon})\mright)\eta^{2}\,{\mathrm d}x
\\& \quad\quad+4\int_{B}\lvert f_{\varepsilon}\rvert\lvert \nabla \eta\rvert V_{\varepsilon}\psi(V_{\varepsilon})\,{\mathrm d}x
\\&\eqqcolon {\mathbf R}_{1}+{\mathbf R}_{2}+{\mathbf R}_{3}.
\end{align*}
For \({\mathbf R}_{1}\), we use the Cauchy--Schwarz inequality
\[\mleft\lvert \mleft\langle\nabla^{2}E_{\varepsilon}(\nabla u_{\varepsilon})\nabla V_{\varepsilon}\mathrel{}\middle|\mathrel{}\nabla\eta\mright\rangle\mright\rvert\le \sqrt{\mleft\langle\nabla^{2}E_{\varepsilon}(\nabla u_{\varepsilon})\nabla V_{\varepsilon}\mathrel{}\middle|\mathrel{}\nabla V_{\varepsilon}\mright\rangle}\,\cdot\,\sqrt{\mleft\langle\nabla^{2}E_{\varepsilon}(\nabla u_{\varepsilon})\nabla \eta\mathrel{}\middle|\mathrel{}\nabla\eta\mright\rangle},\]
which holds true by (\ref{Eq: Hessian estimate for approximated E}). Combining with Young's inequality, we have
\begin{align*}
{\mathbf R}_{1}&\le {\mathbf L}_{2}+4\int_{B}\mleft\langle\nabla^{2}E_{\varepsilon}(\nabla u_{\varepsilon})\nabla\eta\mathrel{}\middle|\mathrel{}\nabla\eta\mright\rangle \frac{V_{\varepsilon}\psi(V_{\varepsilon})^{2}}{\psi^{\prime}(V_{\varepsilon})}\,\mathrm{d}x
\end{align*}
For \({\mathbf R}_{3}\), we apply Young's inequality to get
\[{\mathbf R}_{3}\le 2\int_{B}\lvert f_{\varepsilon}\rvert^{2}\psi^{\prime}(V_{\varepsilon}) V_{\varepsilon}^{3-p}\eta^{2}\,{\mathrm d}x+2 \int_{B}\lvert \nabla\eta\rvert^{2}\frac{V_{\varepsilon}\psi(V_{\varepsilon})^{2}}{\psi^{\prime}(V_{\varepsilon})}V_{\varepsilon}^{p-2}\,{\mathrm d}x.\]
Therefore, by (\ref{Eq: Hessian estimate for approximated E}), we obtain
\begin{align}
{\mathbf L}_{1}&\le\int_{B}\lvert\nabla\eta\rvert^{2} \mleft[(4\Lambda+2)V_{\varepsilon}^{p-2}+4KV_{\varepsilon}^{-1}\mright]\frac{V_{\varepsilon}\psi(V_{\varepsilon})^{2}}{\psi^{\prime}(V_{\varepsilon})}\,{\mathrm d}x\nonumber\\&\quad +\mleft(2+\frac{n}{\lambda}\mright)\int_{B}\lvert f_{\varepsilon}\rvert^{2}\eta^{2}\frac{\psi(V_{\varepsilon})+V_{\varepsilon}\psi^{\prime}(V_{\varepsilon})}{V_{\varepsilon}^{p-2}}\,{\mathrm d}x.\label{Eq: Energy estimate mid 1}
\end{align}
Here we note that the assumptions (\ref{Eq: mu>delta})--(\ref{Eq: Gradient bound in Growth estimate}) enable us to have \(V_{\varepsilon}\le \delta+\mu\le 2\mu\) a.e. in \(B=B_{\rho}(x_{0})\), and \(\mu^{l}=\mu^{l-2p}\cdot\mu^{2p}\le\delta^{l-2p}\mu^{2p}\) for every \(0<l<2p\).
Hence it follows that
\begin{align}
&\mleft[(4\Lambda+2)V_{\varepsilon}^{p-2}+4KV_{\varepsilon}^{-1}\mright]\frac{V_{\varepsilon}\psi(V_{\varepsilon})^{2}}{\psi^{\prime}(V_{\varepsilon})}\nonumber\\&\le C(\Lambda,\,K)\mleft[V_{\varepsilon}^{p-2}+V_{\varepsilon}^{-1}\mright]\frac{V_{\varepsilon}^{p+2}{\tilde\psi}(V_{\varepsilon})^{2}}{p{\tilde \psi}(V_{\varepsilon})+V_{\varepsilon}{\tilde\psi}^{\prime}(V_{\varepsilon})}\nonumber\\&\le C(\Lambda,\,K)\frac{\mleft(1+\delta^{1-p}\mright)\mu^{2p}{\tilde\psi}(V_{\varepsilon})^{2}}{p{\tilde \psi}(V_{\varepsilon})+V_{\varepsilon}{\tilde\psi}^{\prime}(V_{\varepsilon})}\label{Eq: Energy estimate mid 2}\quad \textrm{a.e. in }B,
\end{align}
and
\begin{align}
\frac{\psi(V_{\varepsilon})+V_{\varepsilon}\psi^{\prime}(V_{\varepsilon})}{V_{\varepsilon}^{p-2}}&=(p+1)V_{\varepsilon}^{2}{\tilde\psi}(V_{\varepsilon})+V_{\varepsilon}^{3}{\tilde\psi}^{\prime}(V_{\varepsilon})\nonumber\\&\le 4(p+1)\delta^{2(1-p)}\mu^{2p}\mleft[{\tilde \psi}(V_{\varepsilon})+V_{\varepsilon}{\tilde\psi}^{\prime}(V_{\varepsilon})\mright]\quad \textrm{a.e. in }B.\label{Eq: Energy estimate mid 3}
\end{align}
By (\ref{Eq: Energy estimate mid 0}) and (\ref{Eq: Energy estimate mid 1})--(\ref{Eq: Energy estimate mid 3}), we obtain
\begin{align}
&\int_{B}\mleft\lvert D\mleft[G_{p,\,\varepsilon}(\nabla u_{\varepsilon})\mright]\mright\rvert^{2}\eta^{2}{\tilde \psi}(V_{\varepsilon})\,{\mathrm d}x \le\frac{C_{p}}{\lambda}{\mathbf L}_{1}\nonumber\\&\le C\mu^{2p}\mleft[\int_{B}\lvert\nabla\eta\rvert^{2}\frac{{\tilde\psi}(V_{\varepsilon})^{2}}{p{\tilde \psi}(V_{\varepsilon})+V_{\varepsilon}{\tilde\psi}^{\prime}(V_{\varepsilon})}\,{\mathrm d}x+\int_{B}\lvert f_{\varepsilon}\rvert^{2}\eta^{2} \mleft[{\tilde \psi}(V_{\varepsilon})+V_{\varepsilon}{\tilde\psi}^{\prime}(V_{\varepsilon})\mright]\,{\mathrm d}x\,\mright]\label{Eq: Energy estimate mid fin}
\end{align}
with \(C=C(n,\,p,\,\lambda,\,\Lambda,\,K,\,\delta)\in(0,\,\infty)\). From (\ref{Eq: Energy estimate mid fin}) we will deduce (\ref{Eq: Energy estimate 1})--(\ref{Eq: Energy estimate 2}).

We first apply (\ref{Eq: Energy estimate mid fin}) with \({\tilde\psi}\) and \(\eta\) given by (\ref{Eq: Energy estimate choice 1}) and (\ref{Eq: Choice of cutoff function}) respectively. Then, we have
\begin{align*}
&\int_{B_{\tau\rho}(x_{0})}\mleft\lvert D\mleft[G_{p,\,\varepsilon}(\nabla u_{\varepsilon})\mright]\mright\rvert^{2}\,{\mathrm d}x\\&\le  C(n,\,p,\,\lambda,\,\Lambda,\,K,\,\delta)\mu^{2p}\mleft[\frac{1}{p}\int_{B}\lvert \nabla\eta\rvert^{2}\,{\mathrm d}x+\int_{B}\lvert f_{\varepsilon}\rvert^{2}\eta^{2}{\mathrm d}x\mright]\\&\le C(n,\,p,\,q,\,\lambda,\,\Lambda,\,K,\,\delta)\mu^{2p}\mleft[\frac{1}{(1-\tau)^{2}\rho^{2}}+F^{2}\rho^{-\frac{2n}{q}}\mright]\lvert B_{\rho}(x_{0})\rvert
\end{align*}
by H\"{o}lder's inequality. We note \(\lvert B_{\tau\rho}(x_{0})\rvert=\tau^{n}\lvert B_{\rho}(x_{0})\rvert\), and this yields (\ref{Eq: Energy estimate 1}).

Next, we let \({\tilde\psi}\) and \(\eta\) satisfy (\ref{Eq: Energy estimate choice 2})--(\ref{Eq: Choice of cutoff function}). Here we determine the constant \(k>0\) by \[k\coloneqq (1-2\nu)\mu\ge\frac{\mu}{2}>0.\]
Then, from (\ref{Eq: Gradient bound in Growth estimate}), it follows that \((V_{\varepsilon}-\delta-k)_{+}\le \mu-k=2\nu\mu\) a.e. in \(B=B_{\rho}(x_{0})\). 
Hence by direct computations, we can check that
\[\mleft\{
\begin{array}{ccccccc}
\displaystyle\frac{{\tilde\psi}(V_{\varepsilon})^{2}}{p{\tilde \psi}(V_{\varepsilon})+V_{\varepsilon}{\tilde\psi}^{\prime}(V_{\varepsilon})}&=&\displaystyle\frac{(V_{\varepsilon}-\delta-k)_{+}^{3}}{p(V_{\varepsilon}-\delta-k)_{+}+2V_{\varepsilon}}&\le& \displaystyle\frac{(2\nu\mu)^{3}}{0+2(\delta+k)}&\le& 8\nu^{3}\mu^{2},\\
{\tilde\psi}(V_{\varepsilon})+V_{\varepsilon}{\tilde\psi}^{\prime}(V_{\varepsilon})&\le& 3V_{\varepsilon}(V_{\varepsilon}-\delta-k)_{+}&\le& 3\cdot 2\mu\cdot 2\nu\mu&=&12\nu\mu^{2},
\end{array}
\mright.\]
a.e. in $B$.
Also, there holds \(V_{\varepsilon}-\delta-k>(1-\nu)\mu-k=\nu\mu>0\) in \(S_{\tau\rho,\,\mu,\,\nu}(x_{0})\). These results and (\ref{Eq: Energy estimate mid fin}) enable us to compute
\begin{align*}
&\nu^{2}\mu^{2}\int_{S_{\tau\rho,\,\mu,\,\nu}(x_{0})}\mleft\lvert D\mleft[G_{p,\,\varepsilon}(\nabla u_{\varepsilon})\mright]\mright\rvert^{2}\,{\mathrm d}x \\&\le \int_{B}\eta^{2}\mleft\lvert D\mleft[G_{p,\,\varepsilon}(\nabla u_{\varepsilon})\mright]\mright\rvert^{2}{\tilde\psi}(V_{\varepsilon})\,{\mathrm d}x\\&\le C\mu^{2p}\mleft[8\nu^{3}\mu^{2}\int_{B}\lvert\nabla\eta\rvert^{2}\,{\mathrm d}x+12\nu\mu^{2} \int_{B}\lvert f_{\varepsilon}\rvert^{2}\eta^{2}\,{\mathrm d}x\mright]\\&\le C\nu^{2}\mu^{2p+2}\mleft[\frac{\nu}{(1-\tau)^{2}\rho^{2}}+\frac{F^{2}\rho^{-2n/q}}{\nu} \mright]\lvert B_{\rho}(x_{0})\rvert
\end{align*}
for some constant \(C=C(n,\,p,\,q,\,\lambda,\,\Lambda,\,K,\,\delta)\in(0,\,\infty)\).
From this and \(\lvert B_{\tau\rho}(x_{0})\rvert=\tau^{n}\lvert B_{\rho}(x_{0})\rvert\), (\ref{Eq: Energy estimate 2}) follows.
\end{proof}
From Lemma \ref{Lemma: Energy estimates} and (\ref{Eq: Boundedness of inversed mapping G-p-epsilon})--(\ref{Eq: L1 lower bound lemma on G-p-epsilon}), we prove Lemma \ref{Lemma: Excess decay estimate far from 0} below. This result is used later in the next Section \ref{Subsect: Average integral estimates}.
\begin{lemma}\label{Lemma: Excess decay estimate far from 0}
Let \(u_{\varepsilon}\) be a weak solution to (\ref{Eq: Regularized equation}) in \(\Omega\). Assume that positive numbers $\delta$, $\varepsilon$, $\mu$, $F$, $M$, and an open ball $B_{\rho}(x_{0})\Subset\Omega$ satisfy (\ref{Eq: Range of delta-epsilon}), (\ref{Eq: Bound on external force term et al.}) and (\ref{Eq: mu>delta})--(\ref{Eq: Gradient bound in Growth estimate}).
If (\ref{Eq: Levelset assumption 2}) holds for some \(\nu\in(0,\,1/6)\), then for all \(\tau\in(0,\,1)\), we have
\begin{equation}\label{Eq: Excess decay estimate far from 0}
\Phi(x_{0},\,\tau\rho)\le \frac{C_{\dagger}\mu^{2}}{\tau^{n}}\mleft[\frac{\nu^{2/n}}{(1-\tau)^{2}}+\frac{F^{2}}{\nu} \rho^{2\beta}\mright],
\end{equation}
where the constant \(C_{\dagger}\in(0,\,\infty)\) depends at most on $n$, $p$, $q$, $\lambda$, $\Lambda$, $K$, and $\delta$.
\end{lemma}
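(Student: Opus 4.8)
The plan is to transfer the problem from the gradient $\nabla u_{\varepsilon}$ to the bounded, better-behaved vector field $W_{\varepsilon}\coloneqq G_{p,\,\varepsilon}(\nabla u_{\varepsilon})$ (recall (\ref{Eq: G-p-epsilon})), for which the energy estimates (\ref{Eq: Energy estimate 1})--(\ref{Eq: Energy estimate 2}) provide the control we need. Write $B\coloneqq B_{\rho}(x_{0})$, $B'\coloneqq B_{\tau\rho}(x_{0})$ and $S'\coloneqq S_{\tau\rho,\,\mu,\,\nu}(x_{0})$. By (\ref{Eq: mu>delta})--(\ref{Eq: Gradient bound in Growth estimate}) we have $V_{\varepsilon}\le\delta+\mu\le 2\mu$ a.e.\ in $B$, hence $\lvert W_{\varepsilon}\rvert\le V_{\varepsilon}^{p}\le(2\mu)^{p}$ and $W_{\varepsilon}\in W_{\mathrm{loc}}^{1,\,2}(\Omega)$. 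Moreover $B'\subset B$ forces $S'\subset S_{\rho,\,\mu,\,\nu}(x_{0})$, so (\ref{Eq: Levelset assumption 2}) gives $\lvert B'\setminus S'\rvert\le\lvert B\setminus S_{\rho,\,\mu,\,\nu}(x_{0})\rvert<\nu\lvert B\rvert=\nu\tau^{-n}\lvert B'\rvert$, which quantifies how small the degenerate part of $B'$ is.

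First I would record a pointwise lower bound on $\lvert\nabla u_{\varepsilon}\rvert$ over $S'$: there $V_{\varepsilon}-\delta>(1-\nu)\mu$ with $\nu<1/6$, and $\varepsilon<\delta/8<\mu/8$ by (\ref{Eq: Range of delta-epsilon}) and (\ref{Eq: mu>delta}), so $\lvert\nabla u_{\varepsilon}\rvert^{2}=V_{\varepsilon}^{2}-\varepsilon^{2}\ge\tfrac12\mu^{2}$, hence $\lvert\nabla u_{\varepsilon}\rvert^{p-1}\ge c(p)\mu^{p-1}$ on $S'$. Set $a\coloneqq G_{p,\,\varepsilon}^{-1}\bigl((W_{\varepsilon})_{x_{0},\,\tau\rho}\bigr)$. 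Then (\ref{Eq: L1 lower bound lemma on G-p-epsilon}) yields $\lvert\nabla u_{\varepsilon}-a\rvert\le C(p)\mu^{1-p}\lvert W_{\varepsilon}-(W_{\varepsilon})_{x_{0},\,\tau\rho}\rvert$ a.e.\ on $S'$, while (\ref{Eq: Boundedness of inversed mapping G-p-epsilon}) gives $\lvert a\rvert\le C(p)\lvert(W_{\varepsilon})_{x_{0},\,\tau\rho}\rvert^{1/p}\le C(p)\mu$. Since $\Phi(x_{0},\,\tau\rho)$ is the minimum of $\fint_{B'}\lvert\nabla u_{\varepsilon}-c\rvert^{2}$ over $c\in{\mathbb R}^{n}$, splitting $B'=S'\cup(B'\setminus S')$ and using $\lvert\nabla u_{\varepsilon}-a\rvert\le C(p)\mu$ on $B'\setminus S'$ gives
\[
\Phi(x_{0},\,\tau\rho)\le\fint_{B'}\lvert\nabla u_{\varepsilon}-a\rvert^{2}\,{\mathrm d}x\le C(p)\mu^{2(1-p)}\fint_{B'}\bigl\lvert W_{\varepsilon}-(W_{\varepsilon})_{x_{0},\,\tau\rho}\bigr\rvert^{2}\,{\mathrm d}x+\frac{C(p)\mu^{2}\lvert B'\setminus S'\rvert}{\lvert B'\rvert},
\]
and the last term is $\le C(p)\mu^{2}\nu\tau^{-n}\le C(p)\mu^{2}\nu^{2/n}\tau^{-n}$ because $\nu\le\nu^{2/n}$.

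The heart of the matter is bounding $\fint_{B'}\lvert W_{\varepsilon}-(W_{\varepsilon})_{x_{0},\,\tau\rho}\rvert^{2}\,{\mathrm d}x$. Here I would apply the Sobolev--Poincar\'e inequality with exponents $\bigl(\tfrac{2n}{n+2},\,2\bigr)$, valid for $n\ge2$ exactly as used in the proof of Lemma \ref{Lemma: Higher integrability}, and then split $\int_{B'}\lvert DW_{\varepsilon}\rvert^{2n/(n+2)}$ over $S'$ and $B'\setminus S'$, applying H\"older's inequality with exponent $\tfrac{n+2}{n}$ on each piece. On $S'$ I invoke the refined energy estimate (\ref{Eq: Energy estimate 2}), which carries the crucial factor $\nu$; on $B'\setminus S'$ I use the crude estimate (\ref{Eq: Energy estimate 1}) together with $\lvert B'\setminus S'\rvert<\nu\lvert B\rvert$, so the small measure of the degenerate set produces a factor $\nu^{2/(n+2)}$, which becomes $\nu^{1/n}$ after taking the $\tfrac{n+2}{2n}$-power and $\nu^{2/n}$ after squaring. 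Collecting terms, and using $0<\rho\le1$ so that $\rho^{2-2n/q}\le\rho^{2\beta}$ (with $\beta$ from (\ref{Eq: Definition of betas})), $\tau^{2-n}\le\tau^{-n}$ and $\nu\le\nu^{2/n}\le1$, gives
\[
\fint_{B'}\bigl\lvert W_{\varepsilon}-(W_{\varepsilon})_{x_{0},\,\tau\rho}\bigr\rvert^{2}\,{\mathrm d}x\le\frac{C\mu^{2p}}{\tau^{n}}\Bigl[\frac{\nu^{2/n}}{(1-\tau)^{2}}+\frac{F^{2}\rho^{2\beta}}{\nu}\Bigr]
\]
with $C=C(n,\,p,\,q,\,\lambda,\,\Lambda,\,K,\,\delta)$. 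Substituting this into the previous display yields (\ref{Eq: Excess decay estimate far from 0}).

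The main obstacle is the bookkeeping that makes $\nu^{2/n}$ appear with exactly the right exponent. One must resist applying Sobolev--Poincar\'e directly to $\nabla u_{\varepsilon}$, which would force control of $\nabla^{2}u_{\varepsilon}$ on the degenerate set where the weighted energy estimates give nothing uniform in $\varepsilon$; instead one passes to $W_{\varepsilon}$, using that $W_{\varepsilon}$ is globally bounded and that $DW_{\varepsilon}\in L^{2}(B')$ is controlled on the whole ball by (\ref{Eq: Energy estimate 1}), while only on the non-degenerate part $S'$ is the finer bound (\ref{Eq: Energy estimate 2}) available. Keeping track of which energy estimate is used where, and of the chain of H\"older and Sobolev--Poincar\'e exponents, is where the care is needed; everything else is elementary.
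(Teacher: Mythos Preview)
Your proposal is correct and follows essentially the same route as the paper's proof: pass to $W_{\varepsilon}=G_{p,\varepsilon}(\nabla u_{\varepsilon})$, choose $a=G_{p,\varepsilon}^{-1}\bigl((W_{\varepsilon})_{x_{0},\tau\rho}\bigr)$, split $B'$ into $S'$ and $B'\setminus S'$, apply Poincar\'e--Sobolev with exponents $(2n/(n+2),2)$ to $W_{\varepsilon}-(W_{\varepsilon})_{x_{0},\tau\rho}$, and then split $\int_{B'}\lvert DW_{\varepsilon}\rvert^{2n/(n+2)}$ using (\ref{Eq: Energy estimate 1}) on $B'\setminus S'$ and (\ref{Eq: Energy estimate 2}) on $S'$. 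The only cosmetic differences are the form of the lower bound for $\lvert\nabla u_{\varepsilon}\rvert$ on $S'$ (the paper uses $V_{\varepsilon}-\delta\le\lvert\nabla u_{\varepsilon}\rvert-\tfrac{7}{8}\delta$ to get $\lvert\nabla u_{\varepsilon}\rvert\ge\tfrac{5}{6}\mu$) and that your remark about $0<\rho\le1$ is only needed when $q=\infty$ (for $q<\infty$ one has $2-2n/q=2\beta$ exactly).
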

\begin{proof}
We first set \(\zeta\coloneqq G_{p,\,\varepsilon}^{-1}\mleft((G_{p,\,\varepsilon}(\nabla u_{\varepsilon}))_{\tau\rho}\mright)\in{\mathbb R}^{n}\), so that there holds
\[\fint_{B_{\tau\rho}}\mleft[G_{p,\,\varepsilon}(\nabla u_{\varepsilon})-G_{p,\,\varepsilon}(\zeta)\mright]\,{\mathrm d}x=0.\]
In particular, we can use the Poincar\'{e}--Sobolev inequality
\[\fint_{B_{\tau\rho}(x_{0})}\lvert G_{p,\,\varepsilon}(\nabla u_{\varepsilon})-G_{p,\,\varepsilon}(\zeta)\rvert^{2}\,{\mathrm d}x\le C(n)(\tau\rho)^{2}\mleft(\fint_{B_{\tau\rho}(x_{0})}\mleft\lvert D\mleft[G_{p,\,\varepsilon}(\nabla u_{\varepsilon})\mright] \mright\rvert^{\frac{2n}{n+2}}\,{\mathrm d}x\mright)^{\frac{n+2}{n}}.\]
By (\ref{Eq: mu>delta})--(\ref{Eq: Gradient bound in Growth estimate}), it is easy to get
\[\lvert G_{p,\,\varepsilon}(\nabla u_{\varepsilon})\rvert=V_{\varepsilon}^{p-1}\lvert\nabla u_{\varepsilon}\rvert\le V_{\varepsilon}^{p}\le(\delta+\mu)^{p}\le (2\mu)^{p}\quad  \textrm{a.e. in }B_{\rho}(x_{0})\]
This inequality and (\ref{Eq: Boundedness of inversed mapping G-p-epsilon}) yield
\[\lvert \zeta\rvert\le C(p)\lvert G_{p,\,\varepsilon}(\zeta)\rvert^{1/p}\le C(p)\esssup_{B_{\rho}(x_{0})}\,\lvert G_{p,\,\varepsilon}(\nabla u_{\varepsilon})\rvert^{1/p} \le C(p)\mu.\]
To prove (\ref{Eq: Excess decay estimate far from 0}), we fix arbitrary $\tau\in(0,\,1)$ and set
\[\mleft\{\begin{array}{rcl}\mathbf{I}&\coloneqq& \displaystyle\frac{1}{\lvert B_{\tau\rho}(x_{0})\rvert}\displaystyle\int_{B_{\tau\rho}(x_{0})\setminus S_{\rho,\,\mu,\,\nu}(x_{0})}\lvert \nabla u_{\varepsilon}-\zeta\rvert^{2}\,{\mathrm d}x,\\
\mathbf{II}&\coloneqq& \displaystyle\frac{1}{\lvert B_{\tau\rho}(x_{0})\rvert}\displaystyle\int_{B_{\tau\rho}(x_{0})\cap S_{\rho,\,\mu,\,\nu}(x_{0})}\lvert \nabla u_{\varepsilon}-\zeta\rvert^{2}\,{\mathrm d}x.
\end{array} \mright.\]
For \(\mathbf{I}\), we use (\ref{Eq: mu>delta})--(\ref{Eq: Gradient bound in Growth estimate}) and the triangle inequality to have 
\[\lvert \nabla u_{\varepsilon}-\zeta\rvert\le (\delta+\mu)+C(p)\mu\le C(p)\mu\quad \textrm{a.e. in }B_{\rho}(x_{0}).\]
Combining with (\ref{Eq: Levelset assumption 2}), we get
\[\mathbf{I}\le \frac{\lvert B_{\rho}(x_{0})\setminus S_{\rho,\,\mu,\,\nu}(x_{0})\rvert}{\lvert B_{\tau\rho}(x_{0})\rvert}\cdot C(p)\mu^{2}\le\frac{C(p)\nu\mu^{2}}{\tau^{n}}.\]
Here we have used \(\lvert B_{\tau\rho}(x_{0})\rvert=\tau^{n}\lvert B_{\rho}(x_{0})\rvert\).
Before estimating $\mathbf{II}$, we compute 
\begin{equation}\label{Eq: Lower bounds over E}
(1-\nu)\mu<V_{\varepsilon}-\delta\le \lvert \nabla u_{\varepsilon}\rvert+\varepsilon-\delta\le \lvert \nabla u_{\varepsilon}\rvert-\frac{7}{8}\delta\quad \textrm{a.e. in }S_{\rho,\,\mu,\,\nu}(x_{0}),
\end{equation}
which immediately follows from (\ref{Eq: Range of delta-epsilon}) and the definition of \(S_{\rho,\,\mu,\,\nu}(x_{0})\).
The assumption \(\nu\in(0,\,1/6)\) and (\ref{Eq: Lower bounds over E}) clearly yield
\[\frac{5}{6}\mu\le \lvert \nabla u_{\varepsilon}\rvert\quad \textrm{a.e. in }S_{\rho,\,\mu,\,\nu}(x_{0}).\]
With this in mind, we use (\ref{Eq: L1 lower bound lemma on G-p-epsilon}) and the Poincar\'{e}--Sobolev inequality to compute
\begin{align*}
\mathbf{II}&\le \frac{C(p)}{\mu^{2(p-1)}\lvert B_{\tau\rho}(x_{0})\rvert}\int_{B_{\tau\rho}(x_{0})\cap S_{\rho,\,\mu,\,\nu}(x_{0})}\lvert G_{p,\,\varepsilon}(\nabla u_{\varepsilon})-G_{p,\,\varepsilon}(\zeta)\rvert^{2}\,{\mathrm d}x\\&\le \frac{C(p)}{\mu^{2(p-1)}}(\tau\rho)^{2}\mleft(\fint_{B_{\tau\rho}(x_{0})}\mleft\lvert D\mleft[G_{p,\,\varepsilon}(\nabla u_{\varepsilon}) \mright]\mright\rvert^{\frac{2n}{n+2}} \,{\mathrm d}x\mright)^{\frac{n+2}{n}}\eqqcolon \frac{C(p)}{\mu^{2(p-1)}}(\tau\rho)^{2}\cdot \mathbf{III}^{1+2/n}.
\end{align*}
Noting that there holds \(B_{\tau\rho}(x_{0})\setminus S_{\rho,\,\mu,\,\nu}(x_{0})=B_{\tau\rho}(x_{0})\setminus S_{\tau\rho,\,\mu,\,\nu}(x_{0})\) as measurable sets, we decompose \(\mathbf{III}=\mathbf{III}_{1}+\mathbf{III}_{2}\) with
\[\mleft\{\begin{array}{rcl}\mathbf{III}_{1}&\coloneqq& \displaystyle\frac{1}{\lvert B_{\tau\rho}(x_{0})\rvert}\displaystyle\int_{B_{\tau\rho}(x_{0})\setminus S_{\rho,\,\mu,\,\nu}(x_{0})}\mleft\lvert D\mleft[G_{p,\,\varepsilon}(\nabla u_{\varepsilon})\mright] \mright\rvert^{\frac{2n}{n+2}}\,{\mathrm d}x,\\
\mathbf{III}_{2}&\coloneqq& \displaystyle\frac{1}{\lvert B_{\tau\rho}(x_{0})\rvert}\displaystyle\int_{S_{\tau\rho,\,\mu,\,\nu}(x_{0})}\mleft\lvert D\mleft[G_{p,\,\varepsilon}(\nabla u_{\varepsilon})\mright] \mright\rvert^{\frac{2n}{n+2}}\,{\mathrm d}x.
\end{array} \mright.\]
For \(\mathbf{III}_{1}\), by applying H\"{o}lder's inequality and (\ref{Eq: Energy estimate 1}), we compute
\begin{align*}
\mathbf{III}_{1}^{1+2/n}&\le \mleft[\frac{\lvert B_{\rho}(x_{0})\setminus S_{\rho,\,\mu,\,\nu}(x_{0})\rvert }{\lvert B_{\tau\rho}(x_{0})\rvert }\mright]^{2/n} \fint_{B_{\tau\rho}}\mleft\lvert D\mleft[G_{p,\,\varepsilon}(\nabla u_{\varepsilon})\mright] \mright\rvert^{2}\,{\mathrm d}x\\&\le C(n,\,p,\,q,\,\lambda,\,\Lambda,\,K,\,\delta)\frac{\nu^{2/n}\mu^{2p}}{\tau^{n+2}}\mleft[\frac{1}{(1-\tau)^{2}\rho^{2}}+F^{2}\rho^{-\frac{2n}{q}}\mright].
\end{align*}
For the remained integral \(\mathbf{III}_{2}\), we similarly use H\"{o}lder's inequality and (\ref{Eq: Energy estimate 2}) to have
\begin{align*}
\mathbf{III}_{2}^{1+2/n}&\le \mleft[\frac{\lvert S_{\tau\rho,\,\mu,\,\nu}(x_{0})\rvert}{\lvert B_{\tau\rho}(x_{0})\rvert}\mright]^{2/n}\frac{1}{\lvert B_{\tau\rho}(x_{0})\rvert} \int_{E_{\tau\rho,\,\mu,\,\nu}(x_{0})}\mleft\lvert D\mleft[G_{p,\,\varepsilon}(\nabla u_{\varepsilon})\mright]\mright\rvert^{2}\,{\mathrm d}x\\&\le C(n,\,p,\,q,\,\lambda,\,\Lambda,\,K,\,\delta)\frac{\mu^{2p}}{\tau^{n}}\mleft[\frac{\nu}{(1-\tau)^{2}\rho^{2}}+\frac{F^{2}\rho^{-\frac{2n}{q}}}{\nu}\mright].
\end{align*}

Finally, we apply (\ref{Eq: Average value minimizes L2 oscillation}) to obtain
\begin{align*}
\Phi(x_{0},\,\tau\rho)&\le \fint_{B_{\tau\rho}(x_{0})}\lvert \nabla u_{\varepsilon}-\zeta\rvert^{2}\,{\mathrm d}x=\mathbf{I}+\mathbf{II}\\&\le \frac{C(p)\nu\mu^{2}}{\tau^{n}}+\frac{C(n,\,p)}{\mu^{2p-2}}(\tau\rho)^{2}\mleft(\mathbf{III}_{1}^{1+2/n}+\mathbf{III}_{2}^{1+2/n}\mright)\\&\le \frac{C(n,\,p,\,q,\,\lambda,\,\Lambda,\,K,\,\delta)\mu^{2}}{\tau^{n}}\mleft[\nu+\frac{\nu\tau^{2}+\nu^{2/n}}{(1-\tau)^{2}}+\frac{\tau^{2}+\nu^{1+2/n}}{\nu}F^{2}\rho^{2\beta}\mright].
\end{align*}
Recalling \(0<\tau<1\) and \(0<\nu<1/6\), we are able to find \(C_{\dagger}\in(0,\,\infty)\) such that (\ref{Eq: Excess decay estimate far from 0}) holds.
\end{proof}
\subsection{Campanato-type decay estimates by shrinking arguments}\label{Subsect: Average integral estimates}
The aim of Section \ref{Subsect: Average integral estimates} is to give the proof of Proposition \ref{Prop: Perturbation result} by standard shrinking methods. A significant point therein is to verify that the average integral $(\nabla u_{\varepsilon})_{x_{0},\,r}$ never vanishes even when the radius $r$ tends to zero. We will justify this expectation by similar computations as in Lemma \ref{Lemma: Average lemma from Energy estimates}.

For preliminaries, we deduce Lemmata \ref{Lemma: Average lemma from Perturbaltion}--\ref{Lemma: Average lemma from Energy estimates}, which are key tools to make our shrinking arguments successful. There we use some energy estimates from Lemmata \ref{Lemma: Excess decay estimate 1} and \ref{Lemma: Excess decay estimate far from 0} in Sections \ref{Subsect: Perturbation outside facets}--\ref{Subsect: Energy estimates}.
\begin{lemma}\label{Lemma: Average lemma from Perturbaltion}
Let \(u_{\varepsilon}\) be a weak solution to (\ref{Eq: Regularized equation}) in \(\Omega\). Assume that positive numbers $\delta$, $\varepsilon$, $\mu$, $F$, $M$, and an open ball $B_{\rho}(x_{0})\Subset\Omega$ satisfy (\ref{Eq: Range of delta-epsilon}), (\ref{Eq: Bound on external force term et al.}) and (\ref{Eq: mu>delta})--(\ref{Eq: Gradient bound in Growth estimate}). Assume that there hold (\ref{Eq: Average assumption}) and
\begin{equation}\label{Eq: Average lemma assumption in Perturbation}
\Phi(x_{0},\,\rho)\le \tau^{\frac{n+2}{\vartheta}}\mu^{2}
\end{equation}
for some $\tau\in(0,\,1/2)$. Here \(\vartheta\) is the constant given in Lemma \ref{Lemma: Higher integrability}.
Then, we have
\begin{equation}\label{Eq: Quantitative excess decay estimate}
\Phi(x_{0},\,\tau\rho)\le C_{\ast}\mleft[\tau^{2}\Phi(x_{0},\,\rho)+\frac{\rho^{2\beta}}{\tau^{n}}\mu^{2}\mright].
\end{equation}
Here the constant \(C_{\ast}\in(0,\,\infty)\) depends at most on $n$, $p$, $q$, $\beta_{0}$, $\lambda$, $\Lambda$, $K$, $F$, $M$, and $\delta$,
\end{lemma}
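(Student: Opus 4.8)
The plan is to run the standard Campanato-type comparison scheme, with Lemma \ref{Lemma: Excess decay estimate 1} as the only substantial input. Since the hypothesis (\ref{Eq: Average assumption}) is precisely the nondegeneracy condition required there, we may introduce the solution $v_{\varepsilon}$ of the constant-coefficient Dirichlet problem (\ref{Eq: Dirichlet boundary Poisson problem}) on $B_{\rho/2}(x_{0})$, and we then have both the perturbation bound (\ref{Eq: Perturbation estimates}) for $\nabla u_{\varepsilon}-\nabla v_{\varepsilon}$ and the linear excess decay (\ref{Eq: Dirichlet Growth estimate on Poisson equations}) for $\nabla v_{\varepsilon}$ at our disposal.

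First I would note that $\tau\in(0,1/2)$ forces $B_{\tau\rho}(x_{0})\subset B_{\rho/2}(x_{0})$, so enlarging an integration domain from $B_{\tau\rho}(x_{0})$ to $B_{\rho/2}(x_{0})$ costs only a factor $(2\tau)^{-n}$. Combining the fact that the mean minimizes the $L^{2}$-oscillation, i.e.\ (\ref{Eq: Average value minimizes L2 oscillation}), with the triangle inequality, I split
\[
\Phi(x_{0},\tau\rho)\le 2\fint_{B_{\tau\rho}(x_{0})}\lvert\nabla u_{\varepsilon}-\nabla v_{\varepsilon}\rvert^{2}\,{\mathrm d}x+2\fint_{B_{\tau\rho}(x_{0})}\lvert\nabla v_{\varepsilon}-(\nabla v_{\varepsilon})_{x_{0},\tau\rho}\rvert^{2}\,{\mathrm d}x .
\]
The first term is bounded by $C\tau^{-n}\fint_{B_{\rho/2}(x_{0})}\lvert\nabla u_{\varepsilon}-\nabla v_{\varepsilon}\rvert^{2}\,{\mathrm d}x$; the second, by (\ref{Eq: Dirichlet Growth estimate on Poisson equations}), by $C\tau^{2}\fint_{B_{\rho/2}(x_{0})}\lvert\nabla v_{\varepsilon}-(\nabla v_{\varepsilon})_{x_{0},\rho/2}\rvert^{2}\,{\mathrm d}x$, and a further use of (\ref{Eq: Average value minimizes L2 oscillation}) and the triangle inequality turns this last oscillation into $C\fint_{B_{\rho/2}(x_{0})}\lvert\nabla u_{\varepsilon}-\nabla v_{\varepsilon}\rvert^{2}\,{\mathrm d}x+C\Phi(x_{0},\rho)$. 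Using $\tau<1/2$, hence $\tau^{2}\le\tau^{-n}$, to merge the prefactors, this produces
\[
\Phi(x_{0},\tau\rho)\le \frac{C}{\tau^{n}}\fint_{B_{\rho/2}(x_{0})}\lvert\nabla u_{\varepsilon}-\nabla v_{\varepsilon}\rvert^{2}\,{\mathrm d}x+C\tau^{2}\Phi(x_{0},\rho).
\]

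The decisive step is to absorb the comparison term by means of the smallness hypothesis (\ref{Eq: Average lemma assumption in Perturbation}). Inserting (\ref{Eq: Perturbation estimates}), the leading contribution is $C\tau^{-n}[\Phi(x_{0},\rho)/\mu^{2}]^{\vartheta}\Phi(x_{0},\rho)$; since (\ref{Eq: Average lemma assumption in Perturbation}) reads $\Phi(x_{0},\rho)\le\tau^{(n+2)/\vartheta}\mu^{2}$, we get $[\Phi(x_{0},\rho)/\mu^{2}]^{\vartheta}\le\tau^{n+2}$, so this contribution is at most $C\tau^{2}\Phi(x_{0},\rho)$. The remaining contribution of (\ref{Eq: Perturbation estimates}) is $C\tau^{-n}\bigl(F^{2}+F^{2(1+\vartheta)}\bigr)\rho^{2\beta}$, and using $\mu>\delta$ from (\ref{Eq: mu>delta}) to write $F^{2}+F^{2(1+\vartheta)}\le C(F,\delta)\mu^{2}$ rewrites it as $C\tau^{-n}\rho^{2\beta}\mu^{2}$. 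Summing up gives exactly (\ref{Eq: Quantitative excess decay estimate}) with $C_{\ast}$ depending only on $n$, $p$, $q$, $\beta_{0}$, $\lambda$, $\Lambda$, $K$, $F$, $M$, $\delta$.

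I do not anticipate a real obstacle, since all the analysis is already encapsulated in Lemma \ref{Lemma: Excess decay estimate 1}; the points that need care are merely verifying that the hypotheses of that lemma hold (in particular the nondegeneracy (\ref{Eq: Average assumption}) and the ranges (\ref{Eq: Range of delta-epsilon})--(\ref{Eq: Gradient bound in Growth estimate}), together with $0<\rho\le1$), the bookkeeping of the domain enlargements and the two applications of (\ref{Eq: Average value minimizes L2 oscillation}), and the exponent arithmetic $[\Phi(x_{0},\rho)/\mu^{2}]^{\vartheta}\le\tau^{n+2}$ --- for which the precise exponent $\tfrac{n+2}{\vartheta}$ appearing in (\ref{Eq: Average lemma assumption in Perturbation}) is tailor-made.
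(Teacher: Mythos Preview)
Your proposal is correct and follows essentially the same approach as the paper: you introduce the comparison function $v_{\varepsilon}$ from Lemma \ref{Lemma: Excess decay estimate 1}, split $\Phi(x_{0},\tau\rho)$ via (\ref{Eq: Average value minimizes L2 oscillation}) and the triangle inequality into a comparison term and a linear excess-decay term, reduce the latter back to $\Phi(x_{0},\rho)$ plus another comparison term, and then absorb the comparison terms using (\ref{Eq: Perturbation estimates}) together with the smallness hypothesis $[\Phi(x_{0},\rho)/\mu^{2}]^{\vartheta}\le\tau^{n+2}$ and the bound $\mu>\delta$. The paper's proof proceeds through exactly the same chain of estimates with the same bookkeeping.
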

Before the proof, we recall that there holds
\begin{equation}\label{Eq: Average value minimizes L2 oscillation}
\fint_{B_{r}(x_{0})}\mleft\lvert f(x)-(f)_{x_{0},\,r}\mright\rvert^{2}\,{\mathrm d}x=\min_{\zeta\in{\mathbb R}^{m}}\fint_{B_{r}(x_{0})}\lvert f(x)-\zeta\rvert^{2}\,{\mathrm d}x
\end{equation}
for all \(f\in L^{2}(B_{r}(x_{0});\,{\mathbb R}^{m})\). This is easy to deduce by considering a smooth convex function $g\in C^{\infty}({\mathbb R}^{m};\,{\mathbb R})$ defined by 
\begin{align*}
g(\zeta)&\coloneqq \int_{B_{r}(x_{0})}\lvert f(x)-\zeta\rvert^{2}\,{\mathrm d}x\\&=\lVert f\rVert_{L^{2}(B_{r}(x_{0}))}^{2}-2\lvert B_{r}(x_{0})\rvert\langle (f)_{x_{0},\,r}\mid \zeta\rangle+\lvert B_{r}(x_{0})\rvert\lvert \zeta\rvert^{2},
\end{align*}
which clearly satisfies \(\nabla g((f)_{x_{0},\,r})=0\).
\begin{proof}
Let \(v_{\varepsilon}\in u_{\varepsilon}+W_{0}^{1,\,2}(B_{\rho/2}(x_{0}))\) be the unique solution of (\ref{Eq: Dirichlet boundary Poisson problem}). We first apply (\ref{Eq: Average value minimizes L2 oscillation}) to get
\begin{align*}
\Phi(x_{0},\,\tau\rho)&\le\fint_{B_{\tau\rho}(x_{0})}\mleft\lvert \nabla u_{\varepsilon}-(\nabla v_{\varepsilon})_{x_{0},\,\tau\rho}\mright\rvert^{2}\,{\mathrm d}x \\&\le \frac{2}{(2\tau)^{n}}\fint_{B_{\rho/2}(x_{0})}\mleft\lvert\nabla u_{\varepsilon}-\nabla v_{\varepsilon}\mright\rvert^{2}\,{\mathrm d}x +2\fint_{B_{\tau\rho}(x_{0})}\mleft\lvert\nabla v_{\varepsilon}-(\nabla v_{\varepsilon})_{x_{0},\,\tau\rho} \mright\rvert^{2}\,{\mathrm d}x
\end{align*}
For the second average integral, we use (\ref{Eq: Perturbation estimates}) and (\ref{Eq: Average value minimizes L2 oscillation}) to compute
\begin{align*}
&\fint_{B_{\tau\rho}(x_{0})}\mleft\lvert\nabla v_{\varepsilon}-(\nabla v_{\varepsilon})_{x_{0},\,\tau\rho} \mright\rvert^{2}\,{\mathrm d}x\\&\le C\tau^{2}\fint_{B_{\rho/2}(x_{0})}\mleft\lvert\nabla v_{\varepsilon}-(\nabla v_{\varepsilon})_{x_{0},\,\rho/2}\mright\rvert^{2}\,{\mathrm d}x\le C\tau^{2}\fint_{B_{\rho/2}(x_{0})}\mleft\lvert\nabla v_{\varepsilon}-(\nabla u_{\varepsilon})_{x_{0},\,\rho}\mright\rvert^{2}\,{\mathrm d}x\\&\le C\mleft[\tau^{2}\fint_{B_{\rho/2}(x_{0})}\mleft\lvert\nabla u_{\varepsilon}-\nabla v_{\varepsilon}\mright\rvert^{2}\,{\mathrm d}x+2^{n}\tau^{2}\fint_{B_{\rho}(x_{0})}\mleft\lvert\nabla u_{\varepsilon}-(\nabla u_{\varepsilon})_{x_{0},\,\rho} \mright\rvert^{2}\,{\mathrm d}x  \mright]\\&\le C\mleft[\tau^{2}\fint_{B_{\rho/2}(x_{0})}\mleft\lvert\nabla u_{\varepsilon}-\nabla v_{\varepsilon}\mright\rvert^{2}\,{\mathrm d}x+\tau^{2}\Phi(x_{0},\,\rho)\mright]
\end{align*}
with the constant \(C\in(0,\,\infty)\) depending at most on $n$, $p$, $q$, $\beta_{0}$, $\lambda$, $\Lambda$, $K$, $M$, and $\delta$.
We use (\ref{Eq: Perturbation estimates}) and (\ref{Eq: Average lemma assumption in Perturbation}) to compute
\begin{align*}
\Phi(x_{0},\,\tau\rho)&\le C\mleft[\frac{1}{\tau^{n}}\fint_{B_{\rho/2}(x_{0})}\lvert\nabla u_{\varepsilon}-\nabla v_{\varepsilon}\rvert^{2}\,{\mathrm d}x+\tau^{2}\Phi(x_{0},\,\rho)\mright]\\&\le C\mleft\{ \mleft[\frac{\Phi(x_{0},\,\rho)}{\mu^{2}}\mright]^{\vartheta}\cdot\frac{\Phi(x_{0},\,\rho)}{\tau^{n}}+\frac{F^{2}+F^{2(1+\vartheta)}}{\tau^{n}}\rho^{2\beta}+\tau^{2}\Phi(x_{0},\,\rho)\mright\}\\&\le C\mleft[\tau^{2}\Phi(x_{0},\,\rho)+\mleft(F^{2}+F^{2(1+\vartheta)}\mright)\cdot\frac{\rho^{2\beta}}{\tau^{n}}\cdot\mleft(\frac{\mu}{\delta}\mright)^{2}\mright]\\&\le C_{\ast}(n,\,p,\,q,\,\beta_{0},\,\lambda,\,\Lambda,\,K,\,F,\,M,\,\delta)\mleft[\tau^{2}\Phi(x_{0},\,\rho)+\frac{\rho^{2\beta}}{\tau^{n}}\mu^{2}\mright].
\end{align*}
Here we have used (\ref{Eq: mu>delta}) to obtain (\ref{Eq: Quantitative excess decay estimate}).
\end{proof}
\begin{lemma}\label{Lemma: Average lemma from Energy estimates}
Let \(u_{\varepsilon}\) be a weak solution to (\ref{Eq: Regularized equation}) in \(\Omega\). Assume that positive numbers $\delta$, $\varepsilon$, $\mu$, $F$, $M$, and an open ball $B_{\rho}(x_{0})\Subset\Omega$ satisfy (\ref{Eq: Range of delta-epsilon}), (\ref{Eq: Bound on external force term et al.}) and (\ref{Eq: mu>delta})--(\ref{Eq: Gradient bound in Growth estimate}).
Then, for each fixed \(\theta\in(0,\,9/64)\), there exist numbers \(\nu\in(0,\,1/6)\) and \({\hat\rho}\in(0,\,1)\), which depend at most on $n$, $p$, $q$, $\lambda$, $\Lambda$, $K$, $F$, $M$, $\delta$, and $\theta$, but are independent of $\varepsilon$, such that when \(0<\rho<{\hat\rho}\) and (\ref{Eq: Levelset assumption 2}) hold, we have
\begin{equation}\label{Eq: Average lemma result 1 in Energy estimates}
\mleft\lvert(\nabla u_{\varepsilon})_{x_{0},\,\rho}\mright\rvert\ge \delta+\frac{\mu}{2},
\end{equation}
and
\begin{equation}\label{Eq: Average lemma result 2 in Energy estimates}
\Phi(x_{0},\,\rho)\le \theta\mu^{2}.
\end{equation}
\end{lemma}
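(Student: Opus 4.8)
The plan is to prove the two conclusions in order: first the oscillation bound \((\ref{Eq: Average lemma result 2 in Energy estimates})\), namely \(\Phi(x_{0},\rho)\le\theta\mu^{2}\), and then to deduce the non-degeneracy \((\ref{Eq: Average lemma result 1 in Energy estimates})\) from it together with the level set hypothesis \((\ref{Eq: Levelset assumption 2})\). The oscillation bound will be obtained from the excess-decay estimate of Lemma \ref{Lemma: Excess decay estimate far from 0}, combined with an elementary comparison of \(\Phi\) on concentric balls that transfers the estimate from a shrunken ball back to \(B_{\rho}(x_{0})\) itself; the point of this detour is that Lemma \ref{Lemma: Excess decay estimate far from 0} controls \(\Phi(x_{0},\tau\rho)\) only for \(\tau<1\), with a constant degenerating like \((1-\tau)^{-2}\).

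\textbf{Step 1 (oscillation estimate).} Write \(\zeta\coloneqq(\nabla u_{\varepsilon})_{x_{0},\rho}\). Since \(|\nabla u_{\varepsilon}|\le V_{\varepsilon}\le\delta+\mu\le2\mu\) a.e.\ in \(B_{\rho}(x_{0})\) by \((\ref{Eq: Gradient bound in Growth estimate})\) and \((\ref{Eq: mu>delta})\), and likewise \(|(\nabla u_{\varepsilon})_{x_{0},\tau\rho}|\le\delta+\mu\le2\mu\) for every \(\tau\in(0,1]\), the choice \(c=(\nabla u_{\varepsilon})_{x_{0},\tau\rho}\) in \((\ref{Eq: Average value minimizes L2 oscillation})\), split over \(B_{\tau\rho}(x_{0})\) and the annulus \(B_{\rho}(x_{0})\setminus B_{\tau\rho}(x_{0})\) where \(|\nabla u_{\varepsilon}-c|\le4\mu\), yields
\[
\Phi(x_{0},\rho)\le\tau^{n}\,\Phi(x_{0},\tau\rho)+16\mu^{2}\bigl(1-\tau^{n}\bigr)\qquad\text{for all }\tau\in(0,1).
\]
First fix \(\tau=\tau(n,\theta)\in(0,1)\) close enough to \(1\) that \(16(1-\tau^{n})<\theta/2\). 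With \(\tau\) fixed, the hypothesis \((\ref{Eq: Levelset assumption 2})\) lets us apply Lemma \ref{Lemma: Excess decay estimate far from 0} to bound \(\Phi(x_{0},\tau\rho)\) as in \((\ref{Eq: Excess decay estimate far from 0})\) by \(C_{\dagger}\tau^{-n}\mu^{2}\bigl[\nu^{2/n}(1-\tau)^{-2}+\nu^{-1}F^{2}\rho^{2\beta}\bigr]\); choosing \(\nu\in(0,1/6)\) small enough (in terms of \(n,p,q,\lambda,\Lambda,K,\delta,\theta\)) that \(C_{\dagger}\nu^{2/n}(1-\tau)^{-2}<\theta/4\), and then \(\hat\rho\in(0,1)\) small enough that \(C_{\dagger}\nu^{-1}F^{2}\hat\rho^{2\beta}<\theta/4\), we get \(\tau^{n}\Phi(x_{0},\tau\rho)\le\theta\mu^{2}/2\) whenever \(0<\rho<\hat\rho\). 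Adding the two contributions gives \((\ref{Eq: Average lemma result 2 in Energy estimates})\).

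\textbf{Step 2 (non-degeneracy of the average).} On \(S_{\rho,\mu,\nu}(x_{0})\) we have \(V_{\varepsilon}>\delta+(1-\nu)\mu>\delta>\varepsilon\), so \(|\nabla u_{\varepsilon}|=\sqrt{V_{\varepsilon}^{2}-\varepsilon^{2}}\ge V_{\varepsilon}-\varepsilon^{2}/V_{\varepsilon}\ge V_{\varepsilon}-\varepsilon^{2}/\delta\); combining this with \(\varepsilon<\delta/8\) from \((\ref{Eq: Range of delta-epsilon})\) and \(\delta<\mu\) gives \(|\nabla u_{\varepsilon}|>\delta+\bigl(1-\nu-\tfrac{1}{64}\bigr)\mu\) a.e.\ in \(S_{\rho,\mu,\nu}(x_{0})\), hence \(\bigl(\fint_{S_{\rho,\mu,\nu}(x_{0})}|\nabla u_{\varepsilon}|^{2}\bigr)^{1/2}\ge\delta+\bigl(1-\nu-\tfrac{1}{64}\bigr)\mu\). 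On the other hand, by \((\ref{Eq: Average lemma result 2 in Energy estimates})\) and \(|S_{\rho,\mu,\nu}(x_{0})|\ge(1-\nu)|B_{\rho}(x_{0})|\),
\[
\Bigl(\fint_{S_{\rho,\mu,\nu}(x_{0})}|\nabla u_{\varepsilon}-\zeta|^{2}\Bigr)^{1/2}\le(1-\nu)^{-1/2}\Phi(x_{0},\rho)^{1/2}\le(1-\nu)^{-1/2}\sqrt{\theta}\,\mu.
\]
The triangle inequality in \(L^{2}(S_{\rho,\mu,\nu}(x_{0}))\) then yields \(|\zeta|\ge\delta+\bigl(1-\nu-\tfrac{1}{64}-(1-\nu)^{-1/2}\sqrt{\theta}\,\bigr)\mu\). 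Since \(\theta<9/64\), i.e.\ \(\sqrt{\theta}<3/8\), the bracket tends to \(1-\tfrac{1}{64}-\sqrt{\theta}>1-\tfrac{1}{64}-\tfrac{3}{8}=\tfrac{39}{64}>\tfrac12\) as \(\nu\to0\); so after further shrinking \(\nu\) by an amount depending only on \(\theta\) the bracket is \(\ge1/2\), which is \((\ref{Eq: Average lemma result 1 in Energy estimates})\).

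\textbf{Main obstacle.} The substance of the argument is entirely in Step 1, and rests on the fact that the excess-decay estimate of Lemma \ref{Lemma: Excess decay estimate far from 0} carries a genuine gain \(\nu^{2/n}\) on its leading term — this is precisely what allows the annulus-comparison constant \((1-\tau)^{-2}\) to be absorbed by first fixing \(\tau\) and only afterwards sending \(\nu\to0\). The delicate point is therefore the order of the quantifiers: \(\theta\) is given, then \(\tau=\tau(n,\theta)\), then \(\nu\), then \(\hat\rho\); any permutation of this chain breaks the scheme. Everything else is elementary, and all constants produced are independent of \(\varepsilon\) because the \(\varepsilon\)-uniform energy estimates of Section \ref{Subsect: Energy estimates} (through Lemma \ref{Lemma: Excess decay estimate far from 0}) are used as black boxes.
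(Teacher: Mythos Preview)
Your proof is correct and follows essentially the same approach as the paper: in Step~1 you split \(\Phi(x_{0},\rho)\) via \eqref{Eq: Average value minimizes L2 oscillation} with the auxiliary center \((\nabla u_{\varepsilon})_{x_{0},\tau\rho}\), control the annulus contribution by \((1-\tau^{n})\) and the inner ball by Lemma~\ref{Lemma: Excess decay estimate far from 0}, then fix \(\tau\), \(\nu\), \(\hat\rho\) in that order---exactly as in the paper. In Step~2 the paper argues with the \(L^{1}\) comparison \(\bigl|\fint|\nabla u_{\varepsilon}|-|(\nabla u_{\varepsilon})_{x_{0},\rho}|\bigr|\le\Phi^{1/2}\) together with \eqref{Eq: Lower bounds over E}, whereas you use an \(L^{2}\) triangle inequality on the superlevel set; both are minor variants of the same idea and yield the same conclusion with the same constraint \(\theta<9/64\).
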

\begin{proof}
We will later choose constants \(\tau\in(0,\,1)\), \(\nu\in(0,\,1/6)\), and \({\hat\rho}\in(0,\,1)\). Let the radius \(\rho\) satisfy \(0<\rho<{\hat\rho}\), and assume that there holds (\ref{Eq: Levelset assumption 2}). By (\ref{Eq: Average value minimizes L2 oscillation}), we have
\[\Phi(x_{0},\,\rho)\le\fint_{B_{\rho}(x_{0})}\mleft\lvert\nabla u_{\varepsilon}-(\nabla u_{\varepsilon})_{x_{0},\,\tau\rho}\mright\rvert^{2}\,{\mathrm d}x=\mathbf{J}_{1}+\mathbf{J}_{2}\]
with
\[\mleft\{\begin{array}{rcl}
\mathbf{J}_{1}&\coloneqq&\displaystyle\frac{1}{\lvert B_{\rho}(x_{0})\rvert}\displaystyle\int_{B_{\tau\rho}(x_{0})}\mleft\lvert\nabla u_{\varepsilon}-(\nabla u_{\varepsilon})_{x_{0},\,\tau\rho}\mright\rvert^{2}\,{\mathrm d}x, \\ \mathbf{J}_{2}&\coloneqq & \displaystyle\frac{1}{\lvert B_{\rho}(x_{0})\rvert}\displaystyle\int_{B_{\rho}(x_{0})\setminus B_{\tau\rho}(x_{0})}\mleft\lvert\nabla u_{\varepsilon}-(\nabla u_{\varepsilon})_{x_{0},\,\tau\rho}\mright\rvert^{2}\,{\mathrm d}x.
\end{array} \mright.\]
For \(\mathbf{J}_{1}\), we use (\ref{Eq: Excess decay estimate far from 0}) to obtain
\[\mathbf{J}_{1}=\tau^{n}\Phi(x_{0},\,\tau\rho)\le C_{\dagger}\mu^{2}\mleft[\frac{\nu^{2/n}}{(1-\tau)^{2}}+\frac{F^{2}}{\nu} \rho^{2\beta/q}\mright]\]
with \(C_{\dagger}=C_{\dagger}(n,\,p,\,\lambda,\,\Lambda,\,K,\,M,\,\delta)\in(0,\,\infty)\).
For \(\mathbf{J}_{2}\), we use (\ref{Eq: mu>delta}) to have \(\lvert\nabla u_{\varepsilon}\rvert\le V_{\varepsilon}\le \delta+\mu\le 2\mu\) a.e. in \(B_{\rho}(x_{0})\), and hence \(\mleft\lvert(\nabla u_{\varepsilon})_{x_{0},\,\tau\rho}\mright\rvert\le 2\mu\). This yields
\[\mathbf{J}_{2}\le 8\mu^{2}\cdot\frac{\lvert B_{\rho}(x_{0})\setminus B_{\tau\rho}(x_{0})\rvert}{\lvert B_{\rho}(x_{0})\rvert}=8\mu^{2}(1-\tau^{n})\le 8n\mu^{2}(1-\tau),\]
where we have used \(1-\tau^{n}=(1+\tau+\cdots+\tau^{n-1})(1-\tau)\le n(1-\tau)\).
As a result, we obtain
\[\Phi(x_{0},\,\rho)\le C_{\dagger}\mu^{2}\mleft[\frac{\nu^{2/n}}{(1-\tau)^{2}}+\frac{F^{2}}{\nu} {\hat\rho}^{2\beta}\mright]+8n(1-\tau)\mu^{2}.\]
We first fix \[\tau\coloneqq 1-\frac{\theta}{24n}\in(0,\,1),\quad\textrm{so that there holds}\quad 8n(1-\tau)=\frac{\theta}{3}.\] 
Next we choose \(\nu\in(0,\,1/6)\) sufficiently small that 
\[\nu\le\min\mleft\{\,\mleft(\frac{\theta(1-\tau)^{2}}{3C_{\dagger}}\mright)^{n/2},\,\frac{3-8\sqrt{\theta}}{23} \,\mright\},\]
so that we have
\[\frac{C_{\dagger}\nu^{2/n}}{(1-\tau)^{2}}\le \frac{\theta}{3},\quad\textrm{and}\quad \sqrt{\theta}\le \frac{3-23\nu}{8}.\]
Corresponding to this \(\nu\), we choose and fix sufficiently small \({\hat\rho}\in(0,\,1)\) satisfying 
\[{\hat\rho}^{2\beta}\le \frac{\nu\theta}{3C_{\dagger}(1+F^{2})},\]
which yields \(C_{\dagger}F^{2}{\hat\rho}^{2\beta}/\nu\le \theta/3\).
Our settings of \(\tau,\,\nu,\,{\hat\rho}\) clearly yield (\ref{Eq: Average lemma result 2 in Energy estimates}).

We are left to show (\ref{Eq: Average lemma result 1 in Energy estimates}). 
By (\ref{Eq: Levelset assumption 2}) and (\ref{Eq: Lower bounds over E}), we compute
\begin{align*}
\fint_{B_{\rho}(x_{0})}\lvert\nabla u_{\varepsilon}\rvert\,{\mathrm d}x&\ge \frac{\lvert S_{\rho,\,\mu,\,\nu}(x_{0})\rvert}{\lvert B_{\rho}(x_{0})\rvert}\cdot \essinf\limits_{S_{\rho,\,\mu,\,\nu}(x_{0})}\,\lvert\nabla u_{\varepsilon}\rvert\\&\ge (1-\nu)\cdot\mleft[(1-\nu)\mu+\frac{7}{8}\delta\mright]>0.
\end{align*}
On the other hand, by applying the triangle inequality, the Cauchy--Schwarz inequality and (\ref{Eq: Average lemma result 2 in Energy estimates}), it is easy to get
\begin{align*}
\mleft\lvert\fint_{B_{\rho}(x_{0})}\lvert\nabla u_{\varepsilon}\rvert\,{\mathrm d}x -\mleft\lvert(\nabla u_{\varepsilon})_{x_{0},\,\rho}\mright\rvert\mright\rvert&=\mleft\lvert \fint_{B_{\rho}(x_{0})}\mleft[\lvert\nabla u_{\varepsilon}\rvert -\mleft\lvert(\nabla u_{\varepsilon})_{x_{0},\,\rho}\mright\rvert\mright]\,{\mathrm d}x\mright\rvert\\&\le \fint_{B_{\rho}(x_{0})}\mleft\lvert\nabla u_{\varepsilon}-(\nabla u_{\varepsilon})_{x_{0},\,\rho}\mright\rvert\,{\mathrm d}x\\&\le \sqrt{\Phi(x_{0},\,\rho)}\le \sqrt{\theta}\mu.
\end{align*}
Again by the triangle inequality, we obtain
\begin{align*}
\mleft\lvert(\nabla u_{\varepsilon})_{x_{0},\,\rho}\mright\rvert&\ge \mleft\lvert\fint_{B_{\rho}(x_{0})}\lvert\nabla u_{\varepsilon}\rvert\,{\mathrm d}x \mright\rvert-\mleft\lvert\fint_{B_{\rho}(x_{0})}\lvert\nabla u_{\varepsilon}\rvert\,{\mathrm d}x -\mleft\lvert(\nabla u_{\varepsilon})_{x_{0},\,\rho}\mright\rvert\mright\rvert\\&\ge \mleft((1-\nu)^{2}-\sqrt{\theta}\mright)\mu+\frac{7}{8}(1-\nu)\delta.
\end{align*}
By our setting of \(\nu\) and (\ref{Eq: mu>delta}), we can check that
\begin{align*}
\mleft((1-\nu)^{2}-\sqrt{\theta}\mright)\mu+\frac{7}{8}(1-\nu)\delta-\mleft(\delta+\frac{\mu}{2}\mright)&=\mleft(\frac{1}{2}-2\nu+\nu^{2}-\sqrt{\theta}\mright)\mu-\mleft(\frac{1}{8}+\frac{7}{8}\nu\mright)\delta\\&\ge \mleft(\frac{3-23\nu}{8}-\sqrt{\theta} \mright)\mu\ge 0,
\end{align*}
which completes the proof of (\ref{Eq: Average lemma result 2 in Energy estimates}).
\end{proof}
We infer to an elementary lemma on Campanato-type growth estimates (Lemma \ref{Lemma: Campanato-type estimate gives Lebesgue points everywhere}).
\begin{lemma}\label{Lemma: Campanato-type estimate gives Lebesgue points everywhere}
Fix an open ball $B_{\rho}(x_{0})\subset{\mathbb R}^{n}$, and let $A\in(0,\,\infty)$, $\gamma\in (0,\,1)$ be given constants. Assume that a function $f\in L^{2}(B_{\rho}(x_{0});\,{\mathbb R}^{m})$ satisfy
\begin{equation}\label{Eq: Assumption of Campanato growth estimate}
\fint_{B_{r}(x_{0})}\mleft\lvert f(x)-(f)_{x_{0},\,r} \mright\rvert^{2}\,{\mathrm{d}}x\le A^{2}\mleft(\frac{r}{\rho}\mright)^{2\gamma}
\end{equation}
for all $r\in(0,\,\rho\rbrack$. Then, the limit
\[F(x_{0})\coloneqq \lim_{r\to 0}\mleft(f\mright)_{x_{0},\,r}\in{\mathbb R}^{m}\]
exists, and there holds
\[\fint_{B_{r}(x_{0})}\mleft\lvert f(x)-F(x_{0}) \mright\rvert^{2}\,{\mathrm{d}}x\le c_{\ddagger}A^{2}\mleft(\frac{r}{\rho}\mright)^{2\gamma}\]
for all $r\in(0,\,\rho\rbrack$. Here the constant $c_{\ddagger}\in(2,\,\infty)$ depends at most on $n$ and $\gamma$.
\end{lemma}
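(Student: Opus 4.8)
The plan is to prove this by a standard Campanato-type iteration on dyadic radii, combined with a telescoping argument for the convergence of the averages.

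\medskip

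\textbf{Step 1: Dyadic decay of the averages.} First I would fix $r \in (0,\rho]$ and compare the averages $(f)_{x_0,r}$ and $(f)_{x_0,r/2}$. The key elementary observation is that
\[
\mleft\lvert (f)_{x_0,r/2} - (f)_{x_0,r}\mright\rvert^2 = \mleft\lvert \fint_{B_{r/2}(x_0)}\mleft(f(x)-(f)_{x_0,r}\mright)\,{\mathrm d}x\mright\rvert^2 \le \fint_{B_{r/2}(x_0)}\mleft\lvert f(x)-(f)_{x_0,r}\mright\rvert^2\,{\mathrm d}x,
\]
by Jensen's inequality, and the right-hand side is bounded by $2^n \fint_{B_r(x_0)}\lvert f(x)-(f)_{x_0,r}\rvert^2\,{\mathrm d}x \le 2^n A^2 (r/\rho)^{2\gamma}$ using the doubling of balls and the assumption (\ref{Eq: Assumption of Campanato growth estimate}). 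Iterating this along the dyadic sequence $r_j \coloneqq 2^{-j} r$ gives
\[
\mleft\lvert (f)_{x_0,r_{j+1}} - (f)_{x_0,r_j}\mright\rvert \le 2^{n/2} A \mleft(\frac{r_j}{\rho}\mright)^{\gamma},
\]
and since $\sum_j (r_j/\rho)^\gamma = (r/\rho)^\gamma \sum_j 2^{-j\gamma} = (r/\rho)^\gamma/(1-2^{-\gamma})$ is a convergent geometric series, the sequence $\{(f)_{x_0,r_j}\}_j$ is Cauchy in ${\mathbb R}^m$; its limit defines $F(x_0)$. Here I would note that the limit is genuinely independent of the choice of the base radius $r$, since for two such sequences one can interleave them or simply compare their tails, both of which go to $F(x_0)$.

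\medskip

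\textbf{Step 2: Rate estimate for $F(x_0)$.} Given any $r \in (0,\rho]$, telescoping and then applying the geometric-series bound from Step 1 yields
\[
\mleft\lvert (f)_{x_0,r} - F(x_0)\mright\rvert \le \sum_{j=0}^\infty \mleft\lvert (f)_{x_0,r_{j+1}} - (f)_{x_0,r_j}\mright\rvert \le \frac{2^{n/2}A}{1-2^{-\gamma}}\mleft(\frac{r}{\rho}\mright)^\gamma.
\]
Then I would split
\[
\fint_{B_r(x_0)}\mleft\lvert f(x) - F(x_0)\mright\rvert^2\,{\mathrm d}x \le 2\fint_{B_r(x_0)}\mleft\lvert f(x)-(f)_{x_0,r}\mright\rvert^2\,{\mathrm d}x + 2\mleft\lvert (f)_{x_0,r}-F(x_0)\mright\rvert^2,
\]
bound the first term by $2A^2(r/\rho)^{2\gamma}$ via (\ref{Eq: Assumption of Campanato growth estimate}) and the second by $2 \cdot 2^n A^2 (r/\rho)^{2\gamma}/(1-2^{-\gamma})^2$, and collect constants. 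This produces the claimed inequality with $c_\ddagger \coloneqq 2 + 2^{n+1}/(1-2^{-\gamma})^2$, which depends only on $n$ and $\gamma$ and is clearly $> 2$.

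\medskip

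\textbf{Remark on difficulty.} There is no serious obstacle here; this is the classical Campanato/Morrey-type lemma (cf. \cite[Chapter 3]{MR3887613}, \cite[Chapter 5]{MR3099262}). The only points requiring a modicum of care are (i) verifying that the limit $F(x_0)$ does not depend on the auxiliary base radius used to form the dyadic sequence — handled by interleaving two sequences and using that consecutive increments are summable — and (ii) making sure the convergence is uniform enough that the displayed estimate holds for \emph{every} $r \in (0,\rho]$ and not just along the dyadic subsequence, which follows because any $r$ lies between two consecutive dyadic radii $r_{j+1} < r \le r_j$ and one compares $(f)_{x_0,r}$ with $(f)_{x_0,r_j}$ using the same Jensen-plus-doubling estimate (picking up only a harmless extra factor $2^{n/2}$ absorbed into $c_\ddagger$).
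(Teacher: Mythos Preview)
Your proof is correct and follows essentially the same route as the paper's sketch: the paper fixes an arbitrary geometric ratio $\tau\in(0,1)$, defines $\rho_{k}=\tau^{k}\rho$ and $F_{k}=(f)_{x_{0},\rho_{k}}$, shows $\{F_{k}\}$ is Cauchy via Cauchy--Schwarz and the assumption, telescopes to identify the limit, and then deduces the estimate for all $r$; your argument is precisely this with the specific choice $\tau=1/2$. The paper leaves the passage from the discrete sequence to the full limit $\lim_{r\to 0}(f)_{x_{0},r}$ implicit, while you make it explicit in your Remark, which is a small expository improvement.
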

Lemma \ref{Lemma: Campanato-type estimate gives Lebesgue points everywhere} can be proved in a straightforward way. More precisely, we fix $\tau\in(0,\,1)$ arbitrarily, and define $\rho_{k}\coloneqq \tau^{k}\rho\in (0,\,\rho\rbrack$ and $F_{k}\coloneqq (f)_{x_{0},\,\rho_{k}}\in{\mathbb R}^{m}$ for each $k\in{\mathbb Z}_{\ge 0}$. Then, by the Cauchy--Schwarz inequality and (\ref{Eq: Assumption of Campanato growth estimate}), it is easy to check that $\{F_{k}\}_{k=0}^{\infty}\subset{\mathbb R}^{m}$ is a Cauchy sequence. Moreover, there exists a constant $c_{\ddagger}=c_{\ddagger}(n,\,\gamma,\,\tau)\in(1,\,\infty)$ such that the limit $F_{\infty}\coloneqq \lim\limits_{k\to\infty} F_{k}\in{\mathbb R}^{m}$ satisfies
\[\fint_{B_{r}(x_{0})}\mleft\lvert f(x)-F_{\infty} \mright\rvert^{2}\,{\mathrm{d}}x\le c_{\ddagger}A^{2}\mleft(\frac{r}{\rho}\mright)^{2\gamma}\quad \textrm{for all }r\in(0,\,\rho\rbrack,\]
from which we are able to conclude that the limit $F(x_{0})$ exists and coincides with $F_{\infty}$.

Finally we give the proof of Proposition \ref{Prop: Perturbation result}.
\begin{proof}
Let \(\beta=\beta(n,\,q)\) and \(\vartheta=\vartheta(n,\,p,\,q,\,\beta_{0},\,\lambda,\,\Lambda,\,K,\,M,\,\delta)\) be positive constants given by (\ref{Eq: Definition of betas}) and Lemma \ref{Lemma: Higher integrability} respectively. We will later determine a sufficiently small constant \(\tau\in(0,\,1/2)\), and corresponding to this \(\tau\), we will put the desired constants \(\rho_{\star}\in(0,\,1)\) and \(\nu\in(0,\,1/6)\). 
We first assume that 
\begin{equation}\label{Eq: Determination of tau 1}
0<\tau<\tau^{1-\beta}<\frac{1}{16},\quad \textrm{and therefore}\quad
\theta\coloneqq \tau^{\frac{n+2}{\vartheta}}\in\mleft(0,\,\frac{9}{64}\mright)
\end{equation}
holds.
Throughout the proof, we let \(\nu\in(0,\,1/6)\) and \({\hat\rho}\in(0,\,1)\) be sufficiently small constants satisfying Lemma \ref{Lemma: Average lemma from Energy estimates} with \(\theta\in(0,\,9/64)\) given by (\ref{Eq: Determination of tau 1}). Corresponding to this $\hat\rho$, we assume that \(\rho_{\star}\) is so small that there holds
\begin{equation}\label{Eq: Determination of rho-star 1}
0<\rho_{\star}\le {\hat\rho}\mleft(n,\,p,\,q,\,\beta_{0},\,\lambda,\,\Lambda,\,K,\,F,\,M,\,\delta,\,\tau\mright)<1,
\end{equation}
so that Lemma \ref{Lemma: Average lemma from Energy estimates} can be applied for an open ball whose radius is less than $\rho_{\star}$.

Let the ball \(B_{\rho}(x_{0})\) satisfy \(0<\rho<\rho_{\star}\), and let (\ref{Eq: Levelset assumption 2}) hold for the constant \(\nu\in(0,\,1/6)\).
We set a non-negative decreasing sequence \(\{\rho_{k}\}_{k=0}^{\infty}\) by \(\rho_{k}\coloneqq \tau^{k}\rho\) for $k\in{\mathbb Z}_{\ge 0}$. We will choose suitable \(\tau\) and \(\rho_{\ast}\) such that there hold
\begin{equation}\label{Eq: Induction claim 1}
\Phi(x_{0},\,\rho_{k})\le \tau^{2\beta k}\tau^{\frac{n+2}{\vartheta}}\mu^{2},
\end{equation}
and
\begin{equation}\label{Eq: Induction claim 2}
\lvert (\nabla u_{\varepsilon})_{x_{0},\,\rho_{k}} \rvert\ge \delta+\mleft[\frac{1}{2}-\frac{1}{8}\sum_{j=0}^{k-1}2^{-j}\mright]\mu \ge \delta+\frac{\mu}{4}
\end{equation}
for all \(k\in{\mathbb Z}_{\ge 0}\), which will be proved by mathematical induction.
For \(k=0,\,1\), we apply Lemma \ref{Lemma: Average lemma from Energy estimates} to deduce (\ref{Eq: Average lemma result 1 in Energy estimates})--(\ref{Eq: Average lemma result 2 in Energy estimates}) with \(\theta=\tau^{\frac{n+2}{\vartheta}}\). In particular, we have
\begin{equation}\label{Eq: Phi estimate for the first step}
\Phi(x_{0},\,\rho)\le \tau^{\frac{n+2}{\vartheta}}\mu^{2},
\end{equation}
and hence (\ref{Eq: Induction claim 1}) is obvious when $k=0$. From (\ref{Eq: Average lemma result 1 in Energy estimates}), we have already known that (\ref{Eq: Induction claim 2}) holds for \(k=0\). Also, the results (\ref{Eq: Average lemma result 1 in Energy estimates}) and (\ref{Eq: Phi estimate for the first step}) enable us to apply Lemma \ref{Lemma: Average lemma from Perturbaltion} to obtain
\begin{align*}
\Phi(x_{0},\,\rho_{1})&\le C_{\ast}\mleft[\tau^{2}\Phi(x_{0},\,\rho)+\frac{\rho^{2\beta}}{\tau^{n}}\mu^{2}\mright]\\&\le C_{\ast}\tau^{2(1-\beta)}\cdot\tau^{2\beta}\tau^{\frac{n+2}{\vartheta}}\mu^{2}+\frac{C_{\ast}\rho_{\star}^{2\beta}}{\tau^{n}}\mu^{2},
\end{align*}
where \(C_{\ast}=C_{\ast}(n,\,p,\,q,\,\beta_{0},\,\lambda,\,\Lambda,\,K,\,F,\,M,\,\delta)\in(0,\,\infty)\) is a constant given in Lemma \ref{Lemma: Average lemma from Perturbaltion}.
Now we assume that \(\tau\) and \(\rho_{\star}\) satisfy
\begin{equation}\label{Eq: Determination of tau 2}
C_{\ast}\tau^{2(1-\beta)}\le \frac{1}{3},
\end{equation}
and
\begin{equation}\label{Eq: Determination of rho-star 2}
C_{\ast}\rho_{\star}^{2\beta}\le \frac{1}{3}\tau^{n+2\beta+\frac{n+2}{\vartheta}},
\end{equation}
so that (\ref{Eq: Induction claim 1}) is satisfied for \(k=1\).
In particular, by (\ref{Eq: Determination of tau 1}), (\ref{Eq: Phi estimate for the first step}) and the Cauchy--Schwarz inequality, we obtain
\begin{align*}
\mleft\lvert(\nabla u_{\varepsilon})_{x_{0},\,\rho_{1}}-(\nabla u_{\varepsilon})_{x_{0},\,\rho_{0}} \mright\rvert&\le \fint_{B_{\rho_{1}}(x_{0})}\mleft\lvert \nabla u_{\varepsilon}-(\nabla u_{\varepsilon})_{x_{0},\,\rho_{0}}\mright\rvert\,{\mathrm d}x\nonumber\\&\le \mleft(\fint_{B_{\rho_{1}}(x_{0})}\mleft\lvert \nabla u_{\varepsilon}-(\nabla u_{\varepsilon})_{x_{0},\,\rho_{0}}\mright\rvert^{2}\,{\mathrm d}x\mright)^{1/2}=\tau^{-\frac{n}{2}}\Phi(x_{0},\,\rho)^{1/2}\nonumber\\&\le \tau^{\frac{n+2}{2\vartheta}-\frac{n}{2}}\mu\le \tau\mu\le \frac{1}{8}\mu,
\end{align*}
where we have used (\ref{Eq: Determination of tau 1}) and $0<\vartheta<1$.
Combining this result with (\ref{Eq: Average lemma result 1 in Energy estimates}), we use the triangle inequality to get
\[\lvert (\nabla u_{\varepsilon})_{x_{0},\,\rho_{1}}\rvert\ge \lvert (\nabla u_{\varepsilon})_{x_{0},\,\rho_{0}}\rvert-\lvert(\nabla u_{\varepsilon})_{x_{0},\,\rho_{1}}-(\nabla u_{\varepsilon})_{x_{0},\,\rho_{0}} \rvert\ge \mleft(\delta+\frac{\mu}{2}\mright)-\frac{\mu}{8},\]
which yields (\ref{Eq: Induction claim 2}) for \(k=1\).
Next, we assume that the claims (\ref{Eq: Induction claim 1})--(\ref{Eq: Induction claim 2}) are valid for an integer \(k\ge 1\).
Then, the estimate \(\Phi(x_{0},\,\rho_{k})\le \tau^{\frac{n+2}{\vartheta}}\mu^{2}\) clearly holds. Combining this result with (\ref{Eq: Gradient bound in Growth estimate}) and the induction hypothesis (\ref{Eq: Induction claim 2}), we have clarified that the solution \(u_{\varepsilon}\) satisfies all the assumptions of Lemma \ref{Lemma: Average lemma from Perturbaltion} in a smaller ball \(B_{\rho_{k}}(x_{0})\subset B_{\rho_{0}}(x_{0})\). By Lemma \ref{Lemma: Average lemma from Perturbaltion}, (\ref{Eq: Determination of tau 2}), and the induction hypothesis (\ref{Eq: Induction claim 1}), we compute
\begin{align*}
\Phi(x_{0},\,\rho_{k+1})&\le C_{\ast}\mleft[\tau^{2}\Phi(x_{0},\,\rho_{k})+\frac{\rho_{k}^{2\beta}}{\tau^{n}}\mu^{2}\mright]\\&\le C_{\ast}\tau^{2(1-\beta)}\cdot \tau^{2\beta k}\tau^{\frac{n+2}{\vartheta}}\mu^{2}+\frac{C_{\ast}\rho_{\star}^{2\beta}}{\tau^{n}}\cdot\tau^{2\beta k}\mu^{2}\\&\le \tau^{2\beta(k+1)}\cdot \tau^{\frac{n+2}{\vartheta}}\mu^{2},
\end{align*}
which implies that (\ref{Eq: Induction claim 1}) holds for \(k+1\). By the Cauchy--Schwarz inequality and the induction hypothesis (\ref{Eq: Induction claim 1}), we have
\begin{align*}
\mleft\lvert(\nabla u_{\varepsilon})_{x_{0},\,\rho_{k+1}}- (\nabla u_{\varepsilon})_{x_{0},\,\rho_{k}}\mright\rvert&\le \fint_{B_{\rho_{k+1}}(x_{0})}\lvert \nabla u_{\varepsilon}- (\nabla u_{\varepsilon})_{x_{0},\,\rho_{k}}\rvert\,{\mathrm d}x \\&\le \mleft(\fint_{B_{\rho_{k+1}}(x_{0})}\mleft\lvert \nabla u_{\varepsilon}- (\nabla u_{\varepsilon})_{x_{0},\,\rho_{k}}\mright\rvert^{2}\,{\mathrm d}x\mright)^{1/2}=\tau^{-n/2}\Phi(x_{0},\,\rho_{k})^{1/2}\\&\le \tau^{\beta k}\tau^{\frac{n+2}{\vartheta}-\frac{n}{2}}\mu\le 2^{-k}\cdot \frac{1}{8} \mu,
\end{align*}
where we have used (\ref{Eq: Determination of tau 1}) and $0<\vartheta<1$.
Therefore, by the induction hypothesis (\ref{Eq: Induction claim 2}) and the triangle inequality, we finally get
\begin{align*}
\mleft\lvert(\nabla u_{\varepsilon})_{x_{0},\,\rho_{k+1}}\mright\rvert&\ge\mleft\lvert(\nabla u_{\varepsilon})_{x_{0},\,\rho_{k}}\mright\rvert- \mleft\lvert (\nabla u_{\varepsilon})_{x_{0},\,\rho_{k+1}}-(\nabla u_{\varepsilon})_{x_{0},\,\rho_{k}}\mright\rvert\\& \ge \delta+ \mleft[\frac{1}{2}-\frac{1}{8}\sum_{j=0}^{k-1}2^{-j}\mright]\mu-\frac{1}{8}\cdot 2^{-k}\mu,
\end{align*}
which implies that (\ref{Eq: Induction claim 2}) is valid for \(k+1\). This completes the proof of (\ref{Eq: Induction claim 1})--(\ref{Eq: Induction claim 2}).

We would like to complete the proof of Proposition \ref{Prop: Perturbation result}. For each $r\in(0,\,\rho\rbrack$, there uniquely exists $k\in{\mathbb Z}_{\ge 0}$ such that $\rho_{k+1}<r\le \rho_{k}$. Then, by (\ref{Eq: Average value minimizes L2 oscillation}), (\ref{Eq: Induction claim 1}) and $\tau^{k}\le \tau^{-1}(r/\rho)$, we have
\begin{equation}\label{Eq: beta-Campanato growth of gradients}
\Phi(x_{0},\,r)\le \tau^{-n}\Phi(x_{0},\,\rho_{k})\le \tau^{\frac{n+2}{\vartheta}-n-2\beta}\mu^{2}\mleft(\frac{r}{\rho}\mright)^{2\beta}
\end{equation}
for all \(r\in(0,\,\rho\rbrack\). 
Recalling Lemma \ref{Lemma: Lipschitz estimate on relaxed vector fields} and using (\ref{Eq: Average value minimizes L2 oscillation}) again, we compute
\begin{align*}
&\fint_{B_{r}(x_{0})}\mleft\lvert {\mathcal G}_{2\delta,\,\varepsilon}(\nabla u_{\varepsilon})-\mleft({\mathcal G}_{2\delta,\,\varepsilon}(\nabla u_{\varepsilon}) \mright)_{x_{0},\,r} \mright\rvert^{2}\,{\mathrm{d}}x\\&\le \fint_{B_{r}(x_{0})} \mleft\lvert {\mathcal G}_{2\delta,\,\varepsilon}(\nabla u_{\varepsilon})-{\mathcal G}_{2\delta,\,\varepsilon}\mleft((\nabla u_{\varepsilon})_{x_{0},\,r} \mright)\mright\rvert^{2}\,{\mathrm{d}}x\\&\le c_{\dagger}^{2}\Phi(x_{0},\,r)
\end{align*}
with $c_{\dagger}=1+64/\sqrt{255}$. Combining this result with (\ref{Eq: beta-Campanato growth of gradients}), by Lemma \ref{Lemma: Campanato-type estimate gives Lebesgue points everywhere}, we are able to conclude that the limit $\Gamma_{2\delta,\,\varepsilon}(x_{0})\in{\mathbb R}^{n}$ exists, and there holds
\begin{equation}\label{Eq: Continuous Campanato-Growth estimate in Perturbation}
\fint_{B_{r}(x_{0})}\mleft\lvert {\mathcal G}_{2\delta,\,\varepsilon}(\nabla u_{\varepsilon})-\Gamma_{2\delta,\,\varepsilon}(x_{0})\mright\rvert^{2}\,{\mathrm{d}}x\le c_{\ddagger}c_{\dagger}^{2}\tau^{\frac{n+2}{\vartheta}-n-2\beta}\mu^{2}\mleft(\frac{r}{\rho}\mright)^{2\beta}
\end{equation}
for all $r\in(0,\,\rho\rbrack$. Here the constant $c_{\ddagger}\in(2,\,\infty)$ depends at most on $n$ and $q$. Finally, we let
\begin{equation}\label{Eq: Determination of tau 4}
\tau^{2(1-\beta)}\le c_{\ddagger}^{-1}c_{\dagger}^{-2}.
\end{equation}
Then, the desired estimate (\ref{Eq: Campanato-type beta-growth estimate}) clearly follows from (\ref{Eq: Continuous Campanato-Growth estimate in Perturbation})--(\ref{Eq: Determination of tau 4}) and $0<\vartheta<1$. We note that (\ref{Eq: Range of delta-epsilon}) and (\ref{Eq: Gradient bound in Growth estimate}) imply \(\lvert{\mathcal G}_{2\delta,\,\varepsilon}(\nabla u_{\varepsilon})\rvert\le \mu\) a.e. in \(B_{\rho}(x_{0})\), and therefore (\ref{Eq: G-delta-epsilon average-limit bound}) is obvious.

Finally, we mention that we may choose a sufficiently small constant \(\tau=\tau(C_{\ast},\,\beta)\in(0,\,1)\) enjoying (\ref{Eq: Determination of tau 1}), (\ref{Eq: Determination of tau 2}), and (\ref{Eq: Determination of tau 4}). For this fixed \(\tau\), we take sufficient small numbers \(\nu\in(0,\,1/6),\,{\hat\rho}\in(0,\,1)\) as in Lemma \ref{Lemma: Average lemma from Perturbaltion}, depending at most on $n$, $p$, $q$, $\lambda$, $\Lambda$, $K$, $F$, $M$, $\delta$, and $\theta=\tau^{\frac{n+2}{\vartheta}}$. Then, we are able to determine a sufficiently small radius \(\rho_{\star}=\rho_{\star}(C_{\ast},\,\beta,\,{\hat\rho})\in(0,\,1)\) satisfying (\ref{Eq: Determination of rho-star 1}) and (\ref{Eq: Determination of rho-star 2}), and this completes the proof.
\end{proof}

\begin{Remark}\label{Rmk: beta condition}\upshape
An induction claim (\ref{Eq: Induction claim 2}) tells us that the gradient $\nabla u_{\varepsilon}$ itself may not vanish at the point $x_{0}$, and thus $\nabla u_{\varepsilon}$ will satisfy the Campanato-type growth estimate (\ref{Eq: beta-Campanato growth of gradients}) at $x_{0}$. 
To justify these, as well as energy estimates in Section \ref{Subsect: Energy estimates}, we have appealed to freezing coefficient arguments in Section \ref{Subsect: Perturbation outside facets}. There the regularity assumption $E\in C_{\mathrm{loc}}^{2,\,\beta_{0}}({\mathbb R}^{n}\setminus\{ 0\})$ is used.
Also, it should be noted that the condition $\beta<1$ is substantially used in (\ref{Eq: Determination of tau 2}), which makes our recursive proof of (\ref{Eq: Induction claim 1})--(\ref{Eq: Induction claim 2}) successful.
Hence, in the case $q=\infty$, we are not allowed to let $\beta=1$. Here it is worth recalling that it is generally impossible to get $C^{1,\,1}$-regularity for a weak solution to the Poisson equation $-\Delta v=f\in L^{\infty}$.
\end{Remark}

\subsection{Caccioppoli-type energy bounds}\label{Subsect: De Giorgi Oscillation}
In Section \ref{Subsect: De Giorgi Oscillation}, we prove Proposition \ref{Prop: De Giorgi Oscillation lemma} by De Giorgi's truncation. Here we would like to show a Caccioppoli-type estimate (Lemma \ref{Lemma: Caccioppoli-type estimate 1}).

\begin{lemma}\label{Lemma: Caccioppoli-type estimate 1}
Assume that \(u_{\varepsilon}\) is a weak solution to (\ref{Eq: Regularized equation}) in \(\Omega\). Assume that positive numbers $\delta$, $\varepsilon$, $\mu$, $M$, and an open ball $B_{\rho}(x_{0})\Subset\Omega$ satisfy (\ref{Eq: Range of delta-epsilon}) and (\ref{Eq: Gradient bound in De Giorgi estimate}).
Then, the scalar function \(U_{\delta,\,\varepsilon}\in L^{\infty}(B_{\rho}(x_{0}))\cap W^{1,\,2}(B_{\rho}(x_{0}))\) satisfies
\begin{equation}\label{Eq: U is subsolution}
\int_{B_{\rho}(x_{0})}\langle {\mathcal A}_{\delta,\,\varepsilon}(\nabla u_{\varepsilon})\nabla U_{\delta,\,\varepsilon}\mid \nabla \zeta\rangle\,{\mathrm d}x\le C_{0}\mleft[\int_{B_{\rho}(x_{0})}\lvert f_{\varepsilon}\rvert^{2}\zeta\,{\mathrm d}x+\mu\int_{B_{\rho}(x_{0})}\lvert f_{\varepsilon}\rvert\lvert \nabla \zeta\rvert\,{\mathrm d}x\mright]
\end{equation}
for any non-negarive function \(\zeta\in W^{1,\,\infty}(B_{\rho}(x_{0}))\) that is compactly supported in \(B_{\rho}(x_{0})\). Here the constant \(C_{0}\in(0,\,\infty)\) depends at most on $n$, $p$, $\lambda$, $\Lambda$, $M$, and $\delta$. The matrix-valued function \({\mathcal A}_{\delta,\,\varepsilon}(\nabla u_{\varepsilon})\) satisfies
\begin{equation}\label{Eq: Local uniformly elliptic operator on subsol}
\lambda_{\ast}\mathrm{id}\leqslant {\mathcal A}_{\delta,\,\varepsilon}(\nabla u_{\varepsilon})\leqslant \Lambda_{\ast} \mathrm{id} \quad\textrm{in }B_{\rho}(x_{0}),
\end{equation}
where the constants \(0<\lambda_{\ast}\le\Lambda_{\ast}<\infty\) depend at most on $p$, $\lambda$, $\Lambda$, $K$, $M$, $\delta$, but are independent of \(\varepsilon\).
In particular, we have
\begin{equation}\label{Eq: Caccioppoli 1}
\int_{B_{\rho}(x_{0})}\lvert\nabla\mleft[\eta (U_{\delta,\,\varepsilon}-k)_{+}\mright]\rvert^{2}\,{\mathrm d}x\le C\mleft[\int_{B_{\rho}(x_{0})}\lvert\nabla\eta\rvert^{2} (U_{\delta,\,\varepsilon}-k)_{+}^{2}\,{\mathrm d}x+\mu^{2}\int_{A_{k,\,\rho}(x_{0})}\lvert f_{\varepsilon}\rvert^{2}\eta^{2}\,{\mathrm d}x\mright]
\end{equation}
for all \(k\in(0,\,\infty)\) and for any non-negative function \(\eta\in C_{c}^{1}(B_{\rho}(x_{0}))\). Here \(A_{k,\,\rho}(x_{0})\coloneqq \{x\in B_{\rho}(x_{0})\mid U_{\delta,\,\varepsilon}(x)>k\}\), and the constant \(C\in(0,\,\infty)\) depends on \(\lambda_{\ast}\), \(\Lambda_{\ast}\), and \(C_{0}\).
\end{lemma}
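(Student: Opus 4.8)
The plan is to derive both the subsolution property and the Caccioppoli inequality from the weak formulation of Lemma~\ref{Lemma: Weak formulations of approximated equations} by a single, well-chosen weight $\psi$. Concretely, I would apply (\ref{Eq: Weak formulation of regularized equations}) with
\[
\psi(t)\coloneqq \frac{2(t-\delta)_{+}}{t}\quad (t>0),\qquad \psi(0)\coloneqq 0,
\]
which is non-negative, non-decreasing, bounded by $2$, Lipschitz with constant $2/\delta$, and non-differentiable only at $t=\delta$, hence admissible in Lemma~\ref{Lemma: Weak formulations of approximated equations}. The whole point of this choice is the identity $\psi(V_{\varepsilon})V_{\varepsilon}=2(V_{\varepsilon}-\delta)_{+}$ which, together with $\nabla U_{\delta,\,\varepsilon}=2(V_{\varepsilon}-\delta)_{+}\nabla V_{\varepsilon}$, turns the term $J_{1}$ in (\ref{Eq: Definitions of Jk}) into exactly $\int_{B_{\rho}(x_{0})}\langle\nabla^{2}E_{\varepsilon}(\nabla u_{\varepsilon})\nabla U_{\delta,\,\varepsilon}\mid\nabla\zeta\rangle\,{\mathrm d}x$.

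Next, since $\nabla^{2}E_{\varepsilon}(\nabla u_{\varepsilon})$ is positive semidefinite by (\ref{Eq: Hessian estimate for approximated E}) and $\zeta,\psi,\psi'\ge 0$, the terms $J_{2}$ and $J_{3}$ are non-negative, so (\ref{Eq: Weak formulation of regularized equations}) gives $2J_{1}\le\frac{n}{\lambda}(J_{4}+J_{5})+2J_{6}$. On the support of the integrands defining $J_{4},J_{5},J_{6}$ one has $\psi(V_{\varepsilon})\neq 0$, hence $\delta<V_{\varepsilon}\le\delta+\mu\le M$ by (\ref{Eq: Gradient bound in De Giorgi estimate}) (recall Remark~\ref{Rmk: Remark on truncated composition of vector fields}); on this range $\psi(V_{\varepsilon})\le 2$, $\psi'(V_{\varepsilon})V_{\varepsilon}\le 2$, the powers $V_{\varepsilon}^{2-p}$ and $V_{\varepsilon}^{3-p}$ are bounded by a constant depending only on $p,M,\delta$ (distinguishing the sign of $p-2$, resp.\ $p-3$), and $(V_{\varepsilon}-\delta)_{+}\le\mu$. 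Inserting these bounds into $J_{4},J_{5},J_{6}$ yields (\ref{Eq: U is subsolution}) with $C_{0}=C_{0}(n,p,\lambda,\Lambda,M,\delta)$, once one sets $\mathcal{A}_{\delta,\,\varepsilon}(\nabla u_{\varepsilon})\coloneqq\nabla^{2}E_{\varepsilon}(\nabla u_{\varepsilon})$ on $\{V_{\varepsilon}>\delta\}$ and $\mathcal{A}_{\delta,\,\varepsilon}(\nabla u_{\varepsilon})\coloneqq\lambda_{\ast}\mathrm{id}$ on $\{V_{\varepsilon}\le\delta\}$; this modification leaves $J_{1}$ unchanged because $\nabla U_{\delta,\,\varepsilon}=0$ a.e.\ on $\{V_{\varepsilon}\le\delta\}$. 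Then (\ref{Eq: Local uniformly elliptic operator on subsol}) is immediate from (\ref{Eq: Hessian estimate for approximated E}) and $\delta\le V_{\varepsilon}\le M$ on $\{V_{\varepsilon}>\delta\}$, with $\lambda_{\ast}=\lambda\min\{\delta^{p-2},M^{p-2}\}$ and $\Lambda_{\ast}=\Lambda\max\{\delta^{p-2},M^{p-2}\}+K/\delta$.

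Finally, to obtain (\ref{Eq: Caccioppoli 1}) I would run the standard Caccioppoli argument. First extend (\ref{Eq: U is subsolution}) to non-negative test functions in $W_{0}^{1,2}(B_{\rho}(x_{0}))\cap L^{\infty}(B_{\rho}(x_{0}))$ by density (all integrands lie in $L^{1}$, since $U_{\delta,\,\varepsilon}\in L^{\infty}\cap W^{1,2}$, $\mathcal{A}_{\delta,\,\varepsilon}$ is bounded, and $f_{\varepsilon}\in L^{q}\subset L^{2}_{\mathrm{loc}}$ as $q>n\ge2$). Then test with $\zeta=\eta^{2}(U_{\delta,\,\varepsilon}-k)_{+}$ for $\eta\in C_{c}^{1}(B_{\rho}(x_{0}))$, $\eta\ge0$; expanding $\nabla\zeta$, using the uniform ellipticity (\ref{Eq: Local uniformly elliptic operator on subsol}), Young's inequality to absorb both the cross term and the term $C_{0}\mu\int|f_{\varepsilon}|\eta^{2}|\nabla(U_{\delta,\,\varepsilon}-k)_{+}|\,{\mathrm d}x$ into $\tfrac{\lambda_{\ast}}{2}\int\eta^{2}|\nabla(U_{\delta,\,\varepsilon}-k)_{+}|^{2}\,{\mathrm d}x$, and the pointwise bound $(U_{\delta,\,\varepsilon}-k)_{+}\le U_{\delta,\,\varepsilon}\le\mu^{2}$ (which confines the right-hand side integrals to $A_{k,\rho}(x_{0})$ and extracts the factor $\mu^{2}$), gives a bound for $\int\eta^{2}|\nabla(U_{\delta,\,\varepsilon}-k)_{+}|^{2}\,{\mathrm d}x$; adding $2\int|\nabla\eta|^{2}(U_{\delta,\,\varepsilon}-k)_{+}^{2}\,{\mathrm d}x$ produces (\ref{Eq: Caccioppoli 1}) with $C=C(\lambda_{\ast},\Lambda_{\ast},C_{0})$.

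The main obstacle is the second paragraph: engineering $\psi$ so that $J_{1}$ reproduces the bilinear form exactly while $J_{4}$ and $J_{5}$ — which carry the ``bad'' powers $V_{\varepsilon}^{2-p}$ and $V_{\varepsilon}^{3-p}$ reflecting the non-uniform ellipticity of $E_{\varepsilon}$ — remain controlled. This is precisely where the truncation level $\delta$ keeps $V_{\varepsilon}$ bounded away from $0$ on the support of $\psi(V_{\varepsilon})$ and where the upper bound $V_{\varepsilon}\le M$ from (\ref{Eq: Gradient bound in De Giorgi estimate}) is used; the density extension in the last paragraph is routine but deserves explicit mention, since the natural test function $\eta^{2}(U_{\delta,\,\varepsilon}-k)_{+}$ is only $W^{1,2}$, not $W^{1,\infty}$.
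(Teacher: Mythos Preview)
Your argument is correct and follows the same architecture as the paper---apply Lemma~\ref{Lemma: Weak formulations of approximated equations}, discard the non-negative $J_{2},J_{3}$, identify $J_{1}$ with the bilinear form in (\ref{Eq: U is subsolution}), bound $J_{4},J_{5},J_{6}$ using $\delta<V_{\varepsilon}\le M$ on the active set, and then run the standard Caccioppoli test $\zeta=\eta^{2}(U_{\delta,\varepsilon}-k)_{+}$. The one genuine difference is the choice of weight: the paper takes $\psi(t)=(t-\delta)_{+}$, so that $J_{1}$ carries an extra factor $V_{\varepsilon}$ and the resulting coefficient matrix is $\mathcal{A}_{\delta,\varepsilon}=V_{\varepsilon}\,\nabla^{2}E_{\varepsilon}(\nabla u_{\varepsilon})$ on $\{V_{\varepsilon}>\delta\}$ (extended by $\mathrm{id}$ elsewhere), with ellipticity bounds $\lambda_{\ast}=\min\{1,\lambda\delta^{p-1}\}$ and $\Lambda_{\ast}=\max\{1,\Lambda M^{p-1}+K\}$. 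Your weight $\psi(t)=2(t-\delta)_{+}/t$ cancels that factor at the source, so your matrix is $\nabla^{2}E_{\varepsilon}(\nabla u_{\varepsilon})$ itself on the active set, with bounds $\lambda\min\{\delta^{p-2},M^{p-2}\}$ and $\Lambda\max\{\delta^{p-2},M^{p-2}\}+K/\delta$. Either choice yields a uniformly elliptic $\mathcal{A}_{\delta,\varepsilon}$ and the same Caccioppoli constant structure; the paper's version has the minor cosmetic advantage that the upper ellipticity bound does not involve $\delta$, while yours makes $J_{1}$ match the target form without any rescaling.
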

Lemma \ref{Lemma: Caccioppoli-type estimate 1} implies that a function \(U_{\delta,\,\varepsilon}\) is a subsolution in the sense that the inequality (\ref{Eq: U is subsolution}) always hold true for any non-negative test function \(\zeta\). The key point is to make sure that the coefficient matrix \({\mathcal A}_{\delta,\,\varepsilon}(\nabla u_{\varepsilon})\) in (\ref{Eq: U is subsolution}) is uniformly elliptic. This is possible because the function \(U_{\delta,\,\varepsilon}\) is supported in \(\{V_{\varepsilon}>\delta\}\), where the ellipticity ratio of \(\nabla^{2} E_{\varepsilon}(\nabla u_{\varepsilon})\) becomes bonded, uniformly for \(\varepsilon\in(0,\,\delta/8)\). It should be noted again that this uniform ellipticity is substantially dependent on \(\delta\in(0,\,1)\).
\begin{proof}
We write \(B=B_{\rho}(x_{0}),\,A_{k}=A_{k,\,\rho}(x_{0})\) for notational simplicity.
The assumption (\ref{Eq: Gradient bound in De Giorgi estimate}) clearly yields \[U_{\delta,\,\varepsilon}=\lvert{\mathcal G}_{\delta,\,\varepsilon}(\nabla u_{\varepsilon}) \rvert^{2}\le \mu^{2}\quad \textrm{a.e. in }B_{\rho}(x_{0}).\]
We define a set $A\coloneqq \{x\in B\mid V_{\varepsilon}(x)>\delta\}\subset B$. 
We claim that the desired matrix \(A_{\delta,\,\varepsilon}(\nabla u_{\varepsilon})\) is given by
\begin{equation}\label{Eq: Determination of A}
{\mathcal A}_{\delta,\,\varepsilon}(\nabla u_{\varepsilon})\coloneqq \chi_{A}V_{\varepsilon}\cdot \nabla^{2} E_{\varepsilon}(\nabla u_{\varepsilon})+\chi_{B\setminus A}\cdot \mathrm{id}.
\end{equation}
Here for a measurable set $E\subset B$, the function $\chi_{E}\colon B\rightarrow \{\,0,\,1\,\}$ denotes the characteristic function of $E$,
\[\textrm{i.e.}, \chi_{E}(x)\coloneqq \mleft\{\begin{array}{cc}
1 & (x\in E),\\ 0 & (x\in B\setminus E).
\end{array} \mright.\]
We prove this claim by applying Lemma \ref{Lemma: Weak formulations of approximated equations} with \(\psi(t)\coloneqq (t-\delta)_{+}\,(0\le t<\infty)\). Under this setting, there holds \(\psi(V_{\varepsilon})=\lvert{\mathcal G}_{\delta,\,\varepsilon}(\nabla u_{\varepsilon})\rvert\).
Therefore, we recall Remark \ref{Rmk: Remark on truncated composition of vector fields} and use (\ref{Eq: Gradient bound in De Giorgi estimate}) to get
\[\mleft\{\begin{array}{ccccc}\displaystyle\frac{\psi(V_{\varepsilon})}{V_{\varepsilon}^{p-2}}&= &\displaystyle\frac{\lvert{\mathcal G}_{\delta,\,\varepsilon}(\nabla u_{\varepsilon})\rvert V_{\varepsilon}}{V_{\varepsilon}^{p-1}} & \le &\displaystyle\frac{\mu M}{\delta^{p-1}} ,\\ \displaystyle\frac{V_{\varepsilon}^{2}}{V_{\varepsilon}^{p-1}}\cdot\psi^{\prime}(V_{\varepsilon}) &\le & \displaystyle\frac{M^{2}}{V_{\varepsilon}^{p-1}}\cdot \chi_{A} &\le &\displaystyle\frac{M^{2}}{\delta^{p-1}},\\ \psi(V_{\varepsilon})V_{\varepsilon} & \le & \mu M, & &\end{array}\mright.\]
a.e. in $B$.
Hence, we obtain
\[J_{4}+J_{5}+J_{6}\le C(n,\,p,\,M,\,\delta)\mleft(\int_{B}\lvert f_{\varepsilon}\rvert^{2}\zeta\,{\mathrm d}x+\mu\int_{B}\lvert f_{\varepsilon}\rvert\lvert \nabla \zeta\rvert\,{\mathrm d}x\mright).\]
Since the function $U_{\delta,\,\varepsilon}$ vanishes on $B\setminus A$ and the identity \(\nabla U_{\delta,\,\varepsilon}=2\psi(V_{\varepsilon})\nabla V_{\varepsilon}\) holds, we obtain
\[J_{1}=\int_{A}\mleft\langle\nabla^{2}E_{\varepsilon}(\nabla u_{\varepsilon})\nabla V_{\varepsilon}\mathrel{}\middle|\mathrel{}\nabla\zeta \mright\rangle \psi(V_{\varepsilon})V_{\varepsilon}\,{\mathrm d}x=\frac{1}{2}\int_{B}\mleft\langle {\mathcal A}_{\delta,\,\varepsilon}(\nabla u_{\varepsilon})\nabla U_{\delta,\,\varepsilon}\mathrel{}\middle|\mathrel{}\nabla\zeta\mright\rangle\,{\mathrm d}x.\]
We also note that the integrals \(J_{2},\,J_{3}\) defined by (\ref{Eq: Definitions of Jk}) are non-negative by (\ref{Eq: positivity of J2-J3}). Hence by discarding these integrals, we are able to conclude (\ref{Eq: U is subsolution}).
Over the set \(A\subset B\), we have
\[V_{\varepsilon}\cdot\nabla^{2}E_{\varepsilon}(\nabla u_{\varepsilon})\geqslant \lambda V_{\varepsilon}^{p-1}\mathrm{id}\geqslant \lambda\delta^{p-1}\mathrm{id},\quad\textrm{and}\]
\[V_{\varepsilon}\cdot\nabla^{2}E_{\varepsilon}(z)\leqslant \mleft(\Lambda V_{\varepsilon}^{p-1}+K\mright)\mathrm{id}\leqslant \mleft(\Lambda M^{p-1}+K\mright)\mathrm{id}.\]
Here we have used (\ref{Eq: Gradient bound in Growth estimate}).
Therefore, by setting
\[\mleft\{\begin{array}{rcl}
\lambda_{\ast}(p,\,\lambda,\,\delta)&\coloneqq&\min\mleft\{\,1,\,\lambda\delta^{p-1}\,\mright\},\\ \Lambda_{\ast}(p,\,\Lambda,\,K,\,M)&\coloneqq& \max\mleft\{\,1,\,\Lambda M^{p-1}+K\,\mright\},
\end{array}   \mright.\]
we have (\ref{Eq: Local uniformly elliptic operator on subsol}).

By approximation, we may test non-negative function \(\zeta\coloneqq \eta^{2}(U_{\delta,\,\varepsilon}-k)_{+}\in W_{0}^{1,\,2}(B)\) into (\ref{Eq: U is subsolution}), since it is compactly supported.
Here we note that \(\zeta\) vanishes in \(B\setminus A_{k}\), and there holds \((U_{\delta,\,\varepsilon}-k)_{+}\le U_{\delta,\,\varepsilon}\le \mu^{2}\) in $B$.
Hence by (\ref{Eq: Local uniformly elliptic operator on subsol}) and Young's inequality, we have
\begin{align*}
&\lambda_{\ast}\int_{B}\lvert\nabla (U_{\delta,\,\varepsilon}-k)_{+}\rvert\eta^{2}\,{\mathrm d}x \\&\le \int_{B}\eta^{2}\mleft\langle {\mathcal A}_{\delta,\,\varepsilon}(\nabla u_{\varepsilon})\nabla (U_{\delta,\,\varepsilon}-k)_{+}\mathrel{}\middle|\mathrel{}\nabla(U_{\delta,\,\varepsilon}-k)_{+}\mright\rangle\,{\mathrm d}x\\&\le -2\int_{B}\eta (U_{\delta,\,\varepsilon}-k)_{+}\mleft\langle {\mathcal A}_{\delta,\,\varepsilon}(\nabla u_{\varepsilon})\nabla (U_{\delta,\,\varepsilon}-k)_{+}\mathrel{}\middle|\mathrel{}\nabla\eta\mright\rangle\,{\mathrm d}x\\&\quad +C_{0}\mleft[\mu^{2}\int_{A_{k}}\lvert f_{\varepsilon}\rvert^{2}\eta^{2}\,{\mathrm d}x+\mu\int_{A_{k}}\lvert f_{\varepsilon}\rvert\mleft(\eta^{2}\lvert \nabla(U_{\delta,\,\varepsilon}-k)_{+}\rvert+2\eta\lvert\nabla\eta\rvert(U_{\delta,\,\varepsilon}-k)_{+} \mright)\,{\mathrm d}x\mright]\\&\le \frac{\lambda_{\ast}}{2}\int_{B}\lvert\nabla (U_{\delta,\,\varepsilon}-k)_{+}\rvert\eta^{2}\,{\mathrm d}x\\&\quad +C(\lambda_{\ast},\,\Lambda_{\ast},\,C_{0})\mleft[\int_{B}\lvert\nabla\eta\rvert^{2} (U_{\delta,\,\varepsilon}-k)_{+}^{2}\,{\mathrm d}x+\mu^{2}\int_{A_{k}}\lvert f_{\varepsilon}\rvert^{2}\eta^{2}\,{\mathrm d}x\mright].
\end{align*}
From this, we are able to deduce (\ref{Eq: Caccioppoli 1}).
\end{proof}
The Caccioppoli-type estimate (\ref{Eq: Caccioppoli 1}) suggests that the function \(U_{\delta,\,\varepsilon}\) should be in a certain De Giorgi classes. This fact enables us to show a variety of levelset lemmata for $U_{\delta,\,\varepsilon}$ (Lemmata \ref{Lemma: Oscillation lemma 1}--\ref{Lemma: Oscillation lemma 2}). The results below are applied to complete the proof of Proposition \ref{Prop: De Giorgi Oscillation lemma}. 
\begin{lemma}\label{Lemma: Oscillation lemma 1}
Let $u_{\varepsilon}$ be a weak solution to (\ref{Eq: Reg Eq}) in $\Omega$.
Assume that positive numbers $\delta$, $\varepsilon$, $\mu$, $F$, $M$, and an open ball $B_{\rho}(x_{0})\Subset\Omega$ satisfy (\ref{Eq: Range of delta-epsilon}), (\ref{Eq: Bound on external force term et al.}) and (\ref{Eq: Gradient bound in De Giorgi estimate}). Then, there exists a number \({\hat \nu}\in(0,\,1)\), depending at most on $n$, $p$, $q$, $\lambda$, $\Lambda$, $K$, $M$, and $\delta$, such that if there holds
\begin{equation}\label{Eq: Measure assumption on Oscillation lemma 1}
\frac{\mleft\lvert\{x\in B_{\rho/2}(x_{0})\mid U_{\delta,\,\varepsilon}(x)>(1-\theta)\mu^{2}\}\mright\rvert}{\lvert B_{\rho/2}(x_{0})\rvert}\le {\hat\nu},
\end{equation}
for some \(\theta\in(0,\,1)\), then we have either
\[\mu^{2}<\frac{\rho^{\beta}}{\theta}\quad \textrm{or}\quad \esssup_{B_{\rho/4}(x_{0})}\,U_{\delta,\,\varepsilon}\le \mleft(1-\frac{\theta}{2}\mright)\mu^{2}.\]
\end{lemma}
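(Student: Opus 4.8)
The plan is to prove Lemma \ref{Lemma: Oscillation lemma 1} by a standard De Giorgi iteration applied to the subsolution $U_{\delta,\,\varepsilon}$, using the Caccioppoli-type bound (\ref{Eq: Caccioppoli 1}) from Lemma \ref{Lemma: Caccioppoli-type estimate 1}. First I would assume that the first alternative fails, i.e. $\mu^{2}\ge\rho^{\beta}/\theta$, equivalently $F^{2}\rho^{2\beta}\le F^{2}\theta\rho^{\beta}\mu^{2}$ up to absorbing constants via $\rho\le 1$; more precisely the point is that $\rho^{\beta}\le\theta\mu^{2}$, so all forcing terms of the form $\rho^{2\beta}$ can be dominated by $\theta^{2}\mu^{4}$. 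Then the goal is to show the second alternative $\esssup_{B_{\rho/4}(x_{0})}U_{\delta,\,\varepsilon}\le(1-\theta/2)\mu^{2}$ holds provided the superlevel set $\{U_{\delta,\,\varepsilon}>(1-\theta)\mu^{2}\}$ occupies a sufficiently small fraction ${\hat\nu}$ of $B_{\rho/2}(x_{0})$.

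The iteration setup would be the usual one. I would introduce the decreasing radii $\rho_{j}\coloneqq\rho/4+\rho/4\cdot 2^{-j}$ (so $\rho_{0}=\rho/2$, $\rho_{j}\to\rho/4$), the balls $B_{j}\coloneqq B_{\rho_{j}}(x_{0})$, the increasing levels $k_{j}\coloneqq(1-\theta/2)\mu^{2}-(\theta/2)\mu^{2}2^{-j}$ (so $k_{0}=(1-\theta)\mu^{2}$, $k_{j}\nearrow(1-\theta/2)\mu^{2}$), and the energy quantities
\[
Y_{j}\coloneqq\fint_{B_{j}}(U_{\delta,\,\varepsilon}-k_{j})_{+}^{2}\,{\mathrm d}x.
\]
With cutoff functions $\eta_{j}$ equal to $1$ on $B_{j+1}$, supported in $B_{j}$, with $|\nabla\eta_{j}|\lesssim 2^{j}/\rho$, the Caccioppoli inequality (\ref{Eq: Caccioppoli 1}) together with the Sobolev inequality applied to $\eta_{j}(U_{\delta,\,\varepsilon}-k_{j})_{+}$ gives an estimate of $Y_{j+1}$ in terms of $Y_{j}$ and the measure of the superlevel set $A_{k_{j},\,\rho_{j}}(x_{0})$. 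The forcing term $\mu^{2}\int_{A_{k_{j}}}|f_{\varepsilon}|^{2}\eta_{j}^{2}$ is handled by H\"older's inequality using $f_{\varepsilon}\in L^{q}$ with $q>n$ (cf. (\ref{Eq: Bound on external force term et al.}) and the definition (\ref{Eq: Definition of betas}) of $\beta$), producing a factor $F^{2}\mu^{2}\rho^{2\beta}|A_{k_{j}}|^{1-2/q'}/|B_{j}|$ roughly, which under the standing hypothesis $\rho^{\beta}\le\theta\mu^{2}$ is absorbed into the main terms. Using Chebyshev to bound $|A_{k_{j}}|/|B_{j}|\le C\,4^{j}(\theta\mu^{2})^{-2}Y_{j}$ and the fact that on $A_{k_{j}}$ one has $U_{\delta,\,\varepsilon}-k_{j}\le\mu^{2}$, one arrives at a recursive inequality of the form
\[
Y_{j+1}\le C\,b^{j}\,(\theta\mu^{2})^{-\sigma}\,Y_{j}^{1+\sigma}
\]
for some $b>1$, $\sigma>0$ depending on $n$ and $q$. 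By the standard fast-geometric-convergence lemma (e.g. \cite[Lemma 4.1, Chapter I]{MR1465184}-type statement, or as in \cite[Chapter 10]{MR2566733}), $Y_{j}\to 0$ provided $Y_{0}\le C^{-1/\sigma}b^{-1/\sigma^{2}}(\theta\mu^{2})$, i.e. provided $Y_{0}$ is small enough relative to $\theta\mu^{2}$. Since $Y_{0}=\fint_{B_{\rho/2}}(U_{\delta,\,\varepsilon}-(1-\theta)\mu^{2})_{+}^{2}\le(\theta\mu^{2})^{2}\cdot|\{U_{\delta,\,\varepsilon}>(1-\theta)\mu^{2}\}\cap B_{\rho/2}|/|B_{\rho/2}|$, the measure hypothesis (\ref{Eq: Measure assumption on Oscillation lemma 1}) with a suitably small absolute constant ${\hat\nu}$ forces $Y_{0}\le{\hat\nu}(\theta\mu^{2})^{2}$, and choosing ${\hat\nu}$ small enough (depending only on $n,p,q,\lambda,\Lambda,K,M,\delta$ through the constants $\lambda_{\ast},\Lambda_{\ast},C_{0}$) yields the smallness needed for the iteration. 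Passing $j\to\infty$ gives $\fint_{B_{\rho/4}}(U_{\delta,\,\varepsilon}-(1-\theta/2)\mu^{2})_{+}^{2}=0$, hence $\esssup_{B_{\rho/4}(x_{0})}U_{\delta,\,\varepsilon}\le(1-\theta/2)\mu^{2}$.

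The main obstacle I anticipate is bookkeeping the $\theta$-dependence carefully so that the threshold ${\hat\nu}$ comes out independent of $\theta$: one must check that every appearance of $\theta$ in the denominators (from $k_{j+1}-k_{j}\sim\theta\mu^{2}2^{-j}$ and from the forcing term) is exactly matched by a $\theta$ in the numerator coming from $(U_{\delta,\,\varepsilon}-k_{j})_{+}\le\theta\mu^{2}/2$ on the relevant level, so that $Y_{j}/(\theta\mu^{2})^{2}$ is the right dimensionless iteration variable and the recursion closes with constants free of $\theta$. The role of the dichotomy is precisely to let $\rho^{\beta}/\theta\le\mu^{2}$ kill the inhomogeneous term at the same scale. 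A minor technical point is justifying that $\zeta=\eta_{j}^{2}(U_{\delta,\,\varepsilon}-k_{j})_{+}$ is an admissible test function in (\ref{Eq: U is subsolution}); this follows by approximation since $U_{\delta,\,\varepsilon}\in L^{\infty}\cap W^{1,\,2}$ and $\zeta$ is compactly supported, exactly as noted in the proof of Lemma \ref{Lemma: Caccioppoli-type estimate 1}. Apart from these, the argument is entirely routine De Giorgi iteration; detailed computations of this kind are deferred to the appendix as the authors indicate in Section \ref{Subsect: Outlines of the paper}.
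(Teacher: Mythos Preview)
Your proposal is correct and follows essentially the same route as the paper: a De Giorgi iteration on the nested balls $\rho_{j}=\rho/4+\rho/4\cdot 2^{-j}$ and levels $k_{j}=(1-\theta/2)\mu^{2}-(\theta/2)\mu^{2}2^{-j}$, driven by the Caccioppoli estimate (\ref{Eq: Caccioppoli 1}) and the Sobolev embedding, with the dichotomy $\mu^{2}\ge\rho^{\beta}/\theta$ used exactly as you describe to absorb the forcing term. The only cosmetic difference is the iteration variable: the paper runs the recursion on the measure ratios $R_{i}=\lvert A_{i}\rvert/\lvert B_{i}\rvert$ (obtaining $R_{i+1}\le C_{\ast}16^{i}R_{i}^{1+\varsigma}$ and setting ${\hat\nu}=C_{\ast}^{-1/\varsigma}16^{-1/\varsigma^{2}}$), whereas you iterate on the truncated energies $Y_{j}$; both variants are standard and yield a $\theta$-independent threshold ${\hat\nu}$, as you correctly anticipate.
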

\begin{lemma}\label{Lemma: Oscillation lemma 2}
Under the assumptions of Proposition \ref{Prop: De Giorgi Oscillation lemma}, for every integer \(i_{\star}\in{\mathbb N}\), we have either
\[\mu^{2}<\frac{2^{i_{\star}}\rho^{\beta}}{\nu}\quad \textrm{or}\quad \frac{\mleft\lvert\mleft\{x\in B_{\rho/2}(x_{0})\mathrel{}\middle|\mathrel{}U_{\delta,\,\varepsilon}(x)>\mleft(1-2^{-i_{\star}}\nu \mright)\mu^{2}\mright\} \mright\rvert}{\lvert B_{\rho/2}(x_{0})\rvert}<\frac{C_{\dagger}}{\nu\sqrt{i_{\star}}}.\]
Here the constant \(C_{\dagger}\in(0,\,\infty)\) depends at most on $n$, $p$, $q$, $\lambda$, $\Lambda$, $K$, $F$, $M$, and $\delta$.
\end{lemma}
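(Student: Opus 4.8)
The plan is to run De Giorgi's measure--shrinking iteration for the scalar function $U_{\delta,\,\varepsilon}=\mleft(V_{\varepsilon}-\delta\mright)_{+}^{2}$, using the Caccioppoli bound (\ref{Eq: Caccioppoli 1}) of Lemma \ref{Lemma: Caccioppoli-type estimate 1} as the energy input and the classical discrete isoperimetric inequality (De Giorgi's lemma; see \cite[Chapter 10, \S 4--5]{MR2566733}, \cite[Chapter 3]{MR3887613}, \cite[Chapter 5]{MR3099262}) as the device that converts it into a decay of superlevel sets; a self-contained treatment is deferred to Section \ref{Sect: Appendix De Giorgi-type levelset argument}. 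Fix $i_{\star}\in{\mathbb N}$, set $k_{i}\coloneqq \mleft(1-2^{-i}\nu\mright)\mu^{2}$ for $i\in\{\,0,\,\dots\,,\,i_{\star}\,\}$, and let $A_{i}\coloneqq \{x\in B_{\rho/2}(x_{0})\mid U_{\delta,\,\varepsilon}(x)>k_{i}\}$, which decrease in $i$. The first ingredient is the elementary inclusion $A_{i}\subset S_{\rho/2,\,\mu,\,\nu}(x_{0})$: indeed $U_{\delta,\,\varepsilon}>k_{i}$ forces $(V_{\varepsilon}-\delta)_{+}>\sqrt{1-2^{-i}\nu}\,\mu\ge (1-\nu)\mu$, hence $V_{\varepsilon}-\delta>(1-\nu)\mu$. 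Consequently (\ref{Eq: Levelset assumption}) upgrades to the uniform lower bound $\lvert B_{\rho/2}(x_{0})\setminus A_{i}\rvert\ge \nu\lvert B_{\rho/2}(x_{0})\rvert$ for every $i\le i_{\star}$, which is exactly what the isoperimetric inequality requires.

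Next I would settle the dichotomy. If $\mu^{2}<2^{i_{\star}}\rho^{\beta}/\nu$ the first alternative holds and there is nothing to prove; so assume $\mu^{2}\ge 2^{i_{\star}}\rho^{\beta}/\nu$, equivalently $\rho^{2\beta}\le 4^{-i_{\star}}\nu^{2}\mu^{4}$. Applying (\ref{Eq: Caccioppoli 1}) with $k=k_{i}$ and a cutoff $\eta\equiv 1$ on $B_{\rho/2}(x_{0})$, supported in $B_{\rho}(x_{0})$ with $\lvert\nabla\eta\rvert\le 4/\rho$ (admissible since (\ref{Eq: Gradient bound in De Giorgi estimate}) holds on $B_{\rho}(x_{0})$), together with $(U_{\delta,\,\varepsilon}-k_{i})_{+}\le \mu^{2}-k_{i}=2^{-i}\nu\mu^{2}$ and H\"{o}lder's inequality for the $f_{\varepsilon}$-term (recall $\beta=1-n/q$ when $q<\infty$ and $\beta<1$ arbitrary when $q=\infty$, as in (\ref{Eq: Definition of betas}), and use $\rho\le 1$ in the latter case), I would obtain
\[
\int_{B_{\rho/2}(x_{0})}\mleft\lvert\nabla (U_{\delta,\,\varepsilon}-k_{i})_{+}\mright\rvert^{2}\,{\mathrm d}x\le C\rho^{n-2}\mleft(4^{-i}\nu^{2}\mu^{4}+\mu^{2}F^{2}\rho^{2\beta}\mright)\le C\rho^{n-2}\,4^{-i}\nu^{2}\mu^{4}
\]
for all $i\in\{\,0,\,\dots\,,\,i_{\star}\,\}$, where the standing bound $\mu\le M$ and the inequality $\rho^{2\beta}\le 4^{-i_{\star}}\nu^{2}\mu^{4}\le 4^{-i}\nu^{2}\mu^{4}$ absorb the external-force contribution; here $C$ depends only on $n$, $p$, $q$, $\lambda$, $\Lambda$, $K$, $F$, $M$, $\delta$.

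The iteration is then routine. For $i\in\{\,0,\,\dots\,,\,i_{\star}-1\,\}$, the discrete isoperimetric inequality on $B_{\rho/2}(x_{0})$ applied to $U_{\delta,\,\varepsilon}$ between the levels $k_{i}<k_{i+1}$, combined with the lower bound on $\lvert B_{\rho/2}(x_{0})\setminus A_{i}\rvert$ and the Cauchy--Schwarz inequality, yields
\[
(k_{i+1}-k_{i})\lvert A_{i+1}\rvert\le \frac{C\rho}{\nu}\,\lvert A_{i}\setminus A_{i+1}\rvert^{1/2}\mleft(\int_{B_{\rho/2}(x_{0})}\mleft\lvert\nabla (U_{\delta,\,\varepsilon}-k_{i})_{+}\mright\rvert^{2}\,{\mathrm d}x\mright)^{1/2}.
\]
Inserting the gradient estimate above and $k_{i+1}-k_{i}=2^{-(i+1)}\nu\mu^{2}$, the powers of $2^{-i}$ and $\mu$ cancel, leaving $\lvert A_{i+1}\rvert^{2}\le C\nu^{-2}\rho^{n}\lvert A_{i}\setminus A_{i+1}\rvert$. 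Since the $A_{i}$ are nested, $\lvert A_{i_{\star}}\rvert\le \lvert A_{i+1}\rvert$ for each such $i$; summing over $i=0,\dots,i_{\star}-1$ and telescoping $\sum_{i}\lvert A_{i}\setminus A_{i+1}\rvert=\lvert A_{0}\setminus A_{i_{\star}}\rvert\le \lvert B_{\rho/2}(x_{0})\rvert$ gives $i_{\star}\lvert A_{i_{\star}}\rvert^{2}\le C\nu^{-2}\rho^{n}\lvert B_{\rho/2}(x_{0})\rvert$. Dividing by $\lvert B_{\rho/2}(x_{0})\rvert^{2}$ (note $\rho^{n}\simeq \lvert B_{\rho/2}(x_{0})\rvert$) and taking square roots produces $\lvert A_{i_{\star}}\rvert/\lvert B_{\rho/2}(x_{0})\rvert\le C_{\dagger}/(\nu\sqrt{i_{\star}})$, which is the second alternative (the strict inequality is recovered by enlarging $C_{\dagger}$).

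I expect the only genuine difficulty to be bookkeeping rather than conceptual: one must carry the two competing contributions in the Caccioppoli bound --- the geometric term $4^{-i}\nu^{2}\mu^{4}$ and the source term $\mu^{2}F^{2}\rho^{2\beta}$ --- consistently through the whole iteration, and the dichotomy in the statement is precisely the threshold at which the latter is dominated by the former \emph{uniformly for $i\le i_{\star}$}. Getting the scaling in the isoperimetric inequality right (the extra power of $\rho$, so that $\lvert A_{i+1}\rvert^{2}\le C\nu^{-2}\rho^{n}\lvert A_{i}\setminus A_{i+1}\rvert$ is dimensionally consistent) and checking that all constants remain independent of $\varepsilon$ --- which they do, thanks to the $\varepsilon$-uniform ellipticity of $U_{\delta,\,\varepsilon}$ on $\{V_{\varepsilon}>\delta\}$ furnished by Lemma \ref{Lemma: Caccioppoli-type estimate 1} --- are the points that need care.
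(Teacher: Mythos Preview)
Your proposal is correct and follows essentially the same route as the paper's proof: set $k_{i}=(1-2^{-i}\nu)\mu^{2}$, use (\ref{Eq: Levelset assumption}) to secure the lower measure bound $\lvert B_{\rho/2}(x_{0})\setminus A_{i}\rvert\ge \nu\lvert B_{\rho/2}(x_{0})\rvert$, feed (\ref{Eq: Caccioppoli 1}) with a cutoff from $B_{\rho}(x_{0})$ to $B_{\rho/2}(x_{0})$ into the De~Giorgi isoperimetric inequality, and telescope. The only cosmetic difference is that you verify $A_{i}\subset S_{\rho/2,\,\mu,\,\nu}(x_{0})$ directly via $\sqrt{1-2^{-i}\nu}\ge 1-\nu$, whereas the paper uses the nesting $A_{i}\subset A_{0}$ first; and you absorb the source term $\mu^{2}F^{2}\rho^{2\beta}$ into $C\cdot 4^{-i}\nu^{2}\mu^{4}$ by invoking $\mu\le M$ together with $\rho^{2\beta}\le 4^{-i}\nu^{2}\mu^{4}$, while the paper keeps the factor $\bigl[\,(2^{i}\rho^{\beta}/(\nu\mu^{2}))^{2}+1\,\bigr]$ explicit and bounds it by $2$ --- these lead to the same estimate with the same constant dependencies.
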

Following standard arguments as in \cite[Chapter 10, Propositions 4.1 \& 5.1]{MR2566733}, it is easy to prove Lemmata \ref{Lemma: Oscillation lemma 1}--\ref{Lemma: Oscillation lemma 2}.
For the reader's convenience, we would like to provide the proofs in the appendix (Section \ref{Sect: Appendix De Giorgi-type levelset argument}).

Combining these lemmata, we give the proof of Proposition \ref{Prop: De Giorgi Oscillation lemma}.
\begin{proof}
We choose the constants \({\hat\nu}\in(0,\,1)\) and \(C_{\dagger}\in(0,\,\infty)\), depending at most on \(n\), \(p\), \(q\), \(\lambda\), \(\Lambda\), \(K\), \(F\), \(M\), and \(\delta\), as in Lemmata \ref{Lemma: Oscillation lemma 1}--\ref{Lemma: Oscillation lemma 2}. Corresponding to these numbers, we fix a sufficiently large number \(i_{\star}=i_{\star}(C_{\dagger},\,\nu,\,{\hat\nu})\in{\mathbb N}\) enough to verify
\begin{equation}\label{Eq: Choice of i-star}
\frac{C_{\dagger}}{\nu\sqrt{i_{\star}}}\le {\hat \nu},\quad\textrm{and}\quad 0<2^{-(i_{\star}+1)}<1-2^{-2\beta}.
\end{equation}
Then, the desired constants are given by \(C_{\star}\coloneqq 2^{i_{\star}}/\nu\in \lbrack 1,\,\infty),\,\kappa\coloneqq \sqrt{1-2^{-(i_{\star}+1)}}\in(2^{-\beta},\,1)\). 
In fact, by Lemma \ref{Lemma: Oscillation lemma 2} and (\ref{Eq: Choice of i-star}), we have either \(\mu^{2}<2^{i_{\star}}\rho^{\beta}/\nu=C_{\star}\rho^{\beta}\) or (\ref{Eq: Measure assumption on Oscillation lemma 1}) with \(\theta\coloneqq 2^{-i_{\star}}\nu\). The first case clearly yields (\ref{Eq: Case 1}). The second case enables us to apply Lemma \ref{Lemma: Oscillation lemma 1}, and hence we have either \(\mu^{2}<\rho^{\beta}/\theta=C_{\star}\rho^{\beta}\) or
\[\esssup_{B_{\rho/4}(x_{0})}\,\mleft\lvert{\mathcal G}_{\delta,\,\varepsilon}(\nabla u_{\varepsilon})\mright\rvert^{2}=\esssup_{B_{\rho/4}(x_{0})}\,U_{\delta,\,\varepsilon}\le\mleft(1-\frac{\theta}{2}\mright)\mu^{2}=(\kappa\mu)^{2}.\]
In all the possible cases, we obtain either (\ref{Eq: Case 1}) or (\ref{Eq: Case 2}).
\end{proof}
\section{Appendix: Proofs for some basic estimates}\label{Sect: Appendix De Giorgi-type levelset argument}
In Section \ref{Sect: Appendix De Giorgi-type levelset argument}, we would like to provide the proofs Lemmata \ref{Lemma: Oscillation lemma 1}--\ref{Lemma: Oscillation lemma 2} in Section \ref{Subsect: De Giorgi Oscillation} for the reader's convenience. 
These results can be deduced by the fact that the function \(U_{\delta,\,\varepsilon}\) satisfies Caccioppoli-type energy bounds since it is a weak subsolution to a uniformly elliptic problem (Lemma \ref{Lemma: Caccioppoli-type estimate 1}). We mention that the strategy herein are based on De Giorgi's levelset argument given in \cite[Chapter 10, \S 4--5]{MR2566733}. See also \cite[\S 7]{BDGPdN} as a related item.
 
The proof of Lemma \ref{Lemma: Oscillation lemma 1} is substantially found in \cite[Chapter 10, Proposition 4.1]{MR2566733}.
\begin{proof}
For every \(i\in{\mathbb Z}_{\ge 0}\), we set \[\rho_{i}\coloneqq \frac{\rho}{4}\mleft(1+2^{-i}\mright),\quad k_{i}\coloneqq \mleft(1-\frac{1}{2}\mleft(1+2^{-i}\mright)\theta \mright)\mu^{2},\]
and define measurable sets \(B_{i}\coloneqq B_{\rho_{i}}(x_{0}),\,A_{i}\coloneqq \{ x\in B_{i}\mid U_{\delta,\,\varepsilon}(x)>k_{i}\}\). 

To show Lemma \ref{Lemma: Oscillation lemma 1}, we will find constants \(C_{\ast}=C_{\ast}(n,\,p,\,q,\,\lambda,\,\Lambda,\,K,\,M,\,\delta,\,\theta)\in\lbrack 1,\,\infty)\) and \(\varsigma=\varsigma(n,\,q)\in(0,\,2\beta/n\rbrack\) such that there holds
\begin{equation}\label{Eq: Claim on sequences}
R_{i+1}\le \frac{C_{\ast}}{2}\cdot 4^{2i}\mleft[1+\frac{\rho^{2\beta}}{\theta^{2}\mu^{4}} \mright]R_{i}^{1+\varsigma},\quad \textrm{where }R_{i}\coloneqq \frac{\lvert A_{i}\rvert}{\lvert B_{i}\rvert},
\end{equation}
for every \(i\in{\mathbb Z}_{\ge 0}\). 
For fixed \(i\in{\mathbb Z}_{\ge 0}\), we choose a cutoff function \(\eta\in C_{c}^{1}(B_{i})\) satisfying
\begin{equation}\label{Eq: Choice of cutoff function truncation case}
\eta\equiv 1\textrm{ on }B_{i+1},\quad \lvert \nabla \eta\rvert\le \frac{2}{\rho_{i}-\rho_{i+1}}=\frac{2^{i+4}}{\rho}.
\end{equation}
We also note that
\begin{equation}\label{Eq: Inequalities on truncated functions}
\mleft\{\begin{array}{cccccr}
(U_{\delta,\,\varepsilon}-k_{i})_{+}&\ge &k_{i+1}-k_{i}&=&2^{-(i+2)}\theta\mu^{2} & \textrm{a.e. in }A_{i+1},\\
(U_{\delta,\,\varepsilon}-k_{i})_{+}&\le &\mu^{2}-k_{i}&\le &\theta\mu^{2}& \textrm{a.e. in }B_{\rho}(x_{0}),
\end{array} \mright.
\end{equation}
and
\begin{equation}\label{Eq: Ratio bound on measures of ball}
\frac{\lvert B_{i}\rvert}{\lvert B_{i+1}\rvert}=\mleft(\frac{\rho_{i}}{\rho_{i+1}}\mright)^{n}=\mleft(\frac{1+2^{-i}}{1+2^{-(i+1)}}\mright)^{n}=\mleft(1+\frac{1}{1+2^{i+1}}\mright)^{n}\le 2^{n}
\end{equation}
for every \(i\in{\mathbb Z}_{\ge 0}\). 
We first consider the case \(n\ge 3\). In this case, we may apply the Sobolev embedding \(W_{0}^{1,\,2}(B_{i})\hookrightarrow L^{{\frac{2n}{n+2}}}(B_{i})\) to the function \(\eta(U_{\delta,\,\varepsilon}-k_{i})_{+}\in W_{0}^{1,\,2}(B_{i})\).
By (\ref{Eq: Choice of cutoff function truncation case})--(\ref{Eq: Inequalities on truncated functions}), H\"{o}lder's inequality and the Caccioppoli-type estimate (\ref{Eq: Caccioppoli 1}), we have
\begin{align*}
\lvert A_{i+1}\rvert \cdot 4^{-(i+2)}\theta^{2}\mu^{4}&\le \int_{A_{i+1}}\mleft[\eta(U_{\delta,\,\varepsilon}-k_{i})_{+}\mright]^{2}\,{\mathrm d}x\\&\le C(n)\lvert A_{i}\rvert^{\frac{2}{n}}\int_{B_{i}}\mleft\lvert\nabla \mleft[\eta(U_{\delta,\,\varepsilon}-k_{i})_{+}\mright]\mright\rvert^{2}\,{\mathrm d}x\\&\le C\lvert A_{i}\rvert^{\frac{2}{n}}\mleft[ \int_{B_{i}}\lvert\nabla\eta\rvert^{2} (U_{\delta,\,\varepsilon}-k)_{+}^{2}\,{\mathrm d}x+\mu^{2}\int_{A_{i}}\lvert f_{\varepsilon}\rvert^{2}\eta^{2}\,{\mathrm d}x\mright]\\&\le C\lvert A_{i}\rvert^{\frac{2}{n}}\mleft[\frac{4^{i+4}}{\rho^{2}}\cdot \mleft(\theta\mu^{2}\mright)^{2}\cdot \lvert A_{i}\rvert +F^{2}\lvert A_{i}\rvert^{1-\frac{2}{q}}\mright]\\& \le C\cdot 4^{i}\theta^{2}\mu^{4}\lvert A_{i}\rvert^{1+\frac{2\beta}{n}}\mleft[\frac{\lvert A_{i}\rvert^{\frac{2}{n}(1-\beta)}}{\rho^{2}}+\frac{1}{\theta^{2}\mu^{4}} \mright],
\end{align*}
which yields
\[\lvert A_{i+1}\rvert\le C\cdot 4^{2i}\lvert A_{i}\rvert^{1+\frac{2\beta}{n}}\mleft[\frac{\lvert A_{i}\rvert^{\frac{2}{n}(1-\beta)}}{\rho^{2}}+\frac{1}{\theta^{2}\mu^{4}} \mright]\]
for some constant \(C=C(n,\,p,\,q,\,\lambda,\,\Lambda,\,K,\,F,\,M,\,\delta)\in(0,\,\infty)\).
From this estimate and (\ref{Eq: Ratio bound on measures of ball}), the desired inequality (\ref{Eq: Claim on sequences}) follows with \(\varsigma=2\beta/n\). In fact, we obtain
\begin{align*}
R_{i+1}&\le 2^{n}\cdot\frac{\lvert A_{i+1}\rvert}{\lvert B_{i}\rvert^{1+\varsigma}}\cdot\lvert B_{i}\rvert^{\varsigma} \\&\le C\cdot 4^{2i}\cdot\mleft[\frac{\lvert A_{i}\rvert^{\frac{2}{n}(1-\beta)}}{\rho^{2}}+\frac{1}{\theta^{2}\mu^{4}}\mright]\lvert B\rvert^{\varsigma}\cdot\mleft(\frac{\lvert A_{i}\rvert}{\lvert B_{i}\rvert}\mright)^{1+\varsigma}\\&\le \frac{C_{\ast}}{2}\cdot 4^{2i}\cdot \mleft[1+\frac{\rho^{2\beta}}{\theta^{2}\mu^{4}}\mright]R_{i}^{1+\varsigma}\quad \textrm{for every }i\in{\mathbb Z}_{\ge 0},
\end{align*}
where we have used \(\lvert A_{i}\rvert\le \lvert B_{i}\rvert=C(n)\rho_{i}^{n}\le C(n)\rho^{n}\).
In the remaining case \(n=2\), we choose a sufficiently large constant \(\sigma\in(1,\,\infty)\) satisfying
\begin{equation}\label{Eq: Determination of sigma}
\varsigma\coloneqq \beta-\frac{1}{\sigma}=1-\frac{2}{q}-\frac{1}{\sigma}>0,
\end{equation}
and apply the alternative Sobolev embedding \(W_{0}^{1,\,2}(B_{i})\hookrightarrow L^{2\sigma}(B_{i})\) to the function \(\eta(U_{\delta,\,\varepsilon}-k_{i})_{+}\in W_{0}^{1,\,2}(B_{i})\). Then, we similarly obtain
\begin{align*}
\lvert A_{i+1}\rvert \cdot 4^{-(i+2)}\theta^{2}\mu^{4}&\le \int_{A_{i+1}}\mleft[\eta(U_{\delta,\,\varepsilon}-k_{i})_{+}\mright]^{2}\,{\mathrm d}x\\&\le\lvert A_{i}\rvert^{1-\frac{1}{\sigma}}\mleft[\int_{B_{i}}\mleft[\eta(U_{\delta,\,\varepsilon}-k_{i})_{+}\mright]^{2\sigma} \,{\mathrm d}x \mright]^{1/\sigma} \\&\le C(n,\,\sigma) \lvert A_{i}\rvert^{1-\frac{1}{\sigma}}\cdot \rho_{i}^{\frac{2}{\sigma}}\int_{B_{i}}\mleft\lvert\nabla \mleft[\eta(U_{\delta,\,\varepsilon}-k_{i})_{+}\mright]\mright\rvert^{2}\,{\mathrm d}x\\&\le C \lvert A_{i}\rvert^{1-\frac{1}{\sigma}}\cdot \rho_{i}^{\frac{2}{\sigma}}\mleft[ \int_{B_{i}}\lvert\nabla\eta\rvert^{2} (U_{\delta,\,\varepsilon}-k)_{+}^{2}\,{\mathrm d}x+\mu^{2}\int_{A_{i}}\lvert f_{\varepsilon}\rvert^{2}\eta^{2}\,{\mathrm d}x\mright]\\&\le C\lvert A_{i}\rvert^{1-\frac{1}{\sigma}}\cdot \rho_{i}^{\frac{2}{\sigma}}\mleft[\frac{4^{i+4}}{\rho^{2}}\cdot \mleft(\theta\mu^{2}\mright)^{2}\cdot \lvert A_{i}\rvert +F^{2}\lvert A_{i}\rvert^{1-\frac{2}{q}}\mright]\\ & \le C\cdot 4^{i}\theta^{2}\mu^{4}\lvert A_{i}\rvert^{2-\frac{2}{q}-\frac{1}{\sigma}}\cdot \rho_{i}^{\frac{2}{\sigma}}\mleft[\frac{\lvert A_{i}\rvert^{\frac{2}{q}}}{\rho^{2}}+\frac{1}{\theta^{2}\mu^{4}} \mright].
\end{align*}
Recall (\ref{Eq: Determination of sigma}), and we get
\[\lvert A_{i+1}\rvert\le C\cdot 4^{2i}\lvert A_{i}\rvert^{1+\varsigma}\cdot \rho_{i}^{\frac{2}{\sigma}}\mleft[\frac{\lvert A_{i}\rvert^{1-\beta}}{\rho^{2}}+\frac{1}{\theta^{2}\mu^{4}} \mright]\]
for some \(C=C(n,\,p,\,q,\,\lambda,\,\Lambda,\,K,\,M,\,\delta)\in(0,\,\infty)\).
The desired estimate (\ref{Eq: Claim on sequences}) follows from this estimate with \(\varsigma\) given by (\ref{Eq: Determination of sigma}).
In fact, by (\ref{Eq: Ratio bound on measures of ball})--(\ref{Eq: Determination of sigma}) we obtain
\begin{align*}
R_{i+1}&\le 2^{2}\cdot\frac{\lvert A_{i+1}\rvert}{\lvert B_{i}\rvert^{1+\varsigma}}\cdot\lvert B_{i}\rvert^\varsigma \\&\le C\cdot 4^{2i}\cdot\mleft[\frac{\lvert A_{i}\rvert^{1-\beta}}{\rho^{2}}+\frac{1}{\theta^{2}\mu^{4}}\mright]\rho_{i}^{\frac{2}{\sigma}}\lvert B\rvert^{\varsigma}\cdot\mleft(\frac{\lvert A_{i}\rvert}{\lvert B_{i}\rvert}\mright)^{1+\varsigma}\\&\le \frac{C_{\ast}}{2}\cdot 4^{2i}\cdot \mleft[1+\frac{\rho^{2\beta}}{\theta^{2}\mu^{4}}\mright]R_{i}^{1+\varsigma}\quad \textrm{for every }i\in{\mathbb Z}_{\ge 0},
\end{align*}
where we have used \(\lvert A_{i}\rvert\le \lvert B_{i}\rvert=C\rho_{i}^{n}\le C\rho^{2}\) and \(\rho_{i}^{\frac{2}{\sigma}}\lvert B\rvert^{\varsigma}\le C\rho^{2\beta}\).

Now we determine the constant \({\hat\nu}={\hat\nu}(n,\,p,\,q,\,\lambda,\,\Lambda,\,K,\,F,\,M,\,\delta)\in(0,\,1)\) by
\[{\hat\nu}\coloneqq C_{\ast}^{-\frac{1}{\varsigma}}16^{-\frac{1}{\varsigma^{2}}}.\]
Let \(\mu\) satisfy \(\mu\ge \rho^{\beta}/\theta\). Then, by (\ref{Eq: Measure assumption on Oscillation lemma 1}) and (\ref{Eq: Claim on sequences}), we have already known that
\[R_{i+1}\le C_{\ast}\cdot 16^{i}\cdot R_{i}^{1+\varsigma}\quad \textrm{for every }k\in{\mathbb Z}_{\ge 0},\]
and \(R_{0}\le{\hat\nu}\). By applying \cite[Chapter 2, Lemma 4.7]{MR0244627} to the sequence $\{R_{i}\}_{i=0}^{\infty}$, we are able to conclude \(R_{i}\to 0\) as $i\to \infty$. From this, it follows that
\[0\le \mleft\lvert \mleft\{x\in B_{\rho/4}(x_{0})\mathrel{}\middle|\mathrel{} U_{\delta,\,\varepsilon}(x)>(1-\theta/2)\mu^{2}\mright\}\mright\rvert \le \liminf_{i\to \infty}\,\lvert A_{i}\rvert=0.\]
Hence we have
\[\esssup_{B_{\rho/4}(x_{0})}\,U_{\delta,\,\varepsilon}\le \mleft(1-\frac{\theta}{2}\mright)\mu^{2},\]
which completes the proof.
\end{proof}

The proof of Lemma \ref{Lemma: Oscillation lemma 2} is substantially based on \cite[Chapter 10, Proposition 5.1]{MR2566733}.
Here we use a well-known inequality;
\begin{equation}\label{Eq: Isoperimetric inequality}
(l-k)\cdot \mleft\lvert B_{\rho}\cap \{v<k\}\mright\rvert \le \frac{C(n)\rho^{n+1}}{\mleft\lvert B_{\rho}\cap \{v>l\}\mright\rvert}\int_{B_{\rho}\cap\{k<v<l\}}\lvert\nabla v\rvert\,{\mathrm d}x
\end{equation}
for all \(v\in W^{1,\,1}(B_{\rho})\) and \(-\infty<k<l<\infty\) (see \cite[Chapter 10, \S 5.1]{MR2566733} for the proof).
\begin{proof}
For each \(i\in{\mathbb Z}_{\ge 0}\), we set a measurable set
\[A_{i}\coloneqq \{x\in B_{\rho/2}(x_{0})\mid U_{\delta,\,\varepsilon}(x)>k_{i}\}\quad \textrm{with }k_{i}\coloneqq \mleft(1-2^{-i}\nu\mright)\mu^{2}.\]
By (\ref{Eq: Levelset assumption}), it is easy to check that
\begin{equation}\label{Eq: Measure assumption rewrite}
\lvert B_{\rho/2}(x_{0})\setminus A_{i}\rvert \ge \lvert B_{\rho/2}(x_{0})\setminus A_{0}\rvert \ge \nu\lvert B_{\rho/2}(x_{0})\rvert=C(n)\nu\rho^{n}
\end{equation}
for every \(i\in {\mathbb Z}_{\ge 0}\).
We apply (\ref{Eq: Isoperimetric inequality}) to the function \(U_{\delta,\,\varepsilon}\in W^{1,\,1}(B_{\rho/2}(x_{0}))\) with \((k,\,l)=(k_{i},\,k_{i+1})\). By the Cauchy--Schwarz inequality and (\ref{Eq: Measure assumption rewrite}), we have
\begin{align*}
\frac{\nu\mu^{2}}{2^{i+1}}\cdot \lvert A_{i+1}\rvert&\le \frac{C(n)\rho^{n+1}}{\lvert B_{\rho/2}(x_{0})\setminus A_{i}\rvert}\int_{A_{i}\setminus A_{i+1}}\lvert\nabla U_{\delta,\,\varepsilon}\rvert\,{\mathrm d}x\\&\le \frac{C(n)\rho}{\nu}\mleft(\int_{B_{\rho/2}(x_{0})}\lvert\nabla (U_{\delta,\,\varepsilon}-k_{i})_{+}\rvert^{2}\,{\mathrm d}x \mright)^{1/2}\mleft(\lvert A_{i}\rvert-\lvert A_{i+1}\rvert \mright)^{1/2}.
\end{align*}
Therefore we obtain
\begin{equation}\label{Eq: Levelset estimate from isoperimetric}
\lvert A_{i+1}\rvert^{2}\le \frac{C(n)4^{i}\rho^{2}}{\nu^{4}\mu^{4}}\mleft(\lvert A_{i}\rvert-\lvert A_{i+1}\rvert \mright)\int_{B_{\rho/2}(x_{0})}\lvert\nabla (U_{\delta,\,\varepsilon}-k_{i})_{+}\rvert^{2}\,{\mathrm d}x
\end{equation}
for every \(i\in{\mathbb Z}_{\ge 0}\).
We choose a cutoff function \(\eta\in C_{c}^{1}(B_{\rho}(x_{0}))\) such that
\[\eta\equiv 1\textrm{ on }B_{\rho/2}(x_{0}),\quad \lvert \nabla \eta\rvert\le \frac{4}{\rho},\]
and apply Lemma \ref{Lemma: Caccioppoli-type estimate 1}. Then, for every \(i\in{\mathbb Z}_{\ge 0}\), we have
\begin{align}\label{Eq: Estimate on truncated energy}
&\int_{B_{\rho/2}(x_{0})}\lvert\nabla (U_{\delta,\,\varepsilon}-k_{i})_{+}\rvert^{2}\,{\mathrm d}x\nonumber\\&\le \int_{B_{\rho}(x_{0})}\lvert\nabla\mleft[\eta (U_{\delta,\,\varepsilon}-k_{i})_{+}\mright]\rvert^{2}\,{\mathrm d}x\nonumber\\&\le C\mleft[\int_{B_{\rho}(x_{0})}\lvert\nabla\eta\rvert^{2} (U_{\delta,\,\varepsilon}-k_{i})_{+}^{2}\,{\mathrm d}x+\mu^{2}\int_{A_{i}}\lvert f_{\varepsilon}\rvert^{2}\eta^{2}\,{\mathrm d}x\mright]\nonumber\\&\le \frac{C}{\rho^{2}}\mleft[\frac{\nu^{2}\mu^{4}}{4^{i}}\cdot \lvert B_{\rho}(x_{0})\rvert +F^{2}\rho^{n+2\beta}\mright]\le \frac{C\nu^{2}\mu^{4}}{4^{i}\rho^{2}}\mleft[\mleft(\frac{2^{i}\rho^{\beta}}{\nu\mu^{2}}\mright)^{2}+1\mright]\cdot \lvert B_{\rho/2}(x_{0})\rvert
\end{align}
for some constant \(C=C(n,\,p,\,q,\,\lambda,\,\Lambda,\,K,\,F,\,M,\,\delta)\in(0,\,\infty)\).
Here we have used H\"{o}lder's inequality and
\[(U_{\delta,\,\varepsilon}-k_{i})_{+}\le \mu^{2}-k_{i}=2^{-i}\nu\mu^{2}\quad \textrm{a.e. in }B_{\rho}(x_{0}).\]

Take and fix \(i_{\star}\in {\mathbb N}\) arbitrarily, and assume that \(\mu\) satisfies \(\mu^{2}\ge 2^{i_{\star}}\rho^{\beta}/\nu\). Then, for every $i\in\{\,0,\,\dots\,,\,i_{\star}-1\,\}$, there clearly holds
\[\frac{2^{i}\rho^{\beta}}{\nu\mu^{2}}\le \frac{2^{i_{\star}}\rho^{\beta}}{\nu\mu^{2}}\le 1.\]
In particular, by (\ref{Eq: Levelset estimate from isoperimetric})--(\ref{Eq: Estimate on truncated energy}), we are able to find a constant \(C_{\dagger}\in(1,\,\infty)\), depending at most on \(n\), \(p\), \(q\), \(\lambda\), \(\Lambda\), \(K\), \(F\), \(M\), and \(\delta\), such that
\[\lvert A_{i+1}\rvert^{2}\le \frac{C_{\dagger}^{2}}{\nu^{2}}\lvert B_{\rho/2}(x_{0})\rvert\cdot \mleft( \lvert A_{i}\rvert-\lvert A_{i+1}\rvert \mright)\]
holds for each $i\in\{\,0,\,\dots\,,\,i_{\star}-1\,\}$. This estimate yields
\begin{align*}
i_{\star}\lvert A_{i_{\star}}\rvert^{2}&\le \sum_{i=0}^{i_{\star}-1}\lvert A_{i+1}\rvert^{2}\\&\le \frac{C_{\dagger}^{2}}{\nu^{2}}\lvert B_{\rho/2}(x_{0})\rvert \sum_{i=0}^{i_{\star}-1}\mleft( \lvert A_{i}\rvert-\lvert A_{i+1}\rvert \mright)\\&\le \frac{C_{\dagger}^{2}}{\nu^{2}}\lvert B_{\rho/2}(x_{0})\rvert\cdot \lvert A_{0}\rvert\le \frac{C_{\dagger}^{2}}{\nu^{2}}\lvert B_{\rho/2}(x_{0})\rvert^{2}.
\end{align*}
Hence we obtain
\[\lvert A_{i_{\star}}\rvert \le \frac{C_{\dagger}}{\nu\sqrt{i_{\star}}} \lvert B_{\rho/2}(x_{0})\rvert,\]
which completes the proof.
\end{proof}
\bibliographystyle{plain}

\end{document}